\newcommand{\showcomments}{yes}
\renewcommand{\showcomments}{no}
\newsavebox{\commentbox}
\newenvironment{com}%
{\ifthenelse{\equal{\showcomments}{yes}}%
{\footnotemark
        \begin{lrbox}{\commentbox}
        \begin{minipage}[t]{1.25in}\raggedright\sffamily\tiny
        \footnotemark[\arabic{footnote}]}
{\begin{lrbox}{\commentbox}}}%
{\ifthenelse{\equal{\showcomments}{yes}}%
{\end{minipage}\end{lrbox}\marginpar{\usebox{\commentbox}}}
{\end{lrbox}}}
\newcommand{\visual}{\partial}
\newtheorem{thm}{Theorem}[section]
\newtheorem{lem}[thm]{Lemma}
\newtheorem{cor}[thm]{Corollary}
\newtheorem{prop}[thm]{Proposition}
\newtheorem{thmi}{Theorem}
\theoremstyle{definition}
\newtheorem{defn}[thm]{Definition}
\newtheorem{rem}[thm]{Remark}
\newtheorem{conv}[thm]{Convention}
\newtheorem{claim*}{Claim}
\newtheorem{cons}[thm]{Construction}
\DeclareMathOperator{\image}{im}
\DeclareMathOperator{\stabilizer}{Stab}
\DeclareMathOperator{\diam}{diam}
\newcommand{\neb}{\mathcal N}
\newcommand{\homology}{\ensuremath{{\sf{H}}}}
\newcommand{\field}[1]{\mathbb{#1}}
\newcommand{\integers}{\ensuremath{\field{Z}}}
\newcommand{\naturals}{\ensuremath{\field{N}}}
\newcommand{\reals}{\ensuremath{\field{R}}}
\newcommand{\leftl}{\overleftarrow{\mathcal L}}
\newcommand{\rightl}{\overrightarrow{\mathcal L}}
\newcommand{\closure}[1]{Cl\left({#1}\right)}
\newcommand{\interior} [1] {{\ensuremath \text{\rm Int}(#1) }}
\newcommand{\liftapp}[1]{\widehat{#1_{_{\succ}}}}
\newcommand{\Rmnum}[1]{\mathbf{{\expandafter\@slowromancap\romannumeral #1@}}}
\newcommand{\lefta}{\overleftarrow A}
\newcommand{\righta}{\overrightarrow A}
\let\oldmarginpar\marginpar
\renewcommand\marginpar[1]{\-\oldmarginpar[\raggedleft\footnotesize #1]%
{\raggedright\footnotesize #1}}
\newcommand{\co}{\colon}
\DeclareMathOperator{\comapp}{\ensuremath{\mathbf A}}
\newcounter{enumitemp}
\newcommand{\leftw}{\overleftarrow{W}}
\newcommand{\rightw}{\overrightarrow{W}}
\newcommand{\dist}{\textup{\textsf{d}}}
\DeclareMathOperator{\periodicline}{\Lambda}
\begin{document}
\title[Cubulating free-by-cyclic groups]{Cubulating hyperbolic free-by-cyclic groups: the irreducible case}

\author[M.F.~Hagen]{Mark F. Hagen}
\address{Dept. of Math., University of Michigan, Ann Arbor, Michigan, USA }
\email{markfhagen@gmail.com}

\author[D.T.~Wise]{Daniel T. Wise}
\address{Dept. of Math. and Stat., McGill University, Montreal, Quebec, Canada}
\email{wise@math.mcgill.ca}

\date{\today}
\subjclass[2010]{Primary: 20F65; Secondary: 57M20}
\keywords{free-by-cyclic group, CAT(0) cube complex, train track map}

\begin{abstract}
Let $V$ be a finite graph and let $\phi:V\rightarrow V$ be an irreducible train track map whose mapping torus has word-hyperbolic fundamental group $G$.  Then $G$ acts freely and cocompactly on a CAT(0) cube complex.
\end{abstract}

\maketitle

\setcounter{tocdepth}{2}
\tableofcontents

\section*{Introduction}\label{sec:introduction}
The goal of this paper is to prove the following theorem:

\begin{thmi}\label{thmi:irreducible}
Let $F$ be a finite-rank free group and let $\Phi:F\rightarrow F$ be an irreducible automorphism, and suppose that $G=F\rtimes_{\Phi}\integers$ is word-hyperbolic.  Then $G$ acts freely and cocompactly on a CAT(0) cube complex.
\end{thmi}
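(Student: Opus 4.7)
The plan is to construct a $G$-invariant wallspace using the train track structure of $\Phi$, apply Sageev's construction to obtain a CAT(0) cube complex, and then use the hyperbolicity of $G$ to upgrade to a free cocompact action via a Bergeron--Wise/Agol-type criterion.

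First I would invoke Bestvina--Handel to replace $\Phi$ by an irreducible train track representative $\phi\colon V\to V$ on a finite graph, with Perron--Frobenius eigenvalue $\lambda>1$. The mapping torus $X_\phi$ is a $K(G,1)$, and its universal cover $\widetilde X_\phi$ fibers over $\mathbb R$ with tree fiber $\widetilde V$ carrying the $F$-action; the $\mathbb Z$-factor acts by a translation that covers $\phi$. By Brinkmann's criterion, hyperbolicity of $G$ is equivalent to the absence of periodic Nielsen paths for $\phi$, which in the irreducible setting is a strong dynamical restriction that I will exploit throughout.

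Next I would define a $G$-invariant family $\mathcal W$ of walls in $\widetilde X_\phi$ (or in a combinatorial spine thereof). The natural candidates are codimension-one subspaces built from the train track dynamics: starting from an edge $e$ of $V$, flow it forward under iterates of $\phi$ and backward under iterates of $\phi^{-1}$ to produce a bi-infinite transversal to the $\mathbb R$-direction, and assemble these transversals equivariantly into a two-sided subspace cutting $\widetilde X_\phi$ into halfspaces. The Perron--Frobenius expansion by $\lambda$ produces walls at every scale, and absence of Nielsen paths prevents degenerate self-osculations of the wall pattern, so that any two points of $\widetilde X_\phi$ are separated by only finitely many walls. Applying Sageev's construction to $(\widetilde X_\phi,\mathcal W)$ then yields a CAT(0) cube complex $\widetilde C$ with a $G$-action.

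To obtain properness, I would check that any two $G$-orbits in $\widetilde X_\phi$ are separated by infinitely many walls, a consequence of the $t$-translation structure combined with the uniform $\lambda$-expansion. Cocompactness is the main point, and here I would appeal to the Bergeron--Wise/Agol criterion for hyperbolic groups: it suffices to verify (i) each wall stabilizer is a quasiconvex subgroup of $G$, and (ii) every pair of distinct points of $\partial G$ is separated by some wall. Quasiconvexity of wall stabilizers reduces to showing that the subgroups carried by train track subcomplexes are quasiconvex, which is forced by hyperbolicity plus the train track geometry. Boundary separation follows from the fact that the stable and unstable laminations of an irreducible train track fill $\widetilde V$, so their leaves provide walls distinguishing arbitrary boundary points. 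Finally, $G$ is torsion-free as an extension of torsion-free groups, so proper cocompact implies free cocompact.

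The main obstacle is the construction and analysis of $\mathcal W$: producing a family simultaneously rich enough to guarantee (ii) and tame enough that its stabilizers are quasiconvex. Irreducibility supplies uniform expansion and good ergodic behavior of the laminations, while hyperbolicity rules out the Nielsen paths and Baumslag--Solitar subgroups that would otherwise destroy quasiconvexity of wall stabilizers; balancing these two inputs so that Sageev's machine outputs a cocompact cube complex is the technical heart of the argument.
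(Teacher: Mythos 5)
Your framework (train track representative, Sageev construction, Bergeron--Wise boundary-separation criterion) matches the paper's, but the wall construction at the core of your argument has a genuine gap. The objects you propose as walls --- ``starting from an edge $e$ of $V$, flow it forward under iterates of $\phi$ and backward... to produce a bi-infinite transversal to the $\mathbb R$-direction'' --- are essentially leaves of the forward-flow lamination in $\widetilde X$. These do separate $\widetilde X$ locally, but their stabilizers are trivial or cyclic, so they are not codimension-1 quasiconvex subgroups of the kind the Bergeron--Wise criterion requires, and a family containing uncountably many such leaves cannot yield a cocompact dual cube complex. The paper's walls are built quite differently: one removes finitely many small intervals (``primary busts'') from a copy of the fiber graph $V$ and reconnects the resulting ``nucleus'' to the $\phi^L$-preimages of the busts by \emph{finite}-length tunnels (levels of length $L$). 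The wall is thus mostly horizontal, with only a bounded excursion of length $L$ in the flow direction; this is what makes its stabilizer a finitely generated free group. The bi-infinite leaves do appear in the paper, but only as auxiliary separating objects used (via an associated $\mathbb R$-tree built from the Perron--Frobenius metric) to decide \emph{where} to place the busts so that a wall crosses a given geodesic an odd number of times.

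The two steps you describe as consequences of ``hyperbolicity plus train track geometry'' are in fact the technical heart and do not follow formally. Quasiconvexity of the wall stabilizer requires taking the tunnel length $L$ large and verifying a ``ladder overlap'' condition --- that distinct tunnels attached to a common nucleus have uniformly bounded coarse intersection --- which the paper achieves by placing busts near periodic points whose forward orbits have distinct images in the $\mathbb R$-tree. Boundary separation likewise requires a case analysis (geodesics that fellow-travel long forward ladders versus those that deviate from all of them), each case needing a different placement of busts and a delicate argument that the geodesic crosses the chosen wall transversally and only boundedly often. Also, Brinkmann's criterion is that $G$ is hyperbolic iff $\Phi$ is atoroidal (no periodic conjugacy classes), which is not the same as the absence of periodic Nielsen paths; the latter is used in the paper only at one specific point, to show the $F$-action on the $\mathbb R$-tree is free on periodic points.
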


This result is a special case of Corollary~\ref{cor:irreducible_case}, which handles the more general case of a hyperbolic ascending HNN extension of a free group by an irreducible endomorphism.

Theorem~\ref{thmi:irreducible} provides a widely-studied class of hyperbolic groups for which Gromov's question (see~\cite{Gromov87}) of whether hyperbolic groups are CAT(0) has a positive answer, but goes further, since nonpositively-curved cube complexes enjoy numerous useful properties beyond having universal covers that admit a CAT(0) metric.  For example, combining Theorem~\ref{thmi:irreducible} with a result of~\cite{AgolVirtualHaken} shows that groups $G$ of the type described in Theorem~\ref{thmi:irreducible} are virtually special in the sense of~\cite{HaglundWiseSpecial} and therefore virtually embed in a right-angled Artin group.  This implies that $G$ has several nice structural features, including $\integers$-linearity.

A group $G\cong F\rtimes_{\Phi}\integers$ is word-hyperbolic exactly when $\Phi$ is atoroidal~\cite{BestvinaFeighnCombination,Brinkmann}, so that Theorem~\ref{thmi:irreducible} applies to all mapping tori of irreducible, atoroidal automorphisms of free groups.  More generally, ascending HNN extensions are hyperbolic precisely if they have no Baumslag-Solitar subgroups~\cite{Kapovich:2000}.

We actually prove the following more general statement:

\begin{thmi}\label{thmi:general}
Let $\phi:V\rightarrow V$ be a train track map of a finite graph $V$.  Suppose that $\phi$ is $\pi_1$-injective and that each edge of $V$ is expanding.  Moreover, suppose that the transition matrix $\mathfrak M$ of $\phi$ is irreducible and that the mapping torus $X$ of $\phi$ has word-hyperbolic fundamental group $G$.  Then $G$ acts freely and cocompactly on a CAT(0) cube complex.
\end{thmi}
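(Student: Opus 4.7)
The plan is to apply the Sageev cubulation machinery, together with the Bergeron--Wise criterion for properness and the Hruska--Wise criterion for cocompactness. Concretely, I would aim to construct a $G$-invariant collection of walls in the universal cover $\widetilde X$ of the mapping torus such that the walls fall into finitely many $G$-orbits of quasi-convex codimension-$1$ subspaces, and any two distinct points of the Gromov boundary $\partial G$ are separated by some wall. Sageev's construction then yields a dual CAT(0) cube complex on which $G$ acts by combinatorial isometries; Bergeron--Wise upgrades separation of boundary pairs to a proper action, and Hruska--Wise upgrades quasi-convexity of the wall stabilizers to cocompactness.

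The walls themselves should be extracted from the train-track dynamics. The mapping torus $X$ carries a natural semi-flow in the mapping-torus direction, and $\widetilde X$ inherits a semi-flow whose cross-sections are copies of $\widetilde V$. To build a wall, start from a small transverse object in a cross-section --- for instance, a midpoint of an edge of $V$, or, more generally, a point of $\phi^{-n}(v)$ for large $n$. Flowing this point forward and, via the pre-image structure, backward through the tower of cross-sections should assemble a codimension-$1$ subspace $W\subset\widetilde X$. Because $\mathfrak M$ is irreducible and every edge of $V$ is expanding, iterating $\phi^{-1}$ proliferates transverse points and yields arbitrarily fine wall systems. The train-track hypothesis (no illegal turns created along iterates) should prevent these walls from self-intersecting catastrophically, ensuring controlled immersions and, ultimately, a well-defined wallspace structure.

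The substance of the argument is then the verification of Sageev's two criteria. For \emph{separation of boundary points}, I would exploit the expanding/contracting behavior of $\phi$ on legal turns: two distinct ideal points of $\partial G$ are distinguished either by their projection to the dynamics of the attracting train-track lamination on $V$ or by their behavior with respect to the flow, and in either case a sufficiently deep $\phi^{-n}$-iterate of some base wall should isolate them, perhaps via a ping-pong-style argument driven by the Perron--Frobenius eigenvector of $\mathfrak M$. For \emph{quasi-convexity with finitely many orbits}, the natural route is to identify each wall's stabilizer with a finitely generated subgroup arising from the periodic structure of the attracting lamination and its transverse cross-sections; quasi-convexity in the hyperbolic group $G$ should then follow from the coarse control provided by expansion of legal paths under $\phi$, combined with a bounded-cancellation argument.

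The main obstacle, in my view, is the sufficiency step: showing that the constructed wall system is rich enough to separate every pair of distinct points in $\partial G$. This is precisely where irreducibility of $\mathfrak M$ must do the serious work, by guaranteeing the mixing needed to distribute walls in all asymptotic directions; this is analogous to how the irreducibility of a pseudo-Anosov monodromy produces enough leaves to cubulate a hyperbolic $3$-manifold fibring over the circle. A secondary difficulty is quantitative control of the walls under iteration of $\phi^{-1}$, where the geometry of preimages must be tracked precisely --- likely using the Perron--Frobenius eigenvalue of $\mathfrak M$ and a train-track version of bounded cancellation --- to ensure that the assembled codimension-$1$ subspaces remain quasi-isometrically embedded and do not accumulate in ways that destroy finiteness of orbits.
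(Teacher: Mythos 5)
Your high-level frame (Sageev's construction plus the Bergeron--Wise boundary-separation criterion, with quasiconvex codimension-1 walls) matches the paper's, but the wall construction you propose is not the one that works, and the gap is not merely technical. Flowing a point of an edge forward and taking all $\phi^{-n}$-preimages backward produces exactly a \emph{leaf} of the semiflow on $\widetilde X$. Leaves do separate $\widetilde X$, but they fail the criterion in two ways. First, the leaves form a continuum parametrized by the associated $\reals$-tree $\mathcal Y$, with generically trivial stabilizer, so they do not assemble into a $G$-finite family of walls with quasiconvex codimension-1 stabilizers. Second, and decisively, a leaf can never separate two boundary points whose images in $\mathcal Y$ coincide; in particular, a geodesic that fellow-travels a long forward ladder (a ``ladderlike'' geodesic, e.g.\ one tracking the flow direction) has endpoints that no purely flow-saturated wall can separate. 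So your wall system cannot be ``rich enough'' no matter how deep you iterate $\phi^{-1}$, and irreducibility of $\mathfrak M$ cannot rescue this.

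The paper's walls are instead \emph{mostly vertical}: the main body of a wall is a ``nucleus'' --- a copy of $V$ (really of the mid-level $E$) with finitely many small open intervals (``primary busts'' $d_i$) and their $\phi^L$-preimages removed --- and the only horizontal pieces are short ``tunnels'' of length $L$ (a slope plus a level-part of the flow) joining each bust to its preimages. This makes each wall $G$-cocompact with quasiconvex free stabilizer, and transverse to the flow, so it can cut ladderlike geodesics (by placing a bust near a periodic point on the ladder). The flow dynamics and the $\reals$-tree $\mathcal Y$ are used only for the complementary class of ``deviating'' geodesics, where a parity argument in $\mathcal Y$ produces a leaf meeting the geodesic an odd number of times, and a wall whose tunnel approximates a finite piece of that leaf does the cutting. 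You would also need the substantial quasiconvexity machinery (uniform quasiconvexity of forward ladders, the approximation $\mathbf A(\overline W)$, the ladder overlap property, and concatenation-of-quasigeodesics lemmas) to show these walls are genuinely two-sided and quasi-isometrically embedded for large $L$; none of this is addressed by a bounded-cancellation appeal.
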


Our CAT(0) cube complex arises by applying Sageev's construction~\cite{Sageev:cubes_95} to a family of walls in the universal cover $\widetilde X$ of $X$.  To ensure that the resulting action of $G$ on the dual cube complex is proper and cocompact, we show that there is a quasiconvex wall separating any two points in $\visual G$, thus verifying the cubulation criterion in~\cite{BergeronWise}.   As train track maps are central to the proof that there are many walls in this sense, our results build upon the work of Bestvina, Feighn, and Handel in~\cite{BestvinaHandel,BestvinaFeighnHandel_lamination}.

It appears likely that in the case where $\phi$ is $\pi_1$-surjective, the hypothesis that $\phi$ is irreducible can be removed, and we are currently working on developing the methodology in this paper to generalize Theorem~\ref{thmi:irreducible} to all hyperbolic mapping tori of free group automorphisms\footnote{We posted a tortuous generalization eight months after submitting this paper; see~\cite{HagenWise:general}.}.

Moreover, for the construction of immersed walls in $X$, hyperbolicity of $G$ plays a minor role.  It is therefore natural to wonder which free-by-cyclic groups admit free actions on CAT(0) cube complexes arising from immersed walls constructed essentially as in Section~\ref{sec:immersed_walls}.  If $\Phi$ is fully irreducible and $G$ is not hyperbolic, then $\Phi$ is represented by a homeomorphism of a surface, by~\cite[Thm.~4.1]{BestvinaHandel}.  Consequently, in this case $G$ acts freely on a locally finite, finite-dimensional CAT(0) cube complex~\cite{PrzytyckiWise:mixed}.  It is reasonable to conjecture that in general, if $G=F\rtimes_{\Phi}\integers$ is hyperbolic relative to virtually abelian subgroups, then $G$ acts freely on a locally finite, finite-dimensional CAT(0) cube complex.  The techniques in this paper are largely portable to that context.  However, one cannot expect to obtain cocompact cubulations for general free-by-cyclic groups.  Indeed, Gersten's group $\langle a,b,c,t\,\mid\,a^t=a,\,b^t=ba,\,c^t=ca^2\rangle$ is free-by-cyclic but does not act metrically properly by semisimple isometries on a CAT(0) space~\cite{Gersten:automorphism}, and hence Gersten's group cannot act freely on a locally finite, finite-dimensional CAT(0) cube complex.  Nevertheless, Gersten's group does act freely on an infinite-dimensional CAT(0) cube complex~\cite{WiseTubular}, so there is still much work to do in this direction.

\subsection*{Summary of the paper}
In Sections~\ref{sec:mapping_tori} and~\ref{sec:leaf_level_ladder}, we describe the mapping torus $X$ and introduce some features -- \emph{levels} and \emph{forward ladders} -- that play a role in the construction of immersed walls in $X$.

In Section~\ref{sec:immersed_walls}, we describe \emph{immersed walls} $W\rightarrow X$ when $\phi:V\rightarrow V$ is an arbitrary $\pi_1$-injective map sending vertices to vertices and edges to combinatorial paths, under the additional assumptions that no power of $\phi$ maps an edge to itself and $\pi_1X$ is hyperbolic.  The immersed wall $W$ is homeomorphic to a graph and has two parts, the \emph{nucleus} and the \emph{tunnels}, and is determined by a positive integer $L$ and a collection of sufficiently small intervals $d_i\subset V$, each contained in the interior of an edge.  The nucleus is obtained by removing from $V$ each \emph{primary bust} $d_i$, along with its $\phi^L$-preimage.  The tunnels are ``horizontal'' immersed trees joining endpoints of $d_i$ to endpoints of its preimage.  Let $\widetilde W\rightarrow\widetilde X$ be a lift of the universal cover of $W$ and let $\overline W\subset\widetilde X$ be its image.  Since $W\rightarrow X$ is not in general $\pi_1$-injective, $\widetilde W\rightarrow\overline W$ is not in general an isomorphism.  However, under suitable conditions described in Section~\ref{sec:quasiconvexity}, $\overline W$ is a wall in $\widetilde X$ whose stabilizer is a quasiconvex free subgroup of $G$.  The immersed walls in $X$ are analogous to the ``cross-cut surfaces'' introduced in~\cite{CooperLongReid}, and Dufour used these to cubulate hyperbolic mapping tori of self-homeomorphisms of surfaces~\cite{DufourThesis}.

Section~\ref{sec:cutting_geodesics} and~\ref{sec:getting_lol} are devoted to the proof of Theorem~\ref{thmi:general}.  We use a continuous surjection $\widetilde X\rightarrow\mathcal Y$ to an $\reals$-tree that arises in the case where $\phi$ is a train track representative of an irreducible automorphism (see~\cite{BestvinaFeighnHandel_lamination}).

\subsection*{Acknowledgements}\label{subsec:acknowledgements}
We thank the referees for their many helpful comments and corrections that improved this text.  This is based upon work supported by the National Science Foundation under Grant Number NSF 1045119 and by NSERC.

\section{Mapping tori}\label{sec:mapping_tori}
Let $V$ be a finite connected graph based at a vertex $v$, and let $\phi:V\rightarrow V$ be a continuous, basepoint-preserving map such that $\phi(w)$ is a vertex for each vertex $w$ of $V$, and such that $\phi(e)$ is a \emph{combinatorial path} in $V$ for each edge $e$ of $V$.  This means that there is a subdivision of $e$ such that vertices of the subdivision map to vertices and whose open edges map homeomorphically to open edges.  We also assume that $\phi$ is parametrized so that these homeomorphisms are linear.  Moreover, we require that the map $\Phi:F\rightarrow F$ induced by $\phi$ is injective, where $F\cong\pi_1V$ is a finite-rank free group.  We note that any injective $\Phi:F\rightarrow F$ is represented by such a map $\phi$.

The reader should have in mind the case where $\Phi$ is an irreducible automorphism of $F$ and $\phi$ is a train track map representing $\Phi$, in the sense of~\cite{BestvinaHandel}:

\begin{defn}[Train track map]\label{defn:train_track_map}
$\phi:V\rightarrow V$ is a \emph{train track map} if for all edges $e$ of $V$ and all $n\geq0$, the path $\phi^n(e)\rightarrow X$ is immersed.
\end{defn}

For an integer $L\geq 1$, let $X_L$ be obtained from $V\times[0,L]$ by identifying $(x,L)$ with $(\phi^L(x),0)$ for each $x\in V$, so that $X_L$ is the mapping torus of $\phi^L$, and let $X=X_1$.  See Figure~\ref{fig:pic_of_x}.  Let $G=\pi_1X$ and let $G_L=\pi_1X_L$ for each $L\geq 1$.  Note that if $\Phi$ is surjective then $G_L\cong F\rtimes_{\Phi^L}\integers$.

\begin{figure}[ht]
\includegraphics[width=0.5\textwidth]{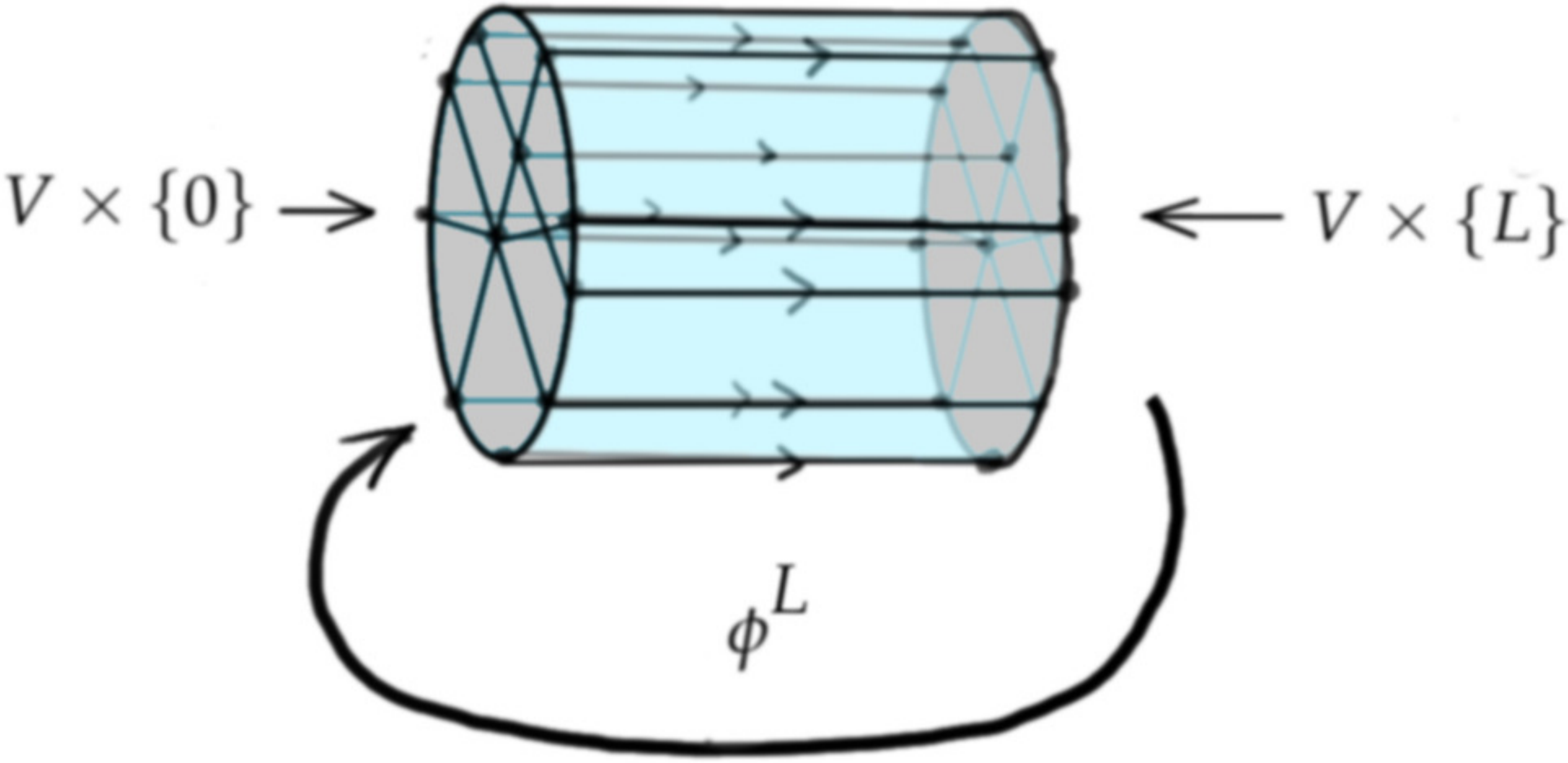}\\
\caption{The mapping torus $X_L$.}\label{fig:pic_of_x}
\end{figure}

We regard $V$ as a subspace of $X_L$, and we denote by $E$ the image in $X_L$ of $V\times\{\frac{1}{2}\}$; the space $E$ plays a role in Section~\ref{sec:immersed_walls}.

We now describe a cell structure on $X_L$.  Let $V\times[0,L]$ have the product cell structure: its vertices are $V^0\times\{0,L\}$, its \emph{vertical edges} are the edges of $V\times\{0,L\}$, and its \emph{horizontal edges} are of the form $\{w\}\times[0,L]$, where $w\in V^0$.  We direct each horizontal edge $\{w\}\times[0,L]$ from $\{w\}\times\{0\}$ to $\{w\}\times\{L\}$, and horizontal edges of $X_L$ are directed accordingly.  The $2$-cells of $X_L$ are images of the $2$-cells of $V\times[0,L]$, which have the form $e\times[0,L]$, where $e$ is an edge of $V$.

For each vertex $w\in V^0\subset X_L^0$, we let $t_w$ denote the unique horizontal edge outgoing from $w$.  When $L=1$, let $z\in G$ be the element represented by the loop $t_v$, where $v$ is the $\phi$-invariant basepoint of $V$.  Note that conjugation by $z$ induces the monomorphism $\Phi:F\rightarrow F$.  For each vertical edge $e$, joining vertices $a,b$, there is a $2$-cell $R_e$ with attaching map $t_b^{-1}e^{-1}t_a\phi^L(e)$, where $t_a,t_b$ are horizontal edges and $\phi^L(e)$ is a combinatorial path in $V$.  

Define a map $\varrho_L:X_L\rightarrow X$ as follows.  First, $\varrho_L$ restricts to the identity on $V$.  Each horizontal edge $t_w$ of $X_L$, joining $w$ to $\phi^L(w)\in V^0$, maps to the concatenation of $L$ horizontal edges of $X$ beginning at $w$.  This determines $\varrho_L:X_L^1\rightarrow X$.  This map extends to the 2-skeleton by mapping each 2-cell $R_e$ of $X_L$ to a disc diagram $D_e\rightarrow X$.  Specifically, for $0\leq i\leq L$, the $i^{th}$ component $P_i$ of the vertical 1-skeleton of $D_e$ is the path $\phi^i(e)$, and there are strips of 2-cells between consecutive vertical components.  The boundary path of $D_e$ consists of $P_0,P_L$, and the horizontal edges of $X$ joining the initial [terminal] point of $P_i$ to the initial [terminal] point of $P_{i+1}$ for $0\leq i\leq L-1$.  Such a diagram $D_e$ is a \emph{long 2-cell} and is depicted in Figure~\ref{fig:long_2_cell}.  Note that $D_e\rightarrow X$ is an immersion when $\phi$ is a train track map.  Otherwise, the paths $P_i\rightarrow X$ are not necessarily immersed and the map $D_e\rightarrow X$ need not be locally injective.  The map $G_L\rightarrow G$ induced by $\varrho_L$ embeds $G_L$ as an index-$L$ subgroup of $G$.

\begin{figure}[ht]
\includegraphics[width=0.4\textwidth]{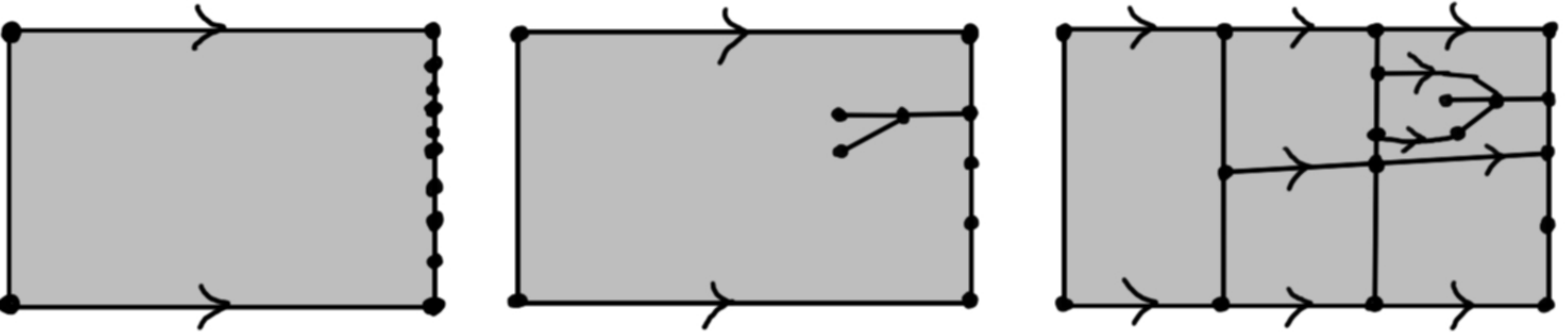}
\caption{A 2-cell of $X_L$ is shown at left, its image in $X_L$ is at the center, and its image in $X$, which is the image of a long 2-cell in $X$, is shown at right.  Here $L=3$.  Horizontal edges have arrows and all non-arrowed edges are vertical.}\label{fig:long_2_cell}
\end{figure}

The universal cover $\widetilde X_L\rightarrow X_L$ inherits a cell structure from $X_L$.  Let $\tilde v\in\widetilde X_L^0$ be a lift of the basepoint $v\in X_L^0$, and let $\widetilde V_0$ denote the smallest $F$-invariant subgraph containing the $\tilde v$ component of the preimage of $V$.  Let $\widetilde V_{nL}=z^{nL}\widetilde V_0$ for $n\in\integers$.  

There is a \emph{forward flow} map $\tilde\phi_L:\widetilde X_L\rightarrow\widetilde X_L$ defined as follows.  For each $p\in V\times\{0\}$, let $S_p$ be the path $\{p\}\times[0,L]\rightarrow X_L$.  The \emph{horizontal ray} $m_p\rightarrow X_L$ at $p$ is the concatenation $S_pS_{\phi^L(p)}S_{\phi^{2L}(p)}\cdots$.  For $\tilde p\in\widetilde V_{nL}$ mapping to $p$, let $\tilde m_{\tilde p}$ be the lift of $m_p$ at $\tilde p$.  For any $\tilde a\in\tilde m_{\tilde p}$, the point $\tilde\phi_L(\tilde a)$ is defined by translating $\tilde a$ a positive distance $L$ along $\tilde m_{\tilde p}$.  When $L=1$, we denote $\tilde\phi_L$ by $\tilde\phi$.

Let $\mathbf R_L$ denote the \emph{combinatorial line} with a vertex for each $nL\in L\integers$ and an edge for each $[nL,nL+L]$ and let $\mathbf S_L$ be a circle with a single vertex and a single edge of length $L$.  We define a map $q_L:\widetilde X_L\rightarrow\mathbf R_L$ as follows.  There is a map $\bar q_L:X_L\rightarrow \mathbf S_L$ induced by the projection $V\times[0,L]\rightarrow[0,L]$.  The map $\bar q_L$ lifts to the desired map $q_L$.  Note that $q_L$ sends vertical edges to vertices and horizontal edges and 2-cells to edges of $\mathbf R_L$.  We let $\mathbf R=\mathbf R_1$, $\mathbf S=\mathbf S_1$, and $q=q_1$.

Let $\widetilde E_{nL}=q_L^{-1}(nL+\frac{1}{2})$.  Each horizontal edge $t_w\cong\{w\}\times[0,L]\subset\widetilde X_L$ intersects $\widetilde E_{nL}$ at the point $\{w\}\times\{\frac{1}{2}\}$ for a unique $n\in\integers$.  

\subsection{Metrics and subdivisions}\label{subsec:metrics_and_subdivisions}
For each edge $e$ of $X$, let $|e|$ be a positive real number, with $|t_w|=1$ for each horizontal edge $t_w$.  The assignment $e\mapsto |e|$ is a \emph{weighting} of $X^1$, and pulls back to a $G$-equivariant weighting of $\widetilde X^1$, with all horizontal edges having unit weight.  Regarding $e$ as a copy of $[0,1]$, the subinterval $d\cong[a,b]\subset e$ has weight $|d|=(b-a)|e|$.  Consider an embedded path $P\rightarrow\widetilde X^1$ (not necessarily combinatorial).  The \emph{length} $|P|$ of $P$ is the sum of the weights of $P\cap e$, where $e$ varies over all edges.  This yields a geodesic metric $\dist$ on $\widetilde X^1$ such that $(\widetilde X^1,\dist)$ is quasi-isometric to $\widetilde X^1$ with the usual combinatorial path-metric in which edges have unit length.

For each $L\geq 1$, let ${\widetilde X^\bullet}_L$ be the subdivision of $\widetilde X_L$ such that the lift $\tilde\varrho_L:\widetilde X_L\rightarrow\widetilde X$ of $\varrho_L$ sends open cells homeomorphically to open cells.  The resulting map ${\widetilde X^\bullet}_L\rightarrow\widetilde X$ is an isomorphism on subspaces $\widetilde V_{nL}$ and sends 2-cells to long 2-cells.  Note that 2-cells of $\widetilde X_L$ do not immerse in $\widetilde X$ unless $\phi$ is a train track map.  Pulling back weights of edges in $\widetilde X$ to ${\widetilde X^\bullet}_L$ yields a metric $\dist_L$ on $({\widetilde X^\bullet}_L)^1$ with respect to which $({\widetilde X^\bullet}_L)^1\rightarrow\widetilde X^1$ is a distance-nonincreasing quasi-isometry.  We shall work mainly in $\widetilde X$, except in Section~\ref{sec:cutting_geodesics}, where it is essential to consider ${\widetilde X^\bullet}_L$.

Beginning in Section~\ref{sec:immersed_walls}, we shall assume that $G$ is word-hyperbolic, so that there exists $\delta\geq 0$ such that $(\widetilde X^1,\dist)$ is $\delta$-hyperbolic.

\section{Forward ladders and levels}\label{sec:leaf_level_ladder}
In this section, we define various subspaces of $\widetilde X$ needed in the construction and analysis of quasiconvex walls in $\widetilde X$ and ${\widetilde X^\bullet}_L$.

\begin{defn}[Midsegment]\label{defn:midsegment}
Let $R_e\rightarrow\widetilde X$ be a 2-cell with boundary path $t_b^{-1}e^{-1}t_a\tilde\phi(e)$, where $e$ is a vertical edge joining vertices $a,b$.  Regarding $R_e$ as a Euclidean trapezoid with parallel sides of length $|e|$ and $|\tilde\phi(e)|$, the \emph{midsegment in $R_e$} determined by $x\in e$ is the line segment joining $x$ to $\tilde\phi(x)$.  The \emph{midsegment} in $\widetilde X$ determined by $x$ is the image of the midsegment in $R_e$ determined by $x$ under the map $R_e\rightarrow\widetilde X$, and is denoted $m_x$.  Midsegments are directed so that $x$ is initial and $\tilde\phi(x)$ is terminal.  The midsegment $m_x$ is \emph{singular} if $\tilde\phi(x)\in\widetilde X^0$ and \emph{regular} otherwise. In general, $R_e\rightarrow\widetilde X$ is not an embedding, and there may be distinct $x,y\in e$ with the property that the terminal points of $m_x$ and $m_y$ coincide.  Note, however, that the intersection of two midsegments contains at most one point.  See Figure~\ref{fig:midsegment}.

\begin{figure}[ht]
\includegraphics[width=0.3\textwidth]{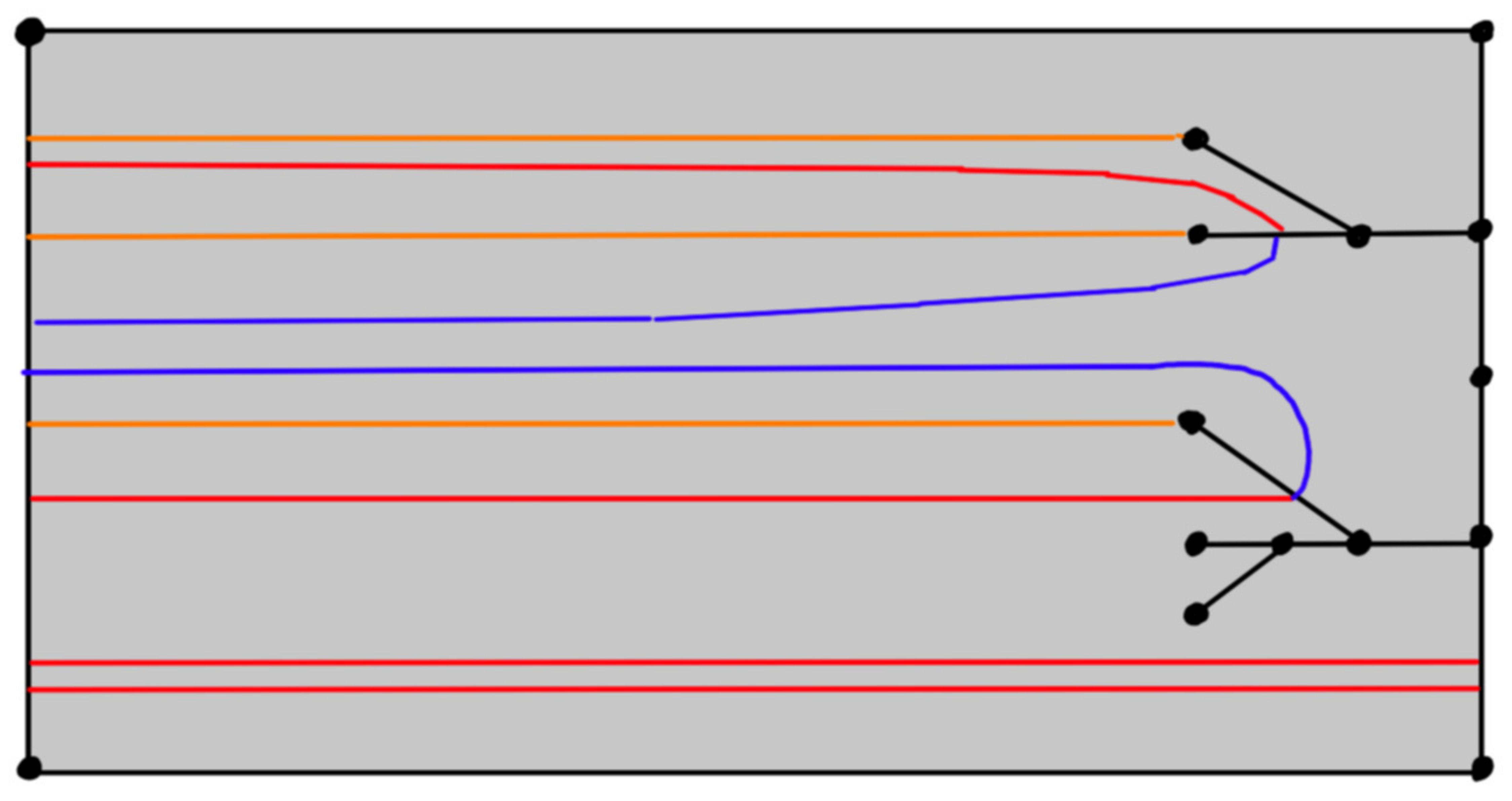}\\
\caption{Some midsegments in the image of a 2-cell in $\widetilde X$.}\label{fig:midsegment}
\end{figure}

\end{defn}

\begin{defn}[Forward path, forward ladder]\label{defn:forward_ladder}
Let $x\in\widetilde V_n$ for some $n\in\integers$ and let $M\in\integers$.  The \emph{forward path} $\sigma_M(x)$ of length $M$ determined by $x$ is the embedded path that is the concatenation of midsegments starting at $x$ and ending at $\tilde\phi^M(x)$.  In other words, $\sigma_M(x)$ is isomorphic to the combinatorial interval $[0,M]$, whose vertices are the points $\tilde\phi^i(x),\,0\leq i\leq M$ and whose edges are the midsegments joining $\tilde\phi^i(x)$ to $\tilde\phi^{i+1}(x)$.  Any path $\sigma$ of this form is a \emph{forward path}.  Note that $\sigma$ is a directed path with respect to the directions of midsegments in the sense that each internal point in which $\sigma$ intersects the vertical 1-skeleton of $\widetilde X$ has exactly one incoming and one outgoing midsegment.  The \emph{forward ladder} $N(\sigma)$ associated to $\sigma$ is the smallest subcomplex of $\widetilde X$ containing $\sigma$.  The $1$-skeleton $N(\sigma)^1$ plays an important role in many arguments.  See Figure~\ref{fig:ladder}.
\end{defn}

\begin{figure}[ht]
\includegraphics[width=0.3\textwidth]{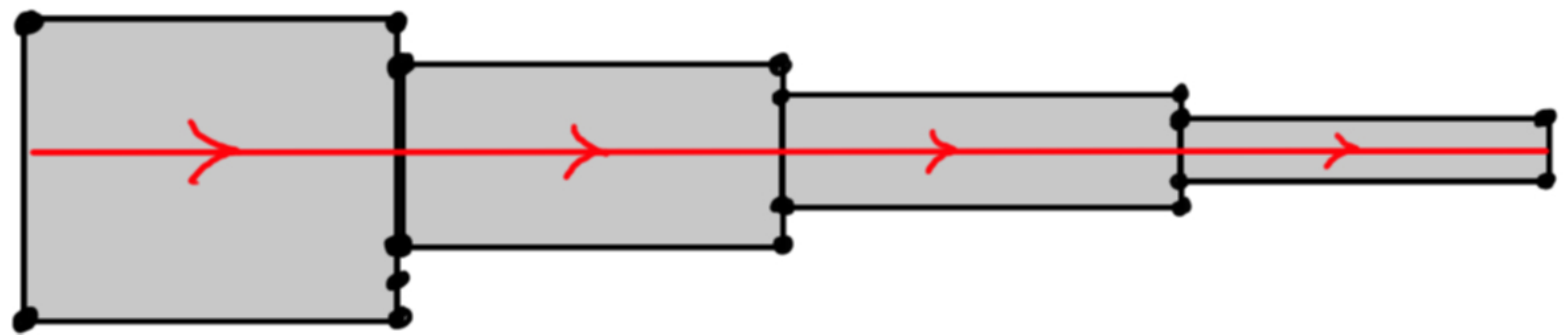}
\caption{A forward ladder.  The forward path is labelled with arrows.}\label{fig:ladder}
\end{figure}

A subgraph $Y$ of $\widetilde X^1$ is \emph{$\lambda$-quasiconvex} if every geodesic of $\widetilde X^1$ starting and ending on $Y$ lies in $\neb_\lambda(Y)$.  We use the notation $\neb_r(Y)$ to denote the closed $r$-neighborhood of $Y$.

\begin{prop}[Quasiconvexity of forward ladders]\label{prop:forward_ladder_quasiconvex}
There exist constants $\lambda_1\geq 1,\lambda_2\geq 0$ such that for each forward path $\sigma\rightarrow\widetilde X$, the inclusion $N(\sigma)^1\hookrightarrow\widetilde X^1$ is a $(\lambda_1,\lambda_2)$-quasi-isometric embedding.  Hence, if $\widetilde X^1$ is $\delta$-hyperbolic, there exists $\lambda\geq 0$ such that each $N(\sigma)^1$ is $\lambda$-quasiconvex.
\end{prop}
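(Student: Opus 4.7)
The inclusion $N(\sigma)^1 \hookrightarrow \widetilde X^1$ is automatically $1$-Lipschitz, so the proposition reduces to finding uniform constants $\lambda_1, \lambda_2$ with $d_{N(\sigma)^1}(p, q) \leq \lambda_1 \dist(p, q) + \lambda_2$ for all $p, q \in N(\sigma)^1$. My plan is to exhibit a coarse Lipschitz retraction $r \colon \widetilde X^1 \to N(\sigma)^1$ whose Lipschitz data depends only on the geometry of $X$, not on $\sigma$ or its length $M$.

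First I analyze the structure of $N(\sigma)^1$. The forward ladder is the chain of $M$ consecutive $2$-cells $R_{e_0}, \ldots, R_{e_{M-1}}$, where $e_{i+1}$ is a sub-edge of the combinatorial path $\tilde\phi(e_i)$. Consequently $q(N(\sigma)^1) = [n_0, n_0+M]$, and at each interior level $n_0 + i$ the intersection $N(\sigma)^1 \cap \widetilde V_{n_0+i}$ is the combinatorial path $\tilde\phi(e_{i-1})$; the end slices are $e_0$ and $\tilde\phi(e_{M-1})$. Each level slice is a sub-path of the tree $\widetilde V_{n_0+i}$ (a tree because $\phi$ is $\pi_1$-injective). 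I define $r$ on vertices at levels in $[n_0, n_0+M]$ by closest-point tree projection onto the corresponding level slice; for vertices at levels outside this range, I retract horizontally to the closest endpoint level and then project. I extend $r$ to edges by shortest-path interpolation in $N(\sigma)^1$.

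The main obstacle is verifying uniform coarse Lipschitz bounds on $r$. A vertical edge contributes at most $1$ to the displacement, because tree projection is $1$-Lipschitz in $\widetilde V_n$ and the tree metric dominates the restriction of $\dist$ to $\widetilde V_n$. Horizontal edges are the delicate case. A key combinatorial observation is that a horizontal edge $t_a$ can have both endpoints in $N(\sigma)^1$ only when $a$ is one of the endpoints $a_i, b_i$ of some $e_i$, in which case $t_a$ is already in $N(\sigma)^1$ as part of $\partial R_{e_i}$; indeed, if $a$ is any other vertex of the slice $\tilde\phi(e_{i-1})$, then $\tilde\phi(a) \notin \tilde\phi(e_i)$ because $e_i$ is a single edge with only $a_i, b_i$ as vertices. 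Thus the only nontrivial displacement of $r$ across a horizontal edge occurs when at least one endpoint lies outside $N(\sigma)^1$, and this shift is uniformly bounded by the perimeters of $2$-cells of $\widetilde X$ — which are uniformly bounded because $G \curvearrowright \widetilde X$ is cocompact with finitely many $G$-orbits of $2$-cells.

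Once $r$ is established as $(K, C)$-coarsely Lipschitz with $K, C$ depending only on $X$, the quasi-isometric embedding follows immediately: for $p, q \in N(\sigma)^1$ we have $r(p) = p$ and $r(q) = q$, hence $d_{N(\sigma)^1}(p, q) = d_{N(\sigma)^1}(r(p), r(q)) \leq K \dist(p, q) + C$, giving $\lambda_1 = K$ and $\lambda_2 = C$. For the second assertion, in the $\delta$-hyperbolic setting the Morse lemma for $(\lambda_1, \lambda_2)$-quasi-geodesics supplies a constant $\lambda = \lambda(\delta, \lambda_1, \lambda_2) \geq 0$ such that any geodesic of $\widetilde X^1$ joining two points of $N(\sigma)^1$ lies in $\neb_\lambda(N(\sigma)^1)$, establishing the uniform $\lambda$-quasiconvexity.
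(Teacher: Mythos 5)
Your strategy---building a coarse Lipschitz retraction $r\colon\widetilde X^1\rightarrow N(\sigma)^1$ rather than, as the paper does, reading off both bounds from the projection $q\colon\widetilde X^1\rightarrow\mathbf R$ (which is $1$-Lipschitz and has uniformly bounded point-preimages on $N(\sigma)^1$)---is a legitimate alternative route, but as written it has two gaps. The first is in the definition of $r$: for vertices at levels above $n_0+M$, ``retract horizontally to the closest endpoint level'' means flowing \emph{backward} along $\tilde\phi$, and no such flow exists, since $\tilde\phi\colon\widetilde V_n\rightarrow\widetilde V_{n+1}$ is in general neither injective nor surjective. (This is easy to repair: send all of $q^{-1}((n_0+M,\infty))$ to the terminal point $\tilde\phi^M(x)$; every edge strictly above level $n_0+M$ then has zero displacement, and a horizontal edge straddling level $n_0+M$ has displacement bounded by the diameter of the top slice.)

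The second gap is in the horizontal-edge estimate, which is the crux of the proof. Your ``key combinatorial observation'' is false: since $\phi$ need not be an immersion on edges, $\tilde\phi$ can identify vertices of $\widetilde V_n$, so a vertex $a$ of the slice $N(\sigma)^1\cap\widetilde V_n$ other than an endpoint of $e_i$ may well satisfy $\tilde\phi(a)\in N(\sigma)^1\cap\widetilde V_{n+1}$ even though $t_a\not\subset N(\sigma)^1$. More importantly, the dichotomy is beside the point, and the bound you actually need---that $\dist_{N(\sigma)^1}(r(a),r(\tilde\phi(a)))$ is uniformly bounded for \emph{every} horizontal edge $t_a$ at an interior level---is asserted rather than proved. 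The missing argument is short: $r(a)$ and $r(\tilde\phi(a))$ lie in the consecutive slices $N(\sigma)^1\cap\widetilde V_n$ and $N(\sigma)^1\cap\widetilde V_{n+1}$, each of which is a subtree of diameter at most $\max_e|\phi(e)|$, and these two slices are joined inside $N(\sigma)^1$ by the horizontal edge $t_{a_i}\subset\partial R_{e_i}$; hence the displacement is at most $2\max_e|\phi(e)|+1$ no matter where $a$ sits. With these two repairs the retraction is coarsely Lipschitz with constants independent of $\sigma$ and $M$, and the remainder of your argument, including the Morse-lemma deduction of $\lambda$-quasiconvexity, goes through.
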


\begin{proof}
Let $\sigma$ join $x$ to $\tilde\phi^M(x)$, so that $\sigma=m_xm_{\tilde\phi(x)}\cdots m_{\tilde\phi^{M-1}(x)}$.  Let $R_i$ be the 2-cell containing $m_{\tilde\phi^i(x)}$.  Then a geodesic $P$ of $N(\sigma)^1$ joining $x$ to $\tilde\phi^M(x)$ has the form $P=Q_0t_0Q_1t_1Q_2\cdots t_{M-1}Q_M$, where each $t_i$ is a horizontal edge in $R_i$ and each $|Q_i|\leq\max_{e}\{|\phi(e)|\}$.  Since each $m_i$ is a midsegment, $R_i\neq R_j$ for $i\neq j$, whence $q(P)$ is a combinatorial interval of length $M$, and the preimage in $N(\sigma)^1$ of each point in $q(P)$ is uniformly bounded.  Hence $P$ is a uniform quasigeodesic in $\widetilde X^1$.
\end{proof}

In the case that $\widetilde X^1$ is $\delta$-hyperbolic, we denote by $\lambda$ the resulting quasiconvexity constant of the 1-skeleton of a forward ladder.

\begin{defn}[Level]\label{defn:level}
Let $x\in\widetilde V_n$ and let $L\geq 0$.  Note that the preimage $(\tilde\phi_L)^{-1}(x)$ is a finite set $\{x_i\}$ in $\widetilde V_{n-L}$.  Let $\sigma_L(x_i)$ be the forward path beginning at $x_i$ and ending at $x$.  The \emph{level} $T^o_L(x)$ is the subspace $\cup_i\sigma_L(x_i)$.  The point $x$ is the \emph{root} of $T_L^o(x)$ and $L$ is the \emph{length}.  The \emph{carrier} $N(T^o_L(x))$ is the smallest subcomplex of $\widetilde X$ containing $T^o_L(x)$.  Note that $N(T^o_L(x))=\cup_iN(\sigma_i)$, where $\sigma_i$ varies over the finitely many maximal forward paths in $T^o_L(x)$.  Note that each level has a natural directed graph structure in which edges are midsegments.
\end{defn}

\begin{prop}[Properties of levels]\label{prop:properties_of_levels}
Let $T^o_L(x)$ be a level.  Then:
\begin{enumerate}
 \item $T^o_L(x)$ is a directed tree in which each vertex has at most one outgoing edge.
 \item If $x\not\in\widetilde X^0$, then there exists a topological embedding $T^o_L(x)\times[-1,1]\rightarrow\widetilde X$ such that $T^o_L(x)\times\{0\}$ maps isomorphically to $T^o_L(x)$.
 \item If $L'\geq L$, then $T^o_L(x)\subseteq T^o_{L'}(x)$.
\end{enumerate}
\end{prop}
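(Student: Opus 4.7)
The plan is to address the three parts in turn, using the uniqueness of the forward flow $\tilde\phi$, the local trapezoidal structure of 2-cells, and for~(2) the hypothesis that $x\notin\widetilde X^0$.

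For~(1), I first note that all forward paths $\sigma_L(x_i)$ share the endpoint $x$, so their union is connected. At any $y\in T^o_L(x)\setminus\{x\}$, the sole outgoing edge is the midsegment $m_y$ terminating at $\tilde\phi(y)$, which is unique because $\tilde\phi$ is a function; this yields the out-degree bound. The projection $q:\widetilde X\to\mathbf R$ sends midsegments in $T^o_L(x)$ monotonically to edges of $\mathbf R$, ruling out directed cycles. Finally, whenever two forward paths meet at a common point $y$, functionality of $\tilde\phi$ forces them to agree on every midsegment from $y$ onward to $x$; so forward paths may merge but never split, and the tree structure follows.

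For~(2), since $\phi$ sends vertices to vertices and $x\notin\widetilde X^0$, none of the points $\tilde\phi^j(x_i)$ for $0\le j\le L$ lies in $\widetilde X^0$; in particular, every vertex of $T^o_L(x)$ lies in the interior of an edge of $\widetilde X$, and every midsegment of $T^o_L(x)$ is regular. I would construct the embedding by thickening locally: at each tree-vertex $y$, take a transverse arc $I_y$ of length $2\epsilon$ inside the ambient edge of $\widetilde X$; over each midsegment $m_y$, take a thin rectangle inside the trapezoidal 2-cell $R_e$ containing $m_y$, parallel to its vertical sides, whose ends match $I_y$ and $I_{\tilde\phi(y)}$. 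Rectangles inside a single 2-cell correspond to parallel midsegments and remain disjoint for small $\epsilon$; rectangles in distinct 2-cells meet only at shared tree-vertices, where they are glued along the single arc there. For $\epsilon$ smaller than the distance from each tree-vertex to the nearest 0-cell and the minimal transverse separation among midsegments sharing a 2-cell, this produces the desired embedding, restricting to the identity on $T^o_L(x)\times\{0\}$.

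For~(3), given $x_i\in(\tilde\phi_L)^{-1}(x)$, pick any $x'_k\in(\tilde\phi_{L'-L})^{-1}(x_i)$; then $\tilde\phi^{L'}(x'_k)=x$, so $x'_k$ is a leaf of $T^o_{L'}(x)$, and $\sigma_{L'}(x'_k)$ passes through $x_i$ after $L'-L$ midsegments and terminates at $x$ along exactly $\sigma_L(x_i)$. Thus $\sigma_L(x_i)\subseteq\sigma_{L'}(x'_k)\subseteq T^o_{L'}(x)$, and taking the union over $i$ gives the claim. The main obstacle I expect is~(2): the local thickenings across distinct 2-cells must be glued coherently at tree-vertices where several incoming midsegments meet a common outgoing one, and the hypothesis $x\notin\widetilde X^0$ is precisely what supplies a canonical transverse direction (the ambient edge of $\widetilde X$) at each such vertex.
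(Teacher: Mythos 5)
Your proof is correct and takes essentially the same approach as the paper's: out-degree $\leq 1$ together with the projection $q:\widetilde X\rightarrow\mathbf R$ for (1), a small product thickening exploiting the regularity (disjointness from $\widetilde X^0$) of the level for (2), and the factorization $\tilde\phi^{L'}=\tilde\phi^{L'-L}\circ\tilde\phi^{L}$ for (3). The only difference is organizational: in (2) the paper builds the embedding from bands $\sigma_i\times[-\epsilon,\epsilon]$ around whole maximal forward paths and then takes their union, whereas you assemble it from local pieces at tree-vertices and midsegments; both hinge on the same facts and on choosing a single sufficiently small $\epsilon$.
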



\begin{proof}
$T^o_L(x)$ is connected since it is the union of a collection of paths, each of which terminates at $x$.  Each vertex of $T^o_L(x)$ has at most one outgoing edge.  Hence any cycle in $T^o_L(x)$ is directed.  The map $q:T^o_L(x)\rightarrow\mathbf R$ thus shows that there are no cycles in $T^o_L(x)$.  This establishes assertion~$(1)$.

Let $x_i\in(\tilde\phi_L)^{-1}(x)$ and let $\sigma_i\subset T^o_L(x)$ be the forward path joining $x_i$ to $x$.  Then $\sigma_i$ is disjoint from $\widetilde X^0$, since $T^o_L(x)$ is regular.  Hence there exists $\epsilon_i>0$ such that $N(\sigma_i)$ contains an embedded copy of $\sigma_i\times[-\epsilon_i,\epsilon_i]$ with $\sigma_i\times\{0\}=\sigma_i$, which we denote by $F_i$.  Let $\epsilon=\min_i\epsilon_i$.  For each $i$, let $F'_i\subset F_i$ be $\sigma_i\times[-\epsilon,\epsilon]\subseteq\sigma_i\times[-\epsilon_i,\epsilon_i]$, and let $F=\cup_iF'_i$.  Since $\sigma_i\cap\sigma_j$ is a forward path for all $i,j$, the subspace $F\cong T^o_L(x)\times[-\epsilon,\epsilon]$.  See Figure~\ref{fig:product_neighborhoods}.

\begin{figure}[ht]
\includegraphics[width=0.6\textwidth]{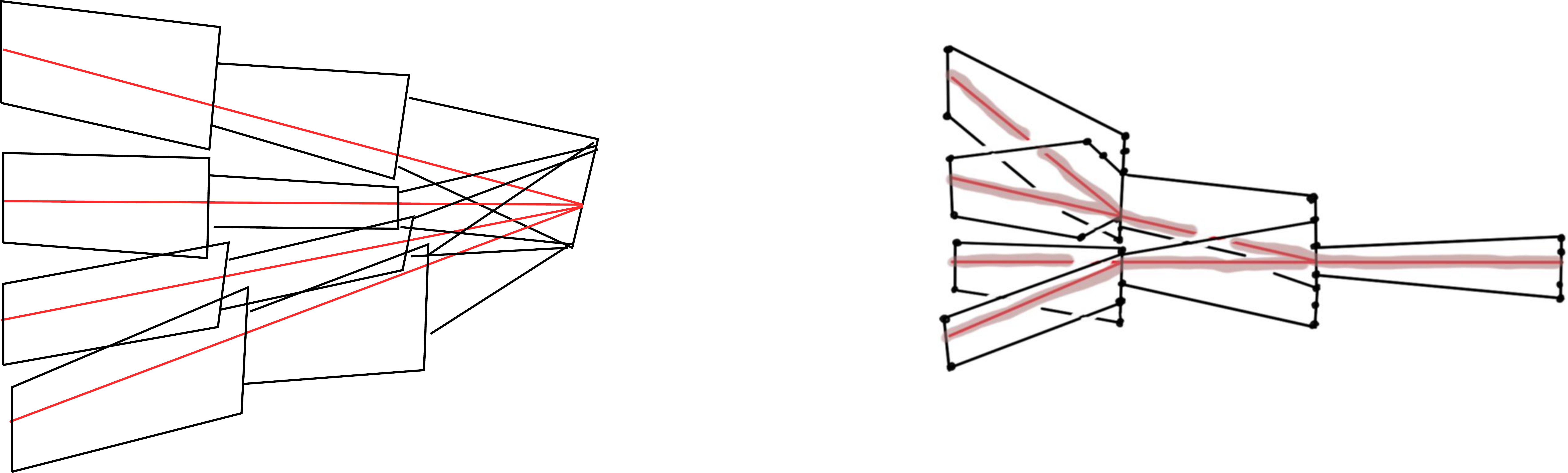}
\caption{The product neighborhood of a regular level in $\widetilde X$ is shown at right; the corresponding level in $\widetilde X_L$ appears at left.  In general, the product neighborhood may contain several subintervals of each vertical edge since $\phi$ is not in general an immersion on edges.}\label{fig:product_neighborhoods}
\end{figure}

Assertion~$(3)$ follows from the fact that $\tilde\phi^{L'}:\widetilde X\rightarrow\widetilde X$ factors as $\widetilde X\stackrel{\tilde\phi^{L}}{\longrightarrow}\widetilde X\stackrel{\tilde\phi^{L'-L}}{\longrightarrow}\widetilde X$.
\end{proof}

For each $L\geq 1$, forward paths and levels are defined in precisely the same way in $\widetilde X_L$.  A level of $\widetilde X_L$ is subdivided when we formed ${\widetilde X^\bullet}_L$ in Section~\ref{subsec:metrics_and_subdivisions}.  Accordingly, each length-$L$ level in ${\widetilde X^\bullet}_L$ is isomorphic to a star whose edges are subdivided into length-$L$ paths.  The map $\tilde\varrho_L:{\widetilde X^\bullet}_L\rightarrow\widetilde X$ sends each length-$n$ level of $\widetilde X_L$, each of whose maximal forward paths contains $nL$ midsegments of ${\widetilde X^\bullet}_L$, to a length-$nL$ level in $\widetilde X$.  Thus $\varrho_L$ maps subdivided stars to rooted trees, as shown in Figure~\ref{fig:product_neighborhoods}.

The image in $X_L$ of a level from ${\widetilde X^\bullet}_L$ is also referred to as a level; it will be clear from the context whether we are working in the base space or the universal cover.

The following observation about forward ladders is required in several places in Section~\ref{sec:quasiconvexity} and Section~\ref{sec:cutting_geodesics}.

\begin{lem}\label{lem:vert_horiz_bound}
Let $\sigma$ be a forward path.  Then for each $R\geq 0$, there exists $\Theta_R\geq 0$, independent of $\sigma$ and $n$, such that $\diam(\neb_R(N(\sigma)^1)\cap\neb_R(\widetilde V_n))\leq\Theta_R$ for all $n\in\integers$.
\end{lem}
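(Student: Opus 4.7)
The plan is to exploit the height projection $q\colon\widetilde X\to\mathbf R$, which is $1$-Lipschitz from $(\widetilde X^1,\dist)$ to $\mathbf R$ because horizontal edges carry unit weight and map isometrically onto edges of $\mathbf R$, while vertical edges collapse to vertices.  Since $\widetilde V_n\subseteq q^{-1}(n)$, any point $p\in\neb_R(\widetilde V_n)$ satisfies $|q(p)-n|\leq R$.  Thus, given $p,p'\in\neb_R(N(\sigma)^1)\cap\neb_R(\widetilde V_n)$, I choose $a,a'\in N(\sigma)^1$ with $\dist(p,a),\dist(p',a')\leq R$, which forces $q(a),q(a')\in[n-2R,n+2R]$.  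Since $\dist(p,p')\leq 2R+\dist(a,a')$, it suffices to bound $\dist(a,a')$ in terms of $R$ alone.

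Next I would exploit the explicit structure of $N(\sigma)$.  Writing $\sigma=m_{y_0}m_{y_1}\cdots m_{y_{M-1}}$ with $y_i\in\widetilde V_{n_0+i}$, the ladder $N(\sigma)$ is the union of $2$-cells $R_0,\dots,R_{M-1}$, where $R_i$ is the unique $2$-cell containing $m_{y_i}$ and satisfies $q(R_i)=[n_0+i,n_0+i+1]$.  Any $a\in N(\sigma)^1$ with $q(a)\in[n-2R,n+2R]$ lies on the boundary of some $R_i$ with $n_0+i\in[n-2R-1,n+2R]$, so at most $4R+2$ of the $R_i$ are relevant.  Moreover, consecutive $2$-cells $R_i$ and $R_{i+1}$ share a vertical edge, because the vertical edge of $R_{i+1}$ containing $y_{i+1}=\tilde\phi(y_i)$ is one of the edges of the combinatorial path $\tilde\phi(e_i)$ forming the upper boundary of $R_i$.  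Hence the union of the boundaries of the relevant $R_i$ is a connected subgraph $Y\subseteq N(\sigma)^1$ that contains both $a$ and $a'$.

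Because $V$ is finite and $\phi$ is fixed, the boundary path $t_b^{-1}e^{-1}t_a\tilde\phi(e)$ of any $2$-cell $R_e$ has weighted length bounded above by a uniform constant $C_0$ depending only on $\max_f|f|$ and on $\max_e|\tilde\phi(e)|$.  Consequently $Y$ has weighted diameter at most $(4R+2)C_0$, giving $\dist(a,a')\leq (4R+2)C_0$ and hence $\dist(p,p')\leq 2R+(4R+2)C_0=:\Theta_R$, which depends only on $R$.  The only mild subtlety I anticipate is cleanly verifying that consecutive $2$-cells of the ladder share an edge in $\widetilde X^1$, so that $Y$ is genuinely connected; this is immediate from the recursion $y_{i+1}=\tilde\phi(y_i)$ together with the fact that $e_{i+1}$ is an edge of $\tilde\phi(e_i)$.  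Notably, no hyperbolicity is required here, only the product-like horizontal structure of $\widetilde X$ together with finiteness of $V$.
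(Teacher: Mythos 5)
Your argument is correct and is essentially the paper's (one-line) proof made explicit: both rest on the observation that the $1$-Lipschitz projection $q$ sends $\widetilde V_n$ to the point $n$ and $N(\sigma)$ onto an interval, so the intersection is confined to a block of at most $4R+2$ consecutive $2$-cells of the ladder, each with uniformly bounded boundary length. The only cosmetic caveat is that when some $y_{i+1}=\tilde\phi(y_i)$ is a vertex, the boundaries $\partial R_i$ and $\partial R_{i+1}$ may share only the point $y_{i+1}$ rather than a full vertical edge, but the connectedness of $Y$ --- which is all your argument needs --- still holds.
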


\begin{proof}
This follows from the fact that $q(\widetilde V_n)=n$, while the image of $q|_{N(\sigma)}$ is an interval, each of whose points has uniformly bounded preimage in $N(\sigma)^1$.
\end{proof}

\section{Immersed walls, walls, and approximations}\label{sec:immersed_walls}
In this section, we will describe immersed walls $W\rightarrow X$, which are determined by two parameters.  The first parameter is a collection $\{d_i\}$ of subintervals of edges in $V$, called \emph{primary busts}.  The second parameter is an integer $L\geq 1$ called the \emph{tunnel length}.  The graph $W$ consists of $V-\cup_id_i-\cup_i(\phi^L)^{-1}(d_i)$ together with a collection of rooted trees called \emph{tunnels}, and is immersed in $X_L$.  We shall show that when $L$ is sufficiently large, $W\rightarrow X$ corresponds to a quasiconvex codimension-1 subgroup of $G$.

\subsection{Primary busts}\label{subsec:bust_choose}
Let $\{e_1,\ldots,e_k\}$ be edges of $V\subset X$.  For each $i$, let $e'_i$ be the image of $e_i$ under the isomorphism $V\rightarrow E$ given by $(x,0)\mapsto(x,\frac{1}{2})$.  The subspaces $e'_i$, regarded as edges of $E$, are \emph{primary busted edges}.  We will choose closed nontrivial intervals $d'_i\subset\interior{e'_i}$, whose distinct endpoints we denote by $p_i^{\pm}$.  The corresponding subinterval of $e_i$ is denoted $d_i$, and its endpoints $q_i^{\pm}$ correspond to $p_i^{\pm}$.  Let $E^{\flat}=E-\cup_{i=1}^k\interior{d'_i}$ and let $V^{\flat}$ denote its preimage in $V$ under the above isomorphism $V\rightarrow E$.  The subspace $E^{\flat}$ is the \emph{primary busted space}, and each $d_i$ (or $d'_i$) is a \emph{primary bust}; $\interior{d_i}$ (or $\interior{d'_i}$) is an \emph{open primary bust}.

Let $C$ be a component of $V^{\flat}$ and let $\widetilde C$ be a lift of its universal cover to some $\widetilde V_n$.  Since $C\hookrightarrow V\hookrightarrow X$ is $\pi_1$-injective, $\widetilde C$ embeds in $\widetilde V_n$.  Its parallel copy $\widetilde C'\subset\widetilde E_n$ is a \emph{primary nucleus}, and likewise, each component of $E^{\flat}$ is a \emph{primary nucleus} in $X$.

\begin{rem}[Quasiconvexity of $\widetilde C$ under various conditions]\label{rem:nucleus_subgroup_conditions}
In our applications, we will require $\widetilde C$ to be quasiconvex in $\widetilde X^1$.  This is achievable in several ways.  Clearly, if $\{e_i\}$ contains enough edges that $E^{\flat}$ is a forest, then the subspaces $\widetilde C\subset\widetilde X^1$ are finite trees and therefore quasiconvex.

Quasiconvexity of $\widetilde C$ occurs under other circumstances.  For example, suppose that $\Phi:F\rightarrow F$ is an automorphism and $\phi$ is a train track map that is \emph{aperiodic} in the sense of~\cite{MitraQuasiconvex}, i.e. $\phi^n(e)$ traverses $f$ for all edges $e,f$ and all sufficiently large $n$.  Then, provided $\{e_i\}$ contains at least one edge corresponding to a nontrivial splitting of $F$, the following theorem of Mitra (see~\cite[Prop.~3.4]{MitraQuasiconvex}), which is an analog of a result of Scott and Swarup~\cite{Scott_Swarup}, ensures that each $\widetilde C$ is quasiconvex:

\begin{thm}\label{thm:mitra}
Let $\Phi:F\rightarrow F$ be an aperiodic automorphism of the finite-rank free group $F$.  If $H\leq F$ is a finitely generated, infinite-index subgroup, then $H$ is quasiconvex in $F\rtimes_{\Phi}\integers$.
\end{thm}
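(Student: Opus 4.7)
The plan is to exploit that $G$ is hyperbolic, reducing quasiconvexity of $H$ to showing that the inclusion $H\hookrightarrow G$ is a quasi-isometric embedding. I would build an $H$-invariant ``ladder'' in $\widetilde X$ and show it is quasiconvex in $\widetilde X^1$.

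Concretely, let $\hat V\to V$ be the cover with $\pi_1\hat V\cong H$. Finite-generation of $H$ gives a finite core subgraph $\mathbf K\subset\hat V$ carrying $H$, and $[F:H]=\infty$ forces $\hat V\setminus\mathbf K$ to consist of infinite cusp trees. Lift $\hat V$ to a connected $H$-invariant subgraph $\mathbf L_0\subset\widetilde V_0$ and define the ladder $\mathbf L=\bigcup_{n\in\integers}\tilde\phi^n(\mathbf L_0)\subset\widetilde X$. Since $H$ acts geometrically on $\mathbf L_0$, it is enough to show that $\mathbf L_0$ is quasiconvex in $\widetilde X^1$. For this, fix $h_1,h_2\in H$ and a geodesic $\gamma\subset\widetilde X^1$ between them, then decompose $\gamma$ according to its height profile $q\circ\gamma$ and bound each ``arch'' of $\gamma$ separately.

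Two ingredients then combine. The Bestvina--Feighn--Brinkmann flaring property, available because $G$ is hyperbolic, says that distinct forward paths diverge exponentially in the flow parameter, so any excursion of $\gamma$ up to level $|n|$ carries an exponential (in $n$) length cost; this reduces the problem to controlling what happens near each individual level. Aperiodicity of $\phi$ then ensures that after finitely many flow steps each image $\tilde\phi^n(\mathbf L_0)$ spreads uniformly across $\widetilde V_n$: via the Perron--Frobenius behaviour of the irreducible transition matrix $\mathfrak M$, every edge of $V$ appears in $\phi^m(e)$ for every edge $e$ and all large $m$, so the combinatorial structure of $\mathbf L_0$---in particular its finite core and the infinite cusps---persists under $\tilde\phi^m$ up to uniform constants.

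The main obstacle is quantitatively ruling out cusp shortcuts: \emph{a priori} $\gamma$ could rise to level $n$, travel cheaply through a cusp of $\hat V\times\{n\}$ where there are no $H$-relations to slow it down, and descend again, producing a genuine shortcut between $h_1$ and $h_2$. Showing that the exponential flaring cost of rising to level $|n|$ dominates any savings available through the cusps, uniformly in $n$, is exactly where aperiodicity---rather than mere irreducibility---is essential: aperiodicity forces the $\tilde\phi^{km}$-image of any cusp edge to intersect every $H$-orbit within a uniformly bounded number of flow steps, so a geodesic that attempts such a detour is compelled to return to $\mathbf L_0$ with horizontal length comparable to the cusp length it traversed. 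This gives the quasi-isometric lower bound $\dist_H(h_1,h_2)\leq C\cdot\dist_G(h_1,h_2)+C$ and hence the desired quasiconvexity.
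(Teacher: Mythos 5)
This statement is not proved in the paper at all: it is imported verbatim from Mitra~\cite{MitraQuasiconvex} (cited there as an analogue of Scott--Swarup), so there is no in-paper argument to compare against, and your sketch must stand on its own. It does not, because its central quantitative claim is backwards. You assert that ``any excursion of $\gamma$ up to level $|n|$ carries an exponential (in $n$) length cost.'' In fact travelling from $\widetilde V_0$ to $\widetilde V_n$ and back costs only $2|n|$ horizontal edges; what is exponential in $n$ is the \emph{horizontal compression} such an excursion can buy, since the flaring property only guarantees exponential divergence of a pair of flowlines in at least one of the two flow directions. This is precisely why the fiber $F=\pi_1V$ itself -- a normal subgroup of infinite index -- is exponentially \emph{distorted} in $G$: geodesics of $\widetilde X^1$ between points of $\widetilde V_0$ pay a linear vertical toll to reap an exponential horizontal saving. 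So the inequality you need points the wrong way, and nothing in your sketch prevents a geodesic between $h_1,h_2\in H$ from compressing through high or low levels. The sentence meant to plug this hole -- that aperiodicity forces images of cusp edges to ``intersect every $H$-orbit within a uniformly bounded number of flow steps'' -- is not a well-formed geometric statement (the images live in $\widetilde V_n$, not near $H\cdot\tilde v$), and no argument is offered for it.

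The missing idea is the actual content of Mitra's theorem. His proof does not estimate geodesics directly; it shows (via the Cannon--Thurston/hyperbolic-ladder machinery and a Scott--Swarup-type analysis) that a finitely generated subgroup $H\leq F$ fails to be quasiconvex in $G$ only if $H$ carries a leaf of the attracting lamination of $\Phi$, equivalently only if $H$ virtually contains a conjugate of a free factor whose conjugacy class is $\Phi^N$-periodic. Aperiodicity (primitivity of the transition matrix) together with $[F:H]=\infty$ then rules this out, since the leaves of the lamination of an aperiodic automorphism cannot be carried by a finitely generated infinite-index subgroup. Your proposal contains no substitute for this carrying criterion, so the gap is essential rather than a matter of missing constants.
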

\end{rem}

\subsection{Constructing immersed walls}\label{subsec:immersed_wall_construction}
We now assume that $\widetilde X^1$ is $\delta$-hyperbolic.  Let $L\geq 1$ be an integer, called the \emph{tunnel length}.  For any set $\{d_i\}$ of nontrivial primary busts, the spaces $E^{\flat}$ and $V^{\flat}$ embed in $X_L$ by maps factoring through $E\hookrightarrow X_L$ and $V\hookrightarrow X_L$ respectively.  For each $i$, let $\{d_{ij}\}_{j}$ denote the finite set of components of $(\phi^L)^{-1}(d_i)$.  For each $i,j$, let $d'_{ij}$ be the parallel copy of $d_{ij}$ in $E$.  Each $d_{ij}$ or $d_{ij}'$ is a \emph{secondary bust}.  In order to choose busts, we will assume that each edge $e$ of $V$ is \emph{expanding} in the sense that $\phi^k(e)\neq e$ for all $k>0$.  This assumption is justified by the following lemma (see also~\cite{BestvinaHandel}).

\begin{defn}
We say $x\in V$ is \emph{periodic} if $\phi^n(x)=x$ for some $n\geq 1$.  A point $x\in V$ has \emph{period~$m$} if $\phi^m(x)=x$ and $\phi^k(x)\neq x$ for $0<k<m$.  We then refer to $x$ as being \emph{$m$-periodic}.

A forward path $\sigma\rightarrow\widetilde X$ is \emph{periodic} if it is a subpath of a bi-infinite forward path whose stabilizer in $G$ is nontrivial.  Note that this holds exactly when each point of $\widetilde X^1\cap\sigma$ projects to a periodic point of $V$.
\end{defn}

Recall that the map $\phi:V\rightarrow V$ is \emph{irreducible} if for all edges $e,f$, there exists $n\geq0$ such that $\phi^n(e)$ traverses $f$.

\begin{lem}\label{lem:edges_expanding}
Let $F\rtimes_{\Phi}\integers$ be hyperbolic.  Then $\Phi:F\rightarrow F$ can be represented by a map $\phi:V\rightarrow V$ with respect to which each edge of $V$ is expanding and no edge is mapped to a point.  Moreover, if $\Phi$ has an irreducible train track representative, then $\phi:V\rightarrow V$ can be chosen to be an irreducible train track map with respect to which each edge is expanding.
\end{lem}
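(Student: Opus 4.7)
The plan is to handle the two halves of the lemma somewhat differently. For the first, I would clean up an arbitrary representative in two stages: iteratively eliminate any collapsed edge, then use atoroidality of $\Phi$ (equivalent to hyperbolicity of $G$) to forbid periodic edges. The moreover part admits a direct Perron--Frobenius argument starting from the given irreducible train track representative.

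I would begin with any topological representative $\phi_0:V_0\rightarrow V_0$ of $\Phi$, which exists by the remark following Definition~\ref{defn:train_track_map}. If some edge $e$ of $V_0$ has $\phi_0(e)$ a single vertex, then $e$ cannot be a loop: a loop would represent a nontrivial element of $\pi_1 V_0\cong F$ lying in $\ker\Phi$, contradicting the injectivity of $\Phi$. So $e$ joins distinct vertices, and I may collapse $e$ in both the source and target of $\phi_0$, producing a representative of $\Phi$ on a graph with one fewer edge. Iterating terminates at a representative $\phi:V\rightarrow V$ with no edge collapsed. To show every edge of $\phi$ is expanding, I would suppose $\phi^k(e)=e$ for some $k\geq 1$. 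When $e$ is a loop, it represents a nontrivial element of $F$ whose $\Phi^k$-conjugacy class is fixed, contradicting the atoroidality of $\Phi$ which is equivalent to hyperbolicity of $G$ by~\cite{BestvinaFeighnCombination,Brinkmann}. When $e$ is non-loop, a further modification---subdividing $e$ at a $\phi^k$-fixed interior point and folding, or reducing to the loop case via the splitting of $F$ induced by a bridge---is required. I expect this non-loop subtlety to be the main technical obstacle, since a periodic non-loop edge does not by itself produce an invariant conjugacy class in $F$, so one must use the graph structure more carefully to extract one.

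For the moreover part, I would suppose $\Phi$ has an irreducible train track representative $\phi:V\rightarrow V$. Since $G$ is hyperbolic, $F$ has rank at least $2$, so $V$ has more than one edge, and irreducibility of $\mathfrak M$ then forbids any zero row, ruling out collapsed edges. The Perron--Frobenius eigenvalue $\lambda$ of $\mathfrak M$ must exceed $1$: otherwise $\mathfrak M$ would be a permutation matrix, making $\Phi$ a finite-order automorphism and $G$ virtually abelian, contradicting hyperbolicity. If $\phi^k(e)=e$ for some $k\geq 1$, then the $e$-th row of $\mathfrak M^{nk}$ equals the standard basis vector $\delta_e$ for every $n$, whereas Perron--Frobenius forces its row sum to grow like $\lambda^{nk}\to\infty$. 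This contradiction shows every edge is expanding, so the given $\phi$ is already the desired representative.
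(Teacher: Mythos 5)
The ``moreover'' half of your argument is correct and is in fact more direct than the paper's: the paper reruns its collapsing procedure and then verifies that collapsing an invariant forest preserves both the train track property and irreducibility, whereas you observe that an irreducible train track representative of an infinite-order (atoroidal) $\Phi$ already has Perron--Frobenius eigenvalue $\varpi>1$, so the $e$-row sum of $\mathfrak M^{nk}$ grows like $\varpi^{nk}$ and cannot remain equal to $1$; hence no edge is periodic, and irreducibility already rules out collapsed edges. (One cosmetic slip: a permutation transition matrix makes $G$ virtually $F\times\integers$, not virtually abelian, but this still contains $\integers^2$ and contradicts hyperbolicity.)

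The first half has a genuine gap exactly where you flag one: the periodic non-loop edge. The problem is not just technical --- the contradiction you hope to extract does not exist. Representatives of atoroidal automorphisms with $\phi^k(e)=e$ for a non-loop edge $e$ genuinely occur (blow up a vertex of a rose representative into a $\phi$-invariant edge), so such an edge cannot be argued away; it must be removed by modifying the representative. This is what the paper does: let $U$ be the union of all vertices and all closed edges $e$ with $|\phi^k(e)|$ bounded as $k\rightarrow\infty$. Then $\phi(U)\subseteq U$, and $\pi_1$-injectivity together with hyperbolicity (no immersed torus) forces each component of $U$ to be contractible --- your loop-edge observation is precisely the special case showing a component of $U$ cannot carry an essential loop. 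One then collapses the invariant forest $U$ as in Bestvina--Handel, strictly decreasing the number of edges unless $U$ was already edge-free, and iterates. Note also that collapsing only the literally periodic edges in one pass is not enough: if $\phi(e)=f$ and $\phi(f)=f$, then $e$ becomes point-image only after $f$ is collapsed, which is why the paper collapses the entire bounded-growth subgraph at once and repeats. So your outline needs this invariant-forest collapse supplied to close the non-loop case; as written, the first assertion of the lemma is not proved.
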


\begin{proof}
We begin with a representative $\phi:V\rightarrow V$, which we will adjust by contracting subtrees of $V$.  Let $U\subset V$ be the union of all vertices and all closed edges $e$ such that $|\phi^k(e)|$ is bounded as $k\rightarrow\infty$.  First, note that $\phi(U)\subseteq U$.  Second, each component of $U$ is contractible, since otherwise either $\phi$ is not $\pi_1$-injective or $X$ would contain an immersed torus, contradicting hyperbolicity.  We now collapse the $\phi$-invariant forest $U$ as in~\cite[Page~7]{BestvinaHandel}, resulting in a graph $\overline V$ and a map $\bar\phi:\overline V\rightarrow\overline V$ representing $\Phi$ (by reparametrizing, we can assume that the restriction of $\bar\phi$ to each edge is a combinatorial path).  Note that either $U$ contained no edges (so all edges were expanding and did not map to points), or $\overline V$ has strictly fewer edges than $V$.  We repeat the above procedure finitely many times to obtain a graph $\overline V$ and a map $\bar\phi:\overline V\rightarrow\overline V$ such that edges map to nontrivial paths and all edges are expanding.

The collapse of $U$ preserves the property of being a train track map.  Indeed, let $\bar e$ be an edge of $\overline V=V/U$ that is the image of an edge $e$ of $V$.  Let $n>0$, and consider the restriction of $\bar\phi^n$ to $\bar e$.  The path $\bar\phi^n(\bar e)$ is obtained from the immersed path $\phi^n(e)$ by collapsing each edge that maps to $U$.  Let $\bar u,\bar v$ be consecutive edges of $\bar\phi^n(\bar e)$ that fold.  Then there is a subpath $u^{-1}fv\subset\phi^n(e)$, where $u\mapsto\bar u,v\mapsto\bar v$ and $f$ is an immersed path in $U$.  Observe that $f$ is a closed path since $u,v$ have the same initial point.  This contradicts the fact that $U$ is a forest.

Finally, the property of irreducibility is preserved by collapsing invariant forests.  Indeed, let $\bar e,\bar f$ be edges of $\overline V$ that are images of edges $e,f$ of $V$.  Then by irreducibility of $\phi$, there exists $m>0$ such that $\phi^m(e)$ passes through $f$, and hence $\bar\phi^m(\bar e)$ passes through $\bar f$.
\end{proof}

A  point $y\in V$ is \emph{singular} if $\phi^k(y)\in V^0$ for some $k$.

\begin{lem}\label{lem:choosing_busts_1}
Let $L\in\naturals$.  Let $\{e_i\}_{i=1}^k$ be a set of expanding edges in $V$, let $x_i\in\interior{e_i}$ for each $i$, and let $\epsilon>0$.  Then there exists a collection $\{d_i\}_{i=1}^k$ of closed subintervals, with each $d_i\subset\interior{e_i}$, such that:
\begin{enumerate}
 \item \label{item:disjoint_from_preimage}$\cup_id_i$ is disjoint from $\cup_{ij}d_{ij}$.
 \item \label{item:neighborhood}$\cup_jd_{ij}$ lies in the $\epsilon$-neighborhood of $(\phi^L)^{-1}(x_i)$ for each $i$.
 \item \label{item:regular_endpoints}The endpoints $p_i^{\pm}$ of $d_i$ are nonsingular.
 \item \label{item:anywhere}If $x_i$ is nonsingular and $\phi^L(x_i)\neq x_i$ then we can choose $d_i$ 
 such that $x_i\in\{p_i^{\pm}\}$ is an endpoint of $d_i$.
 \item \label{item:bustsembed}$\phi^L$ restricts to an embedding on $d_i$, for each $i$.
 \item \label{item:disjoint}Suppose that $\phi^L(x_i)\neq\phi^L(x_j)$ for all $i\neq j$.  Then $\phi^L(d_i)\cap\phi^L(d_j)=\emptyset$.
\end{enumerate}

\end{lem}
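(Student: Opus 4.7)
The construction proceeds by a shrinking and continuity argument exploiting the piecewise linear structure of $\phi^L$ on each edge. Since $\phi$ sends edges to combinatorial paths, $\phi^L$ is piecewise linear on $e_i$: there is a finite subdivision set $S_i\subset e_i$ such that on each component of $e_i\setminus S_i$, the map $\phi^L$ restricts to a linear homeomorphism onto a subinterval of an edge of $V$. We will always arrange $d_i$ to lie inside a single such linear piece, which immediately gives (\ref{item:bustsembed}). The preimage set $(\phi^L)^{-1}(x_i)$ is finite, with at most one preimage in each linear piece of $\phi^L$, and on each such piece $\phi^L$ admits a local linear inverse.

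For any prescribed $\epsilon>0$, shrinking the diameter of $d_i$ forces each preimage component $d_{ij}$ to sit in the $\epsilon$-neighborhood of a point of $(\phi^L)^{-1}(x_i)$, establishing (\ref{item:neighborhood}). A symmetric continuity argument shows $\phi^L(d_i)$ is contained in an arbitrarily small neighborhood of $\phi^L(x_i)$, so whenever $\phi^L(x_i)\neq\phi^L(x_j)$, shrinking $d_i$ and $d_j$ produces disjoint $\phi^L$-images, which is (\ref{item:disjoint}). For (\ref{item:regular_endpoints}), we use that the singular set $\bigcup_{k\geq 0}\phi^{-k}(V^0)\cap e_i$ is countable and hence has dense complement in $e_i$; a small perturbation of the endpoints of $d_i$ into this dense complement yields nonsingular endpoints without disturbing (\ref{item:bustsembed}), (\ref{item:neighborhood}), or (\ref{item:disjoint}).

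The principal step is (\ref{item:disjoint_from_preimage}). A conflict can arise only when $\phi^L(x_i)=x_j$ for some pair $i,j$ (possibly $i=j$), since otherwise the preimage components of the $d_j$'s are bounded away from the $x_i$'s and disjointness follows purely by shrinking. When $\phi^L(x_i)=x_j$, within the linear piece of $e_i$ containing $x_i$, the preimage component of $d_j$ near $x_i$ is a short interval whose location is determined by the sign of the local slope of $\phi^L$ at $x_i$ and by the side of $x_j$ on which $d_j$ lies; we then choose $d_i$ on the opposite side of $x_i$, still inside the same linear piece. Iteratively, we first fix tentative $d_j$ for each $j$, then for each $i$ with $\phi^L(x_i)\in\{x_1,\ldots,x_k\}$ we shrink $d_i$ to lie on the appropriate side of $x_i$ and hence disjoint from every now-determined preimage component.

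When $x_i$ is nonsingular and $\phi^L(x_i)\neq x_i$, this side-selection is compatible with placing $x_i$ as an endpoint of $d_i$: we simply choose $d_i$ to abut $x_i$ on the appropriate side within its linear piece, and nonsingularity of $x_i$ makes this endpoint legitimate for (\ref{item:regular_endpoints}), yielding (\ref{item:anywhere}). The main obstacle is the coordination required in (\ref{item:disjoint_from_preimage}): we must simultaneously arrange side-selections across all indices that are related by $\phi^L$. Finiteness of $\{x_1,\ldots,x_k\}$ and of the number of linear pieces in each $e_i$ ensures this terminates in finitely many shrinkings, producing a collection $\{d_i\}$ satisfying all six properties.
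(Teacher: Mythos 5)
Your overall framework (piecewise linearity of $\phi^L$ on $e_i$, shrinking plus upper semicontinuity of preimages for~(\ref{item:neighborhood}) and~(\ref{item:disjoint}), countability of the singular set for~(\ref{item:regular_endpoints}), a single linear piece for~(\ref{item:bustsembed}), and induction/finiteness for the cross-terms) matches the paper's. The gap is exactly where you locate the ``principal step'': your side-selection argument for~(\ref{item:disjoint_from_preimage}) breaks down in the self-referential case $\phi^L(x_i)=x_i$ (and, similarly, on cycles such as $\phi^L(x_i)=x_j$, $\phi^L(x_j)=x_i$). If $x_i$ is a fixed point of $\phi^L$ at which the local slope is positive, then whichever side of $x_i$ you place $d_i$ on, the component of $(\phi^L)^{-1}(d_i)$ passing through the fixed-point branch lies on the \emph{same} side of $x_i$; ``choose $d_i$ on the opposite side'' is then circular and has no solution, and finiteness of the index set does not rescue the iteration, since moving $d_i$ moves its own preimage. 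Note also that your argument never uses the hypothesis that the $e_i$ are expanding, which is a sign something is missing: that hypothesis is precisely what controls this case.

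The paper resolves it differently. It first shows, using the expanding hypothesis together with Brouwer's fixed point theorem applied to the finitely many subintervals of $e_i\cap(\phi^L)^{-1}(e_i)$ mapping homeomorphically over $e_i$, that the set $\mathcal S$ of $\phi^L$-fixed points in $e_i$ is finite. It then centers the bust at a point $z_i\notin\mathcal S$ (chosen close enough to the relevant preimage to preserve~(\ref{item:neighborhood})) and observes that any non-fixed point admits a small closed interval $h_i$ with $h_i\cap(\phi^L)^{-1}(h_i)=\emptyset$ --- otherwise a sequence of points converging to $z_i$ would have $\phi^L$-images converging to $z_i$, forcing $z_i\in\mathcal S$. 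Taking $d_i\subset h_i$ kills the self-intersection without any side bookkeeping. To repair your proof you would need either this perturbation-off-the-fixed-point-set argument, or an explicit use of the fact that the slope at such a fixed point has magnitude $>1$ (which again comes from the expanding hypothesis) to place $d_i=[x_i+s,x_i+t]$ with $t<|m|s$ so that it clears its own preimage. As written, the proof does not go through.
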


\begin{proof}
We first establish the finiteness of the set $\mathcal S$ consisting of points $s\in e_i$ such that $\phi^L(s)=s$.  Each component $b$ of $e_i\cap(\phi^L)^{-1}(e_i)$ is the concatenation of one or more subintervals of $e_i$, each of which maps homeomorphically to $e_i$.  Since $e_i$ is expanding, Brouwer's fixed point theorem implies that each such subinterval contains a unique point $s$ with $\phi^L(s)=s$.  As there are finitely many such $b$, we conclude that $\mathcal S$ is finite.

Let $z_i \in \interior{e_i} -\mathcal S$. There exists a nonempty closed interval $h_i$ containing $z_i$
such that $h_i\cap (\phi^L)^{-1}(h_i) =\emptyset$.
Indeed, if $h_i\cap (\phi^L)^{-1}(h_i) \neq \emptyset$ for each closed interval containing $z_i$ then
there would be a sequence of points converging to $z_i$  whose $\phi^L$-images also converge to $z_i$,
and so $z_i\in S$.
Property~(1) holds whenever $d_i\subset h_i$.

By continuity of $\phi^L$, there exists $\delta>0$ such that $\neb_\delta((\phi^L)^{-1}(x_i)) \subset (\phi^L)^{-1}(\neb_\epsilon(x_i))$.
Property~(2) holds by choosing $z_i\in\neb_\delta((\phi^L)^{-1}(x_i))$ and letting $d_i$ be a nontrivial component of $h_i\cap\closure{\neb_\delta((\phi^L)^{-1}(x_i))}$.
As there are countably many singular points, Property~(3) holds since we can assume that neither endpoint of  $h_i$ is singular.
Property~(4) holds by letting $z_i=x_i$, and then choosing  $h_i$ above so that it has $x_i$ as an endpoint.

To prove~(5), note that $e_i$ has a subdivision where the vertices are points of $(\phi^L)^{-1}(V^0)$.  If $d_i$ is properly contained in a single closed edge in this subdivision, then $\phi^L$ restricts to an embedding on $d_i$.  This can be arranged by choosing $d_i$ sufficiently small (fixing $x_i$).

We prove~(6) by induction on $|\{e_i\}|$.  The base case, where $k=0$, is vacuous.  Suppose that $d_1,\ldots,d_{k-1}$ have been chosen to satisfy~(1)-(6), with each $d_i$ satisfying $x_k\not\in\phi^L(d_i)$ for $1\leq i<k$.  Choose $d_k$ with properties~(1)-(5) small enough to avoid the finitely many $\phi^L(d_i),\,1\leq i<k$.
\end{proof}

Clearly $d_i\cap d_{i'}=\emptyset$ for $i\neq i'$, since $d_i,d_{i'}$ are contained in distinct open edges. Consequently $d_{ij}\cap d_{i'j'}=\emptyset$ unless $i=i'$ and $j=j'$.

The subspace $\mathbf N$ of $E^{\flat}$ obtained by removing the image under $V\stackrel{\sim}{\rightarrow}E$ of each open secondary bust is the \emph{nucleus}.  Observe that $\mathbf N$ need not be connected.  For each $i,j$, let $q_{ij}^{\pm}$ be the endpoints of $d_{ij}$, which map to $q_i^{\pm}\in V$, and let $p^{\pm}_{ij}$ be the corresponding points of $d'_{ij}\subset E$.  See Figure~\ref{fig:nucleus_attaching_points}.

\begin{figure}[ht]
\includegraphics[width=0.6\textwidth]{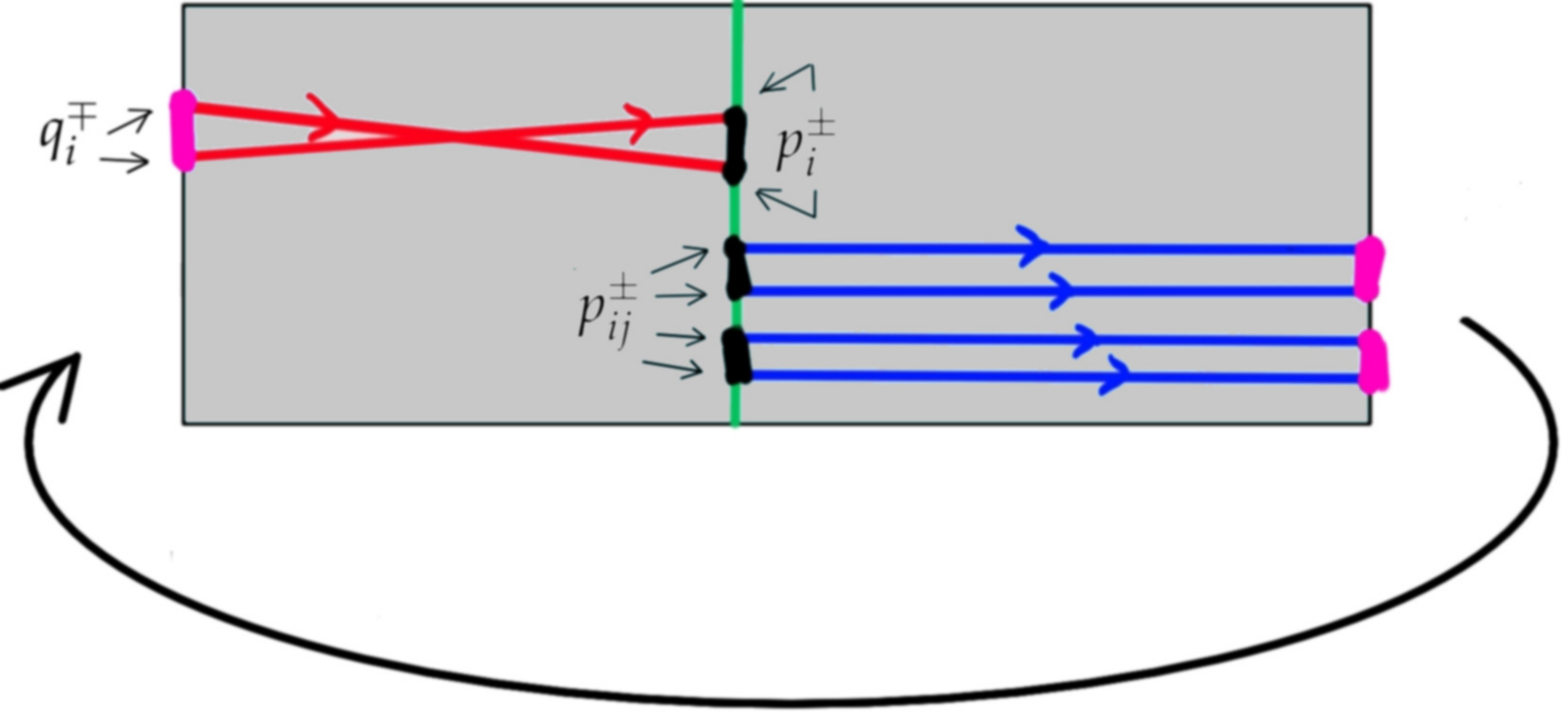}
\caption{Constructing a wall in $X_L$.}\label{fig:nucleus_attaching_points}
\end{figure}

For each $i$, let $\check T_{i}^{o\pm}$ be the image in $X_L$ of the level $T_L^o(\tilde q_i^{\pm})\subset\widetilde X_L$, where $\tilde q_i^{\pm}$ is an arbitrary lift of $q_i^{\pm}\in d_i$. Recall that $\check T_i^{o\pm}$ is an embedded star of length $L$ rooted at $q_i^{\pm}$ with leaves at the various $q_{ij}^{\pm}$.  Let $S_i^{\pm}$ be a segment in the 2-cell $R_{e_i}$ of $X_L$ that joins $q_i^{\pm}$ to $p_i^{\mp}\in E$.  (Note that $S_i^+$ joins $p_i^+$ to $q_i^-$ and $S_i^-$ joins $p_i^-$ to $q_i^+$.)  The arcs $S_i^{\pm}$ are \emph{slopes}.  The \emph{level-part} $T_i^{o\pm}$ is the rooted subtree of $\check T_i^{o\pm}$ with leaves at $p_{ij}^{\pm}$.  The subspace $T_i^{\pm}=T_i^{o\pm}\cup S_i^{\pm}$ obtained by joining the level-part $T_i^{\pm}$ and the slope along the common point $q_i^{\pm}$ is a \emph{tunnel}.  The space $\widehat W^{\bullet}$ determined by the primary busts $\{d_i\}$ and the tunnel length $L$ is the graph obtained by joining each tunnel $T_i^{\pm}$ to $\mathbf N$ along $\{p_{ij}^{\pm}\}\cup\{p_i^{\mp}\}$.  The inclusion $\mathbf N\hookrightarrow X_L$ and the inclusions $T_i^{\pm}\hookrightarrow X_L$ induce a (non-combinatorial) immersion $\widehat W^\bullet\rightarrow X_L$.  Note that $T^{\pm}_i\cap T^{\pm}_j=\emptyset$ when $i\neq j$ since $d_i\cap d_j=\emptyset$. Note that for each $i$, the tunnels $T_i^+$ and $T_i^-$ intersect in the single point $S_i^+\cap S_i^-$.  Composing with the map $X_L\rightarrow X$ gives an immersion $\widehat W^{\bullet}\rightarrow X$. This extends to a local homeomorphism $\widehat W^{\bullet}\times[-1,1]\rightarrow X$ with $\widehat W^{\bullet}$ identified with $\widehat W^{\bullet}\times\{0\}$.  Indeed, we described a map $T_i^{\pm}\times[-1,1]\rightarrow X$ earlier, and $\mathbf N\times[-1,1]\rightarrow X$ is an embedding since $\mathbf N\subset E$, and each $S^{\pm}_i$ lies in a 2-cell.  Appropriately chosen neighborhoods $T_i^{\pm}\times[-1,1]$, and $S_i^{\pm}$, and $\mathbf N$ can be glued to form $\widehat W^{\bullet}\times[-1,1]\rightarrow X_L$.  These gluings can be chosen to preserve a ``normal vector'' at each point of the tunnel, and hence the result is a trivial $[-1,1]$ bundle.  The map $\widehat W^{\bullet}\rightarrow X$ factors through an immersion $W^{\bullet}\rightarrow X$, where $W^{\bullet}$ is obtained from $\widehat W^{\bullet}$ by folding the levels according to the map $\varrho_L:X_L\rightarrow X$ illustrated in Figure~\ref{fig:product_neighborhoods}.   The spaces $\widehat W^\bullet$ and $W^\bullet$ are shown in Figure~\ref{fig:wall_cartoon}.

\begin{figure}[ht]
\includegraphics[width=0.35\textwidth]{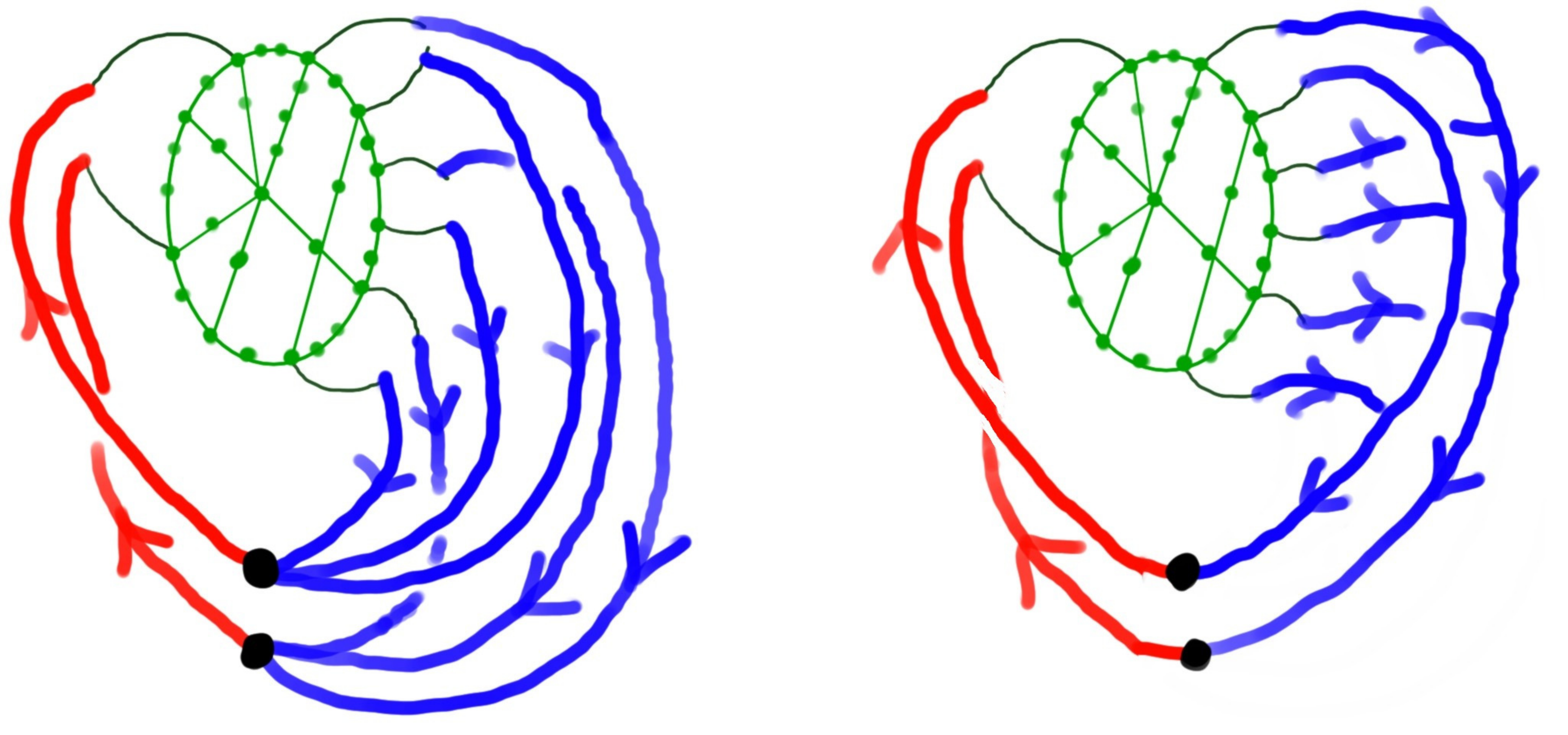}
\caption{At left is $\widehat W^{\bullet}$; at right is $W^{\bullet}$.  Light lines indicate the nucleus, while heavy lines are tunnels.  The emphasized points in each picture are the interior points of a primary bust edge of $V$ where the slopes and levels meet.}\label{fig:wall_cartoon}
\end{figure}

\begin{defn}\label{defn:wall_connected}
A component $W$ of $W^{\bullet}$ is an \emph{immersed wall}.
\end{defn}

\subsection{Description of $\overline W$}\label{subsec:wall_description}
The map $W\rightarrow X$ lifts to a map $\widetilde W\rightarrow\widetilde X$ of universal covers.  For each component $C$ of $\mathbf N$, the universal cover $\widetilde C$ of $C$ lifts to $\widetilde W$, and the restriction of $\widetilde W\rightarrow\widetilde X$ to each such $\widetilde C$ is an embedding.  Moreover, each tunnel lifts to $\widetilde W$, and the map $\widetilde W\rightarrow\widetilde X$ restricts to an embedding on each tunnel $T_i\subset\widetilde W$.  Let $\overline W=\image(\widetilde W\rightarrow\widetilde X)$ and let $H_W=\stabilizer_G(\overline W)$.  We conclude that:

\begin{rem}\label{rem:invariant_halfspace}
When $\overline W$ is locally isomorphic to $W$, the trivial $[-1,1]$-bundle discussed above ensures that there are exactly two components of $\widetilde X-\overline W$, each of which is $H_W$-invariant.
\end{rem}

\begin{rem}[Future shape of $\overline W$]\label{rem:wall_structure}
We now describe the structure of $\overline W$ in the situation in which distinct tunnels are disjoint.  Note that tunnels $T_i^{\pm}$ and $T_j^{\pm}$ in $\overline W$ are disjoint when $i\neq j$, since they map to disjoint tunnels in $\image(W\rightarrow X)$.  Moreover, we shall show below that, under certain conditions, tunnels $T,T'\subset\overline W$, mapping to $T_i^+,T_i^-$ respectively, are disjoint when $L$ is large.  In this situation, $\overline W$ will be shown to have the structure of a tree of spaces, whose underlying vertices are equipped with a 2-coloring.  Red vertices correspond to slopes, while green vertices correspond to subspaces that are maximal connected unions of universal covers of nuclei and lifts of level-parts.  Note that $\overline W$ may still fail to be simply connected -- i.e. $\widetilde W$ may still fail to embed -- since subspaces corresponding to green vertices may not be simply-connected.  If $\overline W$ contains a nucleus in $\widetilde E_n$, then all nuclei lie in $\widetilde E_{n+kL},\,k\in\integers$, and any two nuclei contained in a common vertex space lie in the same space $\widetilde E_{n+kL}$.  A heuristic picture of $\overline W$ is shown in Figure~\ref{fig:heuristic_wall}, and Figure~\ref{fig:small_wall} shows a part of $\overline W$ inside $\widetilde X$.
\end{rem}

\begin{figure}[ht]
\begin{overpic}[width=0.35\textwidth]{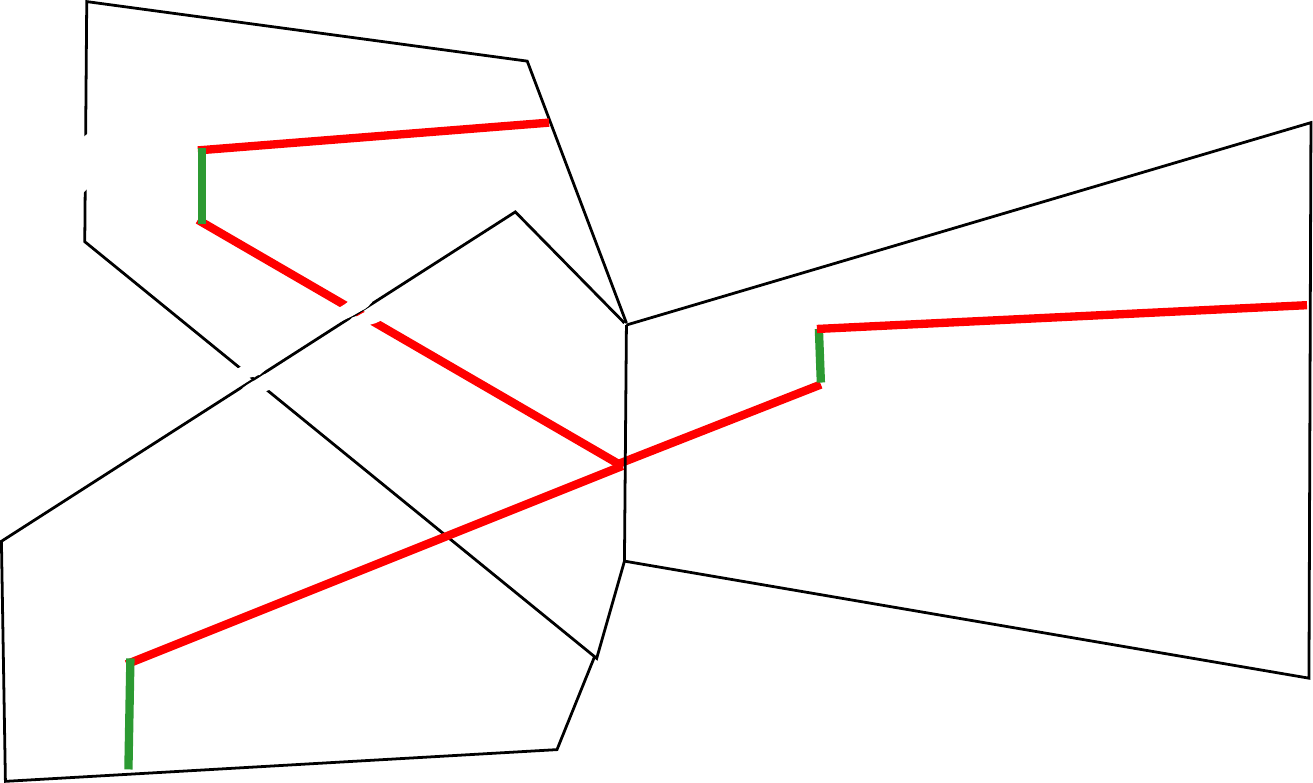}
\put(25,10){\scriptsize{level}}
\put(75,30){\scriptsize{level}}
\put(55,23){\scriptsize{slope}}
\put(-4,45){\scriptsize{nucleus}}
\end{overpic}
\caption{A heuristic picture of part of a wall in $\widetilde X$.  The two nuclei at left, and the levels at left, belong to the same knockout.  This knockout does not contain the slope or the nucleus and level at right.}
\label{fig:heuristic_wall}
\end{figure}

\begin{com}add the replacement figure 8\end{com}

\begin{figure}[ht]
\includegraphics[width=0.4\textwidth]{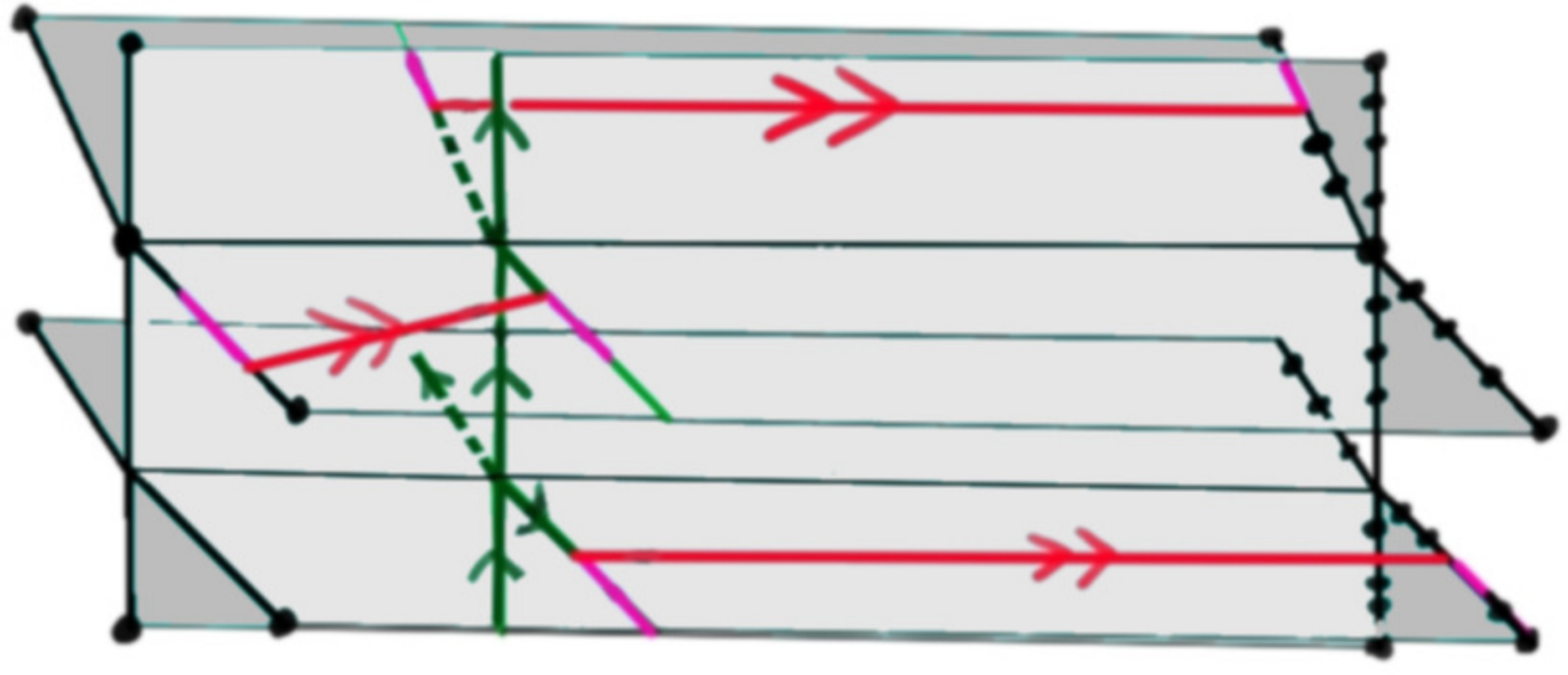}
\caption{Part of a wall $\widetilde W\rightarrow\widetilde X$.  The single-arrowed segments belong to a nucleus, while the double-arrowed segments are tunnels.}\label{fig:small_wall}
\end{figure}


\subsection{The approximation}\label{subsec:approximation}
Let $N(\overline W)$ denote the union of all closed 2-cells of $\widetilde X$ that intersect $\overline W$. We will show that $N(\overline W)^1$ is quasiconvex in $\widetilde X^1$ under certain conditions, notably sufficiently large tunnel length.  However, the quasiconvexity constant will depend on the tunnel length.  This is partly because levels are not uniformly quasiconvex and partly because distinct levels emanating from very close secondary busts may contain long forward paths that closely fellow-travel.  To achieve uniform quasiconvexity we define the \emph{approximation} of $\overline W$, which also has the key feature that it lifts to a geometric wall in ${\widetilde X^\bullet}_L$.

\newcommand{\comappn}[1]{N(\mathbf A(#1))^1}

\begin{defn}[Approximation]\label{defn:approximation}
Let $W\rightarrow X$ be an immersed wall with tunnel length $L$ and primary busted edges $\{e_i\}$.  Let $\overline W$ be the image of a lift $\widetilde W\rightarrow\widetilde X$ of the universal cover of $W$ to $\widetilde X$.  We define a map $\comapp:\overline W\rightarrow\widetilde X$ as follows.  First, suppose that $\widetilde C\subset\overline W$ is the universal cover of a component of the nucleus of $W$.  Let $n\in\integers$ be such that $\widetilde C\subset\widetilde E_n$, and let $\widetilde C'\subset\widetilde V_n$ be the parallel copy of $\widetilde C$.  For each $c\in\widetilde C$, let $c'$ denote the corresponding point of $\widetilde C'$.  Then $\comapp:\widetilde C\rightarrow\widetilde X$ is defined by $\comapp(c)=\tilde\phi^L(c')$.  For each level-part $T^o$ of $\overline W$, let $q$ be the root of $T^o$.  Then $\comapp(t)=q$ for each $t\in T_o$.  Finally, let $S\subset\overline W$ be a slope, beginning at $q$ and ending on a point $p$ in a nucleus $\widetilde C$.  Then $p$ is an endpoint of a primary bust $d_i\subset\widetilde E_n$.  The map $\comapp$ sends the slope $S$ homeomorphically to the path $d_i'P$, where $d_i'$ is the parallel copy of $d_i$ in $\widetilde V_n$ that joins $q$ to $p'$ and $P$ is the forward path joining $p'$ to $\tilde\phi^L(p')$.  See Figure~\ref{fig:comapp_def}.

The \emph{approximation} of $\overline W$ is the subspace $\comapp(\overline W)\subset\widetilde X$.  Note that $\comapp(\overline W)$ is the union of length-$L$ forward paths together with subspaces of $\widetilde V_{nL}$ for each $n\in\integers$.  Let $\comappn{\overline W}$ be the 1-skeleton of the smallest subcomplex of $\widetilde X$ containing $\comapp(\overline W)$.
\end{defn}

\begin{figure}[ht]
\includegraphics[width=0.4\textwidth]{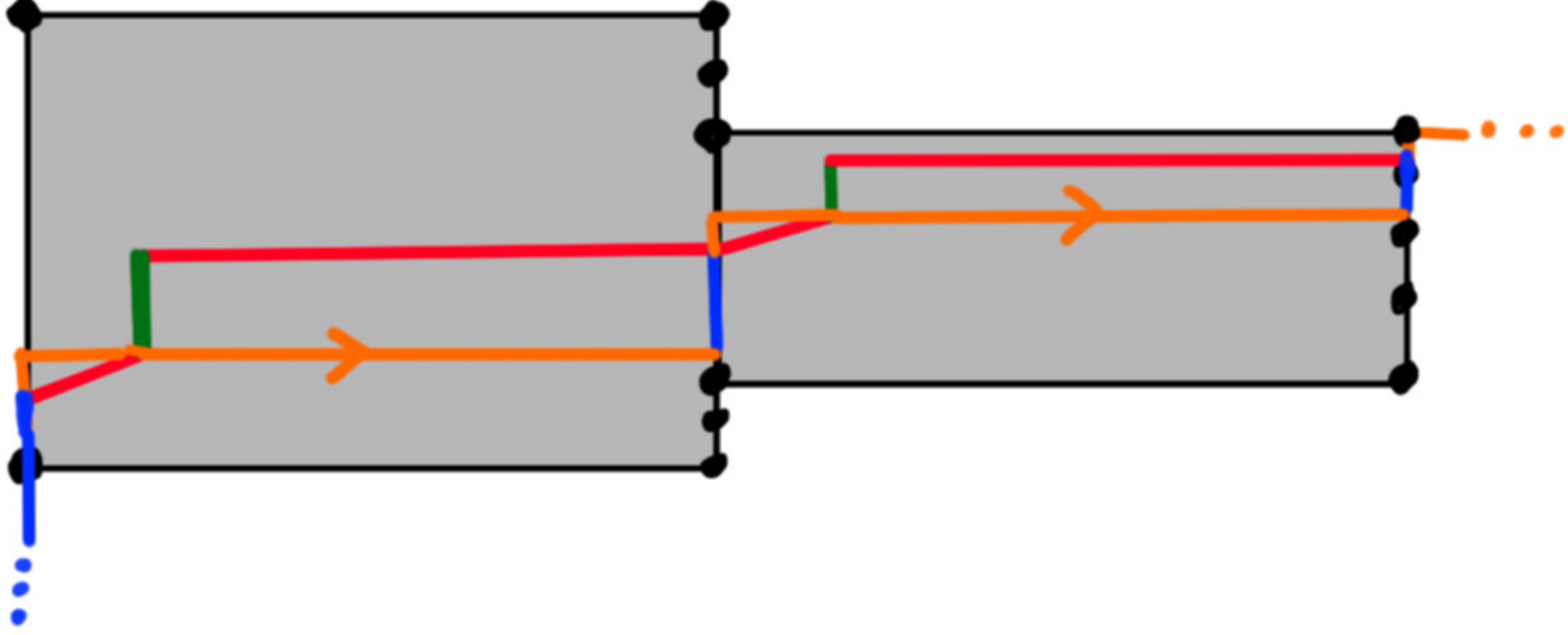}\\
\caption{Part of a wall and its approximation.  The arrowed paths are the approximations of the slopes intersecting them.}\label{fig:comapp_def}
\end{figure}

\begin{rem}
If $\widetilde C_1,\widetilde C_2$ are nuclei of $\overline W$ intersecting a level-part of a tunnel of $\overline W$, then $\comapp(\widetilde C_1)\cap\comapp(\widetilde C_2)\neq\emptyset$.  For each $n\in\integers$, each component of $\comapp(\overline W)\cap\widetilde V_n$ is formed as follows.  A \emph{knockout} $\widetilde K$ is a maximal connected subspace of $\overline W$ that does not contain an interior point of a slope.  The knockout $\widetilde K$ is \emph{at position $n$} if it is the union of nuclei in $\overline W\cap\widetilde E_{n-L}$ together with level-parts traveling from $\widetilde E_{n-L}$ to $\widetilde V_n$.  To each position-$n$ knockout $\widetilde K$, we associate a component of $\comapp(\overline W)\cap\widetilde V_n$, namely the one obtained from the connected subspace $\comapp(\widetilde K)\subset\widetilde V_n$ by adding all (closed) primary bust intervals that intersect $\comapp(\widetilde K)$.  See Figure~\ref{fig:pushforward}.
\end{rem}

\begin{figure}[ht]
\includegraphics[width=0.3\textwidth]{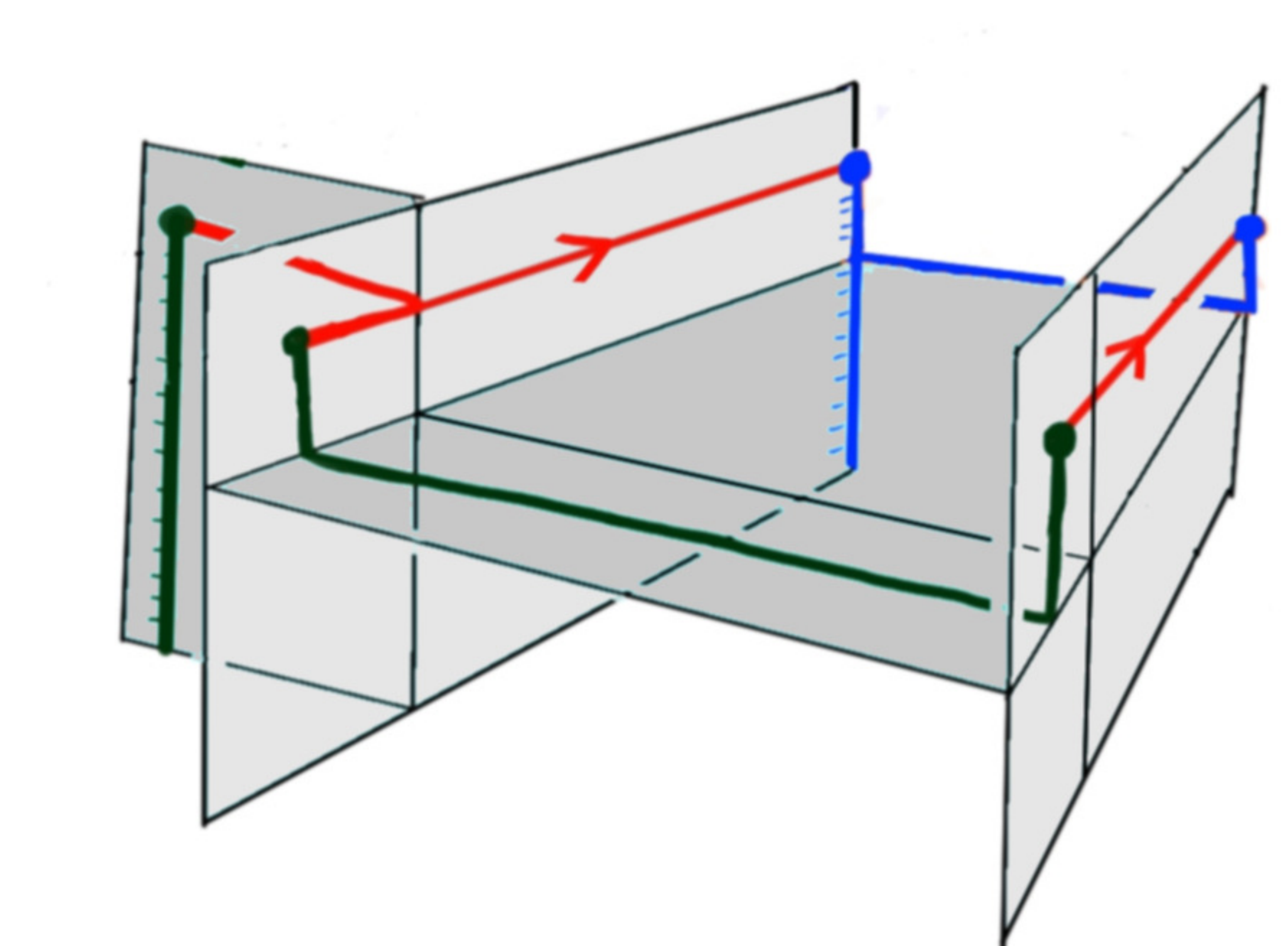}\\
\caption{Two nuclei intersecting a common level-part have intersecting approximations whose union avoids primary busts.}\label{fig:pushforward}
\end{figure}

\begin{rem}\label{rem:distinct_slopes_disjoint_approxes}
If $S_1,S_2$ are distinct slopes, rooted at primary busts $d_1,d_2$, then $\comapp(S_1)\cap\comapp(S_2)=\emptyset$ by Lemma~\ref{lem:choosing_busts_1}.(5)-(6).
\end{rem}

\subsection{Hypotheses on $\comapp(\overline W)$ enabling quasiconvexity}\label{sec:lol}
We recall that we are assuming, for the rest of the paper, that there exists $\delta\geq0$ such that $\widetilde X^1$ is $\delta$-hyperbolic.   The following statements are instrumental in proving that, provided $L$ is sufficiently large, $\comappn{\overline W}\hookrightarrow\widetilde X$ is quasi-isometrically embedded, and the quasi-isometry constants are independent of $L$.

\begin{defn}[Sub-quasiconvex]\label{defn:bust_quasiconvex}
$W$ is \emph{sub-quasiconvex} if there exist constants $\mu_1',\mu_2'$ such that each component in $\widetilde X$ of the preimage of $V-\cup_i\interior{d_i}$ is $(\mu_1',\mu_2')$-quasi-isometrically embedded.  For example, as noted above, $W$ is sub-quasiconvex if $V^{\flat}$ is a forest or if $\Phi$ is an aperiodic isomorphism and there is at least one bust.
\end{defn}

\begin{lem}[Quasiconvexity of approximations of nuclei]\label{lem:avoids_bust}
Approximations have the following properties when $W$ is sub-quasiconvex:
\begin{enumerate}
 \item For each nucleus $\widetilde C$ and each open primary bust $\tilde d$, we have $\comapp(\widetilde C)\cap\tilde d=\emptyset$.
 \item Let $\widetilde K$ be a knockout.  Then $\comapp(\widetilde K)\cap\tilde d=\emptyset$ for each open primary bust $\tilde d$.
 \item \label{nucleus_approx_quasiconvex} Hence there exist $\mu_1\geq1,\mu_2\geq 0$ such that, for each $n\in\integers$ and each component $\mathbf K$ of $\widetilde V_n\cap\comapp(\overline W)$, the inclusion $\mathbf K\rightarrow\widetilde X^1$ is a $(\mu_1,\mu_2)$-quasi-isometric embedding.  Moreover, $\mu_1$ and $\mu_2$ are independent of $\{d_i\}$ and $L$.
\end{enumerate}
\end{lem}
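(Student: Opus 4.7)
My plan is to handle (1) and (2) as essentially definitional checks and then derive (3) by combining sub-quasiconvexity with the tree structure of $\widetilde V_n$.

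For (1), recall that the parallel copy $\widetilde C'\subset\widetilde V_n$ of a nucleus $\widetilde C\subset\widetilde E_n$ is, by construction of $\mathbf N$, a lift of a component of $\mathbf N$, and so avoids every (open) lift of every secondary bust $d_{ij}$. Since $\comapp(\widetilde C)=\tilde\phi^L(\widetilde C')$ and the preimage $(\tilde\phi^L)^{-1}$ of any open primary bust $\tilde d$ (a lift of some $\interior{d_i}$) is precisely the union of open lifts of the $d_{ij}$, no point of $\widetilde C'$ can map into $\tilde d$; hence $\comapp(\widetilde C)\cap\tilde d=\emptyset$. For (2), a knockout $\widetilde K$ decomposes as a continuous gluing of nuclei (handled by (1)) and level-parts. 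Each level-part $T^o$ with root $q$ is sent by $\comapp$ to the single point $q$, which is an endpoint of some primary bust $d_i$; since Lemma~\ref{lem:choosing_busts_1} places distinct primary busts in interiors of distinct edges of $V$, $q$ lies in no open $d_j$ with $j\neq i$, and is a boundary rather than interior point of $d_i$ itself. Thus $\comapp(\widetilde K)$ avoids every open primary bust.

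For (3), parts (1) and (2) show that $\comapp(\widetilde K)\subset\widetilde V_n$ is connected (as the continuous image of the connected set $\widetilde K$) and disjoint from every open primary bust, so it is contained in a single component $C_n$ of the preimage of $V^\flat$ in $\widetilde V_n$. By sub-quasiconvexity, $C_n\hookrightarrow\widetilde X^1$ is a $(\mu_1',\mu_2')$-quasi-isometric embedding. The extra ingredient I would invoke is that $\widetilde V_n$ is a tree (since $\pi_1V$ is free): each primary bust $\tilde d$ incident to $C_n$ has its opposite endpoint in a distinct component of $\widetilde V_n-\cup_i\interior{\tilde d_i}$, so the enlargement $C_n^+=C_n\cup\{\tilde d\,:\,\tilde d\text{ meets }C_n\}$ is obtained from $C_n$ by attaching intervals each of length at most $\max_{e\in V^1}|e|$. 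The inclusion $C_n\hookrightarrow C_n^+$ is therefore isometric with cobounded image, so $C_n^+\hookrightarrow\widetilde X^1$ is quasi-isometric with constants depending only on $(\mu_1',\mu_2')$ and $\max_e|e|$. Finally, $\mathbf K\subset C_n^+$ is a connected subspace of the tree $C_n^+$, hence itself a subtree, so $\mathbf K\hookrightarrow C_n^+$ is isometric, and composition yields the required quasi-isometric embedding $\mathbf K\hookrightarrow\widetilde X^1$.

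The subtlest step is confirming uniformity of $(\mu_1,\mu_2)$ in $L$ and in $\{d_i\}$. All auxiliary quantities appearing in the argument — the sub-quasiconvexity constants $(\mu_1',\mu_2')$, the finite edge-set of $V$, and $\max_{e\in V^1}|e|$ — are intrinsic to the data of $V$ and $\phi$, not to the tunnel length or bust positions. The tree structure of $\widetilde V_n$ is really the decisive feature: without it, primary busts could produce short-cut loops and the intrinsic distance on $\mathbf K$ might fail to agree with the intrinsic distance on the larger $C_n^+$, forcing $L$-dependent distortion bounds.
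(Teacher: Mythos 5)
Your proof is correct and follows essentially the same route as the paper: parts (1) and (2) are the same definitional checks (secondary busts are exactly the $\phi^L$-preimages of primary busts, and level-parts collapse to roots, which are bust endpoints), and part (3) is the same combination of sub-quasiconvexity with the observation that $\mathbf K$ is a subtree lying in a uniform neighborhood of $\comapp(\widetilde K)$. Your part (3) simply spells out in more detail what the paper compresses into one sentence, and your uniformity discussion matches the paper's remark that the constants depend only on the (finitely many possible) sets of busted edges, not on $L$ or the bust positions.
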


\begin{proof}
\begin{enumerate}
 \item $\widetilde C$ has empty intersection with the set of open secondary busts in $\widetilde V_n$, and hence maps to the complement of the set of open primary busts in $\widetilde V_{n+L}$.
 \item This follows immediately from $(1)$ because level-parts map to points.
 \item Since $W$ is sub-quasiconvex, Statement~(2) implies that $\mathbf K$ is a subtree of a uniform neighborhood in $\widetilde V_n$ of some $\comapp(\widetilde K)$, and the claim follows.  Since there are finitely many possible sets of primary busted edges, the constants $\mu_1,\mu_2$ can be chosen independently not only of $L$ and $\{d_i\}$, but also of $\{e_i\}$.  Indeed, each set $\{e_i\}$ of edges yielding a sub-quasiconvex immersed wall gives rise to a pair of quasi-isometry constants, and we take $\mu_1,\mu_2$ to be the maximal such constants.\qedhere
\end{enumerate}
\end{proof}

Lemma~\ref{lem:avoids_bust} provides uniform quasiconvexity of nucleus approximations, and Proposition~\ref{prop:forward_ladder_quasiconvex} provides uniform quasiconvexity of forward ladders.  Lemma~\ref{lem:vert_horiz_bound} provides a bound on the diameters of coarse intersections of nucleus approximations and carriers of approximations of slopes.  To prove quasiconvexity of $\comappn{\overline W}$ requires the following additional property.

\begin{defn}[Ladder overlap property]\label{defn:ladder_overlap_property}
The family of immersed walls $\{W_i\rightarrow X\}$ has the \emph{ladder overlap property} if there exists $B\geq0$ such that for all $i$ and all distinct tunnels $T_1,T_2\subset\overline W_i$ intersecting a common nucleus,  $$\diam\left(\neb_{3\delta+2\lambda}(N(\comapp(T_1)))\cap\neb_{3\delta+2\lambda}(N(\comapp(T_2)))\right)\leq B,$$
where $\lambda$ is the constant from Proposition~\ref{prop:forward_ladder_quasiconvex}.
\end{defn}

\begin{rem}\label{rem:lol_and_wall}
The purpose of the ladder overlap property is to guarantee that, when $L$ is large and $W$ is sub-quasiconvex, paths of the form $\beta\alpha\beta'$ are uniform quasigeodesics, where $\beta,\beta'$ are geodesics of carriers of slope-approximations and $\alpha$ is a vertical geodesic in $\comapp(\overline W)$.

If the interiors of $\beta,\beta'$ have disjoint images in $\mathbf R$, Lemma~\ref{lem:vert_horiz_bound} ensures that $\beta\alpha\beta'$ is a uniform quasigeodesic.  The interesting situations are those in which $\beta,\beta'$ are both incoming or both outgoing with respect to the vertical part of $\comapp(\overline W)$ containing $\alpha$.  A thin quadrilateral argument shows that in either case, the ladder overlap property ensures that $\beta,\beta'$ have uniformly bounded coarse intersection, from which one concludes that $\beta\alpha\beta'$ is a uniform quasigeodesic (see Lemma~\ref{lem:RBRquasi} below).
\end{rem}

\section{Quasiconvex codimension-1 subgroups from immersed walls}\label{sec:quasiconvexity}
In this section, we determine conditions ensuring that $\comappn{\overline W}$ is quasiconvex and $\overline W$ is a wall in $\widetilde X$.

\subsection{Uniform quasiconvexity}\label{subsec:quasiconvexity_of_approximation}
A collection $\{W\rightarrow X\}$ of immersed walls is \emph{uniformly sub-quasiconvex} if there exist constants $\mu_1,\mu_2$ such that $\comapp(\widetilde K)\hookrightarrow\widetilde X^1$ is a $(\mu_1,\mu_2)$-quasi-isometric embedding for each $W$ and each knockout $\widetilde K$ of $\overline W$.  The first goal of this section is to prove:

\begin{prop}\label{prop:quasiconvexity}
Let $\mathbb W=\{W\rightarrow X\}$ be a uniformly sub-quasiconvex set of immersed walls with the ladder-overlap property.  Then there exists $L_0,\kappa_1,\kappa_2$ such that for all $W\in\mathbb W$ with tunnel length at least $L_0$, the inclusion $\comappn{\overline W}\hookrightarrow\widetilde X^1$ is a $(\kappa_1,\kappa_2)$-quasi-isometric embedding.
\end{prop}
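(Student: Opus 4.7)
The plan is to show that geodesics in $\comappn{\overline W}$ are uniform quasi-geodesics in $\widetilde X^1$, with constants independent of $W\in\mathbb W$ and of the tunnel length $L\geq L_0$.  The first step is to decompose any geodesic $\gamma$ of $\comappn{\overline W}$ into an alternating concatenation $\beta_0\alpha_1\beta_1\alpha_2\cdots\alpha_m\beta_m$, where each $\alpha_i$ is a sub-geodesic lying in a nucleus approximation $\widetilde V_{n_i}\cap\comapp(\overline W)$ and each $\beta_j$ is a sub-geodesic lying in the carrier $N(\comapp(S_j))^1$ of the approximation of a single slope $S_j\subset\overline W$.  (Short horizontal segments coming from parallel copies of primary busts inside slope approximations can be absorbed into the adjacent $\beta_j$.)  By Lemma~\ref{lem:avoids_bust}.(3), each $\alpha_i$ is a $(\mu_1,\mu_2)$-quasi-geodesic in $\widetilde X^1$, and by Proposition~\ref{prop:forward_ladder_quasiconvex}, each $\beta_j$ is a $(\lambda_1,\lambda_2)$-quasi-geodesic in $\widetilde X^1$.

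The second step is to invoke the standard local-to-global principle for concatenations of uniform quasi-geodesics in a $\delta$-hyperbolic space: such a concatenation is globally a uniform quasi-geodesic provided each piece is long enough and consecutive pieces have uniformly bounded coarse intersection.  I would establish the coarse intersection bounds case by case.  Where a horizontal $\alpha_i\subset\widetilde V_{n_i}$ meets a vertical $\beta_j\subset N(\sigma)^1$ for a forward path $\sigma$, Lemma~\ref{lem:vert_horiz_bound}, applied with $R$ depending only on $\delta,\mu_1,\mu_2,\lambda_1,\lambda_2$, supplies a uniform bound $\Theta_R$ on $\diam(\neb_R(N(\sigma)^1)\cap\neb_R(\widetilde V_{n_i}))$.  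Where two consecutive vertical pieces $\beta_j,\beta_{j+1}$ are incident to a common knockout (the situation highlighted in Remark~\ref{rem:lol_and_wall}), the ladder overlap property, Definition~\ref{defn:ladder_overlap_property}, supplies the uniform diameter bound $B$ on the coarse intersection of the two carriers.  Where two slopes are incident to a common nucleus but their approximations have disjoint $\mathbf R$-images, Lemma~\ref{lem:vert_horiz_bound} together with Remark~\ref{rem:distinct_slopes_disjoint_approxes} controls the overlap.

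The third step is to choose $L_0$ large enough that each vertical piece $\beta_j$ has length substantially exceeding the sum of the overlap constants $\Theta_R$ and $B$ plus an additive contribution from $\delta$.  With this choice, a standard thin-quadrilateral argument as sketched in Remark~\ref{rem:lol_and_wall} shows that every three-term concatenation $\beta_j\alpha_{j+1}\beta_{j+1}$ is a $(\kappa_1,\kappa_2)$-quasi-geodesic whose constants depend only on $\delta,\mu_1,\mu_2,\lambda_1,\lambda_2,\Theta_R,B$.  The local-to-global principle then promotes these local estimates along $\gamma$ to a uniform $(\kappa_1,\kappa_2)$-quasi-isometric embedding of $\comappn{\overline W}$ into $\widetilde X^1$.

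The main obstacle I expect is the junction type where $\beta_j,\beta_{j+1}$ are simultaneously incoming (or simultaneously outgoing) at a common knockout.  Here the horizontal-vertical bound of Lemma~\ref{lem:vert_horiz_bound} is too weak, and the argument relies essentially on the ladder overlap property combined with a thin-quadrilateral argument that converts the bounded coarse intersection of the two forward-ladder carriers into bounded backtracking of the geodesics $\beta_j,\beta_{j+1}$ themselves.  Once this case is dispatched, the remaining junctions are routine applications of Lemma~\ref{lem:vert_horiz_bound} and uniform sub-quasiconvexity, and the conclusion follows.
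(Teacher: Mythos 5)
Your proposal follows essentially the same route as the paper: decompose a geodesic of $\comappn{\overline W}$ into alternating nucleus-approximation and forward-ladder pieces, use uniform sub-quasiconvexity and Proposition~\ref{prop:forward_ladder_quasiconvex} for the individual pieces, bound consecutive overlaps via Lemma~\ref{lem:vert_horiz_bound} and the ladder overlap property, and apply a concatenation-of-quasigeodesics local-to-global lemma (the paper packages this as Lemma~\ref{lem:RBRquasi}, built on Lemma~\ref{lem:hsuwiseRBR}).  The only detail you elide is that the initial and terminal ladder pieces of a geodesic may be shorter than $L_0$; the paper dispatches this with a short case analysis that only perturbs the additive constant $\kappa_2$.
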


The constants are $\kappa_1=4\lambda_1\mu_1$ and $\kappa_2=\frac{\mu_2}{2}+2L_0(1+\frac{1}{4\lambda_1\mu_1})$.  Here $\mu_1,\mu_2$ are the quasi-isometry constants from uniform sub-quasiconvexity, and $\lambda_1$ is the multiplicative quasi-isometry constant for 1-skeleta of forward ladders.  We emphasize that these are independent of $L$ and of the collection of primary busts.

\begin{proof}[Proof of Proposition~\ref{prop:quasiconvexity}]
For a path $P$ in $\widetilde X^1$, as usual $\|P\|$ denotes the distance in $\widetilde X^1$ between the endpoints of $P$.  If $P$ is a geodesic of $\comappn{\overline W}$, then its edge-length $|P|$ equals the distance in $\comappn{\overline W}$ between the endpoints of $P$.  We will show that when $L\geq L_0$, there are constants $\kappa_1,\kappa_2$ such that $\|P\|\geq\kappa_1^{-1}|P|-\kappa_2$.

\textbf{Alternating geodesics:}  Let $P'$ be a geodesic in the graph $\comappn{\overline W}$.  Suppose $P'$ \emph{alternates}, in the sense that $P'=\alpha_0\beta_1\alpha_1\cdots\beta_k\alpha_k$, where each $\alpha_i$ is a vertical geodesic path, and each $\beta_i$ is a geodesic of the 1-skeleton of a length-$L$ forward ladder (and thus a $(\lambda_1,\lambda_2)$-quasigeodesic).  We allow the possibility that $\alpha_0$ or $\alpha_{k}$ has length~0.

Each $\alpha_i$ is a $(\mu_1,\mu_2)$-quasigeodesic by our hypothesis that knockout-approximations are quasi-isometrically embedded.  Since $W$ has the ladder overlap property, $\diam(\neb_{3\delta+2\lambda}(\beta_i)\cap\neb_{3\delta+2\lambda}(\beta_{i+1}))\leq B$.  Let $B_0=\max(B,\Theta_{3\delta+2\lambda})$, where $\Theta_{3\delta+2\lambda}$ is as in Lemma~\ref{lem:vert_horiz_bound}.  Applying Lemma~\ref{lem:RBRquasi} yields a constant $L_0$ such that, if $L\geq L_0$, then $\|P'\|\geq\frac{1}{4\lambda_1\mu_1}|P'|-\frac{\mu_2}{2}$.

\textbf{$\comapp(\overline W)$ quasi-isometrically embeds:}  Let $P$ be a geodesic of $\comappn{\overline W}$.  By construction $P=\beta_0'P'\beta_{k+1}'$ where $P'$ is alternating and $\beta_0',\beta_{k+1}'$ are (possibly trivial) paths in forward ladders.  If $|\beta_0'|,|\beta_{k+1}'|\geq L_0$, then $\|P\|\geq\frac{1}{4\lambda_1\mu_1}|P|-\frac{\mu_2}{2}$ by Lemma~\ref{lem:RBRquasi}.  If $|\beta_0'|,|\beta_{k+1}'|\leq L_0$, then since $P'$ is alternating,
$$
\|P\|\geq\frac{1}{4\lambda_1\mu_1}|P'|-\frac{\mu_2}{2}-2L_0\geq\frac{1}{4\lambda_1\mu_1}|P|-\frac{\mu_2}{2}-2L_0(1+\frac{1}{4\lambda_1\mu_1}).
$$
In the remaining case, without loss of generality, $P=\beta_0'P''$, where $|\beta_0'|\leq L_0$ and $P''$ satisfies $\|P''\|\geq\frac{1}{4\lambda_1\mu_1}|P''|-\frac{\mu_2}{2}$ by Lemma~\ref{lem:RBRquasi}.  The proof is thus complete with $\kappa_1=4\lambda_1\mu_1$ and $\kappa_2=\frac{\mu_2}{2}+2L_0(1+\frac{1}{4\lambda_1\mu_1})$.\end{proof}

\begin{lem}\label{lem:hsuwiseRBR}
Let $Z$ be $\delta$-hyperbolic, and let $P=\alpha_0\beta_1\alpha_1\cdots\beta_k\alpha_k$ be a path in $Z$ with all $\alpha_i$ and $\beta_i$ geodesic.  Suppose there exists $B\geq 0$ such that for all $i$, each intersection below has diameter $\leq B$:
$$\neb_{3\delta}(\beta_i)\cap\beta_{i+1},\,\,\,\,\,\,\,\neb_{3\delta}(\beta_i)\cap\alpha_i,\,\,\,\,\,\,\,\neb_{3\delta}(\beta_i)\cap\alpha_{i-1}.$$
Then if $|\beta_i|\geq12(B+\delta)$ for each $i$, then $\|P\|\geq\frac{1}{2}\left(\sum_{i=0}^{k}|\alpha_i|+\sum_{i=1}^k|\beta_i|\right)$.
\end{lem}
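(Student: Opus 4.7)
The plan is to use a standard ``projections to a background geodesic'' argument. Let $\gamma$ be a geodesic in $Z$ from $P_-$ to $P_+$, and let $\pi\co Z\to\gamma$ denote a coarse nearest-point projection (well-defined up to bounded ambiguity in a $\delta$-hyperbolic space). I would study the images $\pi(\beta_i)$ and $\pi(\alpha_i)$ of the successive pieces of $P$ and show that together they nearly tile $\gamma$ with only bounded overlap.

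The key technical inputs I would use are two standard consequences of $\delta$-hyperbolicity. First, if $\sigma$ is a geodesic segment, then nearest-point projection onto $\sigma$ is coarsely Lipschitz, and consequently for any geodesic segment $\beta$ the length of the shadow $\pi_{\sigma}(\beta)$ is at least $|\beta|$ minus the diameters of the fellow-travel regions $\neb_{\delta}(\beta)\cap\neb_{\delta}(\sigma)$ at the two ends of $\beta$ (up to additive $O(\delta)$). Second, if two geodesic segments $\beta$ and $\beta'$ have overlapping shadows on $\gamma$ on an interval of length $\ell$, then by a thin-quadrilateral argument there is a subsegment of length approximately $\ell$ in $\beta$ that lies within $\neb_{3\delta}$ of $\beta'$; the bounded-overlap hypothesis then forces $\ell\leq B+O(\delta)$. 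The same reasoning applies to the overlaps $\pi(\beta_i)\cap\pi(\alpha_i)$ and $\pi(\beta_i)\cap\pi(\alpha_{i-1})$.

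Combining these, each shadow $\pi(\beta_i)$ has length at least $|\beta_i|-c_1(B+\delta)$, and the pairwise overlaps of the shadows of consecutive pieces of $P$ are at most $B+c_2\delta$ for explicit constants $c_1,c_2$. Summing up,
\begin{equation*}
\|P\| \;=\; |\gamma| \;\geq\; \sum_i|\pi(\beta_i)|+\sum_i|\pi(\alpha_i)|-(\text{total overlap}),
\end{equation*}
and the hypothesis $|\beta_i|\geq12(B+\delta)$ is calibrated so that the ``loss'' charged against each $\beta_i$ (its own Lipschitz defect plus the overlaps with its two neighbors) is at most $\frac{1}{2}|\beta_i|$, while the remaining overlap charges against the $\alpha_i$'s can be re-routed to the adjacent $\beta_i$'s. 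This yields $\|P\|\geq\frac{1}{2}\bigl(\sum_i|\alpha_i|+\sum_i|\beta_i|\bigr)$.

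The main obstacle I anticipate is the explicit constant bookkeeping rather than any single geometric step. Since the $\alpha_i$'s carry no lower length bound, their shadows can be arbitrarily short and one cannot directly charge any $O(B+\delta)$ loss against them; all such losses must be absorbed into the (long) $\beta_i$'s. The constant $12$ appears as roughly $2\cdot 6$: the factor $6$ bounds the total overlap loss per $\beta_i$, and the factor $2$ produces the $\frac{1}{2}$ in the conclusion, so one has to track carefully how each of the three types of $3\delta$-neighborhood bounds contributes.
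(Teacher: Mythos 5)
Your high-level plan (shadows on a background geodesic $\gamma$ from $P_-$ to $P_+$) is a legitimate strategy, but as written the key step is assumed rather than proved.  The ``first standard consequence'' you invoke is not true as stated: for nearest-point projection $\pi$ to a geodesic $\gamma$, the correct estimate is $\diam(\pi(\beta))\geq|\beta|-\dist(\beta_-,\gamma)-\dist(\beta_+,\gamma)$, and the shadow of a segment lying far from $\gamma$ has diameter $O(\delta)$ no matter how long the segment is.  So to conclude that each shadow $\pi(\beta_i)$ has length $|\beta_i|-O(B+\delta)$ you must first show that every breakpoint of $P$ lies within $O(B+\delta)$ of $\gamma$.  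Nothing in the hypotheses relates the pieces of $P$ to $\gamma$: the overlap bounds are entirely internal to $P$ and constrain only consecutive pieces.  If $P$ backtracked globally --- say the middle breakpoints drifted far from $\gamma$ --- then all the intermediate shadows would collapse to diameter $O(\delta)$ and the tiling inequality would give nothing; ruling out exactly this behavior is the content of the lemma.  The same issue affects the summation $\|P\|\geq\sum_i|\pi(\beta_i)|+\sum_i|\pi(\alpha_i)|-(\text{overlap})$: for that you need the shadows to be coarsely ordered along $\gamma$ with bounded overlap for \emph{all} pairs, whereas your hypotheses and your ``second input'' only control consecutive pairs.

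The missing ingredient is a global, inductive argument, which is how the cited source (Hsu--Wise, Thm.~2.3) proceeds: one shows by induction on $k$ that the truncated path $\alpha_0\beta_1\cdots\beta_i\alpha_i$ already satisfies the conclusion and, crucially, that a geodesic from $P_-$ to its endpoint terminates $O(\delta)$-close to the terminal portion of $\beta_i$; the bounds on the overlaps of $\beta_{i+1}$ with $\alpha_i$ and $\beta_i$, together with $|\beta_{i+1}|\geq12(B+\delta)$ and a thin-quadrilateral argument, then force the next two pieces to contribute at least $\frac{1}{2}\left(|\alpha_i|+|\beta_{i+1}|\right)$ of genuine progress rather than cancelling.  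Once that induction is in place, the breakpoints are uniformly close to $\gamma$ and your shadow bookkeeping becomes valid --- but at that point it is no longer needed.  So the proposal is not yet a proof: it lacks the step that converts the local overlap hypotheses into the global statement that $P$ never doubles back.
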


\begin{proof}
This is a standard argument.  We refer, for instance, to~\cite[Thm~2.3]{HsuWiseCubulatingMalnormal}.
\end{proof}

We now promote Lemma~\ref{lem:hsuwiseRBR} to a statement about piecewise-quasigeodesics.

\begin{lem}\label{lem:RBRquasi}
Let $Z$ be $\delta$-hyperbolic and let $P=\alpha_0\beta_1\alpha_1\cdots\beta_k\alpha_k$ be a path in $Z$ such that each $\beta_i$ is a $(\lambda_1,\lambda_2)$-quasigeodesic and each $\alpha_i$ is a $(\mu_1,\mu_2)$-quasigeodesic.  Suppose that for each $R\geq 0$ there exists $B_R\geq 0$ such that for all $i$, each intersection below has diameter $\leq B_R$:
$$\neb_{3\delta+R}(\beta_i)\cap\beta_{i+1},\,\,\,\,\,\,\,\neb_{3\delta+R}(\beta_i)\cap\alpha_i,\,\,\,\,\,\,\,\neb_{3\delta+R}(\beta_i)\cap\alpha_{i-1}.$$

Then there exists $L_0$ such that, if $|\beta_i|\geq L_0$ for each $i$, then $\|P\|\geq\frac{1}{4\lambda_1\mu_1}|P|-\frac{\mu_2}{2}$.
\end{lem}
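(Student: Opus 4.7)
The plan is to reduce Lemma~\ref{lem:RBRquasi} to Lemma~\ref{lem:hsuwiseRBR} by replacing each quasigeodesic piece of $P$ by a genuine geodesic with the same endpoints.  For each $i$, let $\alpha_i'$ be the $Z$-geodesic joining the endpoints of $\alpha_i$ and let $\beta_i'$ be the $Z$-geodesic joining the endpoints of $\beta_i$, and set $P'=\alpha_0'\beta_1'\alpha_1'\cdots\beta_k'\alpha_k'$.  Since $P$ and $P'$ share endpoints, $\|P\|=\|P'\|$.

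By stability of quasigeodesics in the $\delta$-hyperbolic space $Z$ (the Morse lemma), there exists $R=R(\delta,\lambda_1,\lambda_2,\mu_1,\mu_2)$ such that the Hausdorff distance between $\alpha_i$ and $\alpha_i'$, and between $\beta_i$ and $\beta_i'$, is at most $R$.  I transfer the coarse-intersection hypothesis from $\beta_i$ to $\beta_i'$ as follows: if $x,y\in\neb_{3\delta}(\beta_i')\cap\beta_{i+1}'$, choose points $x',y'\in\beta_{i+1}$ within $R$ of $x,y$.  Since $x,y$ lie within $3\delta+R$ of $\beta_i$, the points $x',y'$ lie in $\neb_{3\delta+2R}(\beta_i)\cap\beta_{i+1}$, whose diameter is at most $B_{2R}$ by hypothesis.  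Hence $d(x,y)\leq B_{2R}+2R=:B'$, and the same bound applies to $\neb_{3\delta}(\beta_i')\cap\alpha_i'$ and $\neb_{3\delta}(\beta_i')\cap\alpha_{i-1}'$.

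Now I choose $L_0$ so large that $|\beta_i|\geq L_0$ implies $|\beta_i'|\geq\tfrac{L_0}{\lambda_1}-\lambda_2\geq 12(B'+\delta)$; Lemma~\ref{lem:hsuwiseRBR} applied to $P'$ then yields
$$\|P\|=\|P'\|\geq\tfrac{1}{2}\Bigl(\sum_{i=0}^{k}|\alpha_i'|+\sum_{i=1}^{k}|\beta_i'|\Bigr).$$
Substituting the quasigeodesic estimates $|\alpha_i'|\geq|\alpha_i|/\mu_1-\mu_2$ and $|\beta_i'|\geq|\beta_i|/\lambda_1-\lambda_2$ gives
$$\|P\|\geq\frac{1}{2\mu_1}\sum_i|\alpha_i|+\frac{1}{2\lambda_1}\sum_i|\beta_i|-\frac{(k+1)\mu_2+k\lambda_2}{2}.$$

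The main obstacle is absorbing the $k$-dependent additive error into the linear term so that only a single $\mu_2/2$ remains.  I would handle this by enlarging $L_0$ further so that, for each $i$, one has $\tfrac{1}{2\lambda_1}|\beta_i|-\tfrac{\mu_2+\lambda_2}{2}\geq\tfrac{1}{4\lambda_1\mu_1}|\beta_i|$; since $|\beta_i|\geq L_0$ and $\mu_1\geq 1$, this holds once $L_0\geq\tfrac{2\lambda_1\mu_1(\mu_2+\lambda_2)}{2\mu_1-1}$.  Each $\beta_i$-piece then pays off both its own $\lambda_2$-penalty and the $\mu_2$-penalty of an adjacent $\alpha$-piece, leaving exactly one surplus $\mu_2$ from the extra $\alpha$-piece.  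Combined with $\tfrac{1}{2\mu_1}\geq\tfrac{1}{4\lambda_1\mu_1}$ (using $\lambda_1\geq 1$) on the $\alpha$-contributions and the identity $|P|=\sum_i|\alpha_i|+\sum_i|\beta_i|$, this yields the claimed bound $\|P\|\geq\tfrac{1}{4\lambda_1\mu_1}|P|-\tfrac{\mu_2}{2}$.
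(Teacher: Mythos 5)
Your proposal is correct and follows essentially the same route as the paper's proof: geodesify each quasigeodesic piece, transfer the coarse-intersection hypotheses to the geodesic pieces via the Morse lemma and the $R$-parametrized family $B_R$, apply Lemma~\ref{lem:hsuwiseRBR}, and then enlarge $L_0$ so that each $\beta_i$ absorbs one $\lambda_2+\mu_2$ penalty, leaving the single residual $\mu_2/2$. The only difference is cosmetic bookkeeping in where the additive constants are folded into the multiplicative factor.
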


\begin{proof}
For each $i$, let $\bar\alpha_i$ [respectively, $\bar\beta_i$] be a geodesic with the same endpoints as $\alpha_i$ [respectively, $\beta_i$], and let $\overline P=\bar\alpha_0\bar\beta_1\bar\alpha_1\cdots\bar\beta_k\bar\alpha_k$ be a piecewise-geodesic with the same endpoints as $P$.  Since $Z$ is $\delta$-hyperbolic, there exists $\mu=\mu(\mu_1,\mu_2,\delta)$ such that $\alpha_i$ and $\bar\alpha_i$ lie at Hausdorff distance at most $\mu$, and there exists $\lambda=\lambda(\lambda_1,\lambda_2,\delta)$ such that $\bar\beta_i$ and $\beta_i$ lie at Hausdorff distance at most $\lambda$.

Note that if $R_1\leq R_2$, then we may assume $B_{R_1}\leq B_{R_2}$.  By hypothesis, $\neb_{3\delta+2\lambda}(\beta_i)\cap\beta_{i+1}$ has diameter $\leq B_{2\lambda}$.  Moreover, if $\bar\beta'\subset\bar\beta_i$ is a subpath that $3\delta$-fellowtravels with a subpath $\bar\alpha'$ of $\bar\alpha_i$ or $\bar\alpha_{i-1}$, then $\beta'$ fellowtravels at distance $3\delta+\mu+\lambda$ with a subpath $\alpha''$ of $\alpha_i$ or $\alpha_{i-1}$, whence $|\beta'|\leq B_{\mu+\lambda}$ by hypothesis.  Letting $L_0\geq12(\delta+B_{\mu+2\lambda})$ and applying Lemma~\ref{lem:hsuwiseRBR} shows that $\overline P$ is a $(\frac{1}{2},0)$-quasigeodesic, and we have:
\begin{equation}\label{eqn:piecewisegeodesic}
\|P\|=\left\|\overline P\right\|\geq\frac{1}{2}\left|\overline P\right|.
\end{equation}

Since $\mu_1,\lambda_1\geq 1$, we can bound $|\overline P|$ as follows:
\begin{eqnarray*}
\left|\overline P\right|=\sum_{i=1}^k|\bar\beta_i|+\sum_{i=0}^k|\bar\alpha_i|&\geq&\sum_{i=1}^k(\lambda_1^{-1}|\beta_i|-\lambda_2)+\sum_{i=0}^k(\mu_1^{-1}|\alpha_i|-\mu_2)\\
&=&\left[\lambda_1^{-1}\sum_{i=1}^k\left(|\beta_i|-\lambda_1(\lambda_2+\mu_2)\right)+\mu_1^{-1}\sum_{i=0}^k|\alpha_i|\right]-\mu_2\\
&\geq&(\lambda_1\mu_1)^{-1}\left[\sum_{i=1}^k(|\beta_i|-\lambda_1(\lambda_2+\mu_2))+\sum_{i=0}^k|\alpha_i|\right]-\mu_2.\\
\end{eqnarray*}
If $L_0\geq 2\lambda_1(\lambda_2+\mu_2)+1$, then, provided that $|\beta_i|\geq L\geq L_0$, we have:
\[\left|\overline P\right|\geq\frac{1}{2\lambda_1\mu_1}\left[\sum_{i=1}^k|\beta_i|+\sum_{i=0}^k|\alpha_i|\right]-\mu_2.\]
Combining this with Equation~\eqref{eqn:piecewisegeodesic} yields $\|P\|\geq\frac{1}{4\lambda_1\mu_1}|P|-\frac{\mu_2}{2}$.
\end{proof}

\subsection{$\overline W$ is a wall when tunnels are long}\label{subsec:W_is_wall}
A subspace $Y\subset\widetilde X$ is a \emph{wall} if $\widetilde X-Y$ has exactly two components, each of which is stabilized by $\stabilizer(Y)$.  Note that this definition is stricter than usual.  For more about wallspaces and the various definitions, background, and references, see~\cite{HruskaWise}.
Our goal is now to show that if $W\rightarrow X$ is an immersed wall with sufficiently long tunnels, then $\overline W$ is a wall.  We need the following useful consequence of quasiconvexity.

\begin{prop}\label{prop:approximation_is_tree}
Let $\mathbb W$ satisfy the hypotheses of Proposition~\ref{prop:quasiconvexity}.  There exists $L_1\geq L_0$ such that  $\comapp(\overline W)$ is a tree for each $W\in\mathbb W$ with tunnel length $L\geq L_1$.
\end{prop}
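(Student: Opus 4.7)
The plan is to prove $\comapp(\overline W)$ is a tree by noting it is connected -- being the continuous image of $\overline W$, which is connected -- and then ruling out embedded cycles for $L$ sufficiently large.  The first step is to show that no embedded cycle can be contained in a single vertical sheet: for each $n \in \integers$, $\widetilde V_n$ is a tree, being (a translate of) the universal cover of the finite graph $V$, and the intersection $\comapp(\overline W) \cap \widetilde V_n$ is a subgraph of this tree, so each of its components is a subtree.  Consequently, any embedded cycle $\gamma$ in $\comapp(\overline W)$ must traverse slope-approximations, which are the only parts of $\comapp(\overline W)$ that cross between distinct vertical sheets.

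Next, I estimate the edge-length $|\gamma|$ of $\gamma$ in $\comappn{\overline W}$.  Each slope-approximation $\comapp(S)$ joins a point in some $\widetilde V_m$ to a point in $\widetilde V_{m+L}$ through a length-$L$ forward path.  Since $q : \widetilde X \to \mathbf R$ is $1$-Lipschitz and horizontal edges of $\widetilde X$ have unit weight, any path of $\comappn{\overline W}$ crossing a slope-approximation traverses at least $L$ horizontal edges, contributing at least $L$ to the edge-length.  The cycle $\gamma$ has zero net $q$-displacement, so it traverses at least two slope-approximations (one in each $q$-direction), whence $|\gamma| \geq 2L$.

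The contradiction now comes from Proposition~\ref{prop:quasiconvexity}: for $L \geq L_0$, the inclusion $\comappn{\overline W} \hookrightarrow \widetilde X^1$ is a $(\kappa_1,\kappa_2)$-quasi-isometric embedding with constants independent of $L$ and of $W \in \mathbb W$.  Picking two points $x,y$ on $\gamma$ that divide it into two arcs of length $|\gamma|/2$, each arc is a geodesic of $\comappn{\overline W}$, so $\dist_{\widetilde X^1}(x,y) \leq |\gamma|/2$ while also $\dist_{\widetilde X^1}(x,y) \geq \kappa_1^{-1}|\gamma|/2 - \kappa_2$, yielding a uniform upper bound $|\gamma| \leq 2\kappa_1\kappa_2/(\kappa_1-1)$.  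Choosing $L_1 \geq L_0$ with $L_1 > \kappa_1\kappa_2/(\kappa_1-1)$, the lower bound $|\gamma| \geq 2L$ contradicts this upper bound whenever $L \geq L_1$, so no embedded cycle exists.  The main care-point to verify is the length estimate for slope-approximations in the edge-length metric on $\comappn{\overline W}$, rather than in the ambient metric on $\widetilde X^1$, which the $1$-Lipschitz argument for $q$ handles directly.
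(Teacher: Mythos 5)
Your setup is sound and matches the paper's: $\comapp(\overline W)$ is connected, each component of $\comapp(\overline W)\cap\widetilde V_n$ is a subtree of the tree $\widetilde V_n$, so an embedded cycle $\gamma$ must traverse slope-approximations, and monotonicity of $q$ on forward paths forces it to traverse at least two, giving $|\gamma|\geq 2L$. The gap is in the final step. First, the two arcs into which $x,y$ divide $\gamma$ need not be geodesics of $\comappn{\overline W}$ (that graph contains much more than $\gamma$ -- e.g.\ the boundaries of all $2$-cells of the carriers -- and an arc of an embedded cycle in a graph can admit shortcuts), so you are not entitled to set $\dist_{\comappn{\overline W}}(x,y)=|\gamma|/2$ when invoking Proposition~\ref{prop:quasiconvexity}. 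Second, and more seriously, even granting that claim the two inequalities you combine point the same way: since $\kappa_1=4\lambda_1\mu_1\geq 1$, the chain $\kappa_1^{-1}|\gamma|/2-\kappa_2\leq\dist_{\widetilde X^1}(x,y)\leq|\gamma|/2$ is satisfied for every value of $|\gamma|$ and yields no upper bound; your stated bound $|\gamma|\leq 2\kappa_1\kappa_2/(\kappa_1-1)$ comes from solving the inequality with its sense reversed. A lower bound on distance that is a $\kappa_1^{-1}$-fraction of length can never exceed the trivial upper bound given by length itself.

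The way to extract a contradiction is to apply the ``long paths have far-apart endpoints'' estimate to a path whose endpoints you know coincide, namely the closed path $\gamma$ itself, for which $\|\gamma\|=0$. Since a closed path is not a geodesic of $\comappn{\overline W}$, you cannot quote Proposition~\ref{prop:quasiconvexity} as a black box; instead, decompose $\gamma$ as an alternating concatenation $\alpha_0\beta_1\alpha_1\cdots\beta_k\alpha_k$ in which each $\beta_i$ is (a geodesic of the carrier of) a full slope-approximation, hence a $(\lambda_1,\lambda_2)$-quasigeodesic of length at least $L$, and each $\alpha_i$ is an embedded path in a component of $\comapp(\overline W)\cap\widetilde V_{nL}$, hence a $(\mu_1,\mu_2)$-quasigeodesic by uniform sub-quasiconvexity. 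The overlap hypotheses of Lemma~\ref{lem:RBRquasi} hold by the ladder overlap property and Lemma~\ref{lem:vert_horiz_bound}, so for $L\geq L_0$ one gets $0=\|\gamma\|\geq\frac{1}{4\lambda_1\mu_1}|\gamma|-\frac{\mu_2}{2}$, forcing $|\gamma|\leq 2\lambda_1\mu_1\mu_2$, which contradicts $|\gamma|\geq 2L$ once $L_1$ is chosen larger than this threshold. This is, in substance, the argument the paper runs.
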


\begin{proof}
Let $Q$ be an immersed path in $\comapp(\overline W)$, and let $Q'$ be a geodesic of $\comappn{\overline W}
$ with the same endpoints as $Q$.  Proposition~\ref{prop:quasiconvexity} implies that $\|Q'\|\geq\kappa_1^{-1}|Q|-\kappa_2$.  Hence if $|Q'|\geq L_1=\max(L_0,\kappa_1\kappa_2+\kappa_1)$, then $Q$ is not closed.  Any immersed path $Q$ in $\comapp(\overline W)$ either lies in a single vertex space and is thus not closed, or contains a slope approximation and thus $Q'$ has length at least $L$.
\end{proof}

\begin{rem}[Tree of spaces structure on $\overline W$]\label{rem:wall_is_tree_of_spaces}
Proposition~\ref{prop:approximation_is_tree} justifies our claim in Remark~\ref{rem:wall_structure} that $\overline W$ is a tree of spaces when $L$ is sufficiently large, assuming that $W$ is sub-quasiconvex and has the ladder overlap property.  Indeed, any cycle in $\overline W$ that is not contained in a knockout will map to a cycle in $\comapp(\overline W)$, contradicting Proposition~\ref{prop:approximation_is_tree}.
\end{rem}

\begin{prop}\label{prop:tunneliswall}
Let $W\rightarrow X$ be an immersed wall in a collection $\mathbb W$ satisfying the hypotheses of Proposition~\ref{prop:quasiconvexity}.  There exists $L_1\geq L_0$ such that the image $\overline W\subset\widetilde X$ of $\widetilde W\rightarrow\widetilde X$ is a wall provided that $W$ has tunnel length $L\geq L_1$.
\end{prop}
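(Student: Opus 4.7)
The plan is to appeal to Remark~\ref{rem:invariant_halfspace}, which already establishes that $\overline W$ is a wall as soon as it is locally isomorphic to $W$: the local homeomorphism $W\times[-1,1]\to X$ constructed in Section~\ref{subsec:immersed_wall_construction} lifts to a product neighborhood of $\overline W$ in $\widetilde X$, forcing $\widetilde X-\overline W$ to split into exactly two $H_W$-invariant components. Thus the task reduces to showing that, for $L$ sufficiently large, the map $\widetilde W\to\overline W$ is a local isomorphism.

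First, I would take $L_1\geq L_0$ to be the constant furnished by Proposition~\ref{prop:approximation_is_tree}, so that $\comapp(\overline W)$ is a tree for every $W\in\mathbb W$ with tunnel length $L\geq L_1$. This is the input that makes the tree-of-spaces description of $\overline W$ outlined in Remark~\ref{rem:wall_is_tree_of_spaces} rigorous.

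Next, I would verify local injectivity of $\widetilde W\to\overline W$ by ruling out each way it can fail. The nucleus pieces embed into $\widetilde X$ by the $\pi_1$-injectivity assumption, so the potential problems are: (i) two nuclei of $\overline W$ overlapping in $\widetilde X$, (ii) a tunnel meeting a nucleus away from its designated attaching points, or (iii) two tunnels meeting each other. Tunnels over distinct primary busts $d_i,d_j$ with $i\neq j$ have disjoint images in $X$ by Lemma~\ref{lem:choosing_busts_1}, so only same-index tunnels are at issue. In each of these potential failure modes, one produces a closed immersed path in $\overline W$ that is not confined to a single knockout; pushing forward via $\comapp$ yields a closed immersed path in $\comapp(\overline W)$, contradicting the fact that this space is a tree (via Proposition~\ref{prop:approximation_is_tree}, exactly as in the justification of Remark~\ref{rem:wall_is_tree_of_spaces}). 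This confirms that $\widetilde W\to\overline W$ is a local isomorphism.

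With local injectivity in hand, the trivial $[-1,1]$-bundle structure $W\times[-1,1]\to X$ from Section~\ref{subsec:immersed_wall_construction} lifts to an open neighborhood $\overline W\times[-1,1]\hookrightarrow\widetilde X$, so Remark~\ref{rem:invariant_halfspace} applies and yields the wall property. I anticipate that the main obstacle is the bookkeeping in the third paragraph: one must check case-by-case that every self-intersection of the image of $\widetilde W$ in $\widetilde X$ really produces a cycle in $\comapp(\overline W)$ rather than being absorbed into an individual nucleus or tunnel (where no tree-of-spaces contradiction is available). The most delicate case is (iii) with matched $i$-index, where the two tunnels $T,T'\subset\overline W$ mapping to $T_i^\pm$ have roots on opposite sides of the same primary bust; here one uses that any intersection of $T$ and $T'$ forces a short loop whose $\comapp$-image is closed and nontrivial once $L\geq L_1$, because slope-approximations of distinct slopes are disjoint by Remark~\ref{rem:distinct_slopes_disjoint_approxes}.
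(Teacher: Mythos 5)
Your proposal follows essentially the same route as the paper: reduce the wall property (via the two-sidedness of Remark~\ref{rem:invariant_halfspace}) to showing that distinct tunnels of $\overline W$ are disjoint, and rule out an intersection by producing a cycle in $\comapp(\overline W)$, contradicting Proposition~\ref{prop:approximation_is_tree}. The ``bookkeeping'' you anticipate is exactly what the paper's three-case analysis does: since $\comapp$ need not send a loop through a tunnel intersection point to a closed path, one adjoins the dual primary busted edge $e$ (and, as needed, $\comapp(T_0)$, $\comapp(T_k)$) to the $\comapp$-image of a connecting path in $\overline W$ in order to exhibit the cycle.
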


\begin{proof}
Since $\homology^1(\widetilde X)=0$, it suffices to show that $\overline W$ has an
open neighborhood homeomorphic to $\overline W \times [-1,1]$ with $\overline W$  identified with $\overline W\times \{0\}$.
The local homeomorphism $W\times[-1,1]\rightarrow X$ lifts to a map $\widetilde W\times[-1,1]\rightarrow \widetilde X$.  We note as well that $W\rightarrow X$ is locally two-sided by construction. The image of $\widetilde W\times[-1,1]\rightarrow\widetilde X$ would provide the desired neighborhood $\overline W \times [-1,1]$ provided that this map is a covering map onto its image.  By choosing the image of $W\times[-1,1]$ to be sufficiently narrow, the only place where this could fail is where distinct slopes of $\overline W$ intersect.  To exclude this possibility, we will show that distinct tunnels $T_0,T_k$ of $\overline W$ are disjoint.

Suppose that $T_0\neq T_k$ and $T_0\cap T_k\neq\emptyset$.  Let $e$ be the primary busted edge dual to $T_0$ and $T_k$.  Since $T_0,T_k\subset\overline W$, there exists a path $P\rightarrow\overline W$ that starts on $T_0$, ends on $T_k$, and which is disjoint from the interiors of $T_0$ and $T_k$.  Indeed, let $\widetilde P\rightarrow\widetilde W$ be a path joining lifts of $\widetilde T_0,\widetilde T_k$ and let $P$ be the image of $\widetilde P$ in $\overline W$.  Moreover, we assume that $\widetilde P$ is minimal in the sense that it is disjoint from intervening lifts of $T_0,T_k$.  The minimality of $\widetilde P$ ensures that $P$ has the desired property.

There are three cases.  The first is where $P$ starts and ends on the levels of $T_0,T_k$.  The second is where $P$ starts and ends on the slopes of $T_0,T_k$.  The third case is where $P$ starts on the level of (say) $T_0$ and ends on the slope of $T_k$.

In the first case, the approximation $\comapp(P)$ of the image of $P$ is a connected subspace of $\comapp(\overline W)$ that contains the endpoints of $e$ but does not contain the entire edge $e$.  Hence $\comapp(P)\cup e$ contains a cycle. Since $\comapp(P)\subset\comapp(\overline W)$ and $e\subset\comapp(\overline W)$, there is a cycle in $\comapp(\overline W)$, contradicting Proposition~\ref{prop:approximation_is_tree}.

Similarly, in the second case, $\comapp(P)$ is disjoint from $e$, so that $\comapp(P)\cup e\cup\comapp(T_0)\cup\comapp(T_k)$ is a subspace of $\comapp(\overline W)$ that contains a cycle.  In the third case, the contradictory subspace is $\comapp(P)\cup\comapp(T_k)\cup e$.
\end{proof}

We note the following corollary:

\begin{cor}\label{cor:quasiconvexsubgroup}
Let $\mathbb W$ be a set of sub-quasiconvex immersed walls such that $\mathbb W$ has the ladder overlap property.  Then there exists $L_1$ such that for all $W\in\mathbb W$ with tunnel length $L\geq L_1$, the stabilizer $H_W\leq G$ of $\overline W$ is a quasiconvex, codimension-1 free subgroup.
\end{cor}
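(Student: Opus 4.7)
The plan is to combine the main results of this section with a Bass-Serre argument applied to the tree-of-spaces structure of $\overline W$. Fix $L_1\ge L_0$ large enough that Propositions~\ref{prop:quasiconvexity},~\ref{prop:approximation_is_tree}, and~\ref{prop:tunneliswall} all apply. Codimension-one is then immediate from Proposition~\ref{prop:tunneliswall}: the set $\widetilde X-\overline W$ has exactly two components, each $H_W$-invariant by Remark~\ref{rem:invariant_halfspace}.

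For quasiconvexity, I would transfer quasiconvexity from $\comappn{\overline W}$ (quasi-isometrically embedded by Proposition~\ref{prop:quasiconvexity}) first to $\overline W$ itself and then to an $H_W$-orbit. Direct inspection of $\comapp$ shows that every point of $\overline W$ lies within distance $L+C$ of $\comapp(\overline W)$ for a uniform constant $C$: slopes are sent to paths of the form $d_i'P$ whose image contains the original endpoint, level-parts collapse to their root within $L$ midsegments, and a nucleus in $\widetilde E_n$ is sent via $\tilde\phi^L$ to $\widetilde V_{n+L}$. A standard thin-quadrilateral argument in the $\delta$-hyperbolic space $\widetilde X^1$ then shows that $\overline W$ itself is quasiconvex. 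Since $\overline W/H_W=W$ is compact, $H_W$ acts cocompactly on $\overline W$, so an $H_W$-orbit is coarsely dense in $\overline W$ and hence quasiconvex, yielding quasiconvexity of $H_W$ in $G$.

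For freeness, Remark~\ref{rem:wall_is_tree_of_spaces} endows $\overline W$ with a tree-of-spaces decomposition whose vertex spaces are slopes (arcs) and knockouts, and whose edge spaces are the single points where a slope meets an adjacent knockout. The associated Bass-Serre tree $\mathcal T$ carries a cocompact $H_W$-action, and I would analyze stabilizers: edge stabilizers are trivial since $G$ acts freely on points of $\widetilde X$; slope stabilizers are trivial since $G$ is torsion-free and no element can invert an arc. For a knockout $\widetilde K$, I would argue that its $q$-image is contained in a bounded interval, and that its nuclei all lie in a common $F$-invariant subgraph of some $\widetilde E_m$, so any $g\in\stabilizer_G(\widetilde K)$ has trivial $z$-exponent and must preserve $\widetilde V_m$; this forces $g\in\stabilizer_G(\widetilde V_m)$, a conjugate of $F$ and hence free. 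Bass-Serre theory with trivial edge groups and free vertex groups then gives that $H_W$ is free. The main obstacle is this knockout-stabilizer step: in the ascending HNN case the fiber $\ker(G\to\integers)$ need not itself be free, so one must use the nucleus structure to pin knockout stabilizers down inside a single conjugate of $F$ rather than settling for membership in the whole fiber.
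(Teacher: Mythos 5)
The paper states Corollary~\ref{cor:quasiconvexsubgroup} with no proof at all, presenting it as an immediate consequence of Propositions~\ref{prop:quasiconvexity} and~\ref{prop:tunneliswall} together with the tree-of-spaces structure of Remark~\ref{rem:wall_is_tree_of_spaces}; your argument correctly supplies exactly the intended details (quasiconvexity of $H_W$ via the $O(L)$ Hausdorff distance between $\overline W$ and its quasi-isometrically embedded approximation plus cocompactness, codimension one from the wall property, and freeness via Bass--Serre on the tree of Remark~\ref{rem:wall_is_tree_of_spaces} with point edge-spaces and hence trivial edge stabilizers). The only step deserving one extra line is the knockout-stabilizer claim: rather than arguing that the nuclei of $\widetilde K$ lie in a common translate of $\widetilde V_m$ (which is not obvious in the non-surjective case, where several components of the vertical preimage flow forward into one), it is cleaner to note that $\comapp(\widetilde K)$ is a connected vertical set, hence contained in a single $G$-translate of $\widetilde V_0$, and that $\stabilizer_G(\widetilde K)$ preserves it by equivariance of $\tilde\phi^L$, which places $\stabilizer_G(\widetilde K)$ inside a conjugate of $F$ as you require.
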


\section{Cutting geodesics}\label{sec:cutting_geodesics}
In this section, we recall the criterion for cocompact cubulation of hyperbolic groups given in~\cite{BergeronWise} and describe how a sufficiently rich collection of quasiconvex walls in $\widetilde X$ ensures that this criterion is satisfied.

\newcommand{\moustache}[1]{\overline{{#1}}_L}

\subsection{Separating points on $\visual\widetilde X$}\label{subsec:cutting_definition}
Let $\visual\widetilde X$ denote the Gromov boundary of $\widetilde X^1$.  Let $W\rightarrow X$ be an immersed wall with the property that $\comappn{\overline W}$ is quasiconvex in $\widetilde X^1$ and $\overline W$ is a wall.  Let $\leftw$ and $\rightw$ be the components of $\widetilde X-\overline W$, and let $N(\leftw),N(\rightw)$ be the smallest subcomplexes containing $\leftw,\rightw$ respectively.  Then $N(\leftw)^1\cap N(\rightw)^1=N(\overline W)^1$, which is coarsely equal to $\comappn{\overline W}$.  Let $\visual\overline W$ denote $\visual N(\overline W)^1=\visual\comappn{\overline W}$, which is a closed subset of $\visual\widetilde X$ since $\comappn{\overline W}$ is quasiconvex in $\widetilde X^1$.  Let $\visual\leftw=\visual N(\leftw)^1-\visual\overline W$ and let $\visual\rightw=\visual N(\rightw)^1-\visual\overline W$, so that $\visual\leftw$ and $\visual\rightw$ are disjoint open subsets of $\visual\widetilde X$.  Note that $\visual\leftw$ and $\visual\rightw$ are $H_W$-invariant, since $N(\leftw)$ and $N(\rightw)$ are $H_W$-invariant by Remark~\ref{rem:invariant_halfspace}.

Let $p,q\in\visual\widetilde X$ be the endpoints of a bi-infinite geodesic $\gamma:\mathbf R\rightarrow\widetilde X^1$.  Then $\gamma$ is \emph{cut} by $\overline W$ if $p\in\visual\leftw$ and $q\in\visual\rightw$ or vice versa.

The following holds by~\cite[Thm~1.4]{BergeronWise}:

\begin{prop}\label{prop:cutting_criterion}
Suppose that for every geodesic $\gamma:\mathbf R\rightarrow\widetilde X^1$, there exists a wall $\overline W$ of the type described in Section~\ref{sec:immersed_walls}, such that $N(\overline W)$ is quasiconvex and such that $\overline W$ cuts $\gamma$.  Then there exists a $G$-finite collection $\{\overline W\}$ of walls in $\widetilde X$ such that $G$ acts freely and cocompactly on the dual CAT(0) cube complex.
\end{prop}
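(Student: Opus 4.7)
The plan is to verify the two hypotheses of the Bergeron--Wise cubulation criterion \cite[Thm~1.4]{BergeronWise} and then invoke it directly.

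First, I would reformulate the geodesic-cutting hypothesis as a boundary-separation statement. Since $\widetilde X^1$ is $\delta$-hyperbolic, any two distinct points $p,q\in\visual\widetilde X$ are joined by some bi-infinite geodesic $\gamma\co\mathbf R\to\widetilde X^1$. By hypothesis, there is a wall $\overline W$ of the type constructed in Section~\ref{sec:immersed_walls} cutting $\gamma$, with $N(\overline W)$ quasiconvex; hence $p$ and $q$ lie in $\visual\leftw$ and $\visual\rightw$ respectively (or vice versa), so $\overline W$ separates $p$ from $q$ in $\visual\widetilde X$. Moreover, $\stabilizer_G(\overline W)$ is a quasiconvex codimension-1 subgroup of $G$, by Corollary~\ref{cor:quasiconvexsubgroup} together with the fact that $N(\overline W)$ is coarsely equal to $\comappn{\overline W}$.

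Second, I would extract a $G$-finite separating family. The diagonal action of $G$ on the space of distinct ordered pairs in $\visual\widetilde X$ is cocompact, by word-hyperbolicity of $G$. For each pair $(p,q)$, the first step supplies a wall $\overline W_{p,q}$ whose two boundary sets provide an open neighborhood $\visual\leftw_{p,q}\times\visual\rightw_{p,q}$ of $(p,q)$ inside which $\overline W_{p,q}$ separates every pair. Cocompactness then selects finitely many $G$-orbits of walls $\overline W_1,\dots,\overline W_k$ whose $G$-translates separate every pair of distinct points in $\visual\widetilde X$.

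Finally, I would invoke \cite[Thm~1.4]{BergeronWise}, which asserts that a word-hyperbolic group $G$ equipped with a $G$-finite collection of quasiconvex codimension-1 walls separating every pair of boundary points acts properly and cocompactly on the dual CAT(0) cube complex; torsion-freeness of $G$ then upgrades properness to freeness. The only step not purely formal is the extraction of a finite orbit-family from an a priori non-finite collection of walls, but this follows from cocompactness of the $G$-action on distinct boundary pairs together with openness of the boundary halfspaces $\visual\leftw$ and $\visual\rightw$ in $\visual\widetilde X$, which in turn uses the quasiconvexity of $\comappn{\overline W}$ established in Proposition~\ref{prop:quasiconvexity}.
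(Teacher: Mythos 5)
Your argument is correct and is essentially the paper's: the paper proves this proposition by direct appeal to \cite[Thm~1.4]{BergeronWise}, and your reformulation of geodesic-cutting as boundary separation, followed by the cocompactness-on-distinct-pairs extraction of a $G$-finite family, is precisely the content of that citation (together with torsion-freeness of the ascending HNN extension to upgrade properness to freeness). No gap.
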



\subsection{A method for cutting the two types of geodesics}\label{subsec:5.2}

\begin{defn}[Ladderlike, deviating]\label{defn:level_like_deviating}
Let $M\geq 0$ and let $\gamma\subset\widetilde X^1$ be an embedded infinite or bi-infinite path whose image is $\xi$-quasiconvex, for some $\xi\geq 0$.  Then $\gamma$ is \emph{$M$-ladderlike} if there exists a forward ladder $N(\sigma)$, where $\sigma$ is a forward path of length $M$, such that a geodesic of $N(\sigma)$ joining the endpoints of $\sigma$ fellow-travels with a subpath of $\gamma$ at distance $2\delta+\lambda+\xi$.  Here, $\widetilde X^1$ is $\delta$-hyperbolic and $\lambda$ is the constant from Proposition~\ref{prop:forward_ladder_quasiconvex}.  Otherwise, $\gamma$ is \emph{$M$-deviating}.
\end{defn}

Note that if $\gamma$ is $M$-deviating, for each $R\geq 0$ there exists $M_R$ depending only on $\xi,M,R$ such that for all forward paths $\sigma$, we have $\diam(\gamma\cap\neb_R(N(\sigma)^1))\leq M_R$.  Moreover, if $\gamma$ is $2M$-deviating, the same bound holds with $\sigma$ replaced by any level, since any geodesic in a level decomposes as the concatenation of two (possibly trivial) forward paths.

\begin{defn}[Many effective walls]\label{defn:many_effective_walls}
A set $\mathbb W$ of immersed walls in $X$ is \emph{spreading} if:
\begin{itemize}
  \item For arbitrarily large $L$, there exists $W\in\mathbb W$ with tunnel length $L$.
  \item $\mathbb W$ has the ladder overlap property of Definition~\ref{defn:ladder_overlap_property}.
\end{itemize}
$\widetilde X$ has \emph{many effective walls} if Conditions~\eqref{item:bust_point} and~\eqref{item:w_a_periodic} below hold.

\begin{enumerate}
 \item \label{item:bust_point}  For each regular $y\in V$, there exists a spreading set $\mathbb W$ such that for each $\epsilon>0$ and each $m\in\naturals$, there exists $L>m$ and $W\in\mathbb W$ with tunnel length $L$, a primary bust in each edge of $V$, and a primary bust in the $\epsilon$-neighborhood of $y$.


 \item \label{item:w_a_periodic}  For each $a\in\widetilde V_0$, whose image in $V$ is periodic and whose corresponding point in $\widetilde E_0$ is denoted by $a'$, there exists $k=k(a)\geq 0$ such that the following set $\mathbb W_a$ of immersed walls in $X$ is uniformly sub-quasiconvex and spreading: $\mathbb W_a$ is the set of $W$ such that each edge of $V$ contains a primary bust of $W$, and such that for each primary bust $d'$ of $\overline W\cap\widetilde E_0$ (corresponding to an interval $d\subset\widetilde V_0$) that is joined to $a'\in\widetilde E_0$ by a path in a knockout of $\overline W$, we have $\dist_{\widetilde X^1}(\tilde\phi^n(a),\tilde\phi^n(d))\geq3\delta+2\lambda$ for all $n\geq k$.  See Figure~\ref{fig:mew}.
\end{enumerate}
\end{defn}

\begin{figure}
\begin{overpic}[scale=0.10]{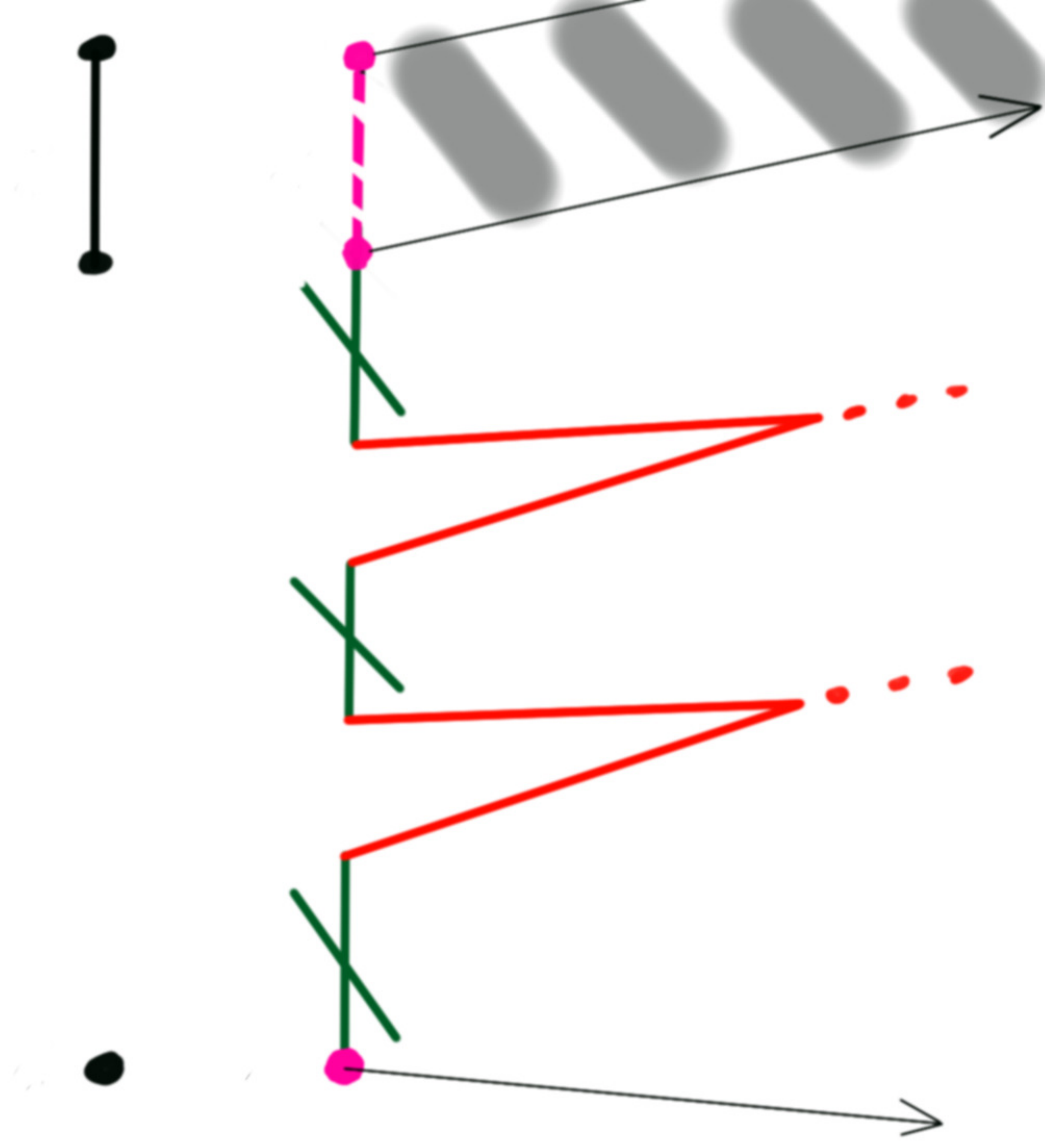}
\put(1,85){$d$}
\put(1,2){$a$}
\put(20,85){$d'$}
\put(19,2){$a'$}
\end{overpic}
\caption{Definition~\ref{defn:many_effective_walls}.\eqref{item:w_a_periodic}.}
\label{fig:mew}
\end{figure}

\begin{rem}\label{rem:uniform_k}
The constant $k$ in Definition~\ref{defn:many_effective_walls} can be chosen independently of the point $a$.  For each $a$, let $k'(a)$ be chosen so that for each bust $d$ and each $n\geq k'(a)$, we have $\dist_{\widetilde X^1}(\tilde\phi^n(a),\tilde\phi^n(d))\geq3\delta+2\lambda+1$.  This is possible since the existence of $k(a)$ implies that the forward rays emanating from $a$ and any point of $d$ diverge.  Fix $\epsilon\in(0,1)$ and let $U_a=(\tilde\phi^{k'(a)})^{-1}(\neb_{\epsilon}(\tilde\phi^{k'(a)}(a))$.  Then for each $b\in U_a$, we have $\dist_{\widetilde X^1}(\tilde\phi^n(b),\tilde\phi^n(d))\geq3\delta+2\lambda+1-\epsilon>3\delta+2\lambda$ for each $n\geq k'(a)$.  Hence we have $k(b)\leq k'(a)$.  The closure of the subset $V^{\circlearrowright}\subseteq V$ consisting of periodic points is compact since $V$ is compact.  Hence the claim follows since the open covering of $V^{\circlearrowright}$ produced above has a finite subcovering.

Similarly, since the collection $\mathbb W_a$ has the ladder overlap property, there is likewise a constant $B_a$ such that for all $W\in\mathbb W_a$, any two tunnels $T,T'$ of $\overline W$ that are joined by a path in $\overline W$ not traversing a slope have the property that the $(3\delta+2\lambda)$-neighborhoods of $\comapp(T),\comapp(T')$ intersect in a set of diameter at most $B_a$.  Let $V_a$ be the set of images in $V$ of points $b\in \widetilde V$ such that for all $W\in\mathbb W_a$, the point $b$ lies in a nucleus of some $\overline W$ and for all primary busts $d$ of $\overline W$, we have $\dist_{\widetilde X^1}(\tilde\phi^n(a),\tilde\phi^n(d))\geq3\delta+2\lambda$ for all $n\geq k$.  The previous argument showed that, with $k$ chosen appropriately, the set $V_a$ is open.  It follows that if $\widetilde X$ has many effective walls, the ladder overlap constant $B_a$ can be chosen independently of $a$.  This is used in the proof of Proposition~\ref{prop:cutting_ladderlike}.
\end{rem}

\begin{defn}[Separating level]\label{defn:separating_level}
$\widetilde X$ is \emph{$(M,K)$-separated} if for each $M$-deviating geodesic $\gamma$ there exists $y\in\widetilde X$ such that the following holds for all sufficiently large $n$: the set $\gamma\cap T^o_n(\tilde\phi^n(y))$ has odd cardinality, and the distance in $T^o_n(\tilde\phi^n(y))$ from $\gamma\cap T^o_n(\tilde\phi^n(y))$ to the root or to any leaf of $T^0_n(\tilde\phi^n(y))$ exceeds $M+K$.  We say $\widetilde X$ is \emph{level-separated} if it is $(M,K)$-separated for all $M>0,K\geq0$.
\end{defn}

\begin{rem}\label{rem:nearly_fixed}
If the level $T_n^o(\tilde\phi^n(y))$ separates $\gamma$ in the above sense, then we can choose $y$ so that the image $\bar y\in V$ of $y$ is not periodic.  Indeed, if $y',y$ are sufficiently close, then $T_n^o(\tilde\phi^n(y))$ and $T_n^o(\tilde\phi^n(y'))$ both separate $\gamma$.  There are points $y'$ arbitrarily close to $y$ whose images in $V$ are not periodic since there are only countably many periodic points.
\end{rem}

\begin{defn}[Bounded level-intersection]\label{defn:bounded_level_intersection}
$\widetilde X$ has \emph{bounded level-intersection} if for each $z\in\widetilde X^1$ and each vertical edge $e\subset\widetilde X^1$, there exists $K=K(z,e)$ such that for every level $T$ with a leaf at $z$, we have $|T\cap e|\leq K$.
\end{defn}

\begin{rem}\label{rem:bounded_level_intersection}
In the case of greatest interest, where $X$ is the mapping torus of a train track map, each level intersects each vertical edge in at most a single point, and hence $\widetilde X$ has bounded level-intersection.  This holds in particular for the complexes $\widetilde X$ considered in Theorem~\ref{thm:irreducible}.  More generally, this holds whenever there is a continuous map from $\widetilde X$ to an $\reals$-tree that is constant on levels and sends edges to concatenations of finitely many arcs.
\end{rem}

\begin{defn}[Exponentially expanding]\label{defn:exponentially_expanding}
The train track map $\phi:V\rightarrow V$ is \emph{exponentially expanding} if there exists an \emph{expansion constant} $\varpi>1$ such that for all edges $e$ of $V$ and all arcs $\alpha\subset e$, and all $L\geq 0$, we have $|\phi^L(\alpha)|\geq\varpi^L|\alpha|$.  Note that if $\phi$ is an irreducible train track map and edges are expanding, then $\phi$ is exponentially expanding, as can be seen by taking $\varpi$ to be the Perron-Frobenius eigenvalue of the transition matrix of $\phi$.  See Section~\ref{sec:forward_space_in_train_track_case} for more on the eigenvalues of the transition matrix.
\end{defn}

The main result of this section is:

\begin{prop}\label{prop:everything_cut}
Suppose that $\phi:V\rightarrow V$ is a $\pi_1$-injective train track map.  Let $X$ be the mapping torus of $\phi$.  Suppose that $\pi_1X$ is word-hyperbolic and that $\widetilde X$ satisfies:
\begin{enumerate}
 \item $\widetilde X$ is level-separated.
 \item $\widetilde X$ has many effective walls.
 \item Every finite forward path fellow-travels at uniformly bounded distance with a periodic forward path.
 \item $\phi$ is exponentially expanding.
\end{enumerate}
Then $G$ acts freely and cocompactly on a CAT(0) cube complex.
\end{prop}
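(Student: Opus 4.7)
The plan is to apply Proposition~\ref{prop:cutting_criterion}: it suffices to verify that every bi-infinite geodesic $\gamma \colon \reals \to \widetilde X^1$ is cut by some wall $\overline W$ arising from an immersed wall of the type described in Section~\ref{sec:immersed_walls}, with $\comappn{\overline W}$ quasiconvex. For each such $\gamma$, I fix a sufficiently large constant $M$ (depending on the constants from level-separation, uniform sub-quasiconvexity, and the ladder-overlap property) and dichotomize into the cases that $\gamma$ is $M$-ladderlike or $2M$-deviating.

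In the deviating case, I apply Condition~(1) to obtain $y \in \widetilde X$ such that for all large $n$, the level $T^o_n(\tilde\phi^n(y))$ meets $\gamma$ in an odd number of points, each at distance exceeding $M$ plus a safety constant from the root and leaves of the level. By Remark~\ref{rem:nearly_fixed} I may arrange that $\bar y \in V$ is non-periodic. Condition~\eqref{item:bust_point} of many effective walls then supplies a spreading family $\mathbb W$ containing walls $W$ of arbitrarily large tunnel length $L$ with a primary bust in every edge of $V$ and one in an arbitrarily small neighborhood of $\bar y$. Taking $L = n$ large enough for Corollary~\ref{cor:quasiconvexsubgroup} to apply, $\overline W$ is a quasiconvex wall, and a level of $\overline W$ agrees with $T^o_n(\tilde\phi^n(y))$ up to a small perturbation of its root. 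Because $\gamma$ meets that level an odd number of times in its interior and the $2M$-deviating hypothesis combined with Proposition~\ref{prop:forward_ladder_quasiconvex} prevents $\gamma$ from returning to a neighborhood of $\comappn{\overline W}$ after crossing, the two ends of $\gamma$ lie in opposite components of $\widetilde X - \overline W$.

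In the ladderlike case, a subpath of $\gamma$ fellow-travels with a geodesic of a forward ladder $N(\sigma)$. By Condition~(3), $\sigma$ fellow-travels at uniformly bounded distance with a periodic forward path, which I translate to originate at a point $a \in \widetilde V_0$ with $\phi$-periodic image. I then take $W \in \mathbb W_a$ from Condition~\eqref{item:w_a_periodic}, with tunnel length $L$ large enough for Corollary~\ref{cor:quasiconvexsubgroup} to apply. The point $a$ sits in a nucleus of $\overline W$; the defining property of $\mathbb W_a$ together with the uniform constants from Remark~\ref{rem:uniform_k} give that $\dist(\tilde\phi^n(a), \tilde\phi^n(d)) \geq 3\delta + 2\lambda$ for every primary bust $d$ in the same knockout and every $n \geq k$. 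A thin-quadrilateral argument in the $\delta$-hyperbolic graph $\widetilde X^1$, analogous to those in the proofs of Lemmas~\ref{lem:hsuwiseRBR} and~\ref{lem:RBRquasi}, then forces the ladder $N(\sigma)$ to cross $\overline W$ exactly once transversely; since $\gamma$ closely fellow-travels $N(\sigma)$ on a long subpath and the quasiconvexity of $\comappn{\overline W}$ prevents $\gamma$ from reentering the opposite halfspace, the two ends of $\gamma$ lie in distinct components of $\widetilde X - \overline W$.

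The main obstacle is the ladderlike case. One must argue not merely that $\gamma$ crosses $\overline W$ transversely once but that neither half-ray of $\gamma$ subsequently re-enters the opposite halfspace, and this requires the quantitative divergence estimate along periodic forward orbits that is built into the definition of $\mathbb W_a$. This divergence in turn rests crucially on the exponential expansion of $\phi$ (Condition~(4)), which ensures that distinct forward trajectories spread apart at a definite rate; without it, the uniform constant $k$ of Remark~\ref{rem:uniform_k} need not exist. Once the crossing has been established in both cases, Proposition~\ref{prop:cutting_criterion} delivers the desired free, cocompact action of $G$ on the dual CAT(0) cube complex.
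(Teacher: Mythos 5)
Your overall architecture coincides with the paper's: Proposition~\ref{prop:everything_cut} is proved there by combining Proposition~\ref{prop:cutting_deviating} (deviating geodesics are cut), Proposition~\ref{prop:cutting_ladderlike} (ladderlike geodesics are cut), and the criterion of Proposition~\ref{prop:cutting_criterion}, with bounded level-intersection for the deviating case supplied by the train track hypothesis. However, your inline treatments of the two cases have genuine gaps. In the deviating case, odd intersection of $\gamma$ with the separating level does not by itself place the two ends of $\gamma$ in distinct halfspaces of $\overline W$. The level is a tree with many maximal forward paths, whereas the slope-approximation $\comapp(S)$ of the wall is a single forward path; one must first modify $\gamma$ (via the partial order on the intersected edges and the confluence-class argument) so that a \emph{single} maximal forward path of the level meets $\gamma$ in an odd-cardinality set $C_o$, and then choose the primary bust so that $\comapp(S)$ contains such a path. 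Even then, determining which side of $\overline W$ each end of $\gamma$ lies on requires the halfspaces $\lefta,\righta$ in ${\widetilde X^\bullet}_L$ built from discrepancy zones (Proposition~\ref{prop:approximation_separates}), control of where augmentation apexes of the lifted geodesic can meet $\comapp(\overline W_L)$, the flowed path $\hat\eta$, and the narrow exceptional zones of Lemma~\ref{lem:exceptional_no_vertex} --- this last point is where exponential expansion actually enters, not (as you suggest) in producing the uniform $k$ of Remark~\ref{rem:uniform_k}, which comes from compactness of $V$. ``Quasiconvexity prevents $\gamma$ from returning'' addresses recurrence, but not this parity and side-determination issue.

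In the ladderlike case your picture of the crossing is also not right: the geodesic does not cross $\overline W$ transversely through a slope, and there is no step at which ``$N(\sigma)$ crosses $\overline W$ exactly once'' in the sense you describe. The paper replaces the ladderlike subpath of $\gamma$ by the genuine forward path $x,\tilde\phi(x),\ldots,\tilde\phi^M(x)$ through the periodic point $a=\tilde\phi^{M/2}(x)$, arranges via Condition~\eqref{item:w_a_periodic} that $a$ lies in the interior of a nucleus, and then observes in ${\widetilde X^\bullet}_L$ that $\hat x_{M/2+L-1}$ lies in a downward discrepancy zone (hence in $\lefta$) while $\hat x_{M/2+L+1}\in\righta$: the separation occurs at a vertical (nucleus) part of the wall. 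One must also verify that $\sigma\vee_{\tilde\phi^L(a)}\comapp(\overline W)$ embeds and that no augmentation apex meets $\comapp(\overline W_L)$, which is where the requirement $M>JL$ and Lemma~\ref{lem:RBRquasi} are used. So while your case decomposition and your matching of hypotheses to cases are correct, neither case is actually closed as written.
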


\begin{proof}
Proposition~\ref{prop:cutting_ladderlike} shows that there exists $M$ such that every $M$-ladderlike geodesic is cut by a wall.  Proposition~\ref{prop:cutting_deviating} shows that each $M$-deviating geodesic is cut by a wall; Proposition~\ref{prop:cutting_deviating} requires $\widetilde X$ to have bounded level-intersection, which is the case since $\phi$ is a train track map.  The claim then follows from Proposition~\ref{prop:cutting_criterion} since each geodesic that is not $M$-ladderlike is by definition $M$-deviating.
\end{proof}

\begin{conv}
In the remainder of this section, $\phi:V\rightarrow V$ is assumed to satisfy the initial hypotheses of Proposition~\ref{prop:everything_cut}, except that the enumerated hypotheses will be invoked as needed.
\end{conv}

\subsection{Walls in $\widetilde X_L$}\label{subsec:walls_in_long_space}
Let $W\rightarrow X$ be an immersed wall with tunnel length $L\geq 1$, and suppose that $\overline W$ is a wall and $\comappn{\overline W}$ is $(\kappa_1,\kappa_2)$-quasi-isometrically embedded and $\kappa$-quasiconvex.  Each primary bust has regular endpoints, by Lemma~\ref{lem:choosing_busts_1}.\eqref{item:regular_endpoints}, so that each level-part of $\overline W$ is disjoint from $\widetilde X^0$.   Similarly,  $\widetilde X^0$ is disjoint from  $\comapp(S)$ for each slope $S$ of $\overline W$.

Recall that $\widetilde X^{\bullet}_L$ denotes the subdivision of $\widetilde X_L$ obtained by pulling back the 1-skeleton of $\widetilde X$.  For each $n\in\integers$, the inclusion $\widetilde V_{nL}\hookrightarrow\widetilde X$ lifts to an embedding $\widetilde V_{nL}\hookrightarrow({\widetilde X^\bullet}_L)^1$, and we continue to use the notation $\widetilde V_{nL}$ for this subspace.  We make the same observation and convention about $\widetilde E_{nL}$.  By translating, we can assume that $\overline W$ has a primary bust in $\widetilde V_0$, and hence all primary busts in $\overline W$ lie in the various $\widetilde V_{nL}$ and the map $\widetilde W\rightarrow\widetilde X$ lifts to $\widetilde W\rightarrow\widetilde X^{\bullet}_L$.  Let $\overline W_L$ be the image of $\widetilde W\rightarrow\widetilde X^{\bullet}_L$, so that we have the commutative diagram:
\begin{center}
$
\begin{diagram}
\node{\moustache{W}}\arrow{e}\arrow{s}\node{{\widetilde X^\bullet}_L}\arrow{s}\\
\node{\overline W}\arrow{e}\node{\widetilde X}
\end{diagram}
$
\end{center}

\renewcommand{\leftw}{\overleftarrow W_L}
\renewcommand{\rightw}{\overrightarrow W_L}

Note that $\moustache{W}$ and $\overline W$ are very similar: each tunnel $\moustache T$ of $\moustache{W}$ consists of a slope and a level-part that is a (subdivided) star, and $\overline W$ is obtained from $\moustache{W}$ by folding each such subdivided star into a tree.  The halfspaces $\leftw,\rightw$ in ${\widetilde X^\bullet}_L$ associated to $\overline W_L$ respectively map to the halfspaces $\overleftarrow W,\overrightarrow W$ in $\widetilde X$.

The approximation map $\comapp$ is defined in $\widetilde X_L$ just as it is in $\widetilde X=\widetilde X_1$. Consider $\comapp:\moustache{W}\rightarrow\widetilde X_L$, which is a lift of $\comapp:\overline W\rightarrow\widetilde X$.  There is a corresponding commutative diagram:
\begin{center}
$
\begin{diagram}
\node{\comapp(\moustache{W})}\arrow{e}\arrow{s}{}\node{{\widetilde X^\bullet}_L}\arrow{s}\\
\node{\comapp(\overline W)}\arrow{e}\node{\widetilde X}
\end{diagram}
$
\end{center}
in which the map $\comapp(\overline W_L)\rightarrow\comapp(\overline W)$ is an isomorphism.  
Thus $\comapp(\overline W)\rightarrow\widetilde X$ lifts to an embedding $\comapp(\overline W)\rightarrow\widetilde X^{\bullet}_L$ whose image is $\comapp(\overline W_L)$.  Figure~\ref{fig:juxtaposition} depicts $\moustache{W}$ and $\comapp(\overline W_L)$.

There is also a lift $\comappn{\overline W}\hookrightarrow{\widetilde X^\bullet}_L$.  Since $\comappn{\overline W}\hookrightarrow\widetilde X^1$ factors as $\comappn{\overline W}\hookrightarrow({\widetilde X^\bullet}_L)^1\rightarrow\widetilde X^1$ and since $({\widetilde X^\bullet}_L)^1\rightarrow\widetilde X^1$ is distance nonincreasing, $\comappn{\overline W}\rightarrow({\widetilde X^\bullet}_L)^1$ is a $(\kappa_1,\kappa_2)$-quasi-isometric embedding.  Thus $\visual\comappn{\overline W}$ embeds in $\visual{\widetilde X^\bullet}_L$ as a closed subset.

The following proposition explains that the tree $\comapp(\overline W_L)$ determines a wall in ${\widetilde X^\bullet}_L$, and therefore determines a coarse wall in $\widetilde X$ that coarsely agrees with $\overline W$.
\begin{figure}
\includegraphics[width=0.35\textwidth]{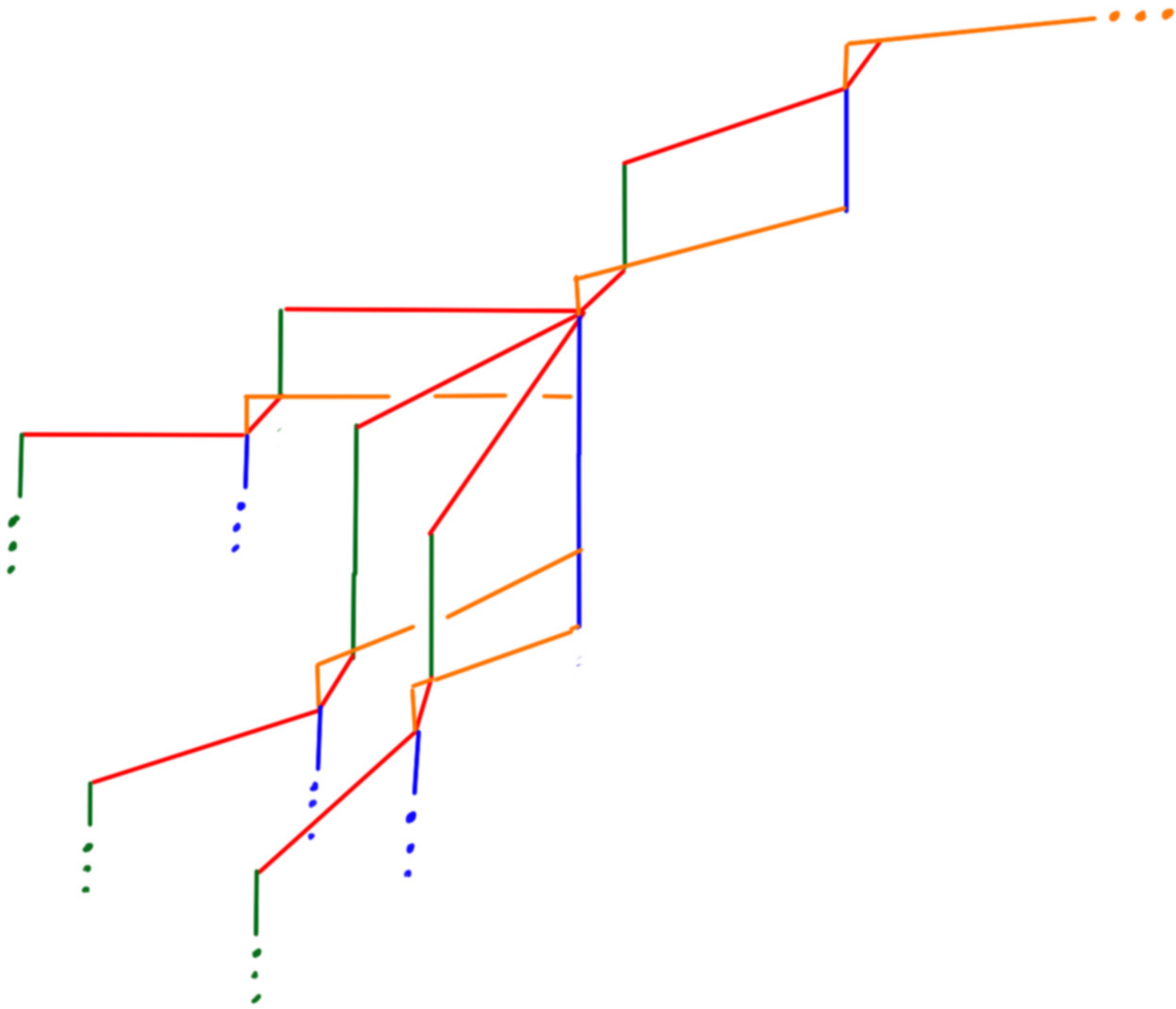}\\
\caption{$\moustache{W}$ and $\comapp(\overline W)$ inside ${\widetilde X^\bullet}_L$.}\label{fig:juxtaposition}
\end{figure}

\begin{prop}\label{prop:approximation_separates}
${\widetilde X^\bullet}_L$ contains subspaces $\lefta,\righta$ such that $\lefta\cup\righta={\widetilde X^\bullet}_L$ and $\lefta\cap\righta=\comapp(\overline W_L)$.  Both $\lefta-\comapp(\overline W_L)$ and $\righta-\comapp(\overline W_L)$ are connected.  Moreover, the images of $\lefta$ and $\righta$ under the map ${\widetilde X^\bullet}_L\rightarrow\widetilde X$ are coarsely equal to $\leftw$ and $\rightw$.
\end{prop}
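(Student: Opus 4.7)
The plan is to realize $\lefta$ and $\righta$ as the closures of the two components of ${\widetilde X^\bullet}_L \setminus \comapp(\overline W_L)$, leveraging that $\comapp(\overline W_L)$ is a tree (Proposition~\ref{prop:approximation_is_tree}) embedded in the simply connected $2$-complex ${\widetilde X^\bullet}_L$.  The main task is to promote the local two-sidedness of $\comapp(\overline W_L)$ to a global separation, and then to compare the resulting halfspaces with $\leftw, \rightw$.

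First I would construct a thin product neighborhood $\comapp(\overline W_L) \times [-1,1] \hookrightarrow {\widetilde X^\bullet}_L$, assembled from three kinds of pieces.  Each component $\mathbf K$ of $\comapp(\overline W_L) \cap \widetilde V_n$ is a subtree of the tree $\widetilde V_n$ (by Lemma~\ref{lem:avoids_bust}), and inherits a thin slab via the horizontal (time) direction.  Each primary-bust segment $d_i' \subset \widetilde V_n$ occurring inside a slope approximation admits the same horizontal thickening.  Each length-$L$ forward path in a slope approximation is a concatenation of midsegments across a sequence of short $2$-cells of ${\widetilde X^\bullet}_L$, and the adjacent midsegments in those cells supply a two-sided thickening inside the forward ladder.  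These local thickenings glue consistently at the junctions because the approximations of slopes rooted at distinct primary busts are disjoint (Remark~\ref{rem:distinct_slopes_disjoint_approxes}) and because $\comapp(\overline W_L)$ is a tree, which supplies a coherent transverse orientation throughout.

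With the product neighborhood in place, simple connectivity of ${\widetilde X^\bullet}_L$ together with connectedness of the tree $\comapp(\overline W_L)$ forces ${\widetilde X^\bullet}_L \setminus \comapp(\overline W_L)$ to have exactly two components; this can be extracted from a Mayer--Vietoris computation for the open cover of ${\widetilde X^\bullet}_L$ by $\comapp(\overline W_L) \times (-1,1)$ and ${\widetilde X^\bullet}_L \setminus \comapp(\overline W_L)$, or equivalently from a mod-$2$ signed-intersection function on the complement.  Let $\lefta, \righta$ be the closures of the two components.  For the coarse identification with $\leftw, \rightw$, I would observe that every point of $\moustache W$ lies within uniformly bounded distance of $\comapp(\overline W_L)$ in $({\widetilde X^\bullet}_L)^1$, so the symmetric difference of $\lefta$ with $\leftw$ is contained in a uniform neighborhood of $\comapp(\overline W_L)$; projecting via ${\widetilde X^\bullet}_L \to \widetilde X$ then yields the required coarse equality of images.

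The principal obstacle I expect is the consistent gluing of local product neighborhoods at the points where a slope approximation meets a knockout approximation: there the $d_i'$-portion of the slope lies inside $\widetilde V_n$ while the forward-path portion emerges transversally across adjacent $2$-cells, and the two thickenings must match at the shared endpoint $p'$.  Parts~(5) and~(6) of Lemma~\ref{lem:choosing_busts_1} ensure that distinct slopes from distinct busts do not conflict, while the tree property from Proposition~\ref{prop:approximation_is_tree} globally orients the choice of $+$-side versus $-$-side and prevents accidental self-identifications from destroying two-sidedness.
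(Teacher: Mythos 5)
Your overall strategy --- produce a product neighborhood $\comapp(\overline W_L)\times[-1,1]$ and then invoke $\homology^1({\widetilde X^\bullet}_L)=0$ (or a Mayer--Vietoris computation) to get exactly two complementary components --- founders at its first step, and the failure is exactly what the paper's proof is designed to circumvent. The complex ${\widetilde X^\bullet}_L$ is not a surface along its vertical $1$-skeleton: a vertical edge $e$ lies on the boundary of exactly one outgoing $2$-cell $R_e$, but on the boundary of an incoming $2$-cell $R_f$ once for every traversal of $e$ by $\phi^L(f)$, so an interior point of $e$ has a neighborhood that is a book with one page on the outgoing side and, in general, many pages on the incoming side (irreducibility and expansion make this unavoidable for large $L$). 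Consequently a knockout approximation $\mathbf K\subset\widetilde V_{nL}$, being a subtree of the vertical tree, has no two-sided collar: deleting it from a small neighborhood leaves one local component per page, not two. The same problem afflicts the forward-path part of a slope approximation: a single forward path crossing a vertical edge with two or more incoming pages fails to separate locally, since one can pass from one side to the other through an unused incoming page. This is precisely why the wall $\overline W$ itself is built from \emph{levels} --- full $\tilde\phi^L$-preimage trees, which sweep up every incoming midsegment and therefore do admit the product neighborhood of Proposition~\ref{prop:properties_of_levels}(2) --- rather than from single forward paths. So the local thickenings you describe do not exist, and the separation argument has nothing to stand on.

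The paper's proof goes the other way around: it starts from the two halfspaces of $\overline W_L$ (which genuinely is two-sided, because of the level/nucleus/slope structure) and \emph{transfers} the separation to $\comapp(\overline W_L)$ by excising and re-attaching the ``discrepancy zones'' lying between $\overline W_L$ and its approximation --- the triangular regions between a slope $S$ and $\comapp(S)$, and the regions $\widetilde C\times[\frac{1}{2},L)$ between a nucleus $\widetilde C$ and its $\tilde\phi^L$-image. This bookkeeping is what correctly sorts the many incoming pages along $\comapp(\overline W_L)$ into the two sides: the pages arriving from $\widetilde C$ are swapped across, while the remaining incoming pages stay wherever $\overline W_L$ already placed them. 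Your closing coarse-comparison step is fine once the halfspaces exist, but the existence argument must be routed through $\overline W_L$ in this way.
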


\begin{proof}
It suffices to produce the subspaces $\lefta,\righta$ so that each is coarsely equal to a component of ${\widetilde X^\bullet}_L-\moustache{W}$.
Let $\leftw,\rightw$ be the closures of the components of ${\widetilde X^\bullet}_L-\moustache{W}$.  The halfspaces $\lefta$ and $\righta$ will be obtained from $\leftw$ and $\rightw$ by adding and subtracting ``discrepancy zones'', which are subspaces between $\moustache{W}$ and $\comapp(\moustache{W})$ suggested by Figure~\ref{fig:juxtaposition}.

\textbf{Discrepancy zones:}  Let $e\subset\widetilde V_{nL}$ be a primary busted edge with outgoing long 2-cell $R_e\subset{\widetilde X^\bullet}_L$.  Let $d\subset e$ be the closed primary bust with endpoints $p,q$.  Let $p',q'$ be the points at distance $\frac{1}{2}$ to the right of $p,q$ within $R_e$.  The slope $S$ travels from $p$ to $q'$, as shown in Figure~\ref{fig:upward_intermediate_zone}.  Let $Z^{\uparrow}$ be the 2-simplex in $R_e$ bounded by $S$ and the part of $\comapp(S)$ between $p$ and $q'$.  The disc $Z^{\uparrow}$ is an \emph{upward discrepancy zone}.

Let $\widetilde C\subset\widetilde E_{nL}$ be a nucleus in $\overline W_L$ and let $\comapp(\widetilde C)\subset\widetilde V_{nL+L}$ be its approximation.  Consider the map $\widetilde C\times[\frac{1}{2},L]\rightarrow{\widetilde X^\bullet}_L$ that restricts to the inclusion $\widetilde C\times\{t\}\hookrightarrow\widetilde V_{nL}\times\{t\}\subset{\widetilde X^\bullet}_L$ for $t<L$ and acts as the map $\tilde\phi^L:\widetilde C\rightarrow\widetilde V_{nL+L}$ on $\widetilde C\times\{L\}$.  The image of this map is a \emph{downward discrepancy zone} $Z^{\downarrow}$.  In other words, $Z^{\downarrow}$ is the closure of $\widetilde C\times[\frac{1}{2},L)$ in $\widetilde X_L^\bullet$.  See Figure~\ref{fig:downward_intermediate_zone}.

\begin{figure}[ht]
\begin{overpic}[scale=0.15]{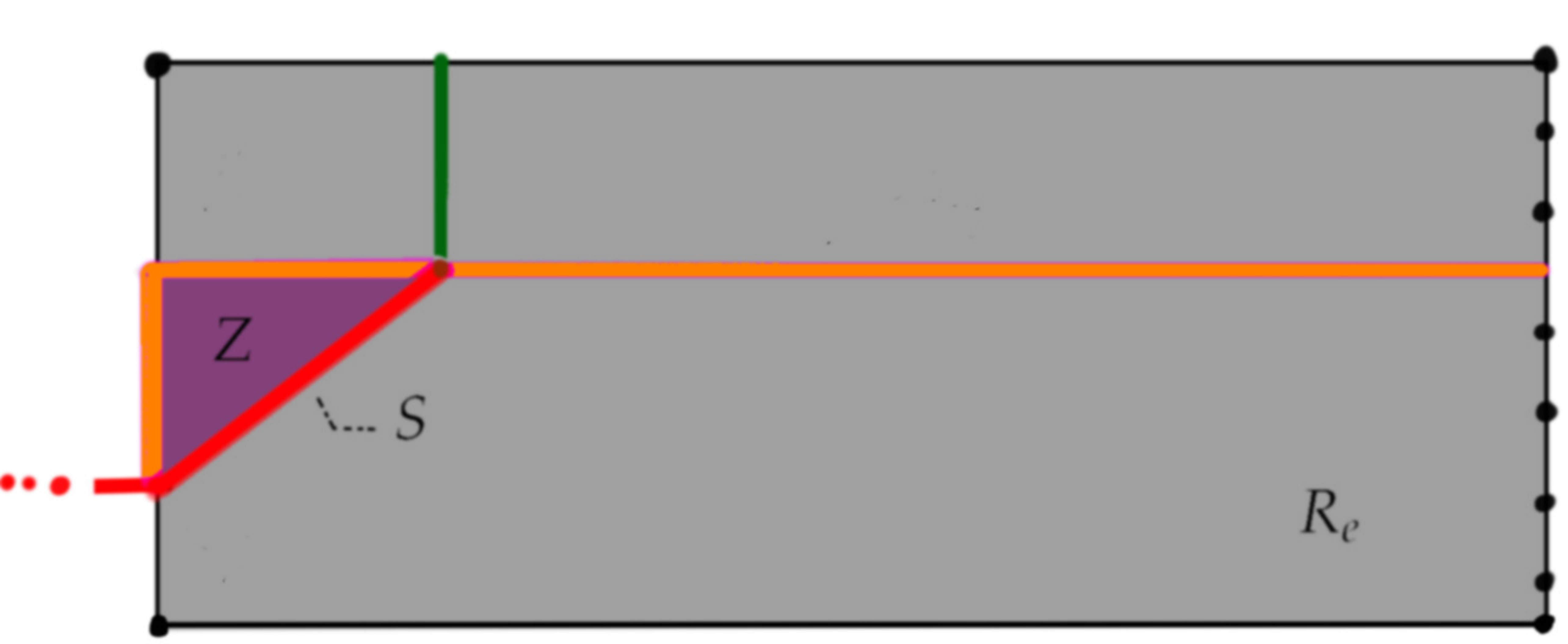}
\put(6,6){$p$}
\put(6,26){$q$}
\put(31,26){$q'$}
\end{overpic}
\caption{An upward discrepancy zone.}\label{fig:upward_intermediate_zone}
\end{figure}

\begin{figure}[ht]
\includegraphics[width=0.5\textwidth]{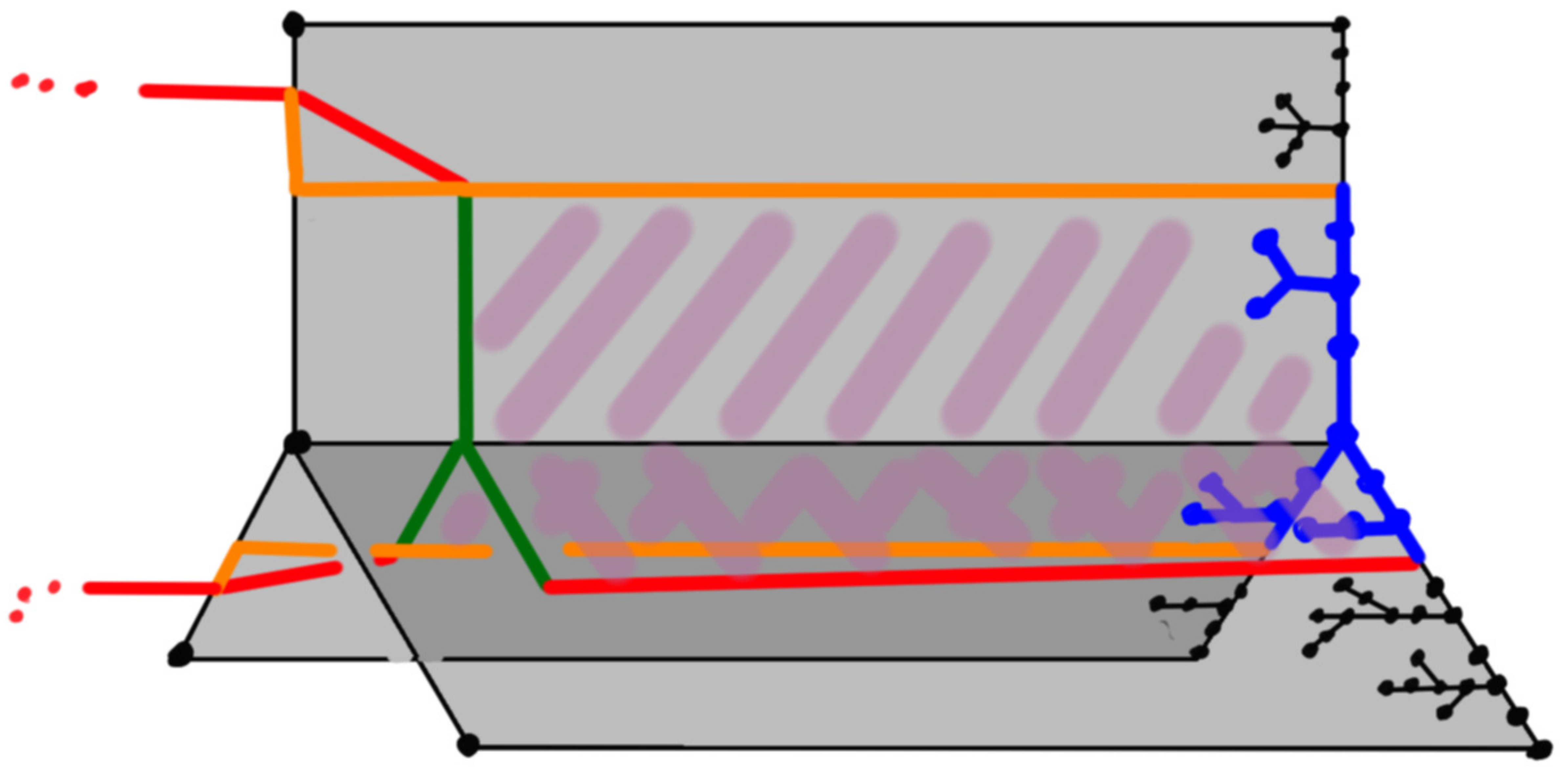}
\caption{A downward discrepancy zone is shaded.}\label{fig:downward_intermediate_zone}
\end{figure}

\textbf{The halfspaces $\lefta$ and $\righta$:}  Let $\mathfrak Z^{\uparrow}$ be the union of all upward discrepancy zones associated to $\comapp(\overline W_L)$, and likewise let $\mathfrak Z^{\downarrow}$ be the union of all downward discrepancy zones.  Let $$\lefta=\closure{\left(\leftw-\mathfrak Z^{\uparrow}\right)\cup\mathfrak Z^{\downarrow}}\text{\,\,\,\,\,\,and\,\,\,\,\,\,}\righta=\closure{\left(\rightw-\mathfrak Z^{\downarrow}\right)\cup\mathfrak Z^{\uparrow}}.$$
Since each discrepancy zone lies at distance less than $L$ from $\overline W_L$, we see that $\lefta$ coarsely equals $\leftw$.  By construction, $\lefta\cup\righta={\widetilde X^\bullet}_L$.  Finally, suppose that $x\in\lefta\cap\righta$.  Then $x$ must lie on the boundary of an discrepancy zone.  If $x\in\overline W_L$, and $x\in\mathfrak Z^{\uparrow}$, then $x\not\in\lefta$ unless $x\in\comapp(\overline W_L)\cap\overline W_L$.  Similarly, if $x\in\overline W_L$ and $x\in\mathfrak Z^{\downarrow}$, then $x\not\in\righta$ unless $x\in\comapp(\overline W_L)\cap\overline W_L$.  Hence $\lefta\cap\righta\subseteq\comapp(\overline W_L)$.  On the other hand, every point in $\comapp(\overline W_L)$ lies in the boundary of an discrepancy zone, and thus $\comapp(\overline W_L)\subseteq\lefta\cap\righta$.

Observe that $\lefta-\comapp(\overline W_L)$ is homeomorphic to $\leftw-\overline W_L$, which is connected.  Likewise $\left(\righta-\comapp(\overline W_L)\right)\cong\left(\rightw-\overline W_L\right)$.  Hence $\lefta-\comapp(\overline W_L)$ and $\righta-\comapp(\overline W_L)$ are connected.
\end{proof}

\renewcommand{\leftw}{\overleftarrow W}
\renewcommand{\rightw}{\overrightarrow W}

\subsection{Lifting and cutting geodesics in ${\widetilde X^\bullet}_L$}\label{subsec:lift_and_cut}
We now describe a criterion ensuring that a given geodesic in $\widetilde X$ is cut by a wall, in terms of quasigeodesics and walls $(\lefta, \righta)$ in ${\widetilde X^\bullet}_L$.

\subsubsection{Lifted augmentations of geodesics}\label{subsubsec:lifted_augmentations}
The following construction adjusts a bi-infinite quasigeodesic $\gamma\rightarrow\widetilde X^1$ so that it can be lifted to a bi-infinite quasigeodesic $\widehat{\gamma_{_{\succ}}}\rightarrow{\widetilde X^\bullet}_L$ such that $\gamma$ and $\widehat{\gamma_{_{\succ}}}$ determine the same pair of points in $\visual\widetilde X\cong\visual{\widetilde X^\bullet}_L$.

\begin{cons}[Lifted augmentations of quasigeodesics]\label{cons:LA}
Let $\gamma:\mathbf R\rightarrow\widetilde X^1$ be an embedded quasigeodesic.  The \emph{augmentation} $\gamma_{_{\succ}}$ of $\gamma$ is defined as follows.  For each (possibly trivial) bounded maximal horizontal subpath $P\subset\gamma$, with endpoints $p,p'\in\widetilde V_n,\widetilde V_{n'}$, let $n''$ be the smallest multiple of $L$ greater than or equal to $\max\{n,n'\}$ and let $p''=\tilde\phi^{n''-n}(p)=\tilde\phi^{n''-n'}(p')$.  Let $Q'$ be the horizontal path $pp''p'$, and replace $P$ by $Q'$.  Performing this replacement for each such $P$ yields $\gamma_{_{\succ}}$.  Note that $\gamma_{_{\succ}}$ is a quasigeodesic that $L$-fellowtravels with $\gamma$, so that $\visual\gamma_{_{\succ}}=\visual\gamma$.  We use the following notation.  First, $P=P_1P_2$, where $P_1$ and $P_2^{-1}$ are forward horizontal paths, one of which is trivial.  Then $Q'=P_1QP_2$, where $Q=Q_1Q_1^{-1}$, with $Q_1$ a forward path.  The terminal point $p''$ of $Q_1$ is the \emph{apex} of $Q$, and $Q=Q_1Q_1^{-1}$ is an \emph{augmentation} of $\gamma$.

The path $\gamma_{_{\succ}}$ lifts to a quasigeodesic $\widehat{\gamma_{_{\succ}}}\rightarrow {\widetilde X^\bullet}_L$.  More specifically, each lift of the union of the vertical edges of $\gamma_{_{\succ}}$ determines a unique lift of $\gamma_{_{\succ}}$ to a quasigeodesic.  Indeed, we can write $\gamma_{_{\succ}}$ in one of the following four forms:
\begin{enumerate}
 \item $\cdots A_{-1}e_{-1}B_{-1}A_0e_0B_0A_1e_1B_1A_2e_2B_2\cdots$, where $e_{\pm i}$ are present for all $i\in\naturals$;
 \item $A_0e_0B_0A_1\cdots$, where $A_0$ is unbounded;
 \item $\cdots A_0e_0B_0$, where $B_0$ is unbounded;
 \item $A_{s}e_sB_s\cdots A_{t}e_tB_t$ with $A_s,B_t$ unbounded.  (This includes the case $B_0A_1$ in which $\gamma=\gamma_{_{\succ}}$ is horizontal.)
\end{enumerate}
Each $A_i$ starts at an apex and each $B_i$ ends at an apex.  Observe that each lift of $e_i$ determines a lifts of $A_i$ and $B_i$ to $\widetilde X^{\bullet}_L$.  Since the apexes lift uniquely, any lift of $B_i$ is concatenable with any lift of $A_{i+1}$, and we conclude that a lift of $\{e_i\}$ induces a lift of $\gamma_{_{\succ}}$.  In the case where $\gamma$ is horizontal, $\gamma=\gamma_{_{\succ}}$ lifts uniquely since any horizontal path starting and ending in $\cup_{k}\widetilde V_{kL}$ lifts uniquely.  Under the quasi-isometry $(\widetilde X^\bullet_L)^1\rightarrow\widetilde X^1$, the quasigeodesic $\widehat{\gamma_{_{\succ}}}$ is sent to $\gamma_{_{\succ}}$, and thus $\visual\widehat{\gamma_{_{\succ}}}=\visual\gamma$.  Finally, if some augmentation of $\gamma$ has a subpath that lifts to a backtrack in $\widehat\gamma_{_{\succ}}$, then we truncate $\gamma_{_{\succ}}$ accordingly and define $\widehat\gamma_{_{\succ}}$ to be the lift of the truncated augmentation.  An augmentation where this truncation is nontrivial is a \emph{truncated augmentation}.   We call $\widehat{\gamma_{_{\succ}}}$ a \emph{lifted augmentation} of $\gamma$.
\end{cons}

\subsubsection{Cutting in ${\widetilde X^\bullet}_L$}\label{sec:general_cutting}
We now establish a criterion, in terms of lifted augmentations, ensuring that a wall $\overline W$ cuts a given quasigeodesic in $\widetilde X^1$.  We first require a classification of discrepancy zones.

\begin{defn}[Exceptional zone, narrow exceptional zones]\label{defn:narrow_intermediate_zones}
Let $W\rightarrow X$ be an immersed wall with tunnel-length $L$.  An \emph{exceptional zone} is a downward discrepancy zone in $\widetilde X_L^{\bullet}$ whose boundary path intersects the interior of a slope approximation.  The downward discrepancy zone shown in Figure~\ref{fig:downward_intermediate_zone} is exceptional. 

We say that $W$ has \emph{narrow exceptional zones} if for each exceptional zone $Z\subset\widetilde X^{\bullet}_L$, associated to a nucleus $\widetilde C$ of $\overline W_L$, the image in $Z\subset\widetilde X^\bullet_L$ of $\widetilde C\times[\frac{1}{2},\frac{3L}{4}]$ does not contain a vertex.
\end{defn}

\begin{lem}\label{lem:no_orange_orange}
Suppose that $\phi:V\rightarrow V$ is a train track map with expanding edges.  Suppose that $W\rightarrow X$ is an immersed wall such that every edge of $V$ contains a primary bust of $W$.  Then if the tunnel length $L$ of $W$ is sufficiently large, each exceptional discrepancy zone $Z$ lies in the interior of a single long 2-cell of $\widetilde X^{\bullet}_L$, and hence $Z$ intersects a single slope-approximation.
\end{lem}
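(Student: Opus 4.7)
The plan is to exploit the proliferation of secondary busts together with the train-track property. Since $\phi$ is a train-track map with expanding edges, the preimages $(\phi^L)^{-1}(d_i)$ of the primary busts consist of exponentially many intervals of shrinking size as $L$ grows; together with the primary busts themselves, these secondary busts carve $V$ into many small nucleus components. Each nucleus $\widetilde C \subset \widetilde E_{nL}$ of $\overline W_L$ therefore falls into one of two structural types: either (a)~$\widetilde C$ is contained in the interior of a single vertical edge of $\widetilde V_{nL}$, in which case the prism $Z = \widetilde C \times [\tfrac12, L]$ embeds automatically in the interior of a single long 2-cell of $\widetilde X^\bullet_L$; or (b)~$\widetilde C$ is a star around some vertex $\tilde v \in \widetilde V_{nL}$ whose arms run to the nearest (primary or secondary) bust in each incident edge, and then $Z$ is a book of pages in distinct long 2-cells glued along the horizontal spine $\{\tilde v\} \times [\tfrac12, L]$.

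Case (a) gives the conclusion with no restriction on $L$, so the task is to show that in case (b), $Z$ is never exceptional once $L$ is sufficiently large, which forces every exceptional $Z$ into case (a). First I would observe that the book sides at each bust-endpoint leaf of $\widetilde C$ coincide, by construction, with the forward-path portion of the slope approximation emanating from that leaf, so these built-in adjacencies are not what makes $Z$ exceptional. A genuine exceptional crossing must therefore arise from one of: the top $\tilde\phi^L(\widetilde C) \subset \widetilde V_{(n+1)L}$ transversely meeting a bust endpoint of a nucleus at level $(n+1)L$, from the spine meeting a slope approximation, or from a side of $Z$ crossing an unrelated slope approximation. Each can be excluded for large $L$ using Lemma~\ref{lem:avoids_bust} (nucleus approximations avoid open busts), the disjointness and non-singularity clauses of Lemma~\ref{lem:choosing_busts_1}, and exponential expansion of $\phi$, which forces $\tilde\phi^L(\widetilde C)$ to spread wide enough that coincidences with the uniformly small busts of $\overline W_L$ are avoidable.

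The main obstacle is the top-meets-bust-endpoint scenario: because $\widetilde C$ contains the vertex $\tilde v$, the image $\tilde\phi^L(\widetilde C)$ passes through $\tilde\phi^L(\tilde v)$, and a priori one of the arm images could coincide with a primary bust endpoint at level $(n+1)L$, placing the coincidence in the interior of a slope approximation at that level. This is controlled by Lemma~\ref{lem:choosing_busts_1}.\eqref{item:disjoint_from_preimage} (primary busts in $\widetilde V_{nL}$ chosen disjoint from $\tilde\phi^L$-preimages of primary busts) together with exponential expansion, which shrinks the set of points in $\widetilde C$ whose $\tilde\phi^L$-image could hit a next-level bust endpoint until it lies outside $\widetilde C$ entirely for $L$ large. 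Once case (b) is ruled out, the ``hence'' clause follows: in the single long 2-cell $R_e$ containing a case-(a) exceptional $Z$, forward paths associated to primary busts in other edges stay in their own long 2-cells (since $\tilde\phi^k$ is injective on each edge for a train-track map), so the only slope approximation meeting $Z$ is the one associated to the primary bust endpoint that is a leaf of $\widetilde C$.
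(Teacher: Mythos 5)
There is a genuine gap, and it stems from a misreading of Definition~\ref{defn:narrow_intermediate_zones}. You assert that the coincidence of a side of $Z$ with the forward part of the slope approximation emanating from a primary-bust-endpoint leaf of $\widetilde C$ is ``not what makes $Z$ exceptional.''  It is exactly what makes $Z$ exceptional: a downward discrepancy zone is exceptional precisely when its nucleus $\widetilde C$ has a leaf at an endpoint $p$ of a primary bust, because then the side $\{p'\}\times[\frac12,L]$ of $Z$ lies in the interior of the forward part of $\comapp(S)$ for the slope $S$ ending at $p$ (this is also how the lemma is used later: in Lemma~\ref{lem:exceptional_no_vertex} the exceptional zone is the one for which ``the forward part of $\comapp(S)$ forms part of the boundary path of $Z$'').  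Having discarded the only intersections that matter, you then spend the argument ruling out three kinds of ``spurious'' crossings (top meets a next-level bust endpoint, spine meets a slope approximation, side meets an unrelated slope approximation); none of these is what the lemma is about, so even if all three exclusions were carried out they would not yield the conclusion.  In particular your resolution of case (b) never addresses the actual question: can the nucleus attached to a slope be a star around a vertex?

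What the proof actually requires is the observation that the nucleus incident to a slope $S$ starts at an endpoint of the primary bust $d\subset e$ and extends into a component $\alpha$ of $e-\interior{d}$; for $L$ large, $\phi^L(\alpha)$ traverses an entire edge (expansion), hence contains a primary bust (every edge is busted), hence $\alpha$ contains a secondary bust, which truncates the nucleus before it reaches a vertex of $e$.  So every slope-incident nucleus is an interval in $\interior{e}$, its zone lies in the single long 2-cell $R_e$, and it meets only the one slope approximation attached at the endpoint of $d$.  Your opening paragraph gestures at this (``secondary busts carve $V$ into many small nucleus components''), and your case (a)/(b) dichotomy is a reasonable frame --- the correct way to finish case (b) is the contrapositive of the above: for large $L$ every star nucleus has all its leaves at secondary busts and so is attached to no slope.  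But that deduction is never made, so as written the argument does not establish the lemma.  (A smaller point: the closing claim that ``$\tilde\phi^k$ is injective on each edge for a train-track map'' is false --- train track maps are only locally injective on edges --- and is not needed; uniqueness of the slope approximation meeting $Z$ follows simply because the interval nucleus has exactly one primary-bust endpoint.)
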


\begin{proof}
Let $\comapp(S)$ be a slope-approximation in a long 2-cell $R$ based at the vertical edge $e\subset\widetilde V_n$.  We will show that for $L$ sufficiently large, the nucleus $\widetilde C$ incident to $S$ is the copy in $\widetilde E_n$ of a subinterval of the interior of $e$.  Let $\alpha$ be a component of $e-\interior{d}$, where $d$ is the primary bust associated to $S$.  Then for all sufficiently large $L$, the path $\phi^L(\alpha)$ traverses an entire edge, and therefore contains a primary bust.
\end{proof}





\begin{lem}\label{lem:exceptional_no_vertex}
Suppose that $\phi:V\rightarrow V$ is a train track map with exponentially expanding edges.  Let $y_1,\ldots,y_s\in V$ be regular points such that each edge of $V$ contains exactly one $y_i$, and let $\epsilon>0$.  Then for all sufficiently large $L$, there exists an immersed wall $W\rightarrow X$ with tunnel-length $L$, such that each primary bust is in the $\epsilon$-neighborhood of some $y_i$, and $W$ has narrow exceptional zones.
\end{lem}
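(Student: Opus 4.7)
The plan is to translate the narrow-exceptional-zone condition into a concrete requirement on the nuclei of $W$ that are adjacent to primary busts, and then to choose the primary busts $d_i$ carefully so that this requirement holds automatically. By Lemma~\ref{lem:no_orange_orange}, once $L$ is sufficiently large each exceptional discrepancy zone lies in a single long $2$-cell of $\widetilde X^{\bullet}_L$, so its associated nucleus $\widetilde C$ is a subinterval of a single vertical edge $e$, adjacent to the primary bust on $e$. A point $(c,t)\in\widetilde C\times[\frac{1}{2},\frac{3L}{4}]$ maps to a vertex of $\widetilde X^{\bullet}_L$ exactly when $t\in\{1,\dots,\lfloor 3L/4\rfloor\}$ and $\phi^t(c)\in V^0$; since $\phi$ sends vertices to vertices, the preimages $\phi^{-t}(V^0)$ are nested. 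Writing $T=\lfloor 3L/4\rfloor$, the narrow-zone condition is therefore equivalent to requiring that $\widetilde C\cap\phi^{-T}(V^0)=\emptyset$; equivalently, $\widetilde C$ must be contained in a single $\phi^T$-subedge of $e$, meaning a component of $e\setminus\phi^{-T}(V^0)$.

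For each edge $e_i$, let $\tau_i$ denote the $\phi^T$-subedge of $e_i$ containing $y_i$. Exponential expansion with constant $\varpi$ gives $|\tau_i|\leq(\max_f|f|)/\varpi^T$, so for $L$ large enough $\tau_i$ lies inside the $\epsilon$-neighborhood of $y_i$. Applying exponential expansion once more, now to $\phi^{L-T}$ on the edge $\phi^T(\tau_i)$, shows that $\tau_i$ contains at least $\varpi^{L-T}\min_f|f|/\max_f|f|$ many $\phi^L$-subedges, a quantity that exceeds $3$ once $L$ is sufficiently large. I would then pick $\sigma^*_i\subset\tau_i$ to be a $\phi^L$-subedge not adjacent to either endpoint of $\tau_i$, so that both boundary points of $\sigma^*_i$ lie in the interior of $\tau_i$ and hence are not in $\phi^{-T}(V^0)$. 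After selecting a regular $y_i^*\in\sigma^*_i$ (possible since the singular set of $V$ is countable), I would invoke Lemma~\ref{lem:choosing_busts_1} with $x_i=y_i^*$ and a sufficiently small controlling constant to produce a primary bust $d_i\subset\sigma^*_i$ satisfying the properties listed there.

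The remaining task is to check that the resulting immersed wall $W$ has narrow exceptional zones. Any nucleus $\widetilde C\subset e_i$ associated to an exceptional zone is adjacent to $d_i\subset\sigma^*_i$ and extends from an endpoint of $d_i$ to the next bust of $e_i$ on that side. Since each $\phi^L$-subedge of $e_i$ contains exactly one secondary bust, $\widetilde C$ either stays entirely inside $\sigma^*_i\subset\tau_i$, or crosses a single boundary of $\sigma^*_i$ into an adjacent $\phi^L$-subedge $\sigma'$ and terminates at the secondary bust of $\sigma'$; in the latter case the boundary crossed lies in the interior of $\tau_i$ and so avoids $\phi^{-T}(V^0)$, while the remaining points of $\widetilde C\cap\sigma'$ map under $\phi^L$ into the interior of a single edge of $V$ and so avoid $\phi^{-L}(V^0)\supseteq\phi^{-T}(V^0)$. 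Either way, $\widetilde C\cap\phi^{-T}(V^0)=\emptyset$. The combinatorial bookkeeping needed to make this verification rigorous --- in particular, tracking precisely which boundary of $\sigma^*_i$ is potentially straddled --- will be the main obstacle. The exponential-expansion hypothesis is essential, because it simultaneously forces $\tau_i\subset\neb_\epsilon(y_i)$ and produces enough $\phi^L$-subedges inside $\tau_i$ for an interior choice of $\sigma^*_i$ to be possible.
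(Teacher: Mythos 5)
Your argument is correct, but it reaches the conclusion by a genuinely different mechanism than the paper's. You make the narrow-zone condition combinatorial: a point $(c,t)$ of the zone with $t\le\frac{3L}{4}$ is a vertex of $\widetilde X^{\bullet}_L$ iff $t$ is an integer and $\phi^t(c)\in V^0$, and since $\phi(V^0)\subseteq V^0$ the sets $\phi^{-t}(V^0)$ are nested, so narrowness reduces to the nucleus avoiding $\phi^{-T}(V^0)$ with $T=\lfloor 3L/4\rfloor$. You then arrange this directly by placing each $d_i$ inside a $\phi^L$-subedge $\sigma_i^*$ interior to the $\phi^T$-subedge $\tau_i$ containing $y_i$, using exponential expansion twice: once to force $|\tau_i|\le\max_f|f|/\varpi^T<\epsilon$, and once to guarantee $\tau_i$ contains at least three $\phi^L$-subedges so that an interior choice of $\sigma_i^*$ exists; the adjacent nucleus then meets $\phi^{-L}(V^0)$ in at most the single point $\partial\sigma_i^*\cap\interior{\tau_i}$, which is not in $\phi^{-T}(V^0)$. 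The paper instead argues dynamically and by contradiction: it places each bust in the $\frac{\epsilon}{2\varpi^L}$-neighborhood of a \emph{periodic} regular point $y_i'$, whose forward orbit stays a definite distance $\chi>0$ from $V^0$, takes $L>4(\log_\varpi\max_i|e_i|-\log_\varpi\chi)$, and observes that a vertex at height $\le\frac{3L}{4}$ would, after the remaining $\ge\frac{L}{4}$ iterations of $\phi$, force the right boundary $\comapp(\widetilde C)$ of the zone to contain a complete edge and hence a primary bust, contradicting Lemma~\ref{lem:avoids_bust}. Your route is more elementary and arguably tighter (the paper's quantitative step tacitly requires controlling how far $\phi^t$ of the bust endpoint drifts from $\phi^t(y_i')$, which your subdivision bookkeeping avoids); what the paper's route buys is that busts chosen in tiny neighborhoods of periodic regular points are exactly the walls constructed in the proof of Theorem~\ref{thm:irreducible}, so its version of the construction plugs directly into the places where the lemma is invoked, whereas with your construction one would want to add a sentence checking that the bust locations it prescribes are compatible with the periodic-point-based choices made there. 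As a proof of the lemma as stated, yours is complete.
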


\begin{proof}
Let $\varpi>1$ be the expansion constant of $\phi$.  For each $i$, let $y'_i\in V$ be a periodic regular point in the edge $e_i$ containing $y_i$ with $\dist_{e_i}(y'_i,y_i)<\frac{\epsilon}{2}$.  Let $\chi_i=\min\{\dist_V(\phi^k(y'_i),V^0)\co k\geq 0\},$ which is positive since $y'_i$ is periodic and regular.  Let $\chi=\max_i\chi_i$.  Let $$L>4(\log_{\varpi}\max_i|e_i|-\log_{\varpi}(\chi)).$$ For each $i$, let $d_i\subset\interior{e_i}$ be a primary bust chosen in the $\frac{\epsilon}{2\varpi^L}$-neighborhood of $y'_i$, and therefore in the $\epsilon$-neighborhood of $y_i$.  Lemma~\ref{lem:choosing_busts_1} ensures that this can be done in such a way as to yield an immersed wall $W\rightarrow X$.

\begin{figure}[ht]
\begin{overpic}[scale=0.15]{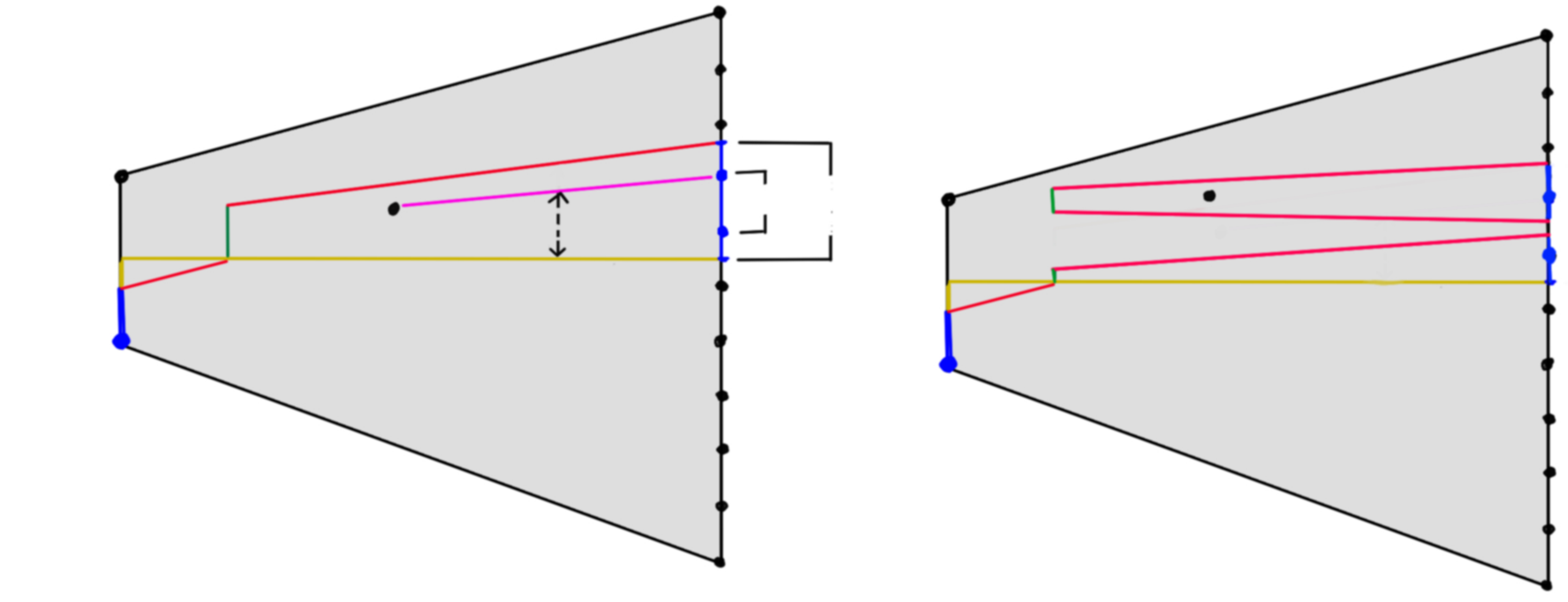}
\put(30,18){$\comapp(S)$}
\put(3,20){$e_i$}
\put(23,24){$v$}
\put(48,24.7){$e'$}
\put(51, 24.5){$\comapp(\widetilde C)$}
\end{overpic}
\caption{The exceptional zone corresponding to $\comapp(S)$ cannot contain the vertex $v$ when $L$ is sufficiently large.   Such a vertex $v$ could only be contained in a non-exceptional downward discrepancy zone, as shown at right.}\label{fig:exceptional_no_vertex}
\end{figure}

Let $Z$ be the image in $\widetilde X$ of an exceptional zone between $\overline W$ and $\comapp(\overline W)$.  By Lemma~\ref{lem:no_orange_orange}, there is a unique slope $S$ such that the forward part of $\comapp(S)$ forms part of the boundary path of $Z$.  See Figure~\ref{fig:exceptional_no_vertex}.  If $v\in Z$ is a vertex at horizontal distince more than $\frac{L}{4}$ from the nucleus-approximation $\comapp(\widetilde C)$ on the right of $Z$, then our choice of $L$ would ensure that the right boundary path of $Z$ contains a complete edge $e'$, and thus a primary bust, which is impossible.
\end{proof}

\begin{prop}\label{prop:general_cutting}
Let $\gamma:\mathbf R\rightarrow\widetilde X^1$ be an embedded quasigeodesic, and let $\widehat{\gamma_{_{\succ}}}\rightarrow{\widetilde X^\bullet}_L$ be a lifted augmentation.  Let $C_o$ be a bounded subset of $\widehat{\gamma_{_{\succ}}}\cap\comapp(\overline W_L)$, let $C$ be the smallest subgraph containing $C_o$.  Let $\widehat{\gamma_{_{\succ}}}\vee_C\comappn{\overline W_L}\rightarrow(\widetilde X^\bullet_L)^1$ be the graph obtained by wedging $\widehat{\gamma_{_{\succ}}}\rightarrow\widetilde X$ and $\comappn{\overline W_L}\rightarrow\widetilde X$  along the common subgraph $C$.  Suppose that:
\begin{enumerate}
 \item \label{item:wedgeqie}$\widehat{\gamma_{_{\succ}}}\vee_C\comappn{\overline W_L}\rightarrow(\widetilde X^\bullet_L)^1$ is a quasi-isometric embedding.
 \item \label{item:localsep}There are nontrivial intervals $f,f'\subset\widehat{\gamma_{_{\succ}}}$, immediately preceding and succeeding $C_o$ within $\widehat{\gamma_{_{\succ}}}$, that lie in $\lefta$ and $\righta$ respectively.
 \item \label{item:nosurprise}For every component $D$ of $\widehat{\gamma_{_{\succ}}}\cap\comapp(\overline W_L)$ disjoint from $C_o$, the 1-neighborhood in $\widehat{\gamma_{_{\succ}}}$ of $D$ lies entirely in $\lefta$ or $\righta$.
\end{enumerate}
Then $\overline W$ cuts $\gamma$.
\end{prop}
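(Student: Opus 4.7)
My plan is to work in $\widetilde X^\bullet_L$, where Proposition~\ref{prop:approximation_separates} gives a clean halfspace decomposition $\widetilde X^\bullet_L=\lefta\cup\righta$ with $\lefta\cap\righta=\comapp(\overline W_L)$, and then transport the conclusion back to $\widetilde X$ via the quasi-isometry $({\widetilde X^\bullet}_L)^1\to\widetilde X^1$.  By Construction~\ref{cons:LA}, the endpoints of $\widehat{\gamma_{_{\succ}}}$ in $\visual{\widetilde X^\bullet}_L$ correspond to the endpoints $p,q\in\visual\widetilde X$ of $\gamma$, and it will suffice to prove one endpoint of $\widehat{\gamma_{_{\succ}}}$ lies in $\visual\lefta\setminus\visual\comapp(\overline W_L)$ and the other in $\visual\righta\setminus\visual\comapp(\overline W_L)$, since these sets are identified (under the quasi-isometry) with $\visual\leftw$ and $\visual\rightw$ respectively, by the discussion of the lift $\comappn{\overline W}\hookrightarrow({\widetilde X^\bullet}_L)^1$ and of $\visual\comappn{\overline W}=\visual\comapp(\overline W_L)$ in Section~\ref{subsec:walls_in_long_space}.

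First, I would show that $\widehat{\gamma_{_{\succ}}}$ splits, upon removing the bounded piece $C_o$, into a backward ray $\gamma_-$ entirely contained in $\lefta$ and a forward ray $\gamma_+$ entirely contained in $\righta$.  Condition~\eqref{item:localsep} anchors the two sides of $C_o$ in the correct halfspaces via $f\subset\lefta$ and $f'\subset\righta$.  Writing $\lefta'=\lefta\setminus\comapp(\overline W_L)$ and $\righta'=\righta\setminus\comapp(\overline W_L)$, the complement ${\widetilde X^\bullet}_L\setminus\comapp(\overline W_L)=\lefta'\sqcup\righta'$ is a disjoint union of open sets by Proposition~\ref{prop:approximation_separates}, so any connected subset of $\widehat{\gamma_{_{\succ}}}$ missing $\comapp(\overline W_L)$ lies entirely in one of them.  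Combined with Condition~\eqref{item:nosurprise}, which guarantees that crossing any component $D\neq C_o$ of $\widehat{\gamma_{_{\succ}}}\cap\comapp(\overline W_L)$ keeps the ambient halfspace constant, this lets one argue inductively along $\widehat{\gamma_{_{\succ}}}$ that the halfspace labelling does not switch except possibly at $C_o$.  Together with Condition~\eqref{item:localsep}, the backward ray from $C_o$ therefore remains entirely in $\lefta$, and the forward ray entirely in $\righta$.

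Next I would invoke Condition~\eqref{item:wedgeqie} to rule out the possibility that the endpoints of $\widehat{\gamma_{_{\succ}}}$ land in $\visual\comapp(\overline W_L)$.  Since $C$ is bounded, the boundary of the wedge is $\visual(\widehat{\gamma_{_{\succ}}}\vee_C\comappn{\overline W_L})=\{p,q\}\sqcup\visual\comappn{\overline W_L}$.  A quasi-isometric embedding of a geodesic or quasigeodesic space induces an injection of Gromov boundaries, so the hypothesis that $\widehat{\gamma_{_{\succ}}}\vee_C\comappn{\overline W_L}\hookrightarrow({\widetilde X^\bullet}_L)^1$ is quasi-isometric implies that $p,q\notin\visual\comapp(\overline W_L)$.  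Combined with the previous paragraph, this places the endpoint of $\gamma_-$ at infinity in $\visual\lefta\setminus\visual\comapp(\overline W_L)$ and the endpoint of $\gamma_+$ in $\visual\righta\setminus\visual\comapp(\overline W_L)$.  Transporting across the quasi-isometry $({\widetilde X^\bullet}_L)^1\to\widetilde X^1$, the endpoints of $\gamma$ lie in $\visual\leftw$ and $\visual\rightw$ respectively, so $\overline W$ cuts $\gamma$.

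The most delicate step is the inductive argument in the second paragraph that the halfspace label of $\widehat{\gamma_{_{\succ}}}$ is locally constant away from $C_o$.  The subtlety is that a priori two consecutive components $D_1,D_2$ of $\widehat{\gamma_{_{\succ}}}\cap\comapp(\overline W_L)$ could have $1$-neighborhoods on opposite sides; but the arc of $\widehat{\gamma_{_{\succ}}}$ between them is a connected subset of $\lefta'\sqcup\righta'$ and therefore lies entirely in one of them, which forces the $1$-neighborhoods of $D_1$ and $D_2$ to live in the same halfspace.  This rigidifies the labelling on each side of $C_o$ and is the technical core of the argument; the rest is bookkeeping against Proposition~\ref{prop:approximation_separates} and Construction~\ref{cons:LA}.
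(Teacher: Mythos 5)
Your proposal is correct and follows essentially the same route as the paper: decompose $\widehat{\gamma_{_{\succ}}}$ via hypotheses~\eqref{item:localsep} and~\eqref{item:nosurprise} into a bounded middle containing $C_o$ and two rays lying in $\lefta$ and $\righta$, use hypothesis~\eqref{item:wedgeqie} to see that the induced boundary map embeds $\visual\widehat{\gamma_{_{\succ}}}\sqcup\visual\comappn{\overline W_L}$ so the endpoints avoid $\visual\comapp(\overline W_L)$, and transport through the quasi-isometry $({\widetilde X^\bullet}_L)^1\to\widetilde X^1$. The "inductive labelling" step you flag as delicate is exactly what the paper compresses into its first sentence, and your justification of it is sound.
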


\begin{proof}
Hypotheses~(2)~and~(3) together imply that $\widehat{\gamma_{_{\succ}}}$ decomposes as a concatenation $\overleftarrow\gamma\bar\gamma\overrightarrow\gamma$, where $\bar\gamma$ is a bounded path containing $C_o$ and $\overleftarrow\gamma,\overrightarrow\gamma$ are rays contained in $\lefta,\righta$ respectively.  The image of $\widehat{\gamma_{_{\succ}}}\vee_C\comappn{\overline W_L}\rightarrow{\widetilde X^\bullet}_L$ is $\widehat{\gamma_{_{\succ}}}\cup\comappn{\overline W_L}$, which is quasi-isometrically embedded in $({\widetilde X^\bullet}_L)^1$ by hypothesis~(1).  The inclusion $\widehat{\gamma_{_{\succ}}}\cup\comappn{\overline W_L}\hookrightarrow({\widetilde X^\bullet}_L)^1$ thus induces an embedding $\visual\widehat{\gamma_{_{\succ}}}\sqcup\visual\comappn{\overline W_L}\rightarrow\visual{\widetilde X^\bullet}_L$.  The two points of $\visual\widehat{\gamma_{_{\succ}}}$ are $\visual\overleftarrow\gamma\in\visual\lefta$ and $\visual\overrightarrow\gamma\in\visual\righta$, and neither of these points lies in $\visual\comappn{\overline W_L}$ since hypothesis~(1) implies that no sub-ray of $\widehat{\gamma_{_{\succ}}}$ lies in a bounded neighborhood of $\comappn{\overline W_L}$.  Applying the quasi-isometry ${\widetilde X^\bullet}_L\rightarrow\widetilde X$ shows that the points of $\visual\gamma\subset\visual\widetilde X$ lie in $\visual N(\leftw)-\visual\overline W$ and $\visual N(\rightw)-\visual\overline W$, whence $\overline W$ cuts $\gamma$.
\end{proof}

\subsection{Cutting deviating geodesics}\label{sec:deviating}

\begin{prop}\label{prop:cutting_deviating}
Let $\widetilde X$ satisfy the hypotheses of Proposition~\ref{prop:everything_cut}, let $M\geq0$, and let $\gamma:\mathbf R\rightarrow\widetilde X^1$ be an $M$-deviating geodesic.  Then there exists a wall $\overline W$ such that $\comappn{\overline W}$ is quasiconvex in $\widetilde X^1$ and $\overline W$ cuts $\gamma$.
\end{prop}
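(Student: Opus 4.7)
The plan is to use the level-separation hypothesis to locate a level that cuts $\gamma$ transversely, realise this level as the level-part of an appropriately chosen immersed wall $\overline W$, and then invoke Proposition~\ref{prop:general_cutting}. First I apply Hypothesis~(1) together with Remark~\ref{rem:nearly_fixed} to produce a regular point $y\in\widetilde X^1$ with non-periodic image $\bar y\in V$, together with a threshold $n_0$, so that for every $n\geq n_0$ the set $\gamma\cap T_n^o(\tilde\phi^n(y))$ has odd cardinality and every one of its points lies at distance at least $M+K$ in the level from the root $\tilde\phi^n(y)$ and from every leaf; here $K$ is a large constant fixed at the end, depending on $\delta$, the quasi-isometry constants of Propositions~\ref{prop:forward_ladder_quasiconvex} and~\ref{prop:quasiconvexity}, the deviation bound $M_R$ of Definition~\ref{defn:level_like_deviating}, and the constant $\Theta_R$ of Lemma~\ref{lem:vert_horiz_bound}.

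With $L=n$ large (in particular exceeding $L_1$ from Propositions~\ref{prop:tunneliswall} and~\ref{prop:approximation_is_tree}), Condition~(1) of Definition~\ref{defn:many_effective_walls} applied to $\phi^n(\bar y)\in V$ supplies a spreading, uniformly sub-quasiconvex family $\mathbb W$ and a wall $W\in\mathbb W$ with tunnel length $L$ and a primary bust $d_i$ with an endpoint in the $\epsilon$-neighborhood of $\phi^n(\bar y)$, for any $\epsilon>0$. Hypothesis~(4) (exponential expansion by some $\varpi>1$) then lets me choose $\epsilon$ small enough that a suitable lift $q_i^+\in\widetilde V_n$ yields a wall level-part $T_L^o(q_i^+)\subset\overline W$ whose leaves are within $\epsilon\varpi^{-L}$ of the leaves of the separating level $T_L^o(\tilde\phi^L(y))$; in particular $\gamma$ meets $T_L^o(q_i^+)$ oddly, with each intersection point retaining its margin of essentially $M+K$ from root and leaves. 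Propositions~\ref{prop:quasiconvexity} and~\ref{prop:tunneliswall} then guarantee that $\overline W$ is a wall and $\comappn{\overline W}\hookrightarrow\widetilde X^1$ is $(\kappa_1,\kappa_2)$-quasi-isometric.

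I form a lifted augmentation $\widehat{\gamma_{_{\succ}}}\to\widetilde X^{\bullet}_L$ as in Construction~\ref{cons:LA}. By Proposition~\ref{prop:approximation_separates}, $\comapp(\overline W_L)$ bounds halfspaces $\lefta,\righta$ coarsely equal to $\leftw,\rightw$. The train-track hypothesis gives bounded level-intersection (Remark~\ref{rem:bounded_level_intersection}), and combined with the $M$-deviation of $\gamma$ it forces $\widehat{\gamma_{_{\succ}}}$ to meet every forward-path portion of $\comapp(\overline W_L)$---and, via Lemma~\ref{lem:vert_horiz_bound} and uniform sub-quasiconvexity, every nucleus-approximation---in a uniformly bounded set. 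I take $C_o$ to be the median point of $\widehat{\gamma_{_{\succ}}}\cap\comapp(\overline W_L)$ on the slope-approximation anchored at $q_i^+$, and let $C$ be the smallest subgraph of $\comappn{\overline W_L}$ containing it.

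It remains to verify the three hypotheses of Proposition~\ref{prop:general_cutting}. Hypothesis~(1), that $\widehat{\gamma_{_{\succ}}}\vee_C\comappn{\overline W_L}\to(\widetilde X^{\bullet}_L)^1$ is a quasi-isometric embedding, follows from Lemma~\ref{lem:RBRquasi} applied to the concatenation of the two rays of $\widehat{\gamma_{_{\succ}}}$ at $C$ with the two halves of $\comappn{\overline W_L}$ at $C$; the coarse-intersection bounds come from the ladder-overlap property, Lemma~\ref{lem:vert_horiz_bound}, and Proposition~\ref{prop:quasiconvexity}. Hypothesis~(2) is immediate from the odd-crossing of $\gamma$ with the level-part of $\overline W$ and Proposition~\ref{prop:approximation_separates}. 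The main obstacle is Hypothesis~(3): every other component $D$ of $\widehat{\gamma_{_{\succ}}}\cap\comapp(\overline W_L)$ must have its $1$-neighborhood in $\widehat{\gamma_{_{\succ}}}$ entirely in a single halfspace. The $M$-deviation of $\gamma$ caps the diameter of each $D$ (since $D$ lies near a single forward ladder or nucleus-approximation), and the margin $M+K$ in the separating level---propagated through exponential expansion to the wall level-part and to its approximation---places every such $D$ far from the central cutting branch, so it cannot bridge the two halfspaces. Taking $K$ large enough to dominate all the auxiliary constants completes the verification.
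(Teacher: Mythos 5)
Your overall strategy (level-separation $\to$ wall whose tunnel shadows the separating level $\to$ Proposition~\ref{prop:general_cutting}) is the right skeleton, but two essential steps are missing, and the second makes the setup incompatible with the criterion you invoke.

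First, the parity reduction. Level-separation gives you odd intersection of $\gamma$ with the level $T^o_n(\tilde\phi^n(y))$, which is a \emph{branching tree}, whereas the object that actually appears in $\comapp(\overline W_L)$ is a single length-$L$ forward path (the forward part of one slope-approximation $\comapp(S)$). There is no reason for $\gamma$ to meet any single branch of the level in an odd number of points even when it meets the whole tree oddly, and taking ``the median point'' of $\widehat{\gamma_{_{\succ}}}\cap\comapp(S)$ does not produce a transversal crossing: if that intersection is even, the intervals of $\widehat{\gamma_{_{\succ}}}$ flanking your chosen $C_o$ can lie in the same halfspace, so Hypothesis~(2) of Proposition~\ref{prop:general_cutting} fails, and the remaining crossing points on the same slope-approximation violate Hypothesis~(3). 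The paper spends the first third of its proof exactly here: it partially orders the edges of $\gamma$ meeting the level by the forward flow, extracts an odd-cardinality confluence class, and then \emph{modifies $\gamma$} by rerouting it through bounded paths in forward ladders so that the new (still deviating, still quasigeodesic) path meets a single maximal forward path of the level exactly once. Your proposal has no substitute for this step.

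Second, the placement of the bust. You put the primary bust near the \emph{root} $\phi^n(\bar y)$, so that the wall's level-part approximates the separating level. But $\comapp$ collapses every level-part to its root, and the forward part of the associated slope-approximation points \emph{forward} from the bust, i.e.\ away from the separating level; hence the odd crossings of $\gamma$ with the separating level are invisible to $\comapp(\overline W_L)$ and cannot be used to verify Hypothesis~(2) for the halfspaces $\lefta,\righta$ of Proposition~\ref{prop:approximation_separates}. The paper instead places the bust near a \emph{leaf} $y$ of the level, so that the forward part of $\comapp(S)$ tracks the oddly-crossing branch. Even with that fix, verifying Hypotheses~(2) and~(3) is not the soft estimate you describe: one must rule out augmentation apexes landing on $\comapp(\overline W_L)$, and determine the sidedness of the crossing points, which the paper does by flowing $\gamma$ forward by $L$ to $\hat\eta$, locating the next tunnel $T'$ and its slope $S'$, and using narrow exceptional zones (Lemma~\ref{lem:exceptional_no_vertex}) to exhibit the intervals $I,I'$ in $\righta$ and $\lefta$. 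Your appeal to ``the margin $M+K$'' does not address either of these points.
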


\begin{proof}
We will find a wall $\overline W\rightarrow\widetilde X$ satisfying the hypotheses of Proposition~\ref{prop:general_cutting}.

\textbf{An oddly-intersecting forward path:}  Since $\widetilde X$ is level-separated, there exists $z\in\widetilde X$ such that for all sufficiently large $n$, there is a regular level $\mathcal T_n=T_n^o(\tilde\phi^n(z))$ with a leaf at $z$, such that $\mathcal T_n$ has odd intersection with $\gamma$ and the distance in $\mathcal T_n$ from $\gamma\cap\mathcal T_n$ to the root or to any leaf of $\mathcal T_n$ exceeds $12(M+\delta)$.

The fact that $\widetilde X$ has bounded level intersection and $\gamma$ is $M$-deviating implies that there exists $N$ and a finite, odd-cardinality set $C_o'\subset\gamma$ such that $\mathcal T_n\cap\gamma=C_o'$ for all $n\geq N$.  Each $\mathcal T_n$ is the union of finitely many maximal forward paths emanating from leaves.  For each $n\geq N$, we wish to choose a leaf $y$ of $\mathcal T_n$ such that the maximal forward path $\sigma_n\subset\mathcal T_n$ emanating from $y$ has the property that $\sigma_n\cap\gamma$ is a fixed odd-cardinality subset $C_o\subseteq C'_o$.  However, to achieve this, we shall slightly modify $\gamma$ as follows by replacing it with an embedded deviating uniform quasigeodesic that coincides with the original $\gamma$ outside a diameter-$2M$ subset.

We now describe the modification of $\gamma$.  Let $e_1,\ldots,e_{|C_o'|}$ be the edges of $\gamma$ intersecting $\mathcal T_n$ for $n\geq N$.  Index these so that $e_i$ precedes $e_j$ in the geodesic $\gamma$ if and only if $i<j$.  The set $\{e_1,\ldots,e_{|C_o'|}\}$ is partially ordered as follows: $e_i\preceq e_j$ if for some $\ell\geq0$, the path $\tilde\phi^\ell(e_i)$ traverses $e_j$.  The edges $e_i,e_j$ are \emph{confluent} if there exists $k$ such that $e_i,e_j\preceq e_k$.  Confluence is an equivalence relation, and there is exactly one confluence class for each $\preceq$-maximal edge.  Since $|C_o'|$ is odd, there exists an odd-cardinality confluence class, and we let $e_k$ be its $\preceq$-maximal element.  Let $e_i,e_j$ be the elements of the confluence class of $e_k$ such that the indices $i,j$ are respectively minimal and maximal.  Let $\alpha_i,\alpha_j$ be forward paths in $\mathcal T_n$ joining $e_i,e_j$ to $e_k$.  Let $A_i$ be an embedded combinatorial path in the forward ladder $N(\alpha_i)$ that joins the terminal vertex $v_i$ of $e_i$ to a vertex $v_k$ of $e_k$ and does not intersect $\alpha_i$.  The edge $e_j$ contains a vertex $v_j$ on the same side of $\mathcal T_n$ as the terminal vertex of $e_i$.  Let $A_j$ be an embedded combinatorial path in $N(\alpha_j)$ joining $v_j$ to $v_k$ and not intersecting $\alpha_j$.  Since $\gamma$ is deviating, $\dist(e_i,e_k)$ and $\dist(e_j,e_k)$ are uniformly bounded.  Hence, since $N(\alpha_i)^1,N(\alpha_j)^1$ are uniformly quasiconvex, the paths $A_i,A_j$ have uniformly bounded length.  Let $A$ be the path obtained from $A_iA_j^{-1}$ by removing backtracks.  We replace the subpath of $\gamma$ between $v_i$ and $v_j$ by $A$, and finally replace $\gamma$ by a bi-infinite embedded in the image of this path.  By construction, for all $n\geq N$, there is a forward path $\sigma_n$ of $\mathcal T_n$, that intersects the modified path exactly once, namely in a point of $e_i$.  The argument proceeds using the new $\gamma$, which is an embedded quasigeodesic that is $M$-deviating, with $M$ a new, larger constant.  However, since the quasi-isometry constants of $\gamma$ play no essential role in the argument, we will assume for simplicity that $\gamma$ remains a geodesic.

There exists $\epsilon>0$ such that for all $x\in\neb_{\epsilon}(y)$, any forward path $\sigma_x$ of length $n\geq N$ emanating from $x$ intersects $\gamma$ in a set $C^x_o$ of interior points of edges that has the same cardinality as $C_o$ and has the property that the smallest subcomplex $C$ containing $C_o$ is exactly the smallest subcomplex containing $C^x_o$.

The wall we will choose will contain a slope $S$ such that $\comapp(S)$ contains such a $\sigma_x$ as its forward part.

\textbf{Quasi-isometric embedding of $\gamma\vee_C\comappn{\overline W}\rightarrow\widetilde X^1$:}  Let $W\rightarrow X$ be an immersed wall such that every edge of $V$ contains a primary bust, and suppose $\overline W\subset\widetilde X$ is the image of a lift $\widetilde W\rightarrow\widetilde X$ such that $\overline W$ contains a slope $S$ with the forward part of $\comapp(S)$ equal to a path $\sigma_x$, emanating from some $x\in\neb_\epsilon(y)$, as above.  Suppose moreover that $W$ was drawn from a set of immersed walls with uniformly bounded ladder-overlap.

Since every edge contains a primary bust, Proposition~\ref{prop:quasiconvexity} provides constants $L_0,\kappa_1,\kappa_2$, depending only on $\widetilde X$, such that if the tunnel length of $W$ is at least $L_0$, then $\comappn{\overline W}$ is $(\kappa_1,\kappa_2)$ quasi-isometrically embedded.  Recall also that $\overline W$ is a genuine wall if the tunnel-length exceeds a uniform constant $L_1$, by Proposition~\ref{prop:tunneliswall}.

There exist constants $L_2\geq L_1,\kappa'_1,\kappa'_2$, depending on $\widetilde X$ and $M$ such that if $W$ has tunnel-length at least $L_2$, then $\gamma\vee_C\comappn{\overline W}\rightarrow\widetilde X^1$ is a $(\kappa_1',\kappa_2')$-quasi-isometric embedding.  Indeed, this follows from an application of Lemma~\ref{lem:RBRquasi}, since $\gamma$ is $M$-deviating and hence has uniformly bounded $(3\delta+2\lambda)$-overlap with $\comapp(S)$.

\textbf{Verification that $\gamma\vee_{C_o}\comapp(\overline W)$ embeds:}  By construction, $\gamma$ does not intersect any point of $\comapp(S)$ outside of $C_o$.  Hence suppose that $\tau\beta_1\alpha_1\cdots\beta_k\alpha_k\beta_{k+1}$ is a path in $\comappn{\overline W}\cup\gamma$ that begins and ends in $C_o$, such that: $\tau$ is a subpath of $\gamma$, and each $\beta_i$ lies in the carrier of a slope-approximation, and each $\alpha_i$ lies in a nucleus approximation, and $|\beta_i|\geq L$ except when $i=k+1$.  If $L$ is sufficiently large and $|\beta_2|=L$, then the existence of such a closed path contradicts the above conclusion that $\comappn{\overline W}\vee_C\gamma$ uniformly quasi-isometrically embeds.  The remaining possibility is that a path of the form $\tau\beta_1\alpha_1\beta_2$ or $\tau\beta_1\alpha_1$ is closed in $\widetilde X$.  In either case, when $L$ is sufficiently large, a thin quadrilateral argument shows that $\gamma$ is forced to $(2\delta+\lambda)$-fellow-travel with $\beta_1$ or $\beta_2$ for distance exceeding $M$, since the fellow-traveling between $\alpha_1$ and $\beta_i$ is controlled by Lemma~\ref{lem:vert_horiz_bound}.  Hence $\gamma\vee_{C_o}\comapp(\overline W)$ embeds in $\widetilde X$.

\textbf{Preventing short backtracks from crossing $\comapp(\overline W_L)$ at an apex:}  We now compute the tunnel-length $L_3\geq L_2$ necessary to ensure that each augmentation $QQ^{-1}$ in $\gamma_{_{\succ}}$ either fails to intersect $\comapp(\overline W)$ or has length at least $\frac{L}{4}$, where $L\geq L_3$ is the tunnel-length of $W$.  Note that if $QQ^{-1}$ is a truncated augmentation in the sense of Construction~\ref{cons:LA}, then the apex lies in some $\widetilde V_{n}$ with $n\not\in L\integers$, and hence $QQ^{-1}\cap\comapp(\overline W)=\emptyset$, so we only need consider non-truncated augmentations.

Let $W$ have tunnel-length $L\geq L_2$ and let $QQ^{-1}$ be a augmentation whose apex $p$ lies in $\comapp(\overline W)$, and hence in a nucleus-approximation.  Suppose that $|Q|\leq\frac{L}{4}$.  Let $\gamma'$ be the subpath of $\gamma$ between $C$ and the initial point of $Q$, let $\beta$ be a geodesic of $\comappn{\overline W}$ joining $p$ to the terminal point of $\comapp(S)$, and let $\tau$ be a geodesic of $\comappn{S}$ joining the initial point of $\gamma'$ to the terminal point of $\beta$.  Since $\gamma$ is deviating, the path $\gamma'Q$ is a quasigeodesic with constants depending only on $M$ and $\lambda$.  Meanwhile, since $\overline W$ has uniform ladder-overlap and $L\geq L_2\geq L_0$, the path $\beta\tau^{-1}$ is a $(\kappa_1,\kappa_2)$-quasigeodesic.  Hence $\gamma'Q$ fellow-travels with $\tau\beta^{-1}$ at distance depending only on $M$ and $\widetilde X$.  This is impossible for sufficiently large $L$, since $\gamma',\tau$ have $(2\delta+2\lambda)$-overlap of length at most $M$.

\textbf{Choosing $\overline W$:}  Since $\widetilde X$ has many effective walls, there exists an immersed wall $W\rightarrow X$ with tunnel length $L\geq L_3$, involving a primary bust in every edge of $V$, such that the image $\overline W$ of a lift $\widetilde W\rightarrow\widetilde X$ satisfies the following:
\begin{enumerate}
 \item $\overline W$ is a wall (since $L_3\geq L_2$).
 \item $\comapp(\overline W)$ is $(\kappa_1,\kappa_2)$-quasiconvex.
 \item $\gamma\cap\comapp(\overline W)=C$, which is contained in the carrier of a slope-approximation $\comapp(S)$.
 \item $\comappn{\overline W}\vee_C\gamma\rightarrow\widetilde X^1$ is a quasi-isometric embedding.
 \item Any augmentation $QQ^{-1}$ of $\gamma$ that intersects $\comapp(\overline W)$ has the property that $|Q|>\frac{L}{4}$ (since $L\geq L_3$).
\end{enumerate}

$W$ is chosen from the spreading set $\mathbb W$ given in Definition~\ref{defn:many_effective_walls}.(1).

\textbf{An arbitrary lifted augmentation:}  Let $\liftapp{\gamma}\rightarrow\widetilde X^{\bullet}_L$ be a lifted augmentation of $\gamma$.  Since the map $\widetilde X^{\bullet}_L\rightarrow\widetilde X$ is a quasi-isometry and restricts to the identity on $\comapp(\overline W_L)$ and sends $\liftapp{\gamma}$ to $\gamma_{_{\succ}}$, the intersection $\widehat C=\liftapp{\gamma}\cap\comappn{\overline W_L}$ is bounded and $\liftapp{\gamma}\vee_{\widehat C}\comappn{\overline W_L}\rightarrow\widetilde X^{\bullet}_L$ is a quasi-isometric embedding.  (We could have chosen a specific lifted augmentation to make $\widehat C\neq\emptyset$, but it follows from the discussion below that this holds for any lifted augmentation.)  Thus any $\liftapp{\gamma}$, together with $\comapp(\overline W_L)$, satisfies Hypothesis~\eqref{item:wedgeqie} of Proposition~\ref{prop:general_cutting}.

We now verify that $\liftapp{\gamma}$ satisfies the remaining two hypotheses of Proposition~\ref{prop:general_cutting}.  To this end, let $\hat\eta$ be an embedded quasigeodesic in $\widetilde X^{\bullet}_L$ obtained from $\tilde\phi_L\circ\liftapp{\gamma}$ by removing backtracks, and let $\eta$ be the image in $\widetilde X$ of $\hat\eta$.  Note that $\hat\eta$ is independent of the choice of lifted augmentation of $\gamma$.

\textbf{Intersection of $\hat\eta$ with $\comapp(\overline W)$:}  We first work in $\widetilde X$.  Recall that $\gamma\cap\comapp(\overline W)$ is the odd-cardinality set $C_o$ of points in $\comapp(S)$ for some slope $S$ of $\overline W$.  Consider the nucleus $\widetilde M\subset\overline W$ that intersects $S$ and $\comapp(S)$.  Then, since we can assume that $L$ is sufficiently large to ensure that each primary bust is separated from each vertex by a secondary bust, $\widetilde M$ corresponds to a subinterval containing no vertex, and hence $\comapp(\widetilde M)$ maps to a subspace of the star of a vertex in $V$.  By Lemma~\ref{lem:exceptional_no_vertex} and our above choice of $L$, the exceptional zone determined by $\widetilde M$ and $\comapp(\widetilde M)$ contains no vertex of a vertical edge containing a point of $C_o$.  It follows that the tunnel $T'$ attached to the unique secondary bust of $\widetilde M$ intersects $\gamma$ in a set of points corresponding bijectively to $C_o$.  Let $S'$ be the slope of $T'$.  Then, since the primary busts can be chosen arbitrarily small, we can assume that $\comapp(S')\cap\eta$ is an odd-cardinality set $E_o'$.  Hence $\comapp(S')\cap\hat\eta\subset\widetilde X^\bullet_L$ is an odd-cardinality set $E_o$ mapping bijectively to $E_o'$.

Since the endpoints of primary busts are regular, the path $\hat\eta$ contains a nontrivial interval $I$ ending at a point of $E_o$ and lying in the image of $S'$ under the forward flow; hence $I\subset\rightw_L$ and, since $I$ does not lie in a discrepancy zone, $I\subset\righta$.  There is likewise a nontrivial interval $I'$ in $\hat\eta$ beginning on $E_o$ and lying in an exceptional zone determined by the nucleus intersecting $\comapp(S')$.  Moreover, $I'$ and $I$ can be chosen to be separated in $\hat\eta$ by $E_o$.  See Figure~\ref{fig:repair}.

\begin{figure}[h]
\begin{overpic}[width=0.6\textwidth]{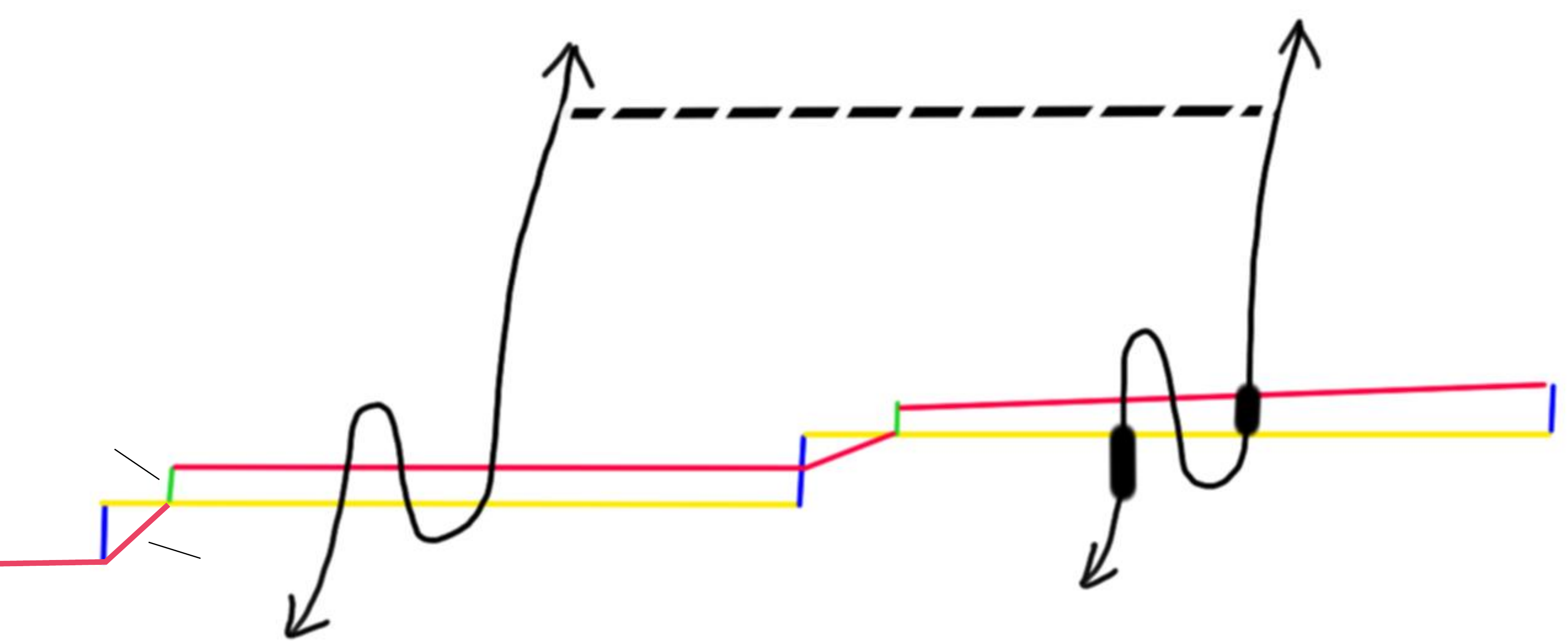}
\put(12,4){$S$}
\put(-5,3){$\overline W$}
\put(3,11){$\widetilde M$}
\put(34,4){$\comapp(S)$}
\put(84,9){$\comapp(S')$}
\put(30,22){$\gamma$}
\put(77,22){$\eta$}
\put(82,33){$p$}
\put(50,35){$\upsilon$}
\put(32,32){$p'$}
\end{overpic}
\caption{The relationship between $\gamma,\eta,\overline W,\comapp(\overline W)$ in $\widetilde X$.  The intervals $I,I'$ in $\widetilde X^\bullet_L$ map to the bold intervals.  The path $\theta$ contains the terminal part of $\comapp(S)$ and the initial part of $\comapp(S')$.}\label{fig:repair}
\end{figure}

We claim that $\comapp(\overline W)\cap\hat\eta=E_o$.  Otherwise, applying the map $\widetilde X^\bullet_L\rightarrow\widetilde X$ would show that $\comapp(\overline W)\cap\eta$ contains some point $p\not\in E'_o$, since $\widetilde X^\bullet_L\rightarrow\widetilde X$ restricts to a bijection on $\comapp(\overline W)$.  Then there is a forward path $\upsilon$ of length $L$ emanating from a point $p'\in\gamma$ and terminating at $p$.  Let $\theta$ be a geodesic of $\comapp(\overline W)$ joining $p$ to a closest point $a$ of $C_o$, and let $\gamma_1$ be the subpath of $\gamma$ joining $a$ to $p'$.  Then $\gamma_1\upsilon$ is a quasigeodesic with quasi-isometry constants depending only on the deviation constant of $\gamma$, while $\theta$ is a $(\kappa_1,\kappa_2)$-quasigeodesic.  Hence $\gamma_1\upsilon$ fellowtravels with $\theta$ at distance depending only on $\delta,M,\kappa_1,\kappa_2$ and not on $L$.  It follows that there is a uniform upper bound on $|\gamma_1|$ that is independent of $L$.  Hence, if $L$ is sufficiently large, then since $C_o$ lies at distance at least $\frac{L}{4}$ from all nuclei, $\min_n|q(p)-nL|\geq\frac{L}{4}$, so that $p$ lies at horizontal distance at least $\frac{L}{4}$ from any nucleus approximation.  Suppose that $\comapp(S')$ lies in $\theta$.  Then $\theta$ contains a point at distance at least $\frac{L}{4}$ from $\gamma_1\upsilon$, and hence $\gamma_1\upsilon$ and $\theta$ cannot uniformly fellow-travel when $L$ is sufficiently large.  Similarly, if $\theta$ enters some other slope-approximation attached to $\comapp(\widetilde M)$, we find that $\theta$ and $\gamma_1\upsilon$ cannot fellow-travel.  The remaining possibility is that there is a path in $\comapp(\widetilde M)$ joining the endpoint of $\comapp(S)$ to a point of $\upsilon$.  This is impossible since, by Remark~\ref{rem:nearly_fixed}, the level part of $T'$ has odd-cardinality intersection with $\gamma$ and $p\not\in E'_o$.

\textbf{Conclusion:}  It follows from the above discussion that $\hat\eta$ contains two quasigeodesic rays, one in each of the halfspaces of $\widetilde X^\bullet_L$ associated to $\comapp(\overline W)$.  Since $\hat\eta$ fellow-travels with $\liftapp{\gamma}$, we see that $\liftapp{\gamma}$ satisfies all hypotheses of Proposition~\ref{prop:general_cutting}, whence $\overline W$ cuts $\gamma$.
\end{proof}
\subsection{Cutting ladderlike geodesics}\label{subsec:ladderlike_case}

\begin{prop}\label{prop:cutting_ladderlike}
Suppose that $\widetilde X$ has many effective walls and for each bounded forward path $\alpha$ there exists a periodic regular forward path $\alpha'$ such that $N(\alpha)=N(\alpha')$.

Then for each geodesic $\gamma:\mathbf R\rightarrow\widetilde X^1$ that is not $M$-deviating for any $M$, there exists an immersed wall $W\rightarrow X$ such that $\overline W$ is a wall, $\comappn{\overline W}$ is quasiconvex, and $\overline W$ cuts $\gamma$.
\end{prop}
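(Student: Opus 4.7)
The plan parallels the proof of Proposition~\ref{prop:cutting_deviating}, with the ladderlike hypothesis and the periodic-forward-path hypothesis supplying the input that level-separation provided there. Since $\gamma$ is not $M$-deviating for any $M$, for arbitrarily large $M$ there exists a forward path $\sigma$ of length $M$ whose ladder $(2\delta+\lambda+\xi)$-fellow-travels a subpath of $\gamma$. Applying the periodic-forward-path hypothesis to a bounded subpath $\sigma_0\subset\sigma$ produces a periodic regular forward path $\sigma'$ with $N(\sigma')=N(\sigma_0)$, and I select a point $a\in\sigma'$ whose image in $V$ is periodic and regular.

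Next, Definition~\ref{defn:many_effective_walls}.\eqref{item:w_a_periodic} applied at $a$ yields a spreading, uniformly sub-quasiconvex family $\mathbb W_a$, with uniform separation constant $k$ and uniform ladder-overlap constant $B$ (using Remark~\ref{rem:uniform_k}).  I take $L$ large enough to invoke Propositions~\ref{prop:quasiconvexity} and~\ref{prop:tunneliswall}, and large enough for the cutting estimates below, then select $W\in\mathbb W_a$ of tunnel length $L$, using Lemma~\ref{lem:choosing_busts_1}.\eqref{item:anywhere} to place a primary bust $d$ of $W$ with $a$ as an endpoint.  I lift $\widetilde W\rightarrow\widetilde X$ so that the forward-path portion of the slope-approximation $\comapp(S)$ at $d$ runs along $\sigma'$ for length $L$, hence closely fellow-travels $\gamma$; Propositions~\ref{prop:quasiconvexity} and~\ref{prop:tunneliswall} then give that $\overline W$ is a wall and $\comappn{\overline W}$ is $(\kappa_1,\kappa_2)$-quasi-isometrically embedded.

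The conclusion then follows from Proposition~\ref{prop:general_cutting}, applied to a lifted augmentation $\widehat{\gamma_{_{\succ}}}$ and a set $C_o\subset\widehat{\gamma_{_{\succ}}}\cap\comapp(\overline W_L)$ arising from an odd-cardinality transverse crossing of $\widehat{\gamma_{_{\succ}}}$ with $\comapp(S)$; odd parity is arranged by a small modification of $\gamma$ as in the ``oddly-intersecting forward path'' step of Proposition~\ref{prop:cutting_deviating}.  Lemma~\ref{lem:RBRquasi} supplies the required quasi-isometric embedding $\widehat{\gamma_{_{\succ}}}\vee_C\comappn{\overline W_L}\rightarrow(\widetilde X^\bullet_L)^1$, with the three fellow-traveling bounds coming from Lemma~\ref{lem:vert_horiz_bound}, the uniform constant $B$, and the divergence estimate $\dist_{\widetilde X^1}(\tilde\phi^n(a),\tilde\phi^n(d'))\geq3\delta+2\lambda$ of Condition~\eqref{item:w_a_periodic}.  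The same divergence estimate prevents $\gamma$ from substantially fellow-traveling any slope-approximation of $\overline W$ other than $\comapp(S)$, verifying hypothesis~\eqref{item:nosurprise} of Proposition~\ref{prop:general_cutting}; hypothesis~\eqref{item:localsep} follows from transversality at $C_o$.

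The principal obstacle is the odd-cardinality transverse crossing.  Unlike in the deviating case, where $\gamma$ automatically had bounded intersection with every forward ladder, here $\gamma$ remains within $(2\delta+\lambda+\xi)$ of $\comapp(S)$ over the entire tunnel length, so $\gamma\cap\comapp(S)$ is not a priori bounded.  Bounding it requires the forward-ray divergence from $a$ to every other bust of $\overline W$ in the same knockout, which is precisely the content of Condition~\eqref{item:w_a_periodic}; once this is in place, an adaptation of the confluence/parity argument of Proposition~\ref{prop:cutting_deviating} yields the odd crossing and closes the proof.
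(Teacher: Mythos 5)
There is a genuine gap, and it is located exactly where you flag the ``principal obstacle.'' Your construction places the primary bust $d$ with $a$ as an endpoint and lets the forward part of $\comapp(S)$ run along the periodic forward path through $a$, so that $\comapp(S)$ fellow-travels $\gamma$. This is geometrically the wrong way to cut a ladderlike geodesic: a slope-approximation is (a bust interval followed by) a length-$L$ forward path, so it runs \emph{parallel} to the ladder that $\gamma$ is tracking, and there is no mechanism forcing $\gamma$ to cross it transversally, let alone an odd number of times, nor any reason for the two ends of $\gamma$ to land in distinct halfspaces. In the deviating case the odd transverse crossing came from level-separation via the $\reals$-tree $\mathcal Y$; nothing analogous is available here, and deferring this to ``an adaptation of the confluence/parity argument'' leaves the essential point unproved. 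Moreover, your choice of $W$ is internally inconsistent with membership in $\mathbb W_a$: Definition~\ref{defn:many_effective_walls}.\eqref{item:w_a_periodic} requires $\dist_{\widetilde X^1}(\tilde\phi^n(a),\tilde\phi^n(d))\geq3\delta+2\lambda$ for busts $d$ in the knockout of $a$, which fails identically if $a$ is an endpoint of $d$; and Lemma~\ref{lem:choosing_busts_1}.\eqref{item:anywhere} requires $\phi^L(x_i)\neq x_i$, which fails when $L$ is a multiple of the period of $a$ (as it must be in the constructions of Section~\ref{sec:irreducible_case}).

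The paper's proof does the opposite: it arranges for the image of $a=\tilde\phi^{M/2}(x)$ to lie in the \emph{interior of a nucleus} of $W$, away from all busts. After replacing the fellow-traveling subpath $\gamma'$ by the actual forward path $x,\ldots,\tilde\phi^M(x)$ to form $\sigma$, the path $\sigma$ punches through $\comapp(\overline W)$ transversally at the single point $\tilde\phi^L(a)$ (so oddness is automatic, no parity argument is needed), and the points $\hat x_{M/2+L\pm1}$ lie in the two halfspaces $\lefta,\righta$ via the discrepancy-zone analysis of Proposition~\ref{prop:approximation_separates}. The divergence constant $k$ of Condition~\eqref{item:w_a_periodic} is then used only to bound the overlap of the forward-path portion of $\sigma$ with the slope-approximations of $\overline W$, feeding Lemma~\ref{lem:RBRquasi} to verify Hypothesis~\eqref{item:wedgeqie} of Proposition~\ref{prop:general_cutting}. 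To repair your argument you would need to relocate the cut from the tunnel to the nucleus; as written, the wall you construct need not separate the endpoints of $\gamma$ at all.
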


\begin{proof}
Suppose that $\gamma$ contains a path $\gamma'$ such that for some regular $x\in\widetilde X^1$ and some $M$ to be determined, the path $\gamma'$ fellowtravels at distance $(2\delta+2\lambda)$ with the sequence $x,\tilde\phi(x),\ldots,\tilde\phi^M(x)$, where $x$ is a periodic point.  Such $\gamma'$ exists for arbitrarily large $M$ by combining the fact that $\gamma$ is $M$-ladderlike for arbitrarily large $M$ with the first hypothesis.  We shall show that if $M$ is sufficiently large, then there exists a wall $\overline W$ that has the desired properties and cuts $\gamma$ and separates $x$ and $\tilde\phi^M(x)$.

\textbf{Choosing $W$ using many effective walls:}  Without loss of generality, $M$ is an even integer, and we let $a=\tilde\phi^{M/2}(x)$.  Note that $a$ is periodic.  Let $\{e_i\}$ be the collection of edges of $V$, and let $W\rightarrow X$ be an immersed wall busting each $e_i$, with tunnel-length $L$ to be determined.  Let $e_1$ be the edge whose interior contains $a$.  By Remark~\ref{rem:uniform_k} and the fact that $\widetilde X$ has many effective walls, there exist $\kappa_1,\kappa_2, L_1$ depending only on $\widetilde X$ such that we can choose $W$ with tunnel length $L\geq L_1$ so that $\overline W$ is a wall and $\comappn{\overline W}$ is $(\kappa_1,\kappa_2)$-quasi-isometrically embedded.  Moreover, we choose $W$ from the the collection $\mathbb W_a$ of Definition~\ref{defn:many_effective_walls}.\eqref{item:w_a_periodic}, which guarantees that $\overline W$ can be chosen with the following properties:
\begin{enumerate}
 \item There exists $k\geq0$ such that for each primary bust $d$ with an endpoint in $\overline W$ in the same knockout as $a$, we have $\dist(\tilde\phi^n(a),\tilde\phi^n(d))\geq3\delta+2\lambda$ for all $n\geq k$.\label{eq:k}
 \item $W$ has tunnel length $L>\max\{12(\delta+k),L_1\}$, independent of $M$.
 \item The image of $a$ in $V$ lies in the interior of a nucleus of $W$ and so $\comapp(\overline W)$ contains $\tilde\phi^L(a)$.
\end{enumerate}
We assume that $M>JL$, where $J\geq 4$ will be chosen below.  Let $\sigma$ be the uniform quasigeodesic in $\widetilde X^1$ obtained from $\gamma$ by removing $\gamma'$ and replacing it by the sequence $x,\ldots,\tilde\phi^M(x)$.

\textbf{Verifying that $\sigma\vee_{\phi^L(a)}\comappn{\overline W}$ quasi-isometrically embeds:}  Consider paths of the form $\alpha_0\beta_0\cdots\beta_{s-1}\alpha_s\tau$, where $\beta_i$ is a geodesic of the carrier of a slope-approximation, $\alpha_i$ is a vertical geodesic of $\comappn{\overline W}$, and $\alpha_s$ terminates at $\tilde\phi^L(a)$, and $\tau$ is a subpath of $\sigma$ beginning at $\tilde\phi^L(a)$ (actually, the initial part of $\tau$ is a subsequence of $x,\tilde\phi(x),\ldots\tilde\phi^{\frac{M}{2}+L}(x)$ or of $\tilde\phi^{\frac{M}{2}+L}(x),\ldots\tilde\phi^M(x)$).  The $(3\delta+2\lambda)$-overlap between $\alpha_s$ and $\tau$ and between $\alpha_i$ and $\beta_{i}$ and between $\alpha_i$ and $\beta_{i-1}$ is controlled by Lemma~\ref{lem:vert_horiz_bound}, and Condition~\eqref{eq:k} on $\overline W$ ensures that the $(3\delta+2\lambda)$ overlap between $\tau$ and $\beta_{s-1}$ has length at most $k$.  The choice of $L$ now allows us to invoke Lemma~\ref{lem:RBRquasi} to conclude that $\sigma\vee_{\tilde\phi^L(a)}\comappn{\overline W}$ is quasi-isometrically embedded in $\widetilde X^1$, with constants $(\kappa_1',\kappa_2')$ depending only on $\kappa_1,\kappa_2,\lambda$.

\textbf{Verifying that $\sigma\vee_{\tilde\phi^L(a)}\comapp(\overline W)$ embeds:}  We will show that there is no path $\tau\subset\sigma$ beginning at $\tilde\phi^L(a)$ and joining the endpoints of a path $\alpha_0\beta_0\cdots\alpha_{m}$ or $\alpha_0\beta_0\cdots\alpha_m\beta_m$ in $\comappn{\overline W}$ with each $\alpha_i$ vertical, and each $\beta_i$ a path in the carrier of a slope approximation.  Each $\beta_i$ has length $L$ except for the $\beta_m$ in the path of the second form.  Since $\sigma\vee_{\tilde\phi^L(a)}\comappn{\overline W}$ is quasi-isometrically embedded, it suffices to examine the case where $m\leq 1$.  The quadrilateral $\alpha_0\beta_0\alpha_1\tau^{-1}$ is approximated by a quasigeodesic quadrilateral $\bar\alpha_0\beta_0\bar\alpha_1\tau^{-1}$, where each $\bar\alpha_i$ is a geodesic of length exceeding $3\delta+2\lambda$.  This quadrilateral is $2\delta+2\lambda$ thin, and $\beta_0,\tau$ fellow travel at distance $2\delta+2\lambda$ for length at most $k$.  Hence, without loss of generality, $\bar\alpha_0$ must fellow-travel with $\beta_0$ at distance $2\delta+\lambda$ for distance at least $\frac{11L}{24}$, whence $\alpha_0$ must $(2\delta+2\lambda+\mu)$-fellow-travel with $\beta_0$ for distance at least $\frac{11L}{24\mu_1}-\mu_2$, which contradicts Lemma~\ref{lem:vert_horiz_bound} when $L$ is sufficiently large.  (Recall that the quasiconvexity constant $\mu$ and the quasi-isometry constants $(\mu_1,\mu_2)$ of the nucleus approximations are independent of $L$.)  Hence $\sigma\vee_{\tilde\phi^L(a)}\comapp(\overline W)$ embeds.

\textbf{Applying Proposition~\ref{prop:general_cutting}:}  Let $\liftapp{\sigma}$ be a lifted augmentation of $\sigma$ induced by a lift of the forward path joining $x$ to $\phi^M(x)$.  Let $\hat x_i$ denote the lift of $\tilde\phi^i(x)$, so that $\hat x_{M/2}$ is a lift of $a$ and $\hat x_{M/2+L}$ is a lift of $\tilde\phi^L(a)$.  The wall $\overline W$ is the image of a wall $\overline W_L$ such that a nucleus of $\overline W_L$ separates $\hat x_{M/2}$ from $\hat x_{M/2+1}$ and thus $\hat x_{M/2+L}$ lies in a nucleus approximation of $\comapp(\overline W_L)$.  (Recall that $\comapp(\overline W_L)$ maps isomorphically to $\comapp(\overline W)$.)  Thus the points $\hat x_{M/2+L\pm1}$ lie in distinct halfspaces associated to $\comapp(\overline W_L)$.  Indeed, $\hat x_{M/2+L-1}$ lies in a downward discrepancy zone and hence in $\lefta$, while $\hat x_{M/2+L+1}\in\righta$.  See Figure~\ref{fig:ladderlike_cut}.  This verifies Hypothesis~\eqref{item:localsep} of Proposition~\ref{prop:general_cutting}.

\begin{figure}
\begin{overpic}[scale=0.25]{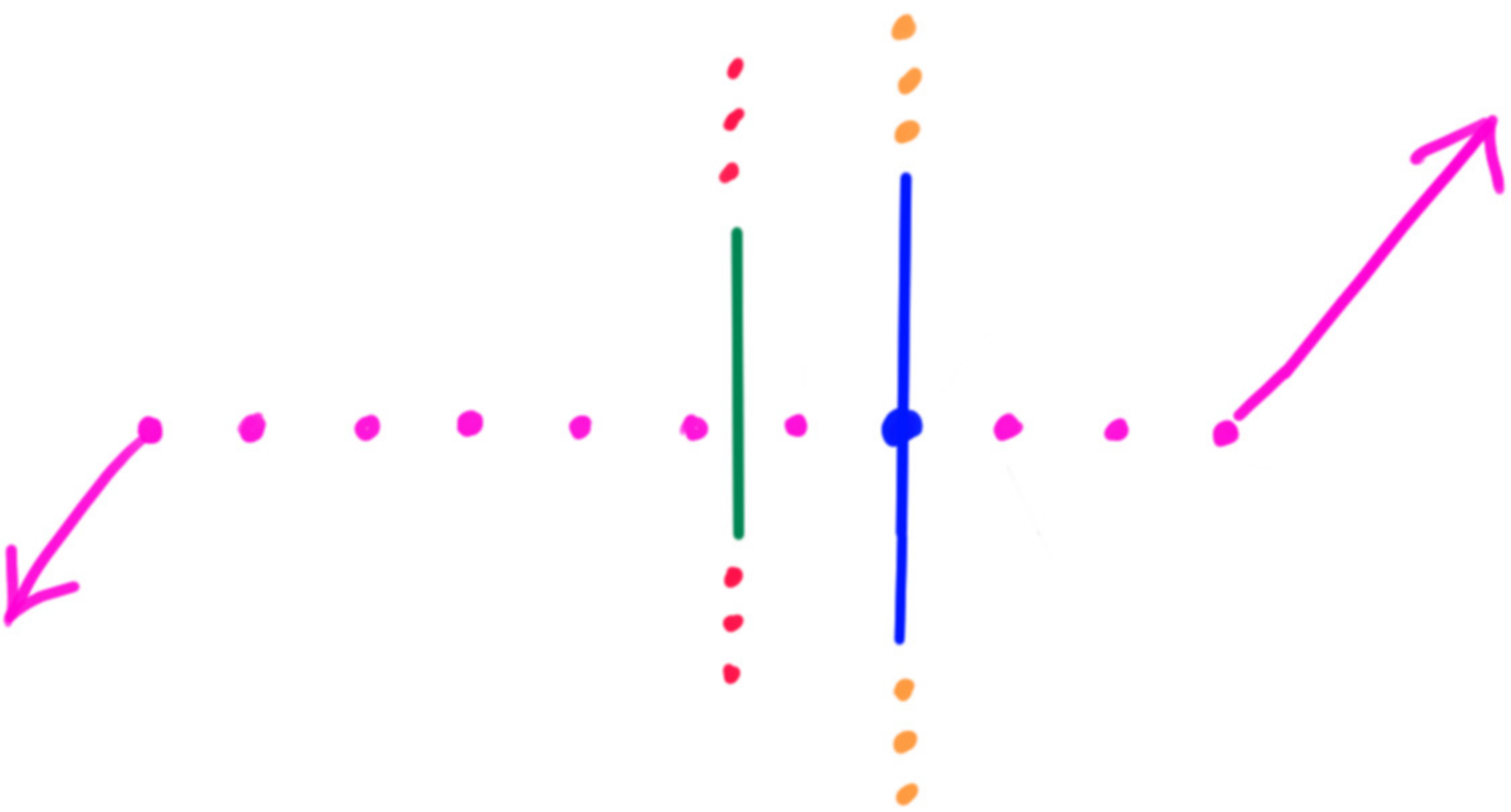}
 \put(9.2,20){$\hat x_{_{0}}$}
 \put(60.3,20){$\hat x_{_{\frac{M}{2}+L}}$}
 \put(80.5,20){$\hat x_{_M}$}
 \put(42,33){$\overline W_L$}
 \put(61,33){$\comapp(\overline W_L)$}
\end{overpic}
\caption{Notation in the proof of Proposition~\ref{prop:cutting_ladderlike}.}\label{fig:ladderlike_cut}
\end{figure}

As in the proof of Proposition~\ref{prop:cutting_deviating}, the fact that $\sigma\vee_{\tilde\phi^L(a)}\comappn{\overline W}$ is quasi-isometrically embedded, together with the fact that $\widetilde X^{\bullet}_L\rightarrow\widetilde X$ is a quasi-isometry, shows that $\liftapp{\sigma}\vee_{\hat x_{M/2+L}}\comappn{\overline W_L}\rightarrow\widetilde X^{\bullet}_L$ is a quasi-isometric embedding.  This verifies Hypothesis~\eqref{item:wedgeqie} of Proposition~\ref{prop:general_cutting}.

Let $y\in\liftapp{\sigma}\cap\comapp(\overline W_L)$.  Then either $y$ maps to a point of $\sigma\cap\comapp(\overline W)$, in which case $y=\tilde\phi^L(a)$ since $\sigma\vee_{\tilde\phi^L(a)}\comapp(\overline W)$ embeds in $\widetilde X$, or $y$ is an apex of $\liftapp{\sigma}$.  The latter is impossible provided $J$ is sufficiently large compared to $\kappa_1',\kappa_2'$.  Indeed, suppose $QQ^{-1}$ is an augmentation beginning on $\sigma$ and having an apex $p\in\comapp(\overline W)$.  Let $\beta\rightarrow\comappn{\overline W}$ join $p$ to $\tilde\phi^L(a)$, let $\tau\rightarrow N(\sigma'')$ join $\tilde\phi^L(a)$ to $x$, and let $P$ be the subpath of $\sigma$ subtended by $x$ and the initial point of $Q$.  Then the concatenation $P\tau^{-1}\beta^{-1}$ is a $(\kappa_1',\kappa_2')$-quasigeodesic containing a subpath of length at least $(J/2-1)L$, namely $\tau$.  Hence if $J>2(\kappa_1'(L+\kappa_2')L^{-1}+1)$, then the offending apex $p$ cannot exist since $|Q|\leq L$.  This verifies Hypothesis~\eqref{item:nosurprise} of Proposition~\ref{prop:general_cutting}, and the proof is complete.
\end{proof}

\section{Leaf-separation and many effective walls in the irreducible case}\label{sec:getting_lol}
In this section, we describe conditions on $\phi$ ensuring that $\widetilde X^1$ satisfies the hypotheses of Proposition~\ref{prop:everything_cut}.

\subsection{Leaves}\label{subsec:leaves_and_train_tracks}

\begin{defn}[Leaf]\label{defn:leaf}
Let $x,y\in\widetilde X$.  Then $x,y$ are \emph{equivalent} if there exist forward paths $\sigma_x,\sigma_y$ such that $x\in\sigma_x,y\in\sigma_y$ and $\sigma_x\cap\sigma_y\neq\emptyset$.  An equivalence class is a \emph{leaf}.
We denote the leaf containing $x$ by $\mathcal L_x$.  The leaf $\mathcal L_x$ is \emph{singular} if it contains a 0-cell, and otherwise $\mathcal L_x$ is \emph{regular}.
\end{defn}

Observe that $\mathcal L_x$ is $\tilde\phi$-invariant.  Moreover, observe that $\mathcal L_x$ has a natural directed graph structure: vertices are points of $\mathcal L_x\cap\widetilde X^1$, and edges are midsegments.  From Proposition~\ref{prop:leaf_properties}.\eqref{leaf_neighborhood} and Proposition~\ref{prop:properties_of_levels}, it follows that this subdivision makes $\mathcal L_x$ a directed tree in which each vertex has exactly one outgoing edge and finitely many incoming edges. 

\begin{prop}[Properties of leaves]\label{prop:leaf_properties}
Leaves have the following properties:
\begin{enumerate}
 \item If $\mathcal L_x$ is a regular leaf and $\phi$ is a train track map, then $\mathcal L_x$ has a neighborhood homeomorphic to $\mathcal L_x\times[-1,1]$ with $\mathcal L_x$ identified with $\mathcal L_x\times\{0\}$.\label{leaf_neighborhood}
 \item Each level is contained in a unique leaf, and $\mathcal L_x$ is an increasing union of levels.\label{leaf_level}
\end{enumerate}
\end{prop}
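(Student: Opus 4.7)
The plan is to dispose of (2) first and then leverage it in the proof of (1). For (2), I will begin by verifying that each level lies in a single leaf: if $y, y' \in T^o_L(w)$, both points lie on forward paths of length $L$ terminating at the common root $w$, and since these two forward paths share $w$ in their intersection, the leaf relation identifies $y$ with $y'$. Hence $T^o_L(w)$ is contained in the single leaf of any of its points.

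To express $\mathcal L_x$ as an increasing union of levels, I will first characterize the leaf relation by showing that $y \sim x$ holds iff there exist integers $a, b \geq 0$ with $\tilde\phi^a(y) = \tilde\phi^b(x)$. Starting from a chain $y = y_0, \ldots, y_n = x$ witnessing the equivalence, for each consecutive pair $y_i, y_{i+1}$ I will examine where the intersection point $p$ of their respective forward paths sits (ahead of both of $y_i, y_{i+1}$ in the forward-flow sense, behind both, or mixed) and deduce in each case that $\tilde\phi^{\alpha_i}(y_i) = \tilde\phi^{\beta_i}(y_{i+1})$ for some $\alpha_i, \beta_i \geq 0$. A short induction along the chain, using only that iterated $\tilde\phi$ commutes with itself, yields the claim. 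Consequently $\mathcal L_x = \bigcup_{a,b \geq 0} T^o_a(\tilde\phi^b(x))$. This family is directed: Proposition~\ref{prop:properties_of_levels}.(3) gives $T^o_a(\tilde\phi^b(x)) \subseteq T^o_{a'}(\tilde\phi^b(x))$ whenever $a' \geq a$, and applying $\tilde\phi^c$ to the root gives $T^o_a(\tilde\phi^b(x)) \subseteq T^o_{a+c}(\tilde\phi^{b+c}(x))$. Any two members thus sit inside a common member at root $\tilde\phi^{\max(b_1, b_2)}(x)$ with sufficiently large length, and a cofinal sequence in this directed system exhibits $\mathcal L_x$ as an increasing union of levels.

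For (1), I use (2) to exhaust $\mathcal L_x$ by an ascending family of levels $T^o_{L_n}(w_n)$. Regularity of $\mathcal L_x$ forces every point of the family, including each root $w_n$, to miss $\widetilde X^0$, so Proposition~\ref{prop:properties_of_levels}.(2) supplies an embedded transverse product neighborhood $T^o_{L_n}(w_n) \times [-\epsilon_n, \epsilon_n] \hookrightarrow \widetilde X$ for each $n$. The remaining task is to globalize these into a single embedded collar of $\mathcal L_x$. At each interior vertex $v$ of a forward path in $\mathcal L_x$, two consecutive midsegments meet: one lies in the 2-cell associated to some edge $e$ of $V$, the next in the 2-cell associated to the appropriate edge of $\phi(e)$. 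The train track hypothesis, namely that $\phi^n$ is immersed for all $n$, rules out folding between these consecutive midsegments, so the two 2-cells meet transversally along their shared vertical edge and admit a consistent notion of which side is ``positive.'' This coherence propagates along each forward path, and combining it with the local product structure of Proposition~\ref{prop:properties_of_levels}.(2) at the branch points of $T^o_{L_n}(w_n)$ lets me choose the transverse directions compatibly on nested levels. Taking a direct limit and uniformly reparametrizing the transverse coordinate produces the desired homeomorphism $\mathcal L_x \times [-1,1] \to \widetilde X$ onto an open neighborhood of $\mathcal L_x$.

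The principal obstacle is the coherence argument in (1): matching the interval-bundle structures from Proposition~\ref{prop:properties_of_levels}.(2) across the entire directed system of levels and globalizing them to a product neighborhood of the (possibly non-compact) leaf. The train track condition is precisely what rules out the folding of consecutive midsegments that would locally obstruct such a matching.
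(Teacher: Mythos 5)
Part~(2) is fine: the paper treats it as immediate from the definitions, and your directed-union argument via $\mathcal L_x=\bigcup_{a,b}T^o_a(\tilde\phi^b(x))$ is a correct (if more detailed) version of the same observation.

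Part~(1) has a genuine gap. Your strategy is to exhaust $\mathcal L_x$ by levels, take the product neighborhoods supplied by Proposition~\ref{prop:properties_of_levels}.(2) for each level, and pass to a direct limit. But an increasing union of embedded finite trees, each with a product neighborhood, need not itself have a product neighborhood: the leaf could accumulate on itself, for instance by returning to a single vertical edge in an infinite set of points with an accumulation point (think of a leaf of an irrational-slope foliation of the torus, which is an increasing union of collared arcs but has no collar). Each level avoids this only because it is a \emph{finite} tree, so one can take the transverse width $\epsilon$ smaller than the minimal gap between its finitely many intersection points with any vertical edge; for the full, generally non-compact leaf there is no such finite minimum, and your ``uniform reparametrization of the transverse coordinate'' cannot repair a failure of embeddedness. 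What is missing is precisely the paper's Lemma~\ref{lem:finite_intersection_leaf_edge}: when $\phi$ is a train track map, $|\tilde e\cap\mathcal L_x|\leq 1$ for every vertical edge $\tilde e$ (because $\phi^k(e)$ is immersed, hence $\tilde\phi^k$ is injective on each lifted edge, so distinct points of $\tilde e$ lie on distinct leaves). This is the step where the train track hypothesis actually does its work. Your invocation of the train track condition to rule out ``folding of consecutive midsegments'' addresses only the local gluing of two trapezoids at a single interior vertex of a forward path, not this global discreteness; once Lemma~\ref{lem:finite_intersection_leaf_edge} is in hand, the paper builds the neighborhood directly as a union of open trapezoids, one per edge of the leaf, with no need for the level exhaustion at all.
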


\begin{proof}
\textbf{Proof of~$\eqref{leaf_neighborhood}$:}  This uses Lemma~\ref{lem:finite_intersection_leaf_edge} below.  For each vertex $v_{\tilde e}=\mathcal L_x\cap\tilde e$ of $\mathcal L_x$, let $U_{\tilde e}$ be an open interval in $\tilde e$ about $v_{\tilde e}$.  For each edge $f_{\tilde c}=\mathcal L_x\cap R_{\tilde c}$ of $\mathcal L_x$, with vertices at $v_{\tilde c}$ and $v_{\tilde d}$, let $U(f_{\tilde c})$ be the open trapezoid in $R_{\tilde c}$ joining $U_{\tilde c}$ to $U_{\tilde d}$.  The desired open neighborhood of $\mathcal L_x$ is $\bigcup_{f_{\tilde c}\in\text{Edges}(\mathcal L_x)}U(f_{\tilde c})$, as shown in Figure~\ref{fig:product_neighborhood}.

\begin{figure}[ht]
\includegraphics[width=0.25\textwidth]{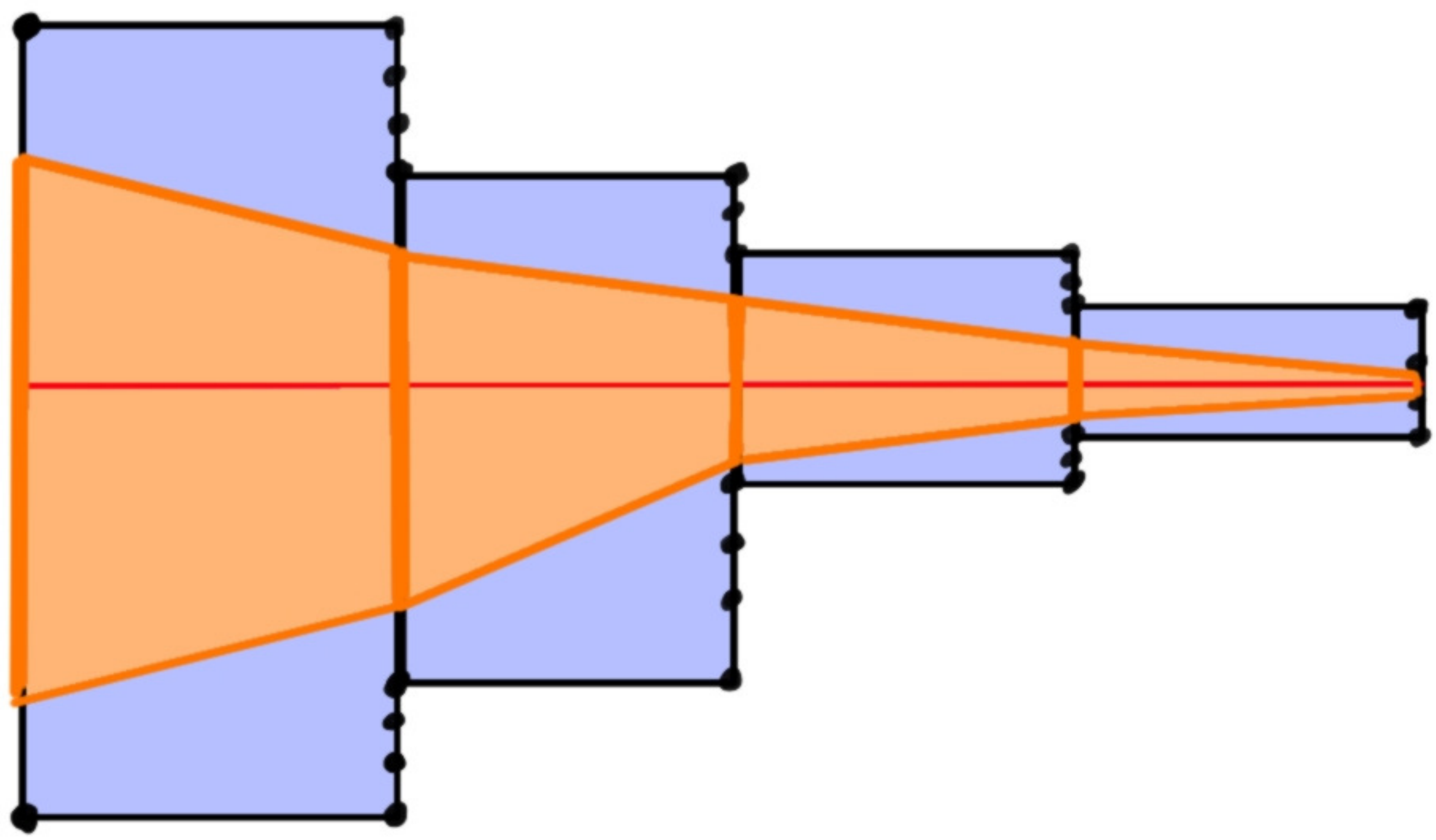}
\caption{A product neighborhood of a regular leaf.}\label{fig:product_neighborhood}
\end{figure}

\textbf{Proof of~$\eqref{leaf_level}$:}  This follows immediately from the definitions of levels and leaves.
\end{proof}

We denote by $\mathcal Y_0$ the set of leaves of $\widetilde X$ and define a surjection $\rho_0:\widetilde X\rightarrow\mathcal Y_0$ by $\rho_0(x)=\mathcal L_x$.

\begin{lem}\label{lem:finite_intersection_leaf_edge}
Let $\tilde e$ be a vertical edge of $\widetilde X$ and let $\mathcal L_x$ be a leaf.  If $\phi$ is a train track map, then $|\tilde e\cap\mathcal L_x|\leq 1$.
\end{lem}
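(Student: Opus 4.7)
The plan is a proof by contradiction combining the definition of leaf, the height function $q\colon\widetilde X\to\mathbf R$ from Section~\ref{sec:mapping_tori}, and the train track hypothesis. Suppose $y,y'\in\tilde e\cap\mathcal L_x$ are distinct.

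The first step is to extract non-negative integers $m,n$ with $\tilde\phi^m(y)=\tilde\phi^n(y')$. Since $y$ and $y'$ lie in a common leaf, they are joined by a chain of points in which consecutive pairs lie on intersecting forward paths (Definition~\ref{defn:leaf}). For a single such pair, intersecting forward paths $\sigma_y,\sigma_{y'}$ meeting at $z$ yield, via case analysis on whether $z$ is a forward iterate of $y$, of $y'$, or of a common ancestor reached by both, non-negative integers $m_0,n_0$ with $\tilde\phi^{m_0}(y)=\tilde\phi^{n_0}(y')$. Concatenating such equations along the chain and applying $\tilde\phi$ uniformly to compose them yields integers $m,n\geq 0$ with $\tilde\phi^m(y)=\tilde\phi^n(y')$.

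The second step forces $m=n$. Applying $q$, which satisfies $q\circ\tilde\phi=q+1$, to the equation gives $q(y)+m=q(y')+n$; but $q(y)=q(y')=k$ because $y,y'\in\tilde e\subset\widetilde V_k$, so $m=n$ and hence $\tilde\phi^m(y)=\tilde\phi^m(y')$.

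The third step uses the train track hypothesis. Writing $e\subset V$ for the image of $\tilde e$, the combinatorial path $\phi^m(e)\to V$ is immersed, so $\phi^m|_e$ is locally injective. Its lift $\tilde\phi^m|_{\tilde e}$ is then a locally injective map of the compact interval $\tilde e$ into $\widetilde V_{k+m}$, which is a tree because it is the universal cover of the graph $V$ (whose fundamental group is free). Since an immersion of an interval into a tree is an embedding, $\tilde\phi^m|_{\tilde e}$ is globally injective, forcing $y=y'$ and contradicting $y\neq y'$. The critical step is the first: converting leaf-equivalence into the explicit equation $\tilde\phi^m(y)=\tilde\phi^n(y')$. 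Once this is in hand, the level function immediately collapses $m=n$, and train-track injectivity of $\tilde\phi^m|_{\tilde e}$ finishes the argument. The proof relies neither on the directed-tree structure of $\mathcal L_x$ (which is only established \emph{after} this lemma, via Proposition~\ref{prop:leaf_properties}) nor on hyperbolicity, but only on the structural framework of Section~\ref{sec:mapping_tori}.
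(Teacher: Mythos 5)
Your proof is correct, and it supplies an actual argument where the paper's proof is essentially a one-line assertion: the paper just notes that under the train track hypothesis each $2$-cell is foliated by pairwise disjoint midsegments, so that $\rho_0$ is injective on vertical edges. Your route makes explicit what that assertion is hiding: unwinding the leaf relation into an equation $\tilde\phi^m(y)=\tilde\phi^n(y')$, using $q\circ\tilde\phi=q+1$ to force $m=n$, and then using the train track hypothesis exactly once, in the form ``$\tilde\phi^m|_{\tilde e}$ is an immersion of an interval into a tree, hence injective.'' The key fact is the same in both arguments (injectivity of iterated edge images), but your reduction from the equivalence relation to that fact is genuinely spelled out rather than implicit, which is a real gain in rigor; the height-function step and the immersed-path-in-a-tree step are both fine.

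One point in your first step is stated too strongly. The intermediate points $z_i$ of the chain, and the intersection point $z$ of two forward paths, need not lie at integer height: they may be interior points of midsegments. If $q(z_i)\not\equiv q(z_{i+1})\pmod 1$, then there are \emph{no} non-negative integers $m_0,n_0$ with $\tilde\phi^{m_0}(z_i)=\tilde\phi^{n_0}(z_{i+1})$, since the heights $q(z_i)+m_0$ and $q(z_{i+1})+n_0$ can never agree; so the per-pair claim as written can fail. The repair is routine: a point in an open $2$-cell lies on a unique midsegment (open cells embed), so two forward paths meeting at such a point both contain that entire midsegment and hence share its endpoints, which do lie at integer height; one then runs your case analysis on these common vertices. (Equivalently, allow real flow times throughout and observe at the end that, since $y,y'\in\tilde e$ have integer height, the resulting equation $\tilde\phi^s(y)=\tilde\phi^s(y')$ can be pushed forward to $\tilde\phi^{\lceil s\rceil}(y)=\tilde\phi^{\lceil s\rceil}(y')$.) With this repair the rest of your argument goes through verbatim.
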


\begin{proof}
When $\phi$ is a train track map, distinct points in each vertical edge $e$ lie on distinct leaves, i.e. the map $\rho_0:e\rightarrow\mathcal Y_0$ is injective.  Note that in this case, each 2-cell of $\widetilde X$ is foliated by a family of distinct fibers of $\rho$, each of which is a midsegment.
\end{proof}

\subsection{Forward space in the train track case}\label{sec:forward_space_in_train_track_case}
Suppose that $\phi$ is a train track map.  We now describe an $\reals$-tree $\mathcal Y$ whose points are equivalence classes of leaves, and a $G$-action on $\mathcal Y$, and use this to establish that $\widetilde X$ is level-separated.  This construction mimics the stable tree discussed in~\cite{BestvinaFeighnHandel_lamination}, although the underlying set is defined differently.  The referee explains that it is a special case of a construction in~\cite{GJLL}.  Let $\mathcal E=\reals[\text{Edges}(V)]$ and denote by $\vec e_i$ the basis element of $\mathcal E$ corresponding to $e_i$.  Let $\mathfrak M:\mathcal E\rightarrow\mathcal E$ be the linear map whose matrix with respect to the basis $\{\vec e_i\}$ has $ij$-entry the number of times the path $\phi(e_i)$ traverses $e_j$, ignoring orientation.  Note that this \emph{transition matrix}, which we also denote by $\mathfrak M$, is a nonnegative matrix.  We further assume that $\mathfrak M$ is irreducible.

Let $\varpi$ be the Perron-Frobenius eigenvalue of $\mathfrak M$.  As shown in~\cite{BestvinaHandel}, $\varpi>1$ since $\phi$ is irreducible and has infinite order.  Let $\mathbf v$ be a $\varpi$-eigenvector, all of whose entries are positive.  For each $i$, let $c_i$ be the magnitude of the Perron projection of $\vec e_i$ onto $\reals[\mathbf v]$.  As is made precise in Definition~\ref{defn:exponentially_expanding}, the map $\phi$ expands edges of $V$ by a factor of $\varpi$.

We now choose an equivariant weighting of vertical edges in $\widetilde X$ by letting $|e_i|=c_i$ for each edge $e_i$ of $V$, letting each horizontal edge of $X$ have unit weight, and pulling back these weights to $\widetilde X$.  This determines the metric $\dist$ on $\widetilde X^1$.  For each $e_i$ and each $n\in\integers$, we define the \emph{scaled length} of a lift $\tilde e_i$ of $e_i$ to $\widetilde V_n$ to be $\varpi^{-n}|\tilde e_i|=\varpi^{-n}c_i$.  Let $\dist_n:\widetilde V_n\times\widetilde V_n\rightarrow[0,\infty]$ be the resulting path-metric.

Given leaves $\mathcal L_x,\mathcal L_y$, with $x,y\in\widetilde V_k$ for some $k$, let $$\dist_{\infty}(\mathcal L_x,\mathcal L_y)=\lim_{n\rightarrow\infty}\dist_{n}(\tilde\phi^n(x),\tilde\phi^n(y)).$$

This limit exists and is finite because $\dist_n(\tilde\phi^n(x),\tilde\phi^n(y))$ is non-increasing and bounded.  Moreover, $\dist_{\infty}(\mathcal L_x,\mathcal L_y)$ is well-defined since for other choices $x'\in\mathcal L_x\cap\widetilde V_{k'}$ and $y'\in\mathcal L_y\cap\widetilde V_{k'}$, for all but finitely many $n$, we have $\phi^{n'}(x')=\phi^n(x)$ and $\phi^{n'}(y')=\phi^n(y)$ for some $n'$.

\begin{lem}\label{lem:metrizing}
Let $\mathcal Y$ be the quotient of $\mathcal Y_0$ obtained by identifying points $\rho_0(x),\rho_0(y)$ for which $\dist_{\infty}(\rho_0(x),\rho_0(y))=0$.  Then the induced pseudometric $\dist_{\infty}:\mathcal Y\rightarrow [0,\infty)$ is a metric.  Let $\rho:\widetilde X\rightarrow\mathcal Y$ be the composition $\widetilde X\stackrel{\rho_0}{\longrightarrow}\mathcal Y_0\rightarrow\mathcal Y$.  Then the restriction of $\rho$ to each vertical edge is an isometric embedding.
\end{lem}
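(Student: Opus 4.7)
The proof splits into three steps: establishing the pseudometric axioms for $\dist_\infty$ on $\mathcal Y_0$, passing to the metric quotient, and evaluating $\rho$ on a single vertical edge.

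First, the pseudometric axioms. Non-negativity and symmetry are immediate, and reflexivity follows by choosing the same representative of a leaf. Independence of $\dist_\infty(\mathcal L_x,\mathcal L_y)$ from the choice of representatives has already been noted just before the statement of the lemma, since any two representatives of a leaf lie on a common forward ray after enough iterations of $\tilde\phi$. For the triangle inequality applied to leaves $\mathcal L_x,\mathcal L_y,\mathcal L_z$, the $\tilde\phi$-invariance of leaves together with the fact that each leaf $\mathcal L$ meets $\widetilde V_m$ for all sufficiently large $m$ (since forward-flowing any representative yields points in every subsequent $\widetilde V_{m}$) permits choosing representatives $x,y,z$ in a common $\widetilde V_k$. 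The path-metric on the graph $\widetilde V_{k+n}$ satisfies the genuine triangle inequality at $\tilde\phi^n(x),\tilde\phi^n(y),\tilde\phi^n(z)$ for each $n$, and passing to the limit (which exists by the already-noted monotonicity argument) gives the corresponding inequality for $\dist_\infty$.

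Second, the quotient step is the standard fact that identifying points at pseudodistance zero in a pseudometric space produces a metric space: the identification is an equivalence relation by the triangle inequality and symmetry, and the induced function on equivalence classes inherits non-negativity, symmetry, and the triangle inequality while separating distinct classes by construction. Applying this to $(\mathcal Y_0,\dist_\infty)$ yields the required metric on $\mathcal Y$.

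Third, for the restriction to a vertical edge, fix a vertical edge $\tilde e\subset\widetilde V_k$ and two distinct points $x,y\in\tilde e$ bounding a subarc $[x,y]$. Because $\phi$ is a train track map, $\phi^n$ is immersed on each edge of $V$, so $\tilde\phi^n([x,y])$ is an immersed combinatorial path in $\widetilde V_{k+n}$. Since $\widetilde V_{k+n}$ is a tree, this immersed path is actually embedded and hence is the unique geodesic in $(\widetilde V_{k+n},\dist_{k+n})$ between $\tilde\phi^n(x)$ and $\tilde\phi^n(y)$. The Perron--Frobenius choice of edge weights $c_i$, together with the parametrization of $\phi$ on each edge, makes $\tilde\phi$ multiply the weighted length of any subarc of a single vertical edge by exactly $\varpi$. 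Combined with the scaling factor $\varpi^{-(k+n)}$ defining $\dist_{k+n}$, the quantity $\dist_{k+n}(\tilde\phi^n(x),\tilde\phi^n(y))$ is therefore constant in $n$ and equal to the scaled length of $[x,y]$ in $(\widetilde V_k,\dist_k)$. Thus $\dist_\infty(\mathcal L_x,\mathcal L_y)$ equals this scaled length, so $\rho$ restricts to an isometric embedding of $\tilde e$ into $(\mathcal Y,\dist_\infty)$; in particular $\mathcal L_x\neq\mathcal L_y$ in $\mathcal Y$ whenever $x\neq y$.

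The main subtlety is the exact cancellation of the two scaling effects: the $\varpi^n$ expansion of $\phi^n$ on arcs inside a single vertical edge is cancelled precisely by the $\varpi^{-(k+n)}$ scaling built into $\dist_{k+n}$. The equality (rather than a mere inequality) relies on the train-track hypothesis, which via Lemma~\ref{lem:finite_intersection_leaf_edge} and the tree structure of $\widetilde V_{k+n}$ rules out any folding of $\tilde\phi^n([x,y])$; without this, one would only recover a quasi-isometric embedding.
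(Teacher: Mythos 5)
Your proof is correct and follows the same route as the paper's: symmetry and the triangle inequality (verified in the approximating trees $\widetilde V_{k+n}$ and passed to the limit) give the metric quotient, and the isometric-embedding claim comes from the fact that the train-track hypothesis makes $\tilde\phi^n$ of a subarc an embedded geodesic in the tree $\widetilde V_{k+n}$, whose Perron--Frobenius-weighted length grows by exactly $\varpi^n$ and so cancels the $\varpi^{-n}$ normalization in $\dist_n$. You have simply filled in the details that the paper's two-line proof leaves implicit, including correctly flagging that the exactness (rather than mere quasi-isometry) depends on the train-track condition and the constant-speed parametrization of $\phi$ on edges.
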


\begin{proof}
$\dist_{\infty}$ is symmetric and satisfies the triangle inequality.  Hence $\dist_{\infty}:\mathcal Y\rightarrow[0,\infty)$ is a metric.  
Let $e_i$ be a vertical edge with endpoints $x,y$.  Then the distance in $\widetilde V$ between the endpoints of $\tilde\phi^n(e)$ is $\varpi^n|e_i|$, whence $\dist_{\infty}(\rho(x),\rho(y))=c_i$.  Our assumption that $\tilde\phi$ has a constant-speed parametrization on each edge implies that the same equality holds for any subinterval of $e_i$.
\end{proof}

\begin{prop}\label{prop:R_tree_base case}
Suppose that every edge of $V$ is expanding with respect to $\phi$.  Then:
\begin{enumerate}
 \item The map $\rho:\widetilde X\rightarrow\mathcal Y$ is continuous.
 \item $(\mathcal Y,\dist_{\infty})$ is a 0-hyperbolic geodesic metric space, i.e. $\mathcal Y$ is an $\reals$-tree.
 \item $\mathcal Y$ admits a $G$-action by homeomorphisms with respect to which $\rho$ is $G$-equivariant.
 \item The restriction of the $G$-action on $\mathcal Y$ to $F$ is an action by isometries. 
 \item The stabilizer in $F$ of $\rho(\tilde x)$ is trivial whenever $\tilde x$ is a lift to $\widetilde X$ of a periodic point in $V$.
\end{enumerate}
\end{prop}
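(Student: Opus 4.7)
The argument establishes items~(1)--(5) in order, with item~(5) the main substantive step.

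For (1), I combine Lemma~\ref{lem:metrizing} with two structural observations. Every midsegment $m_x$ is a unit-length forward segment and hence lies in the leaf $\mathcal L_x$, while every horizontal edge $t_w$ is itself the first step of a horizontal ray and also lies in a single leaf; thus $\rho$ is constant on each midsegment and each horizontal edge. Since Lemma~\ref{lem:metrizing} gives that $\rho$ restricts to an isometric embedding on each vertical edge, and each 2-cell of $\widetilde X$ is foliated by midsegments, on any 2-cell $\rho$ factors through the midsegment projection onto a vertical side, yielding continuity cell-by-cell.

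For (2), I realize $\mathcal Y$ as a limit of the $\reals$-trees $(\widetilde V_n,\dist_n)$. The eigenvalue relation $\mathfrak M\mathbf v=\varpi\mathbf v$ gives $|\phi(e_i)|=\varpi c_i$, so the forward-flow map $\tilde\phi\colon(\widetilde V_n,\dist_n)\to(\widetilde V_{n+1},\dist_{n+1})$ sends each edge of scaled length $\varpi^{-n}c_i$ to a combinatorial path of the same scaled length; each such map is thus an $\reals$-tree morphism, length-preserving on edges and possibly folding only at vertices. For $\rho(x),\rho(y)\in\mathcal Y$, I take preimages in $\widetilde V_0$ and consider the unique arcs $A_n\subset\widetilde V_n$ between $\tilde\phi^n(x)$ and $\tilde\phi^n(y)$; their images in $\mathcal Y$ are paths of length at most $\dist_n$, and a diagonal/compactness argument extracts a limiting arc realizing $\dist_\infty(\rho(x),\rho(y))$, yielding a geodesic. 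For $0$-hyperbolicity, any four points of $\mathcal Y$ have preimages in a common $\widetilde V_n$, and the Gromov four-point condition -- which holds in each tree $(\widetilde V_n,\dist_n)$ -- passes to the limit $n\to\infty$.

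For (3) and (4), the central fact is $G$-equivariance of $\tilde\phi$: every deck transformation $g\in G$ permutes lifts of horizontal rays, whence $g\tilde\phi(\tilde p)=\tilde\phi(g\tilde p)$ for all $\tilde p$. Hence $G$ preserves leaves and descends to an action on $\mathcal Y$ with respect to which $\rho$ is equivariant; continuity of this action follows from continuity of $\rho$ together with continuity of the $G$-action on $\widetilde X$. For~(4), each $g\in F$ preserves every $\widetilde V_n$ (using $\widetilde V_n=z^n\widetilde V_0$, the $F$-invariance of $\widetilde V_0$, and the identity $gz^n=z^n\Phi^{-n}(g)\in z^nF$) and acts on $(\widetilde V_n,\dist)$ by isometries, hence on $(\widetilde V_n,\dist_n)$ since $\dist_n=\varpi^{-n}\dist$. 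Passing to the limit, $F$ acts on $(\mathcal Y,\dist_\infty)$ by isometries. By contrast, $z$ sends $(\widetilde V_n,\dist_n)$ to $(\widetilde V_{n+1},\dist_{n+1})$ as a $\varpi^{-1}$-homothety (it is a $\dist$-isometry, but the scale factor changes by $\varpi^{-1}$), so $z$ acts on $\mathcal Y$ as a $\varpi^{-1}$-homothety, which is a homeomorphism.

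For~(5), suppose $g\in F\setminus\{1\}$ fixes $\rho(\tilde x)$, where $\tilde x$ lifts a $k$-periodic $v\in V$. Periodicity provides $\alpha\in G$ of the form $\alpha=fz^k$ with $f\in F$ such that $\tilde\phi^k(\tilde x)=\alpha\tilde x$; in particular $\rho(\alpha\tilde x)=\rho(\tilde x)$. Using $G$-equivariance of $\tilde\phi$ and the fact that $\alpha^j$ acts on $(\widetilde X,\dist)$ by isometries,
\[
\dist_{jk}\!\bigl(\tilde\phi^{jk}(g\tilde x),\tilde\phi^{jk}(\tilde x)\bigr)=\varpi^{-jk}\,\dist\!\bigl((\alpha^{-j}g\alpha^{j})\tilde x,\tilde x\bigr),
\]
where $\alpha^{-j}g\alpha^j\in F$ by normality of $F$ in $G$. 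The main obstacle is to show that the right-hand side is bounded away from $0$, forcing $\dist_\infty(\rho(g\tilde x),\rho(\tilde x))>0$ and contradicting the stabilizer hypothesis. I plan to exploit the Perron-Frobenius expansion structure of $\phi$ at periodic points: the conjugation action of $\alpha$ on $F$, measured in the $F$-translation metric on $\widetilde V_0$, expands nontrivial displacements at rate $\varpi^k$, so that $\dist((\alpha^{-j}g\alpha^j)\tilde x,\tilde x)\asymp\varpi^{jk}$ uniformly in $j$. Since the construction of $\mathcal Y$ is a special case of one in \cite{GJLL}, one may alternatively invoke their point-stabilizer results directly to conclude $g=1$.
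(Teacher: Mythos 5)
Items~(1)--(4) of your argument are sound and essentially match the paper's proof. (For~(2) the paper takes a shortcut: $\mathcal Y$ is path-connected because $\rho$ is continuous, and a path-connected subspace of an asymptotic cone of the simplicial tree $(\widetilde V,\dist_0)$ is automatically an $\reals$-tree by a result of Kapovich--Leeb. Your direct limit argument is workable in principle, but the extraction of a limiting arc from the images of the arcs $A_n$ needs more care than a one-line appeal to compactness, since $\mathcal Y$ is not obviously complete or proper.)

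The genuine gap is in~(5). Your reduction to the estimate $\dist\bigl((\alpha^{-j}g\alpha^{j})\tilde x,\tilde x\bigr)\asymp\varpi^{jk}$ identifies the right quantity, but the justification you offer --- Perron--Frobenius expansion of displacements under conjugation by $\alpha$ --- does not work and, as stated, is false. Perron--Frobenius expansion controls the \emph{length} of the image path: if $P$ is the vertical geodesic from $\tilde x$ to $g\tilde x$, then $\tilde\phi^{jk}(P)$ has length exactly $\varpi^{jk}|P|$. It says nothing about the \emph{geodesic distance} between the endpoints of $\tilde\phi^{jk}(P)$ after tightening, which is what $\dist((\alpha^{-j}g\alpha^{j})\tilde x,\tilde x)$ measures. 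The obstruction is precisely a periodic Nielsen path based at the periodic point: in that situation the tightened displacement stays bounded for all $j$ and the corresponding $g$ genuinely stabilizes $\rho(\tilde x)$, so statement~(5) is simply false without an extra hypothesis. Your argument never invokes hyperbolicity of $G$, which is exactly the ingredient that rules this out. The paper's proof proceeds via Corollary~\ref{cor:bh_separation}: either $\rho(g\tilde x)\neq\rho(\tilde x)$ and we are done, or the forward rays from $\tilde x$ and $g\tilde x$ lie at finite Hausdorff distance, in which case the vertical path joining them projects to an essential closed path whose $\phi^k$-image is a periodic Nielsen path, contradicting word-hyperbolicity of $G$. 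Your fallback of citing~\cite{GJLL} does not repair this: point stabilizers in such limit trees need not be trivial in general (again because of Nielsen paths), so hyperbolicity must enter the argument somewhere.
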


\begin{proof}
\textbf{Continuity of $\rho$:}  The restriction of $\rho$ to each vertical edge $e$ is continuous since it is an isometric embedding, and $\rho$ is continuous on each closed 2-cell since $\rho$ is constant on each midsegment and each 2-cell is therefore foliated by fibers of $\rho$ since $\phi$ is a train track map.  Since $\widetilde X$ is locally finite, the pasting lemma implies that $\rho$ is continuous on $\widetilde X$.

\textbf{$\reals$-tree:}  Let $x,y\in\widetilde V_n$ and let $P\rightarrow\widetilde V_0$ be a path joining $x$ to $y$.  Then since $\rho$ is continuous, $\rho(P)$ is a path joining $\rho(x)$ to $\rho(y)$, whence $\mathcal Y$ is path-connected.  Since $\mathcal Y$ is a path-connected subspace of an asymptotic cone of the simplicial tree $(\widetilde V,\dist_0)$, the space $\mathcal Y$ is an $\reals$-tree~\cite[Prop.~3.6]{KapovichLeeb_asymptotic_cone}.  (The asymptotic cone in question is built using any non-principal ultrafilter on $\naturals$, the observation point $(\tilde v,\tilde\phi(\tilde v),\ldots)$, and the scaled metrics $\dist_n$ on $\widetilde V_0$.)

\textbf{The $G$-action:}  For $g\in G$ and $x\in\widetilde X$, let $g\rho(x)=\rho(gx)$.  This defines an action since $G$ takes leaves in $\widetilde X$ to leaves.  The action is by homeomorphisms since $\rho$ is continuous and $G$ acts by homeomorphisms on $\widetilde X$.

\textbf{The $F$-action is isometric:}  Let $x,y\in\widetilde X$.  Since $F$ acts by isometries on each $\widetilde V_n$, for each $f\in F$, we have
\begin{eqnarray*}
\dist_{\infty}(f\rho(x),f\rho(y))&=&\lim_n\dist_n(\phi^n(fx),\phi^n(fy))\\
&=&\lim_n\dist_{n}(\Phi^n(f)\phi^n(x),\Phi^n(f)\phi^n(y))=\lim_n\dist_n(\phi^n(x),\phi^n(y))=\dist_{\infty}(\rho(x),\rho(y)).
\end{eqnarray*}

\textbf{The $F$-action is free on periodic points:}  Let $x\in V$ be a periodic point and let $\tilde x$ be a lift of $x$ to $\widetilde X$.  By Corollary~\ref{cor:bh_separation}, either $\rho(\tilde x)\neq f\rho(\tilde x)$, and we are done, or the forward rays $\sigma_{\tilde x}$ and $f\sigma_{\tilde x}$ emanating from $\tilde x$ and $f\tilde x$ respectively lie at finite Hausdorff distance.  It follows that the immersed vertical path $\widetilde P$ joining $\tilde x$ to $f\tilde x$ projects to an essential closed path $P\rightarrow V$, based at $x$, such that $\phi^k(P)$ is a periodic Nielsen path for some $k\geq 0$.  This contradicts hyperbolicity of $G$.
\end{proof}


\begin{rem}
When $\phi$ is a $\pi$-isomorphism, and $G$ is hyperbolic, the action of $F$ on $\mathcal Y$ can be shown to be free using Lemma~\ref{lem:splitting_lemma_tt} and the fact that there are no nontrivial periodic Nielsen paths.  We expect that this is true for a general hyperbolic monomorphism, but a free action on the set of periodic points suffices for our purposes.
\end{rem}

\subsection{Level-separation in the train track case}\label{subsec:tt_cube}
The purpose of this subsection is to prove Lemma~\ref{lem:tt_level_separated}.

\begin{defn}[Transverse]\label{defn:transverse}
Let $\mathcal T$ be an $\reals$-tree.  The map $\theta:\mathbf R\rightarrow\mathcal T$ is \emph{transverse} to $y\in\mathcal T$ if for each $p\in\theta^{-1}(y)$, there exists $\epsilon>0$ such that $\theta((p-\epsilon,p))$ and $\theta((p,p+\epsilon))$ lie in distinct components of $\mathcal T-\{y\}$.  Note that if $\theta$ is transverse to $y$, then $\theta^{-1}(y)$ is a discrete set.
\end{defn}

We denote by $\mathbf R^+$ a combinatorial sub-ray of the combinatorial line $\mathbf R$.

\begin{lem}\label{lem:trichotomy_tt}
Let $\mathcal T$ be an $\reals$-tree. Let $\mathcal T_0\subseteq\mathcal T$ have the property that $\mathcal T-\{y\}$ has two components for each $y\in\mathcal T_0$ and each open arc of $\mathcal T$ contains a point of $\mathcal T_0$.  Let $\theta:\mathbf R\rightarrow\mathcal T$ or $\theta:\mathbf R^+\rightarrow\mathcal T$ be a continuous map.  Suppose $\theta$ is transverse to every point in $\mathcal T_0$.  Moreover, suppose that each edge $e$ of the domain of $\theta$ has connected intersection with the preimage of each point in $\mathcal T$.  Then one of the following holds:
\begin{enumerate}
 \item \label{item:odd}There exists a nontrivial arc $\alpha\subset\mathcal T$ such that $|\theta^{-1}(y)|$ is odd for all $y\in\alpha\cap\mathcal T_0$.
 \item \label{item:infinite_preimage}There exists a point $y\in\mathcal T$ with $|\theta^{-1}(y)|$ infinite.
 \item \label{item:bigpreimage_tt}For each $r\geq0$, there exists $y_r\in\mathcal T$ such that $\theta^{-1}(y_r)$ has diameter at least $r$.
\end{enumerate}
\end{lem}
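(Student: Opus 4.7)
I would argue the contrapositive: assuming both (2) and (3) fail, I will deduce (1). Under these assumptions every $\theta^{-1}(y)$ is finite and there is a uniform $r_0 \ge 0$ with $\diam(\theta^{-1}(y)) \le r_0$ for all $y \in \mathcal T$. For $y \in \mathcal T_0$ with nonempty preimage, let $[a(y),b(y)]$ be the smallest closed interval containing $\theta^{-1}(y)$. The connected sets $\theta((-\infty, a(y)))$ and $\theta((b(y),+\infty))$ avoid $y$, so each lies in a single component $S_-(y)$, $S_+(y)$ of $\mathcal T - \{y\}$. (In the ray case the $-\infty$ end is replaced by $\theta(0)$; when $\theta^{-1}(y)=\emptyset$, set $S_-(y)=S_+(y)$ to be the component containing $\theta(\mathbf R)$.) Transversality forces $\theta$ to swap components of $\mathcal T-\{y\}$ at each preimage, while connectedness keeps $\theta$ in one component between consecutive preimages. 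Hence $|\theta^{-1}(y)|$ is odd if and only if $S_-(y)\neq S_+(y)$. So (1) reduces to producing a nontrivial arc $\alpha \subset \mathcal T$ with $S_-(y)\neq S_+(y)$ for all $y\in\alpha\cap\mathcal T_0$.

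\textbf{Producing one odd-parity point.} The main obstacle is to show that some $y_0 \in \mathcal T_0$ satisfies $S_-(y_0)\neq S_+(y_0)$. I would argue by contradiction: suppose $S_-(y)=S_+(y)$ for every $y\in\mathcal T_0$, and let $\Omega(y)$ denote the opposite component of $\mathcal T-\{y\}$. Then $\theta^{-1}(\Omega(y))\subseteq[a(y),b(y)]$ has length $\le r_0$, so both time-ends of $\theta$ escape into a common ``end-direction'' $\theta^{\infty}$ of the tree. Pick $y_0\in\mathcal T_0$ with nonempty preimage (such a $y_0$ exists because $\theta$ is nonconstant and $\mathcal T_0$ is dense in every arc), and choose $y_n\in\mathcal T_0$ along a ray from $y_0$ toward $\theta^{\infty}$ with $y_0\in\Omega(y_n)$. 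The tree version of the intermediate value theorem (applied edge-by-edge, and using the edge-connected-preimage hypothesis to rule out local oscillation missing $y_n$) forces $\theta$ to pass through each $y_n$ at some time approaching $-\infty$ and at another time approaching $+\infty$, because eventually $\theta(t)$ enters every neighborhood of $\theta^{\infty}$. This gives $b(y_n)-a(y_n)\to\infty$, contradicting the uniform bound $r_0$.

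\textbf{Constructing the arc.} Once a $y_0\in\mathcal T_0$ with $S_-(y_0)\neq S_+(y_0)$ has been produced, the two time-ends of $\theta$ limit into the two distinct components $S_-(y_0),S_+(y_0)$ of $\mathcal T-\{y_0\}$, which together determine distinct ideal endpoints $\theta^-,\theta^+$ of $\mathcal T$ (possibly proper points or genuine ends). Let $\gamma$ be the (possibly unbounded) arc of $\mathcal T$ joining $\theta^-$ to $\theta^+$; it passes through $y_0$. For any $y$ in the interior of $\gamma$, the two subrays of $\gamma$ on either side of $y$ contain the respective tails of $\theta$, so the tail heading to $\theta^+$ lies in one component of $\mathcal T-\{y\}$ and the tail heading to $\theta^-$ lies in the other, giving $S_-(y)\neq S_+(y)$. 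Taking $\alpha$ to be any nontrivial subarc of the interior of $\gamma$ through $y_0$ then yields (1). The main difficulty remains the intermediate-value step in the previous paragraph: the $\mathbf R$-tree $\mathcal T$ need not be locally compact, and one must exploit the edge-connected-preimage condition to ensure that $\theta$ cannot approach $\theta^{\infty}$ while dodging the intervening points $y_n$.
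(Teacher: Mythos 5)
Your overall architecture is reasonable and your parity reformulation (``$|\theta^{-1}(y)|$ is odd iff the two tails of $\theta$ lie in distinct components of $\mathcal T-\{y\}$'') is essentially the separation argument the paper uses. But there is a genuine gap at the heart of your contradiction step: you assert that, when every parity is even, ``both time-ends of $\theta$ escape into a common end-direction $\theta^{\infty}$ of the tree,'' and your entire contradiction (choosing $y_n$ along a ray toward $\theta^\infty$ and forcing crossings at times tending to $\pm\infty$) depends on this ideal point existing and on $\theta(t)$ actually converging to it. Nothing you have said produces such a $\theta^\infty$. Knowing that each ``opposite'' component $\Omega(y)$ has preimage of diameter at most $r_0$ only tells you that, for each individual $y$, both tails stay in the single component $S(y)$; it does not make the family $\{S(y)\}$ define an end, nor does it prevent the tails from wandering among infinitely many branches of a non--locally-compact tree without converging. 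You flag this difficulty yourself in your last sentence, but you locate it in the wrong place: the intermediate-value/crossing step is the easy part (connectedness of $\theta([s,t])$ handles it), whereas the existence of the limit is the part that requires real work.

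This is exactly where the paper spends its effort. It first shows, using the hypothesis that each edge of $\mathbf R$ has connected preimage-intersection with each point, that either conclusion~(3) already holds or each $\theta(e)$ meets only finitely many other edge-images, whence $\image(\theta)$ is a locally compact $\reals$-tree; then $\image(\theta)\cup\partial\image(\theta)$ is compact, so one can extract sequences $a_i\to+\infty$, $b_i\to-\infty$ with $\bar a_i\to\bar a_\infty$ and $\bar b_i\to\bar b_\infty$. Only subsequential limits are obtained, and the case split is on whether $\bar a_\infty=\bar b_\infty$ (if not, separation gives (1) or (2); if so, one reselects times far out along the common geodesic $\bar o\,\bar p_\infty$ to exhibit arbitrarily distant times with equal images, i.e.\ conclusion (3), with a further sub-case when $\bar p_\infty$ lies in the image). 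To repair your proof you would need to import this local-compactness/bordification step (or an equivalent) before you may speak of $\theta^\infty$; with only subsequential limits in hand, your ``all parities even'' case essentially becomes the paper's $\bar a_\infty=\bar b_\infty$ case, and your ``some parity odd'' case becomes its $\bar a_\infty\neq\bar b_\infty$ case.
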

\begin{proof}
For each $p\in\mathbf R$, we denote by $\bar p$ its image in $\mathcal T$ and by $|\theta^{-1}(x)|$ the number of components of the preimage of $x\in\mathcal T$ in $\mathbf R$.

We now show that either $(3)$ holds or $\image(\theta)$ is locally compact.  We first claim that either $(3)$ holds, or for each edge $e$ of $\mathbf R$, there are (uniformly) finitely many edges $f$ such that $\theta(f)\cap\theta(e)\neq\emptyset$.  Indeed, if there are arbitrarily many such $f$, then for each $r\geq 0$, we can choose $f$ such that $\dist_{\mathbf R}(e,f)>r$ but $\theta(e)\cap\theta(f)\neq\emptyset$, yielding~(3).  Second, choose a point $p\in\mathcal T$.  Our first claim shows that either~(3) holds or the set $\{e_i\}$ of edges with $p\in\theta(e_i)$ is finite.  Assume the latter. Then for each $i$ we can choose $\epsilon_i>0$ such that the $\epsilon_i$-neighborhood of $p$ in $\theta(e_i)$ is disjoint from the image of each edge not in $\{e_i\}$.  Let $\epsilon=\min_i\epsilon_i$.  Then the $\epsilon$-neighborhood of $p$ in $\image(\theta)$ lies in $\cup_i\theta(e_i)$ and thus has compact closure.

There exist sequences $\{a_i\}$ and $\{b_i\}$ in $\mathbf R=(-\infty,\infty)$ converging to $\infty$ and $-\infty$ respectively, whose images are sequences $\{\bar a_i\}$ and $\{\bar b_i\}$ that converge to points $\bar a_\infty$ and $\bar b_\infty$ in $\image(\theta)\cup\partial\image(\theta)$.  Indeed, since $\image(\theta)$ is a locally compact $\reals$-tree, $\image(\theta)\cup\partial\image(\theta)$ is compact by~\cite[Exmp.~II.8.11.(5)]{BridsonHaefliger}.

Suppose $\bar a_\infty \neq \bar b_\infty$.  Let $\alpha$ be a nontrivial arc in the geodesic joining $\bar a_{\infty}$ and $\bar b_{\infty}$.  Note that $\theta^{-1}(\bar c)$ has either odd or infinite cardinality for each $\bar c\in\alpha\cap\mathcal T_0$, since it must separate $a_i$ from $b_i$ for all but finitely many $i$.  Hence either conclusion~$(1)$ or $(2)$ holds.

Suppose $\bar a_\infty$ and $\bar b_\infty$ are equal to the same point $\bar p_\infty\not\in\image\theta$.  Let $\bar o$ denote the image of the basepoint $o$ of $\mathbf R$.  The intersections $\bar o\bar a_i \cap \bar o \bar p_\infty$ converge
to the segment $\bar o \bar p_\infty$.  The same holds for $\bar o \bar b_i$.  We use this to choose a new pair of sequences $\{a_i'\}$ and $\{b_i'\}$
that still converge to $\pm\infty$, and with the additional property that
$\bar a_i'=\bar b_i'$. We do this by choosing the image points far out in
$\bar o \bar p_\infty$.  We have thus found  arbitrarily distant points in $\mathbf R$ with the same images, verifying conclusion~(3).

In the remaining case, there exists $p\in\mathbf R$ such that $\bar p=\bar p_\infty$.  Consider the restriction of $\theta$ to a ray $\mathbf R^+$ so that the initial vertex of $\mathbf R^+$ is $p$.  Let $o$ be the vertex adjacent to $p$.  Repeating the previous argument with $a_i=p$ and $b_i$ converging to $p_\infty$ verifies conclusion~(3).  

The case of the ray $\mathbf R^+$ is similar.
\end{proof}

By Lemma~\ref{lem:finite_intersection_leaf_edge} and Proposition~\ref{prop:leaf_properties}, for each regular leaf there is a pair $(\leftl,\rightl)$ of closed halfspaces in $\widetilde X$ such that $\leftl\cup\rightl=\widetilde X$ and $\leftl\cap\rightl=\mathcal L$.  Points of $\rho(\widetilde X^0)$ are \emph{singular} points of $\mathcal Y$, and the other points are \emph{regular}.  If $\mathcal L$ is a regular leaf, then $\mathcal Y-\rho(\mathcal L)$ has two components, namely the interiors of the images of $\leftl$ and $\rightl$.  Since there are countably many singular points in $\mathcal Y$, each open arc in $\mathcal Y$ contains a regular point.

\begin{lem}\label{lem:rho_transverse}
For any geodesic $\gamma:\mathbf R\rightarrow\widetilde X^1$, the map $\theta=\rho\circ\gamma:\mathbf R\rightarrow\mathcal Y$ is transverse to regular points.
\end{lem}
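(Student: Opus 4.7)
The plan is to show that if $\theta(p) = y$ is a regular point of $\mathcal Y$, then locally near $p$ the geodesic $\gamma$ is contained in a single vertical edge $e$ and crosses the leaf corresponding to $y$ transversely, exploiting the fact (Lemma~\ref{lem:metrizing}) that $\rho$ restricts to an isometric embedding on each vertical edge. As a preliminary, I first check that the leaf $\mathcal L$ containing $\gamma(p)$ is itself regular: otherwise $\mathcal L$ would contain a $0$-cell $v$, which by definition of $\mathcal L$ lies in the same leaf-equivalence class as $\gamma(p)$, giving $\rho(v)=y$ and contradicting $y\notin\rho(\widetilde X^0)$.

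Next, I argue that $\gamma(p)$ must be an interior point of some vertical edge $e$. A leaf is a union of midsegments concatenated at their endpoints; the interior of each midsegment lies inside the interior of a $2$-cell, and its endpoints lie on vertical edges. Hence $\mathcal L\cap\widetilde X^1$ consists of endpoints of midsegments, each of which is in the interior of a vertical edge (since $\mathcal L$ is regular, none of these endpoints is a $0$-cell). As $\gamma(p)\in\mathcal L\cap\widetilde X^1$, we conclude $\gamma(p)\in\interior{e}$ for a vertical edge $e$. Because $\gamma$ takes values in $\widetilde X^1$ and passes through an interior point of $e$, there exists $\epsilon>0$ such that $\gamma((p-\epsilon,p+\epsilon))\subset\interior{e}$, with the two halves $\gamma((p-\epsilon,p))$ and $\gamma((p,p+\epsilon))$ lying on opposite sides of $\gamma(p)$ within $e$.

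The final step applies Lemma~\ref{lem:metrizing}: since $\rho|_e$ is an isometric embedding into the $\reals$-tree $\mathcal Y$, the image $\rho(e)$ is an arc and $y=\rho(\gamma(p))$ is an interior point of that arc. In any $\reals$-tree, removing an interior point of an arc separates the arc into two subarcs lying in distinct components of the complement, since any two points on opposite sides are joined by the unique tree-arc between them, which necessarily passes through the removed point. Thus $\theta((p-\epsilon,p))$ and $\theta((p,p+\epsilon))$ lie in the two distinct components of $\mathcal Y-\{y\}$, which is precisely transversality at $p$.

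The argument is essentially a routine unpacking of definitions, so there is no real obstacle; the only mildly substantive step is the first, where one must promote regularity of the $\mathcal Y$-point $y$ to regularity of the leaf $\mathcal L$ containing $\gamma(p)$. Everything else then follows immediately from the structural description of leaves as midsegment concatenations and from the isometric embedding of each vertical edge into $\mathcal Y$.
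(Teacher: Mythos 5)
Your proof is correct and follows essentially the same route as the paper's: a regular point of $\mathcal Y$ has preimage meeting $\widetilde X^1$ only in interiors of vertical edges, each vertical edge embeds isometrically in $\mathcal Y$ by Lemma~\ref{lem:metrizing}, and an interior point of an arc separates it inside the $\reals$-tree. Your extra preliminary step (promoting regularity of $y$ to regularity of the leaf through $\gamma(p)$, hence excluding $0$-cells and horizontal edges) is exactly the content the paper compresses into the phrase ``each $x\in\rho^{-1}(y)$ lies in the interior of a vertical $1$-cell.''
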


\begin{proof}
Let $y\in\mathcal Y$ be a regular point, so that each $x\in\rho^{-1}(y)$ lies in the interior of a vertical 1-cell, which in turn embeds in $\mathcal Y$ by Proposition~\ref{prop:leaf_properties}.  The image of the vertical 1-cell is separated by $\rho(x)=y$.
\end{proof}

The goal of the rest of this subsection is to prove Corollary~\ref{cor:odd_intersection}, which depends on Corollary~\ref{cor:bh_separation}.  We first give a proof of the latter in the case where $\phi$ is $\pi_1$-surjective incorporating the technology of~\cite{BestvinaFeighnHandelTits}, followed by a self-contained proof in the general case.

\begin{cor}\label{cor:odd_intersection}
Let $\gamma:\mathbf R\rightarrow\widetilde X^1$ be an $M$-deviating geodesic for some $M\geq 0$.  Then there exists a regular leaf $\mathcal L$ such that $|\gamma\cap\mathcal L|$ is finite and odd.
\end{cor}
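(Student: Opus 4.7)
The plan is to apply Lemma~\ref{lem:trichotomy_tt} to the continuous map $\theta=\rho\circ\gamma:\mathbf R\to\mathcal Y$, taking $\mathcal T_0\subset\mathcal Y$ to be the set of regular points, and then to rule out alternatives~(2) and~(3) using the $M$-deviation hypothesis.  The hypotheses of Lemma~\ref{lem:trichotomy_tt} are verified as follows.  Transversality of $\theta$ at each $y\in\mathcal T_0$ is Lemma~\ref{lem:rho_transverse}; Proposition~\ref{prop:leaf_properties}.(\ref{leaf_neighborhood}) supplies a product neighborhood of any regular leaf in $\rho^{-1}(y)$, producing the two components of $\mathcal Y-\{y\}$; and countability of $\rho(\widetilde X^0)$ ensures that $\mathcal T_0$ meets every open arc of $\mathcal Y$.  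The connected-intersection condition on the edges of $\gamma$ (viewed as inducing a simplicial structure on $\mathbf R$) holds because each vertical edge embeds isometrically into $\mathcal Y$ via $\rho$ by Lemma~\ref{lem:metrizing}, while each horizontal edge is contained in a single leaf and therefore maps to a single point of $\mathcal Y$.

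To rule out alternatives~(2) and~(3), the first step is to extract points $a_n,b_n\in\mathbf R$ with $\theta(a_n)=\theta(b_n)$ and $|a_n-b_n|\to\infty$.  For~(3) this is immediate, while for~(2) it follows from the observations that each edge of the simplicial structure on $\mathbf R$ contributes at most one component of $\theta^{-1}(y)$ and that edge weights in $\widetilde X$ are bounded below by a positive constant, so that only finitely many components of $\theta^{-1}(y)$ fit in any bounded interval; hence infinitely many components must be unbounded.  In the case where $\gamma(a_n)$ and $\gamma(b_n)$ lie in a common leaf (the remaining case, where distinct leaves of $\rho^{-1}(y)$ become identified in $\mathcal Y$, is controlled by Corollary~\ref{cor:bh_separation}), the definition of leaf equivalence together with Proposition~\ref{prop:properties_of_levels}.(3) places both points in a common level $T_n$, by flowing the lower point forward to the vertical height of the other and then iterating $\tilde\phi$ until the pair meets at a common root.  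The remark following Definition~\ref{defn:level_like_deviating}, applied with the appropriate deviation parameter, then supplies a constant $M_0$ with $\diam(\gamma\cap N(T)^1)\leq M_0$ for every level $T$; since $\gamma$ is a geodesic, $|a_n-b_n|=\dist(\gamma(a_n),\gamma(b_n))\leq M_0$, contradicting $|a_n-b_n|\to\infty$.

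Hence alternative~(1) of Lemma~\ref{lem:trichotomy_tt} holds, producing a nontrivial arc $\alpha\subset\mathcal Y$ on which $|\theta^{-1}(y)|$ is odd (and finite, since~(2) has been excluded).  Fix any $y\in\alpha\cap\mathcal T_0$; then $\rho^{-1}(y)$ is a disjoint union of regular leaves, and because $\gamma\cap\rho^{-1}(y)$ has odd finite cardinality, at least one such leaf $\mathcal L$ meets $\gamma$ in a finite set of odd cardinality, which is the required conclusion.  The central technical point, both in the multi-leaf reduction within alternative~(2) and more broadly, is Corollary~\ref{cor:bh_separation}, which controls when distinct leaves of $\widetilde X$ are identified under the quotient $\mathcal Y_0\to\mathcal Y$; the rest of the argument is an essentially formal application of the trichotomy, using the level-bound from the $M$-deviation hypothesis to eliminate the two undesired alternatives.
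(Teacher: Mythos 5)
Your overall architecture is the same as the paper's: apply Lemma~\ref{lem:trichotomy_tt} to $\theta=\rho\circ\gamma$ with $\mathcal T_0$ the regular points, read off the conclusion from alternative~(1) since $\rho^{-1}(y)$ is a union of regular leaves, and use the $M$-deviation hypothesis together with Corollary~\ref{cor:bh_separation} to exclude alternatives~(2) and~(3). Your verification of the hypotheses of the trichotomy lemma, and your reduction of alternative~(2) to unbounded-diameter fibers, are details the paper leaves implicit, and they are correct. Your common-leaf case is also a nice variant: placing $\gamma(a_n)$ and $\gamma(b_n)$ in a common level and invoking the diameter bound from the remark after Definition~\ref{defn:level_like_deviating} is legitimate (two points of a leaf do flow forward to a common point, since each vertex of a leaf has exactly one outgoing midsegment, so the path between them in the leaf-tree has at most one sink).

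The weak point is the case you dispose of in a parenthesis: distinct leaves with the same image in $\mathcal Y$. Corollary~\ref{cor:bh_separation} does not by itself ``control'' this case, because it is a dichotomy: either $\rho(\sigma_1)\neq\rho(\sigma_2)$ (which contradicts your setup) or $N(\sigma_1),N(\sigma_2)$ lie at \emph{finite} Hausdorff distance -- and the second alternative is perfectly consistent with $\theta(a_n)=\theta(b_n)$. To finish you must exclude it, and this requires the deviation hypothesis again: since the two forward rays are quasigeodesics at finite Hausdorff distance, hyperbolicity makes them uniformly fellow-travel, and then a thin-quadrilateral argument (with the short side being a geodesic between points of $\sigma_1$ and $\sigma_2$ far out, and one long side being the subpath of $\gamma$ from $\gamma(a_n)$ to $\gamma(b_n)$, which has length $|a_n-b_n|\to\infty$) forces a long subpath of $\gamma$ to fellow-travel a forward ladder at the distance appearing in Definition~\ref{defn:level_like_deviating}, contradicting $M$-deviation. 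This is exactly the step the paper makes explicit (``The rays $\sigma_1,\sigma_2$ cannot fellowtravel when $r$ is sufficiently large\dots''), and the paper in fact runs this single argument uniformly for both of your cases rather than splitting them. There is also a minor height adjustment you should make explicit before invoking Corollary~\ref{cor:bh_separation}: the two rays must begin on a common $\widetilde V_m$, so one point must first be flowed forward, and the deviation hypothesis (via a thin triangle) is what guarantees this does not collapse the distance $|a_n-b_n|$. With those additions the argument is complete.
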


\begin{proof}
Consider the restriction of $\rho$ to $\gamma$.  By Lemma~\ref{lem:rho_transverse}, $\rho|_{\gamma}$ is transverse to regular points.  By Lemma~\ref{lem:trichotomy_tt}, one of the following holds:
\begin{itemize}
 \item There exists a regular point $y\in\mathcal Y$ such that $\rho^{-1}(y)\cap\gamma$ has finite, odd cardinality.
 \item For all $r\geq 0$, there exists $y_r\in\mathcal Y$ such that $\diam(\rho^{-1}(y_r)\cap\gamma)>r$.  (This includes the case in which some point in $\mathcal Y$ has infinite preimage.)
\end{itemize}
In the first case, note that $\rho^{-1}(y)$ is the union of regular leaves, one of which must therefore have odd intersection with $\gamma$.  We will now show that the second case leads to a contradiction.

In the second case, for each $r\geq 0$, there exists $m\in\integers$ and forward rays $\sigma_1,\sigma_2$, originating at points of $\gamma$ and traveling through $\widetilde V_m$, such that $\rho(\sigma_1)=\rho(\sigma_2)$ and $\dist_{\widetilde V_m}(\sigma_1\cap\widetilde V_m,\sigma_2\cap\widetilde V_m)>r$.  Indeed, let $x_1,x_2\in\gamma$ be chosen so that $\rho(x_1)=\rho(x_2)=y_r$ and $q(x_1)\leq q(x_2)=m$ and $\dist_{\widetilde X}(x_1,x_2)>r+M$.  For some $k\geq 0$, we have $\tilde\phi^k(x_1)=x'_1\in\widetilde V_m$.  We also have $\rho(x'_1)=y_r$.  Since $\gamma$ is $M$-deviating, considering the $\delta$-thin triangle $x_1x_2x'_1$ shows that $\dist_{\widetilde X}(x'_1,x_2)>r$.  Hence $\dist_{\widetilde V_m}(x'_1,x_2)>r$. We now apply Corollary~\ref{cor:bh_separation}.

The rays $\sigma_1,\sigma_2$ cannot fellowtravel when $r$ is sufficiently large, since the conclusion of a thin quadrilateral argument would then contradict the hypothesis that $\gamma$ is $M$-deviating.  Hence, by Corollary~\ref{cor:bh_separation}, we see that $\rho(\sigma_1)\neq\rho(\sigma_2)$, a contradiction.
\end{proof}

  The \emph{tightening} of a path $P$ in a graph is the immersed path that is path-homotopic to $P$.  A \emph{periodic Nielsen path} in $V$ is an essential path $P$ such that the tightening of $\phi^k(P)$ is path-homotopic to $P$ for some $k>0$.  The following is a rephrasing of a special case of~\cite[Lem.~6.5]{Levitt:split}, which splits into ~\cite[Lem. 4.1.4, Lem. 4.2.6, Lem. 5.5.1]{BestvinaFeighnHandelTits}.

\begin{lem}[Splitting lemma]\label{lem:splitting_lemma_tt}
Let $\phi:V\rightarrow V$ be a $\pi_1$-surjective train track map.  Let $P\rightarrow V$ be a path.   Then there exists $n_0$ such that the tightening of $\phi^{n_0}(P)$ is a concatenation $Q_1\cdots Q_k$, where each $Q_s$ is of one of the following types:
\begin{enumerate}
\item a periodic Nielsen path;
\item an edge of $V$;
\item a subinterval of an edge of $V$, if $s\in\{1,k\}$;
\end{enumerate}
Moreover, for all $n\geq n_0$, the tightening of $\phi^n(P)$ is equal to a concatenation of the tightenings of the paths $\phi^{n-n_0}(Q_s)$.
\end{lem}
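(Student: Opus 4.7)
The plan is to obtain this statement as a special case of the splitting technology developed in~\cite{BestvinaFeighnHandelTits} and systematized by Levitt in~\cite{Levitt:split}. The $\pi_1$-surjectivity assumption guarantees that $\phi$ is a homotopy equivalence, so the machinery of illegal turns, indivisible Nielsen paths, and the train track structure from~\cite{BestvinaHandel, BestvinaFeighnHandelTits} applies directly; the main content is to verify that the decomposition claimed here matches the splittings produced in loc.\ cit.

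The first step is to locate where cancellation can occur. Because $\phi$ is a train track map, the restriction of $\phi^n$ to a single edge is immersed, and legal turns of $V$ persist under $\phi$; hence the only places where the tightening of $\phi^n(P)$ can differ from the naive image $\phi^n(P)$ are at the finitely many illegal turns of $P$, together with its two endpoints. I would then track the evolution of each of these illegal turns under iterated application of $\phi$.

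The key mechanism is the dichotomy extracted in~\cite[Lem.~4.2.6, Lem.~5.5.1]{BestvinaFeighnHandelTits}: at each illegal turn, either after finitely many iterates of $\phi$ the two branches meeting at the turn cancel entirely into a neighboring edge (so the turn disappears and the region becomes a type~$(2)$ piece), or the cancellation at the turn stabilizes and the maximal cancelling subpath is an indivisible periodic Nielsen path (a type~$(1)$ piece). Because there are only finitely many illegal turns of $P$ and (by $\pi_1$-surjectivity together with the train track structure) finitely many indivisible periodic Nielsen paths up to the $\phi$-action, a single integer $n_0$ can be chosen after which every illegal turn of $P$ has been resolved in one of these two ways. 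Tightening $\phi^{n_0}(P)$ then produces a decomposition $Q_1\cdots Q_k$ in which each $Q_s$ is either a periodic Nielsen path or an edge, except possibly that $Q_1$ and $Q_k$ may be subintervals of edges reflecting the partial edges at the endpoints of $P$.

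For the ``moreover'' clause, the splitting points between consecutive $Q_s$ are legal turns by construction (were they illegal, maximality of the $Q_s$ would be violated), and the tightening of $\phi^{n-n_0}$ applied to a periodic Nielsen path is a periodic Nielsen path while its application to an edge yields a tight path. No cancellation can therefore occur between the tightenings of consecutive $\phi^{n-n_0}(Q_s)$, yielding the desired factorization for all $n\geq n_0$. The main obstacle is the uniform bookkeeping needed to select a single $n_0$ handling every illegal turn in the orbit of $P$; this is exactly what the splitting lemma of~\cite{BestvinaFeighnHandelTits, Levitt:split} provides, by classifying the long-term behaviour of illegal turns under iteration and using that the collection of periodic Nielsen paths is $\phi$-invariant and finite up to the $\phi$-action in a $\pi_1$-surjective train track map.
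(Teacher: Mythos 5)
Your proposal takes essentially the same route as the paper, which offers no argument beyond observing that the lemma is a rephrasing of a special case of Lemma~6.5 of Levitt's splitting paper, itself resting on Lemmas~4.1.4, 4.2.6 and 5.5.1 of Bestvina--Feighn--Handel --- exactly the sources you invoke. Your expanded sketch of the illegal-turn dichotomy (each illegal turn either cancels away after finitely many iterates or stabilizes into an indivisible periodic Nielsen path) is a faithful account of what those cited lemmas provide, so nothing further is needed.
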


\begin{cor}\label{cor:bh_separation}
Let $\sigma_1,\sigma_2$ be forward rays beginning on $\widetilde V_m$.  Then either $N(\sigma_1),N(\sigma_2)$ lie at finite Hausdorff distance or $\rho(\sigma_1)\neq\rho(\sigma_2)$.
\end{cor}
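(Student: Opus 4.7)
The plan is to prove the contrapositive: if $\rho(\sigma_1)=\rho(\sigma_2)$, equivalently $\dist_{\infty}(\mathcal L_{\sigma_1},\mathcal L_{\sigma_2})=0$, then $N(\sigma_1)$ and $N(\sigma_2)$ lie at finite Hausdorff distance in $\widetilde X^1$.  I will carry out the argument under the $\pi_1$-surjective hypothesis using Lemma~\ref{lem:splitting_lemma_tt}.  Let $x_i$ be the initial point of $\sigma_i$, let $P$ be the geodesic in the tree $\widetilde V_m$ from $x_1$ to $x_2$, and let $\bar P$ be its projection to $V$.  For each $n\geq 0$, the tree geodesic $\gamma_n\subset\widetilde V_{m+n}$ from $\tilde\phi^n(x_1)$ to $\tilde\phi^n(x_2)$ is obtained from $\tilde\phi^n(P)$ by removing backtracks, and it projects to the tightening of $\phi^n(\bar P)$ in $V$.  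The hypothesis $\dist_\infty=0$ then reads $\varpi^{-(m+n)}|\gamma_n|\to 0$ in the unscaled metric $\dist$.

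Applying the Splitting Lemma to $\bar P$ yields $n_0$ so that for $n\geq n_0$ the tightening of $\phi^n(\bar P)$ is the concatenation of the tightenings of $\phi^{n-n_0}(Q_s)$, each $Q_s$ being a periodic Nielsen path, a full edge of $V$, or an initial/terminal subinterval of an edge.  Since $\phi$ is an exponentially expanding train track map with constant $\varpi$, any non-Nielsen piece $Q_s\subseteq e$ has $|\phi^{n-n_0}(Q_s)^{\mathrm{tight}}|\geq\varpi^{n-n_0}|Q_s|$, so its scaled contribution $\varpi^{-m-n_0}|Q_s|$ is bounded away from zero.  Combined with $\varpi^{-(m+n)}|\gamma_n|\to 0$, this forces every $Q_s$ to be a periodic Nielsen path.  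Because there are only finitely many primitive periodic Nielsen paths and the tightenings of their $\phi$-iterates cycle through finitely many shapes, we get a uniform bound $|\gamma_n|\leq D$ for all $n\geq 0$, hence $\dist_{\widetilde X^1}(\tilde\phi^n(x_1),\tilde\phi^n(x_2))\leq D$ for every $n$.  Each point of $N(\sigma_i)$ lies in a $2$-cell containing some midsegment of $\sigma_i$, and $G$-cocompactness uniformly bounds the $\dist$-diameters of such $2$-cells by a constant $C$; hence every point of $N(\sigma_1)$ lies within $C+D$ of $N(\sigma_2)$ and symmetrically, giving finite Hausdorff distance.

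The main obstacle is the general (non-$\pi_1$-surjective) case, in which the BFH splitting machinery encoded in Lemma~\ref{lem:splitting_lemma_tt} is not available.  A self-contained replacement would need to establish the same dichotomy directly: either the tightening of $\phi^n(\bar P)$ eventually contains a piece inside some edge, forcing $|\gamma_n|$ to grow like $\varpi^n$ by exponential expansion, or it eventually consists of bounded-length periodic combinatorial pieces, keeping $|\gamma_n|$ uniformly bounded.  The delicate point is controlling the behaviour of the tightening under iteration without the Nielsen-path classification, likely by exploiting the finiteness of combinatorial types of $F$-orbits together with hyperbolicity of $G$ to rule out nontrivial periodic Nielsen paths.
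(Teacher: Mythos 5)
Your argument for the $\pi_1$-surjective case is essentially the paper's own: the paper also applies Lemma~\ref{lem:splitting_lemma_tt} to a path $P\rightarrow\widetilde V_m$ joining the two rays and observes that either the splitting consists entirely of periodic Nielsen pieces (whence the rays fellowtravel, since the tightenings of the iterates of such pieces have bounded length) or it contains an edge piece $e$, which by the no-cancellation clause of the Splitting Lemma contributes at least $|e|$ to every scaled distance $\dist_{n'}$ from that point on, so $\dist_\infty>0$. Your contrapositive phrasing and the explicit $\varpi^{-(m+n)}$ bookkeeping are fine; the appeal to finiteness of primitive Nielsen paths is more than you need, since boundedness of $|\gamma_n|$ already follows from periodicity of each Nielsen piece together with the ``Moreover'' clause of Lemma~\ref{lem:splitting_lemma_tt}.

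The genuine gap is the general case, which you explicitly leave unresolved. The corollary cannot be restricted to automorphisms: it feeds into Corollary~\ref{cor:odd_intersection}, Lemma~\ref{lem:tt_level_separated}, and Proposition~\ref{prop:R_tree_base case}, and hence into Theorem~\ref{thm:irreducible} and Corollary~\ref{cor:irreducible_case}, which cover irreducible \emph{monomorphisms} and ascending HNN extensions, where $\Phi$ need not be surjective and the BFH splitting machinery is unavailable. The paper closes this with a self-contained second proof resting on Lemma~\ref{lem:separated_or_fellow_travel} and Lemma~\ref{lem:separated_implies_band}: if $N(\sigma_1),N(\sigma_2)$ do not lie at finite Hausdorff distance, then (via a syllable decomposition of the connecting geodesics and a limiting argument with leaves) one finds a nontrivial open subarc $\alpha\subset e$ of the geodesic in $\widetilde V_m$ joining the initial points, all of whose regular leaves separate $\sigma_1$ from $\sigma_2$; consequently $\tilde\phi^n(\alpha)$ lies on the geodesic joining $\sigma_1\cap\widetilde V_n$ to $\sigma_2\cap\widetilde V_n$ for every $n$, giving the lower bound $\dist_n\geq|e|(t_2-t_1)$ and hence $\dist_\infty(\rho(\sigma_1),\rho(\sigma_2))>0$. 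Your sketch of ``what a replacement would need'' correctly identifies the target dichotomy but supplies no mechanism for producing it; in particular, the difficulty is not ruling out periodic Nielsen paths (their presence is harmless here, as they only force fellowtraveling) but rather producing, without the splitting, a definite edge subinterval whose leaves all separate the two rays. As written, the proposal proves the corollary only under a surjectivity hypothesis the statement does not carry.
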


Corollary~\ref{cor:bh_separation} means that for each $y\in\mathcal Y$, any two forward rays in $\rho^{-1}(y)$ fellowtravel, in the sense that they lie at finite Hausdorff distance.

\begin{proof}[Proof of Corollary~\ref{cor:bh_separation} when $\phi$ is $\pi_1$-surjective]
Let $P\rightarrow\widetilde V_m$ be a path from $\sigma_1$ to $\sigma_2$.  Lemma~\ref{lem:splitting_lemma_tt} implies that for some $n\geq0$, the tightening of $\tilde\phi^n(P)$ splits as the concatenation of Nielsen paths and edges.  If $\tilde\phi^n(P)$ is the concatenation of Nielsen paths, then $\sigma_1,\sigma_2$ fellowtravel.  Otherwise the splitting contains an edge $e$ and for all $n'\geq n$, we have $\dist_{n'}(\sigma_1\cap\widetilde V_{n'},\sigma_2\cap\widetilde V_{n'})\geq |e|$, whence $\rho(\sigma_1)\neq\rho(\sigma_2)$.
\end{proof}

\begin{proof}[Proof~of~Corollary~\ref{cor:bh_separation} in the general case]
If $\sigma_1,\sigma_2$ do not fellowtravel, then by Lemma~\ref{lem:separated_or_fellow_travel} and Lemma~\ref{lem:separated_implies_band}, the geodesic of $\widetilde V_m$ joining the initial points of $\sigma_1,\sigma_2$ contains an open arc $\alpha\subset e$, for some edge $e$, such that each regular leaf intersecting $\alpha$ separates $\sigma_1,\sigma_2$.  For each $n\geq m$, let $a_n=\sigma_1\cap\widetilde V_n$ and $b_n=\sigma_2\cap\widetilde V_n$.  Then for each $n$, the geodesic of $\widetilde V_n$ joining $a_n,b_n$ contains $\phi^n(\alpha)$.  Regarding $e$ as a copy of $[0,1]$ with weight $|e|$, and $\alpha=(t_1,t_2)\subset[0,1]$, we see that $\dist_n(a_n,b_n)\geq|e|(t_2-t_1).$  Hence $\dist_{\infty}(\rho(\sigma_1),\rho(\sigma_2))>0$.
\end{proof}

\begin{lem}\label{lem:separated_or_fellow_travel}
Let $\sigma_1,\sigma_2$ be forward rays beginning on $\widetilde V_m$.  Then either $N(\sigma_1),N(\sigma_2)$ are at finite Hausdorff distance or there exists a regular leaf separating $\sigma_1$ from $\sigma_2$.
\end{lem}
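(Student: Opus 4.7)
Let $p_1, p_2 \in \widetilde V_m$ be the initial points of $\sigma_1, \sigma_2$. For each $n \geq 0$ set $a_n = \tilde\phi^n(p_1)$, $b_n = \tilde\phi^n(p_2)$, and let $P_n$ be the unique geodesic in the tree $\widetilde V_{m+n}$ from $a_n$ to $b_n$. Because $\widetilde V_{m+n+1}$ is a tree, $P_{n+1}$ is obtained by tightening $\tilde\phi(P_n)$, i.e.\ by cancelling backtracks that appear at the images of interior vertices of $P_n$. I shall distinguish two cases depending on whether the $\widetilde X^1$-diameter of $P_n$ is bounded in $n$, or some point of some $P_n$ is \emph{persistent} in the sense that $\tilde\phi^k(x)$ lies in the interior of an edge of $P_{n+k}$ for all $k \geq 0$.

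\textbf{Bounded case.} Suppose there is a uniform bound $D$ with $\dist(a_n,b_n) \leq D$ for all $n$. Every point of the forward ladder $N(\sigma_i)$ lies within bounded distance (uniform in $n$, controlled by $\max_e|\phi(e)|$) of some $a_n$ or $b_n$, since $N(\sigma_i)$ is a concatenation of 2-cells whose vertical cross-sections lie near these points. Combining this observation for $i=1$ and $i=2$ with the bound $D$ gives that $N(\sigma_1)$ and $N(\sigma_2)$ lie at finite Hausdorff distance.

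\textbf{Persistent case.} Suppose there is a persistent point $x$ in the interior of an edge of some $P_n$. Since $\phi$ is a train track map, $\tilde\phi^k$ restricts to an embedding on any sufficiently small neighborhood of $x$ contained in an edge, so persistence is an open condition and we may choose a \emph{regular} persistent $y$ near $x$. By Proposition~\ref{prop:leaf_properties}.(1) the regular leaf $\mathcal L_y$ admits a product neighborhood, and we obtain halfspaces $\leftl_y,\rightl_y$ with $\leftl_y\cap\rightl_y = \mathcal L_y$. For each $k \geq 0$, the point $\tilde\phi^{n+k}(y) \in \mathcal L_y$ lies in the interior of an edge of $P_{n+k}$; since $P_{n+k}$ is a geodesic of the tree $\widetilde V_{m+n+k}$, this point separates $a_{n+k}$ from $b_{n+k}$ in $\widetilde V_{m+n+k}$. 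Because the product neighborhood is two-sided and compatible with $\tilde\phi$, the points $a_{n+k}$ and $b_{n+k}$ must lie in opposite halfspaces $\leftl_y, \rightl_y$. Since $\sigma_i$ visits every $a_{n+k}$ (respectively $b_{n+k}$), the rays $\sigma_1, \sigma_2$ lie on opposite sides of $\mathcal L_y$.

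\textbf{Main obstacle: verifying the dichotomy.} It remains to show that if $\diam(P_n)$ is unbounded, then some persistent point exists. For each $n \geq 0$ and each $N \geq 0$ let $Q_n^N \subset P_n$ consist of those interior points of edges $x$ whose first $N$ forward images $\tilde\phi^k(x)$ ($k \leq N$) lie in the interiors of edges of $P_{n+k}$; each $Q_n^N$ is open, and $Q_n^{N+1} \subset Q_n^N$. The strategy is to show, using that $V$ has finitely many edges and that cancellation in $\tilde\phi(P_n)$ is dictated by finitely many local configurations, that unbounded diameter of $P_n$ forces $Q_n^N$ to be nonempty for every $N$; taking closures and diagonalising yields a persistent point. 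Concretely, at each step $n \mapsto n+1$ each maximal cancellation segment of $\tilde\phi(P_n)$ has combinatorial length bounded in terms of the Bounded Cancellation Constant of $\phi$; if no edge interior of any $P_n$ ever survived all iterations, each edge of $P_n$ would be killed within a uniformly bounded number of steps, forcing the combinatorial (hence weighted) length of $P_n$ to be uniformly bounded, contradicting our hypothesis. This is the delicate bookkeeping step of the proof.
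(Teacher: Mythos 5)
Your overall shape (a dichotomy between ``the connecting geodesics stay bounded, hence the ladders fellow-travel'' and ``some point survives forever, hence a leaf separates'') is reasonable, but both halves of the non-bounded branch have genuine gaps. First, the dichotomy itself is not established. From the failure of persistence you infer that ``each edge of $P_n$ would be killed within a uniformly bounded number of steps,'' but the negation of persistence only gives, for each individual point, \emph{some} finite $k$ at which it leaves the interiors of the edges of $P_{n+k}$ -- there is no uniform bound, and the closure-and-diagonalisation you propose produces a point whose forward images merely lie \emph{on} $P_{n+k}$, possibly at vertices or at endpoints of partial edges, which is strictly weaker than persistence. A correct version of this step has to track where cancellation can occur (at the boundedly many illegal turns) and use the expansion factor $\varpi$ to exhibit a subinterval of some $P_n$ all of whose points stay a definite distance from every cancellation locus forever; your sketch does not do this, and this is precisely the hard content of the lemma.

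Second, even granting a regular persistent point $y$, your separation argument is incomplete because of parity. The fact that $\tilde\phi^{n+k}(y)$ separates $a_{n+k}$ from $b_{n+k}$ \emph{inside the tree} $\widetilde V_{m+n+k}$ does not place $a_{n+k}$ and $b_{n+k}$ in opposite halfspaces of $\widetilde X-\mathcal L_y$: the leaf $\mathcal L_y$ may meet $P_{n+k}$ in several points (one per edge, by Lemma~\ref{lem:finite_intersection_leaf_edge}), and the endpoints lie on opposite sides only if that intersection has odd cardinality. The paper's proof is organised exactly around this parity issue: it works with the map $\rho:\widetilde X\rightarrow\mathcal Y$ to the $\reals$-tree, observes that if $\rho(\sigma_1)\neq\rho(\sigma_2)$ then a separating point of $\mathcal Y$ pulls back to a union of regular leaves meeting any connecting path in an odd-cardinality set (so \emph{one} of them separates), and then devotes the bulk of the work to showing that $\rho(\sigma_1)=\rho(\sigma_2)$ forces finite Hausdorff distance, via a minimal-complexity decomposition of the connecting paths and a thin-quadrilateral argument in the hyperbolic space $\widetilde X^1$. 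You would need either to route your persistent point through $\mathcal Y$ (a surviving subinterval gives $\dist_{\infty}(\rho(\sigma_1),\rho(\sigma_2))>0$, after which the parity argument applies) or to supply an independent parity count; as written, neither appears.
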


\begin{proof}
We claim that if $\sigma_1,\sigma_2$ are not separated by a regular leaf, then $\rho(\sigma_1)=\rho(\sigma_2)$.  If $\rho(\sigma_1)\neq\rho(\sigma_2)$, then these points are separated by a point $y\in\mathcal Y$ whose preimage is the union of regular leaves.  Any path joining $\sigma_1,\sigma_2$ must intersect the union of these leaves in an odd-cardinality set, so one of them separates $\sigma_1,\sigma_2$.  Hence suppose $\rho(\sigma_1)=\rho(\sigma_2)$ and let $z=\rho(\sigma_i)\in \mathcal Y$.

Let $p\geq m$ be such that there is a vertical geodesic $I_{p}\rightarrow\widetilde V_{p}$ joining $\sigma_1\cap\widetilde V_p,\sigma_2\cap\widetilde V_p$ and with the property that $\rho^{-1}(z)\cap I_p$ has minimal cardinality.  For simplicity, having chosen $p$, we will translate so that $p=0$.

Having chosen $I_0$, we now inductively define paths $I_n\rightarrow\widetilde V_n$ joining $\sigma_1$ to $\sigma_2$ as follows.  For $n\geq 0$, express $I_n=e_1e_2\cdots e_k$ as a concatenation of partial edges: $e_1,e_k$ are closed subintervals of edges and the other $e_i$ are entire edges.  Let $I_{n+1}\rightarrow\widetilde V_{n+1}$ be the path $\tilde\phi(e_1)\cdots\tilde\phi(e_k)$.  Let $\bar I_n$ be the image of $I_n$ in $\widetilde X$ and note that $\bar I_n$ is a finite subtree of $\widetilde V_n$.  Observe that $T=\rho(\bar I_0)\subset\mathcal Y$ is a finite tree, since it is the union of finitely many closed embedded arcs.  Let $\rho_n:I_n\rightarrow\mathcal Y$ be the composition $I_n\rightarrow\widetilde X\stackrel{\rho}{\rightarrow}\mathcal Y$.  Since each $\bar I_n\rightarrow\bar I_{n+1}$ is surjective, $\rho(\bar I_n)=T$ for all $n\geq 0$.  The maps $e_i\rightarrow\tilde\phi(e_i)$ induce a map $I_n\rightarrow I_{n+1}$ so that the following diagram commutes.

\begin{center}
$
\begin{diagram}
\node{I_n}\arrow{e}{}\arrow{s}{}\node{I_{n+1}}\arrow{s}{}\arrow{se,l}{\rho_{n+1}}\\
\node{\bar I_n}\arrow{e,t}{\tilde\phi}\node{\bar I_{n+1}}\arrow{e,t}{\rho}\node{T\subset\mathcal Y}
\end{diagram}
$
\end{center}

Since $\rho(\sigma_1)=\rho(\sigma_2)$, each $\rho_n:I_n\rightarrow T$ is a closed path in $T$ beginning and ending at $z\in T$.  If $d_{\widetilde V_n}(\sigma_1\cap\bar I_n,\sigma_2\cap\bar I_n)$ is uniformly bounded as $n\rightarrow\infty$, then $\sigma_1,\sigma_2$ lie at uniformly bounded vertical distance, and so $N(\sigma_1)$ and $N(\sigma_2)$ lie at finite Hausdorff distance.

Since $I_n$ is vertical, $\rho_n^{-1}(z)$ is finite, and $I_n=Q_1Q_2\cdots Q_r$, where the interiors of the $Q_i$ are the components of $I_n-\rho_n^{-1}(z)$.  Let $\bar Q_i$ denote the image of $Q_i$ in $\widetilde V_n$.  Note that $r$ is independent of $n$; indeed, this is ensured by the minimality achieved through our choice of $p$.  It follows that no regular leaf intersects $\bar Q_i$ and $\bar Q_j$ for $i\neq j$, for otherwise we could apply $\tilde\phi$ finitely many times and reduce $r$.

Let $a_i$ and $b_i$ be the endpoints of $Q_i$, and let $\bar a_i,\bar b_i$ be their images in $\bar I_n$.  We will show that there exists $M$, independent of $n$, such that $d_{\widetilde V_n}(\bar a_i,\bar b_i)\leq M$. We conclude that $d_{\widetilde V_n}(N(\sigma_1),N(\sigma_2))\leq rM$ for all sufficiently large $n$.

To verify the existence of $M$, we shall show that there exists a leaf $\mathcal L_i$ that intersects the initial and terminal (possibly partial) edges of $\bar Q_i$, intersecting these edges in points $c_i,d_i$ respectively.  This leaf $\mathcal L_i$ must intersect $\bar I_0$ in points $\hat c_i,\hat d_i$ with $\tilde\phi^n(\hat c_i)=c_i$ and $\tilde\phi^n(\hat d_i)=d_i$.  Hence there are forward paths $\hat c_ic_i$ and $\hat d_id_i$ of $N(\mathcal L_i)$ whose intersections with $\widetilde X^1$ are $\lambda$-quasigeodesics lying in forward rays of $N(\mathcal L_i)$.  The quasigeodesic quadrilateral $\hat c_ic_id_i\hat d_i$ shows that $\hat c_ic_i$ and $\hat d_id_i$ fellowtravel at distance $M'=M'(\delta,\lambda,|I_0|)$, and hence $d_{\widetilde V_n}(\bar a_i,\bar b_i)\leq M$, where $M=M'+2$.

It remains to find the leaf $\mathcal L_i$.  Note that if $\bar a_i,\bar b_i$ lie on a common leaf, we are done.  We can assume that no regular leaf separates $\bar a_i$ from $\bar b_i$.  Indeed, any such leaf could not separate $\sigma_1,\sigma_2$ by our assumption, and thus must end on $\bar Q_j$ for some $i\neq j$, which was ruled out above.   Hence each leaf emanating from the image of the initial edge of $\bar Q_i$ intersects the images of an even number of edges of $\bar Q_i$.  Let $\mathcal L$ be such a regular leaf, and let $b'_i\in\mathcal L\cap\bar Q_i$ be a point outside of the image of the initial partial edge of $\bar Q_i$.  We claim that by choosing $\mathcal L$ to intersect the $\bar Q_i$ at a point $a'_i$ sufficiently close to $\bar a_i$, we can ensure that $b'_i$ lies in the image of the terminal partial edge of $\bar Q_i$.  

Indeed, choose a sequence $\{a'_{ik}\}_k$ of regular points in the initial edge of $\overline Q_i$, with $a'_{ik}\rightarrow\bar a_i$.  For each $k$. let $\mathcal L_{ik}$ be the regular leaf containing $a'_{ik}$.  If $\mathcal L_{ik}$ intersects the terminal edge of $\overline Q_i$, we  are done, so we let $b'_{ik}$ be a point of $\mathcal L_{ik}\cap\overline Q_{ik}$ that lies in a non-terminal, non-initial edge.  By possibly passing to a subsequence, compactness allows us to assume that $\{b'_{ik}\}$ converges to some $b_i'\in\overline Q_i$ different from $\bar b_i$.  Since $\rho$ is continuous and $\rho(b'_{ik})=\rho(a'_{ik})\rightarrow z$, we have $\rho(b_i')=z$, contradicting the fact that the interior of $\overline Q_i$ contains no point in $\rho^{-1}(z)$.
\end{proof}

\begin{lem}\label{lem:separated_implies_band}
Let $\sigma_1,\sigma_2$ be forward rays beginning on $\widetilde V_m$ that do not fellowtravel.  Suppose there exists a regular leaf $\mathcal L$ separating $\sigma_1,\sigma_2$.  Then the geodesic of $\widetilde V_m$ joining the initial points of $\sigma_1,\sigma_2$ contains an open arc $\alpha\subset e$, for some edge $e$, such that each regular leaf intersecting $\alpha$ separates $\sigma_1,\sigma_2$.

Hence for all $n\geq m$, the geodesic of $\widetilde V_n$ joining $\sigma_1\cap\widetilde V_n$ and $\sigma_2\cap\widetilde V_n$ contains $\tilde\phi^n(\alpha)$.
\end{lem}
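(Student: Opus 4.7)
The plan is to exploit the product-neighborhood structure of the regular leaf $\mathcal L$ (Proposition~\ref{prop:leaf_properties}.(1)) together with the isometric embedding $\rho|_{e}$ of each vertical edge into the $\mathbb R$-tree $\mathcal Y$ (Lemma~\ref{lem:metrizing}) in order to locate the arc $\alpha$, and then use the train track property together with disjointness of distinct leaves to get the ``hence'' clause.

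First I would set up as follows. By Lemma~\ref{lem:finite_intersection_leaf_edge}, the regular leaf $\mathcal L$ meets each vertical edge in at most one point, so $\mathcal L\cap I=\{x_1,\ldots,x_k\}$ is a finite set with each $x_j$ interior to an edge $e_j$ of $I$. Since $\mathcal L$ separates $\sigma_1,\sigma_2$ and $I$ joins their initial points, the crossings are transverse and $k$ is odd. I would then fix a crossing $x=x_j$ with edge $e=e_j$ and take $\alpha\subset e$ to be a small open arc around $x$, sitting inside the product neighborhood of $\mathcal L$. For regular $y\in\alpha$ with $y\neq x$, the image $\rho(y)\neq\rho(x)=\rho(\mathcal L)$ by the isometric embedding property, so $\mathcal L_y\neq\mathcal L$; since distinct leaves are disjoint equivalence classes, $\mathcal L_y\cap\mathcal L=\emptyset$, and in particular $\mathcal L_y$ lies entirely in one of the two closed halfspaces of $\mathcal L$.

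The core step is to show $\mathcal L_y$ separates $\sigma_1,\sigma_2$. Disjointness of leaves gives $\sigma_i\cap\mathcal L_y=\emptyset$ (else $\mathcal L_y=\mathcal L_{\sigma_i}$ has a different $\rho$-image), so each $\sigma_i$ lies in one component of $\widetilde X-\mathcal L_y$. I would prove these components are distinct by a $\mathbb Z/2$-intersection parity count: for $\alpha$ sufficiently small, each $x_j$ produces exactly one nearby transverse crossing $y_j$ of $I$ with $\mathcal L_y$ (via the parallel slice structure in the product neighborhood $\mathcal L\times[-1,1]$ around each $x_j$), with matching local sign. Any additional crossings of $\mathcal L_y$ with $I$ away from the neighborhoods of $\{x_j\}$ must come in canceling pairs, since $\mathcal L\cap I=\{x_1,\ldots,x_k\}$ and $\mathcal L_y$ is disjoint from $\mathcal L$. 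Hence $|\mathcal L_y\cap I|$ has the same parity as $k$, namely odd, forcing the initial and terminal endpoints of $I$ into different components of $\widetilde X-\mathcal L_y$ and therefore $\sigma_1,\sigma_2$ into different components.

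For the ``hence'' clause, the point $\tilde\phi^n(y)\in\mathcal L_y\cap\widetilde V_{m+n}$ is the candidate separating point on $J_n$. Using that $\phi$ is a train track map, $\tilde\phi^n$ restricts to an immersion on $e$, and any cancellation in $\tilde\phi^n(I)\to J_n$ occurs only at the interior vertices of $I$ adjacent to $e$ and affects only bounded initial and terminal portions of $\tilde\phi^n(e)$; by shrinking $\alpha$ if necessary (so that $\alpha$ is safely in the interior of $e$), we ensure $\tilde\phi^n(\alpha)$ always lies in the middle of $\tilde\phi^n(e)$ and survives tightening, hence $\tilde\phi^n(\alpha)\subset J_n$. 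The hard part is the global parity claim: controlling extraneous crossings of $\mathcal L_y$ with $I$ far from the product neighborhoods of the $x_j$'s, so that the odd parity is genuinely preserved; this is where the tree structure of leaves and the simple connectivity of $\widetilde X$ are essential, and is the main technical obstacle in the argument.
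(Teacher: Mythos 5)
There is a genuine gap at the heart of your argument: the parity count that is supposed to show that each regular leaf $\mathcal L_y$ through $\alpha$ separates $\sigma_1$ from $\sigma_2$ is asserted but not proved, and the local picture you describe is not correct. A leaf $\mathcal L_y$ with $y\in\alpha$ near the crossing $x_j$ does cross the edge $e_j$ exactly once (by Lemma~\ref{lem:finite_intersection_leaf_edge}), but there is no reason it produces ``exactly one nearby transverse crossing'' near each of the \emph{other} points $x_{j'}$: the slices of the product neighborhood $\mathcal L\times[-1,1]$ near $x_{j'}$ belong to whatever leaves pass there, not to $\mathcal L_y$, and $\mathcal L_y$ is not contained in any neighborhood of $\mathcal L$. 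Likewise ``additional crossings come in canceling pairs'' has no mechanism behind it: disjointness of $\mathcal L_y$ from $\mathcal L$ only tells you that $\mathcal L_y$ lies in one closed halfspace of $\mathcal L$, which is compatible with $|\mathcal L_y\cap I|$ being even (i.e.\ $\sigma_2$ landing on the same side of $\mathcal L_y$ as $\mathcal L$ and hence as $\sigma_1$). That even-crossing failure mode genuinely occurs for some nearby leaves and is exactly the difficulty the lemma has to overcome; you flag it as ``the main technical obstacle'' but do not resolve it, so the proof is incomplete precisely where the content lies.

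For comparison, the paper's proof does not attempt a direct parity computation. It first applies $\tilde\phi$ until the decomposition of the connecting geodesic into \emph{syllables} (maximal legal subpaths) stabilizes; since a leaf meets each syllable at most once and nonconsecutive syllables cannot meet a common leaf, the separating leaf $\mathcal L$ then meets the geodesic in exactly one point, interior to a syllable $Q_i$. It then argues by contradiction: if regular leaves arbitrarily close to $\mathcal L\cap Q_i$ on either side all fail to separate, they must also cross the adjacent syllables $Q_{i\pm1}$; passing to a limit produces points $z_\pm\in Q_{i\pm1}$ with $\rho(z_\pm)=\rho(\mathcal L)$ whose forward rays fellowtravel with the ray through $\mathcal L\cap Q_i$ by Lemma~\ref{lem:separated_or_fellow_travel}, and an induction on the number of syllables then yields the arc $\alpha$. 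You would need some version of this (or another genuine argument) to close the gap. Separately, your treatment of the ``hence'' clause via bounded cancellation under tightening is shakier than needed; the paper instead observes that regular points are dense in $\tilde\phi^n(\alpha)$, each lies on a separating leaf and hence on the geodesic $P_n$, and $P_n$ is closed --- but this is secondary to the main gap above.
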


\begin{proof}
Let $P\rightarrow\widetilde V_m$ be a vertical geodesic joining $\sigma_1,\sigma_2$.  For any $n\geq m$, given a path $U\rightarrow\widetilde V_n$ joining $\sigma_1,\sigma_2$, a \emph{syllable} of $U$ is a maximal subpath $Q$ that is \emph{legal} in the sense of~\cite{BestvinaHandel}; since $\phi$ is a train track map, this means that $\tilde\phi^k(Q)$ is embedded for all $k\geq 0$.  Consider the vertical geodesic $T_k$ joining the endpoints of $\tilde\phi^k(P)$ for $k\geq 0$.  Each $T_k$ can be expressed as a concatenation of syllables, since $\phi$ is a train track map, and this decomposition is unique.  Choose $k\geq 0$ such that the number of syllables in the decomposition of $T_k$ is equal to the number of syllables in $T_{k'}$ for all $k'\geq k$.  Let $T_k=Q_1\cdots Q_n$ be a decomposition into syllables.  Observe that nonconsecutive syllables of $T_k$ cannot intersect a common leaf, for otherwise applying some iterate of $\tilde\phi$ would result in a path with fewer syllables.

Since $\mathcal L$ intersects each syllable in at most one point, the minimality of $T_k$ guarantees that $|\mathcal L\cap T_k|<3$ and hence, since this cardinality is odd, $|\mathcal L\cap T_k|=1$.  Hence there exists a unique $Q_i$ such that $\mathcal L\cap T_k$ is contained in $\interior{Q_i}$.

For each $p\in\naturals$, let $B_i^{\pm}(\frac{1}{p})$ be the two half-open $\frac{1}{p}$-neighborhoods in $Q_i$ bounded at $\mathcal L\cap Q_i$.  If the lemma does not hold, then for each $p$ there exists a regular leaf $\mathcal L^{\pm}(\frac{1}{p})$ intersecting $B_i^{\pm}(\frac{1}{p})$ but failing to separate $\sigma_1$ and $\sigma_2$.  Each $\mathcal L^{\pm}(\frac{1}{p})$ has even intersection with $T_k$ and thus also intersects $Q_{i\pm 1}$ in a single point.  The sequence $\{\mathcal L^{\pm}(\frac{1}{p})\cap Q_{i\pm1}\}_p$ has a subsequence that converges to a point $z_{\pm}\in Q_{i\pm1}$ such that $\rho(z_{\pm})=\rho(\mathcal L)$.  Observe that no regular leaf separates $z_{\pm}$ from $\mathcal L\cap Q_i$, since such a separating regular leaf would have to intersect some $\mathcal L^{\pm}(\frac{1}{p})$, which is impossible since leaves are disjoint.  Hence, by Lemma~\ref{lem:separated_or_fellow_travel}, the forward rays $\sigma_{z\pm}$ and $\sigma$, respectively emanating from $z_{\pm}$ and $\mathcal L\cap Q_i$, must fellowtravel.

\begin{figure}[ht]
\includegraphics[width=0.5\textwidth]{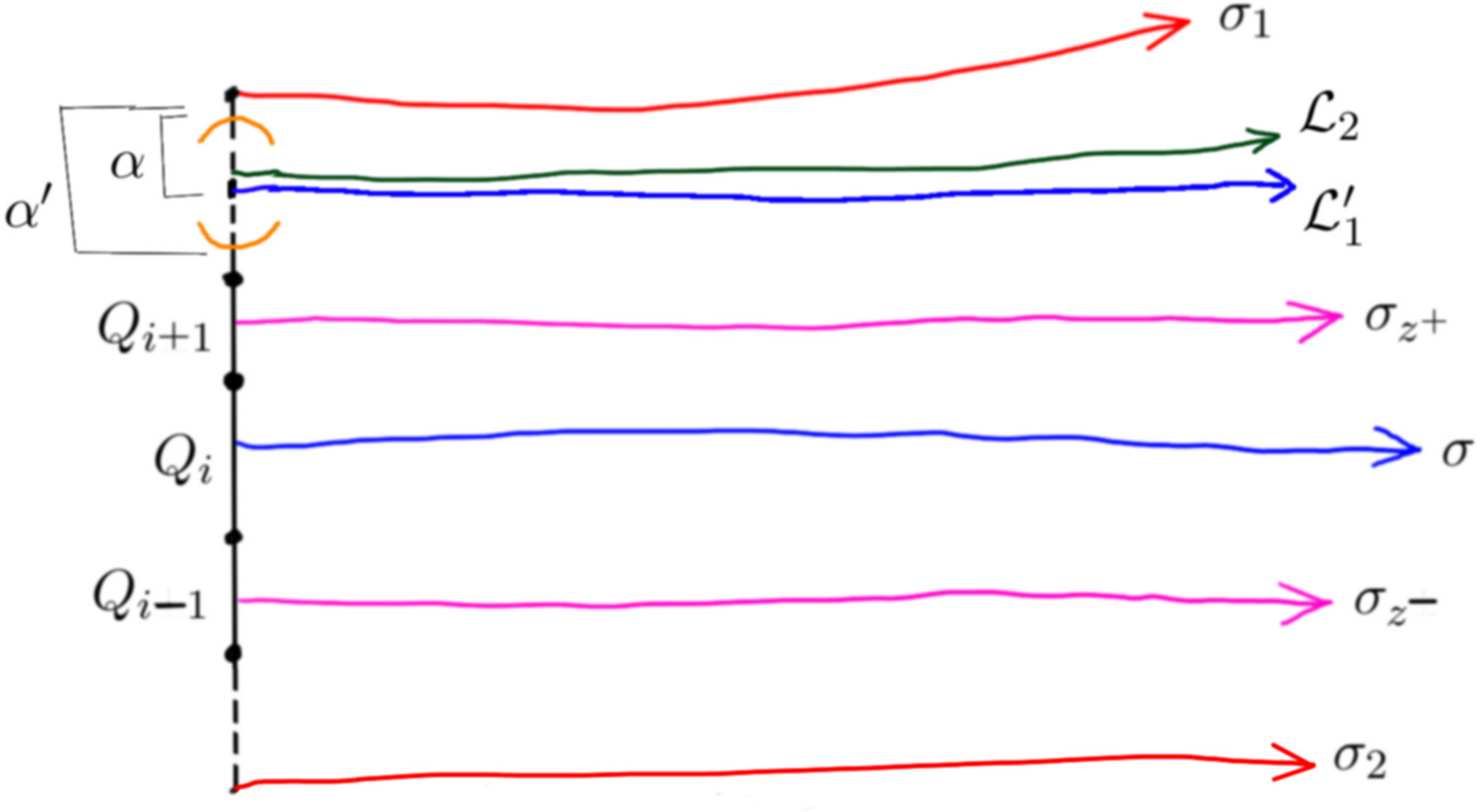}
\caption{The forward rays and leaves in the proof of Lemma~\ref{lem:separated_implies_band}}\label{fig:sigma_ft}.
\end{figure}

If $\sigma$ fellowtravels with $\sigma_1$ [resp. $\sigma_2$] and $\sigma_{z^{\pm}}$ fellowtravels with $\sigma_2$ [resp. $\sigma_1]$, then since $\sigma,\sigma_{z^{\pm}}$ fellowtravel, we conclude that $\sigma_1,\sigma_2$ fellowtravel, contradicting our hypotheses.  See Figure~\ref{fig:sigma_ft}.  If $\sigma,\sigma_1$ (for example) do not fellowtravel, then $\sigma_1,\sigma_{z^+}$ (say) also do not fellowtravel.  Lemma~\ref{lem:separated_or_fellow_travel} implies that a regular leaf $\mathcal L_1$ separates $\sigma_{z^+},\sigma_1$.  The part of $T_k$ subtended by $\sigma_{z^+},\sigma_1$ has strictly fewer syllables than $T_k$, so by induction, there is an open interval $\alpha'\subset T_k$ with the following properties:
\begin{enumerate}
 \item $\alpha'$ is contained in the interior of some edge.
 \item $\alpha'$ intersects a regular leaf $\mathcal L'_1$ that separates $\sigma_{z^+}$ and $\sigma_1$.
 \item $\alpha'$ lies on the part of $T_k$ between $\sigma_{z^+}$ and $\sigma_1$.
 \item All regular leaves intersecting $\alpha'$ separate $\sigma_{z^+}$ from $\sigma_1$.
\end{enumerate}

Let $\mathcal L_2$ be a regular leaf intersecting $\alpha'$ between $\mathcal L_1'$ and $\sigma_1$.  Then $\mathcal L_2$ separates $\sigma_1$ from $\sigma_{z^+}$ by the induction hypothesis, and therefore $\mathcal L_2$ separates $\sigma_1$ from $\sigma_2$.  The subinterval of $\alpha'$ above $\mathcal L'_1$ (and so containing all such $\mathcal L_2$) is the desired interval $\alpha$.  See Figure~\ref{fig:sigma_ft}.

In the base case, $T_k$ has a single syllable, and any open subinterval of an edge suffices.

Finally, let $n\geq m$ and let $P_n\rightarrow\widetilde V_n$ be the geodesic joining $\sigma_1,\sigma_2$.  For any $x\in\tilde\phi^n(\alpha)$, and any $\epsilon>0$, there exists a regular point $y\in\tilde\phi^n(\alpha)$ at distance less than $\epsilon$ from $x$, since edges are expanding.  The regular leaf $\mathcal L_y$ separates $\sigma_1,\sigma_2$, so that $y\in P_n$.  Since this holds for arbitrarily small $\epsilon$ and $P_n$ is closed, $x\in P_n$.
\end{proof}

We have now arrived at the main goal of this subsection:

\begin{lem}\label{lem:tt_level_separated}
Suppose that $\phi$ is a train track map, that every edge of $V$ is expanding, and that $\mathfrak M$ is irreducible.  Then $\widetilde X$ is level-separated.
\end{lem}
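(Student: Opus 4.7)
The plan is to combine Corollary~\ref{cor:odd_intersection} with the tree structure of levels to construct, for each $M$-deviating geodesic $\gamma$ and each $K\geq 0$, a point $y\in\widetilde X$ whose forward-iterate levels contain $\gamma\cap\mathcal L$ as deep interior points. By Corollary~\ref{cor:odd_intersection}, there is a regular leaf $\mathcal L$ with $\gamma\cap\mathcal L=\{p_1,\ldots,p_{2j+1}\}$ of finite, odd cardinality. Since all $p_i$ lie in $\mathcal L$, their forward orbits eventually merge: fix $q\in\mathcal L$ and integers $c_i\geq 0$ with $\tilde\phi^{c_i}(p_i)=q$ for each $i$.

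First I would exploit the irreducibility of $\mathfrak M$: for each edge $f$ of $V$ there is $k_f$ such that $\phi^k(e)$ traverses $f$ whenever $k\geq k_f$ and $e$ is any edge of $V$. It follows that for $k$ sufficiently large, every point of $\widetilde X^1$ admits a $\tilde\phi^k$-preimage in $\widetilde X$ (points on horizontal edges have unique preimages trivially). Choose $k'>M+K+\max_i c_i$, also large enough that both $q$ admits a $\tilde\phi^{k'}$-preimage and each $p_i$ admits a $\tilde\phi^{k'-c_i}$-preimage, and pick $y\in\widetilde X$ with $\tilde\phi^{k'}(y)=q$; note $y\in\mathcal L$, since leaves are closed under backward flow.

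For any $n>M+K+k'$, set $d_i=n-k'+c_i$. Then $\tilde\phi^{d_i}(p_i)=\tilde\phi^{n-k'}(q)=\tilde\phi^n(y)$, and a $\tilde\phi^{k'-c_i}$-preimage of $p_i$ yields a leaf of $T^o_n(\tilde\phi^n(y))$ whose forward path of length $n$ passes through $p_i$. Hence $p_i$ lies in this level at tree-distance $d_i$ from the root and tree-distance $n-d_i=k'-c_i$ from the nearest leaf, and both quantities exceed $M+K$ by our choice of $k'$ and $n$. Since $T^o_n(\tilde\phi^n(y))\subseteq\mathcal L$ by Proposition~\ref{prop:leaf_properties}.\eqref{leaf_level} and each $p_i$ lies in it, we conclude $\gamma\cap T^o_n(\tilde\phi^n(y))=\gamma\cap\mathcal L$, of odd cardinality $2j+1$. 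This verifies $(M,K)$-separation, and since $M,K$ were arbitrary, $\widetilde X$ is level-separated.

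The main subtlety is confirming that each $p_i$ is an interior point of the level rather than merely a preimage at maximum depth: this is precisely where irreducibility of $\mathfrak M$ together with the expanding-edges hypothesis enters, to guarantee that $p_i$ admits preimages under large iterates of $\tilde\phi$. Once that existence is in place, the remaining argument is bookkeeping with depths in the rooted tree $T^o_n(\tilde\phi^n(y))$.
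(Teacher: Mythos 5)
Your argument follows the paper's proof essentially step for step: apply Corollary~\ref{cor:odd_intersection} to get a regular leaf $\mathcal L$ meeting $\gamma$ in a finite odd set, then choose the basepoint $y$ far enough back in $\mathcal L$ that for all large $n$ the level $T^o_n(\tilde\phi^n(y))$ contains $\gamma\cap\mathcal L$ at depth more than $M+K$ from the root and from every leaf; you are in fact more explicit than the paper about why the points $p_i$ genuinely lie in the level (existence of suitable $\tilde\phi$-preimages), a point the paper leaves implicit. The one flaw is your opening appeal to primitivity: for a merely irreducible (possibly imprimitive) $\mathfrak M$ it is \emph{not} true that $\phi^k(e)$ traverses $f$ for \emph{every} edge $e$ once $k\geq k_f$ — an irreducible transition matrix of period greater than $1$ gives a counterexample. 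What you actually use, and what does follow from irreducibility, is the weaker statement that for every $k\geq1$ and every edge $f$, \emph{some} edge $e$ has $\phi^k(e)$ traversing $f$ (no column of $\mathfrak M^k$ vanishes because no column of $\mathfrak M$ does), so every point in the interior of a vertical edge admits $\tilde\phi^k$-preimages for all $k$; with that substitution your proof is complete and matches the paper's.
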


\begin{proof}
Let $\gamma:\mathbf R\rightarrow\widetilde X^1$ be an $M$-deviating geodesic and let $K\geq0$.  By Corollary~\ref{cor:odd_intersection}, there is a regular leaf $\mathcal L$ such that $|\mathcal L\cap\gamma|$ is finite and odd.  Let $C_0=\mathcal L\cap\gamma$ and choose $y\in\mathcal L$ such that $q(c)-q(y)>M+K$ for all $c\in C_0$.  Then for all sufficiently large $n$, there is a level $T^n_o(\tilde\phi^n(y))\subset\mathcal L$ that contains $y$ as one of its leaves and satisfies $T^n_o(\tilde\phi^n(y))\cap\gamma=C_0$.  Hence $\widetilde X$ is level-separated.
\end{proof}

\subsection{Proof of Theorem~\ref{thmi:general}}\label{sec:irreducible_case}

\begin{thm}\label{thm:irreducible}
Let $\phi:V\rightarrow V$ be a train track map of a finite graph $V$.  Suppose that $\phi$ is $\pi_1$-injective and that each edge of $V$ is expanding.  Moreover, suppose that the transition matrix $\mathfrak M$ of $\phi$ is irreducible and that the mapping torus $X$ of $\phi$ has word-hyperbolic fundamental group $G$.  Then $G$ acts freely and cocompactly on a CAT(0) cube complex.
\end{thm}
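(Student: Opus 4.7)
The plan is to verify the four hypotheses of Proposition~\ref{prop:everything_cut} and let that proposition deliver the conclusion.  Two of them are immediate: Definition~\ref{defn:exponentially_expanding} already observes that an irreducible train track map with expanding edges is exponentially expanding, with expansion constant $\varpi>1$ given by the Perron--Frobenius eigenvalue of $\mathfrak M$; and Lemma~\ref{lem:tt_level_separated} then yields that $\widetilde X$ is level-separated.

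For the periodic fellow-traveling hypothesis I would first argue that periodic points of $V$ are dense.  Given an open subinterval $U$ of some edge, irreducibility of $\mathfrak M$ forces some $\phi^k(U)$ to cover an edge, and some further iterate of that edge to cover $U$; Brouwer's fixed point theorem, applied to the resulting self-overlapping iterate, produces a periodic point in $U$.  With density in hand, any finite forward path $\sigma$ of length $M$ admits a periodic forward path starting within $\varpi^{-M}$ of the initial vertex of $\sigma$ and realizing the same sequence of vertical edges of $V$, so that $N(\sigma)$ equals the carrier of its periodic shadow; the Hausdorff distance between $\sigma$ and the shadow is then bounded by a constant independent of $M$.

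The substantive work lies in verifying that $\widetilde X$ has many effective walls in the sense of Definition~\ref{defn:many_effective_walls}.  Using Lemma~\ref{lem:choosing_busts_1} I would construct immersed walls with a primary bust in \emph{every} edge of $V$, each placed in a prescribed small neighborhood of a specified regular point.  Since every edge is busted, $V^\flat$ is a forest, so by Remark~\ref{rem:nucleus_subgroup_conditions} each lifted nucleus is a finite tree and hence uniformly sub-quasiconvex, with constants depending only on $V$.  Lemma~\ref{lem:exceptional_no_vertex} supplies narrow exceptional zones once the tunnel length $L$ is large.  For the ladder overlap property (Definition~\ref{defn:ladder_overlap_property}), distinct tunnels attached to a common nucleus emanate from endpoints lying on distinct regular leaves of $\widetilde X$; Corollary~\ref{cor:bh_separation} and Lemma~\ref{lem:separated_implies_band} imply that the corresponding forward rays project to distinct points of the $\reals$-tree $\mathcal Y$ of Proposition~\ref{prop:R_tree_base case} and thus diverge in $\widetilde V_n$ at the exponential rate set by $\varpi$.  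This forces the $(3\delta+2\lambda)$-coarse intersection of the associated forward-ladder approximations to be uniformly bounded, and spreading is then ensured by freely varying $L$, to which the construction imposes no upper bound.

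For Condition~(2) of Definition~\ref{defn:many_effective_walls}, given a periodic lift $a\in\widetilde V_0$, I would choose primary busts in small neighborhoods of regular points not lying on the $\phi$-orbit of the image of $a$.  Since the forward orbit of $a$ is confined to a finite periodic orbit while each nearby non-orbit bust endpoint lies on a regular leaf distinct from that of $a$, Corollary~\ref{cor:bh_separation} together with exponential expansion give $\dist_{\widetilde X^1}(\tilde\phi^n(a),\tilde\phi^n(d))\geq 3\delta+2\lambda$ for all $n$ beyond some threshold $k(a)$; Remark~\ref{rem:uniform_k} then makes $k(a)$ uniform in $a$ using compactness of the periodic locus.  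The main obstacle is the quantitative coupling required to arrange Definition~\ref{defn:ladder_overlap_property} and Definition~\ref{defn:many_effective_walls} simultaneously with constants independent of $L$, of the precise bust locations, and of the periodic orbit chosen; this rests entirely on the separation machinery of Section~\ref{subsec:tt_cube} combined with the uniform exponential expansion provided by the Perron--Frobenius eigenvalue.
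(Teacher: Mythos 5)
The overall strategy is the same as the paper's: reduce to Proposition~\ref{prop:everything_cut} and check its four hypotheses. Your treatment of exponential expansion, level-separation, and the periodic fellow-traveling hypothesis matches the paper (the last is Lemma~\ref{lem:periodic_paths_everywhere}, proved exactly by your density-plus-$\varpi^{-M}$ argument). The problem is in the verification of many effective walls, which is where essentially all of the work in the paper's proof lives.

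The gap is the step where you claim that because distinct tunnels attached to a common nucleus emanate from endpoints on \emph{distinct regular leaves}, ``Corollary~\ref{cor:bh_separation} and Lemma~\ref{lem:separated_implies_band} imply that the corresponding forward rays project to distinct points of the $\reals$-tree $\mathcal Y$.'' This does not follow. The tree $\mathcal Y$ is the quotient of the leaf space $\mathcal Y_0$ identifying leaves at $\dist_\infty$-distance zero, and Corollary~\ref{cor:bh_separation} only gives the dichotomy: two forward rays either lie at finite Hausdorff distance or have distinct $\rho$-images. Two bust endpoints on distinct leaves may perfectly well have fellow-traveling forward rays (the same $\rho$-image), in which case the ladder overlap property fails for that choice of busts. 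The same conflation appears in your verification of Condition~(2): a point off the $\phi$-orbit of $a$ need not have a forward ray diverging from that of $a$. What the paper actually does is \emph{actively choose} the periodic points $x_i$ near which the busts are placed so that all their lifts to a fixed finite tree $\mathbf S\subset\widetilde V_0$ have pairwise distinct $\rho$-images. For distinct lifts of a single periodic point this is Proposition~\ref{prop:R_tree_base case}.(5) (freeness of the $F$-action on $\rho$-images of periodic lifts, which itself uses hyperbolicity to exclude periodic Nielsen paths); for periodic points in different edges it is the pigeonhole argument of Lemma~\ref{lem:r_tree_quotient}, which exploits the fact that there are roughly $C\varpi^m$ points of period $m$ in each edge, so one can always find an orbit avoiding the finitely many forbidden $\rho$-values. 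Only after this selection does positivity of $\dist_\infty$ between the relevant rays hold, and only then does exponential expansion convert it into the uniform divergence needed for the ladder overlap property and for the constant $k(a)$ in Condition~(2). Without some such selection mechanism your construction of the spreading families $\mathbb W$ and $\mathbb W_a$ is incomplete.

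A minor additional point: Lemma~\ref{lem:exceptional_no_vertex} (narrow exceptional zones) is not part of verifying many effective walls; it is invoked later, inside the proof of Proposition~\ref{prop:cutting_deviating}. Citing it here is harmless but misplaced.
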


\begin{proof}
Let $\mathcal Y$ be the forward space arising from the map $\tilde\phi:\widetilde X\rightarrow\widetilde X$.  Since $\phi$ is a train track map, $\widetilde X$ has bounded level intersection by Remark~\ref{rem:bounded_level_intersection} and is level-separated by Lemma~\ref{lem:tt_level_separated}.  By Lemma~\ref{lem:periodic_paths_everywhere}, each finite forward path uniformly fellow-travels with a periodic forward path.  Hence by Proposition~\ref{prop:everything_cut}, it suffices to show that $\widetilde X$ has many effective walls by verifying Conditions~\eqref{item:bust_point} and~\eqref{item:w_a_periodic} of Definition~\ref{defn:many_effective_walls}.

\textbf{Condition~\eqref{item:bust_point}:}  Let $y\in V$ be regular and let $\epsilon>0$.  Let $\mathbf S$ be a finite subtree of $\widetilde V_0$ such that each contractible subspace of $V$ has one or more lifts to $\mathbf S$.

Let $x_0\in V$ be a periodic point in the interior of the edge $e_0$ containing $y$, chosen so that $\dist_{e_0}(x_0,y)<\epsilon$.  This choice is possible since periodic points are dense in each edge of $V$.  Indeed, by irreducibility of $\mathfrak M$, for each edge $e$ of $V$, and each subinterval $d\subset e$, there exists $k>0$ such that the path $\phi^k(d)$ properly contains $e$.  Brouwer's fixed point theorem implies that $d$ contains a $\phi^k$-fixed point.

Let $e_1,\ldots,e_r$ be the edges of $V$, except $e_0$.  Suppose that we have chosen periodic points $\{x_i\in\interior{e_i}\}$ for $0\leq i< s$, for some $s\leq r$, with the property that $\rho(\tilde x_i)\neq\rho(\tilde x_j)$ for $i\neq j$ and any lifts $\tilde x_i,\tilde x_j$ of $x_i,x_j$ to $\mathbf S$.  Let $\tilde e_{i1},\ldots,\tilde e_{ip_i}$ be the lifts of $e_i$ to $\mathbf S$.  Likewise, let $\tilde x_{ij}$ be the lift of $x_i$ to $\tilde e_{ij}$.  Choose $x_{s}\in\interior{e_{s}}$ to be a periodic point with the property that no lift of $x_s$ to $\mathbf S$ lies in $\cup_{i<s,j}\rho^{-1}(\{\rho(\tilde x_{ij})\})$.  Let $\tilde x_{s1},\ldots,\tilde x_{sp_s}$ be the lifts of $x_s$ to $\mathbf S$.

Iterating this procedure, we obtain a set $\{x_0,\ldots,x_r\}$ of periodic points in $V$ such that:

\begin{enumerate}
 \item Each edge $e_i$ of $V$ contains exactly one point $x_i$ in its interior.
 \item The point $x_0$ lies in the interior of the edge $e_0$ containing $y$ and $\dist_{e_0}(x_0,y)<\epsilon$.
 \item Let $\tilde x_{ip}\neq\tilde x_{jq}$ be lifts of $x_i,x_j$ in the closure of a lift of a component of $V-\cup_i\{x_i\}$.  Then $\rho(\tilde x_{ip})\neq\rho(\tilde x_{jq})$.  This holds by construction when $i\neq j$, and holds by Proposition~\ref{prop:R_tree_base case}.(5) when $i=j$ and $p\neq q$.
\end{enumerate}

For each $\tilde x_{ip}$, let $\periodicline_{ip}$ be the bi-infinite periodic forward path containing $\tilde x_{ip}$.  Let $N(\periodicline_{ip})$ be the 1-skeleton of the smallest subcomplex of $\widetilde X$ containing $\periodicline_{ip}$, so that $N(\periodicline_{ip})$ is $\lambda$-quasiconvex in $\widetilde X^1$ by Proposition~\ref{prop:forward_ladder_quasiconvex}.

We now show that for each $R\geq 0$ there exists $B_R$ such that $$\diam(\neb_R(N(\periodicline_{ip}))\cap\neb_R(N(\periodicline_{jq})))\leq B_R$$ whenever $\periodicline_{ip}\neq\periodicline_{jq}$.  Since $\periodicline_{ip},\periodicline_{jq}$ are periodic, they either fellow-travel or have bounded coarse intersection; the following argument precludes the former possibility, whence the claimed $B_R$ exists since there are finitely many pairs $\periodicline_{ip},\periodicline_{jq}$.  Let $\dist_{(ip,jq)}=\dist_{\infty}(\rho(\tilde x_{ip}),\rho(\tilde x_{jq}))$.  By definition of $\dist_{\infty}$, when $\periodicline_{ip}\neq\periodicline_{jq}$, there exists $n^o_{(ip,jq)}>0$ such that for all $n\geq n^o_{(ip,jq)}$ we have $$\dist_{n}(\tilde\phi^{n}(\tilde x_{ip}),\tilde\phi^{n}(\tilde x_{jq}))\geq\frac{\varpi^{n}\dist_{(ip,jq)}}{2}.$$  Let $n_{(ip,jq)}\geq n^o_{(ip,jq)}$ have the property that $\varpi^{n_{(ip,jq)}}\dist_{(ip,jq)}\geq 2R$.  Let $m=\max\{n_{(ip,jq)}\}$.  Then for all $\periodicline_{ip}\neq\periodicline_{jq}$, and all $n\geq m$ we have $$\dist_{n}(\tilde\phi^{n}(\tilde x_{ip}),\tilde\phi^{n}(\tilde x_{jq}))\geq R.$$

We now construct the uniformly sub-quasiconvex spreading set $\mathbb W$.    For $L\geq 1$, let $\epsilon'=\frac{\epsilon}{\varpi^L}$.  By Lemma~\ref{lem:choosing_busts_1}, there exist primary busts $d_i\subset e_i$, each disjoint from its $\phi^L$-preimage, with $d_i\subset\neb_{\epsilon'}(x_i)$.  Let $W\rightarrow X$ be the immersed wall with tunnel-length $L$ and primary busts $d_i$.  Choose $J$ such that $\phi^J(x_s)=x_s$ for all $0\leq s\leq r$.  We choose $\mathbb W$ to be the set of all walls constructed in this way, where $L$ is divisible by $J$.

$\mathbb W$ is uniformly sub-quasiconvex since each component of $V-\cup_i{\interior{e_i}}$ is a finite tree.  Let $T_i,T_j$ be distinct tunnels of $\overline W$ and suppose that $\comapp(T_i),\comapp(T_j)$ intersect a common nucleus approximation $\mathbf N$.  The forward parts of $\comapp(T_i),\comapp(T_j)$ begin at endpoints of primary busts $\tilde d_{ip},\tilde d_{jq}$ which are lifts of primary busts $d_{i},d_{j}$ near the periodic points $x_i,x_j$ respectively.  Let $\tilde x_{ip},\tilde x_{jq}$ be the lifts of $x_i,x_j$ at distance $\epsilon'$ from $\tilde d_{ip},\tilde d_{jq}$.  There are three cases according to whether each of $\comapp(T_i),\comapp(T_j)$ is incoming or outgoing at $\mathbf N$.  In the case where one is incoming and the other outgoing, consideration of the map $q$ shows that the diameter of the intersection of the $R$-neighborhoods of $\comappn{T_i}$ and $\comappn{T_j}$ is bounded by a function of $R$.

Suppose that $\comapp(T_i)$ and $\comapp(T_j)$ are both outgoing from $\mathbf N$.  Our choice of $\epsilon'$ ensures that $\comapp(T_i)$ fellow-travels at distance $\epsilon$ with the forward path of length $L$ emanating from $\tilde x_{ip}$ and similarly for $\comapp(T_j)$ and $\tilde x_{jq}$.  (More precisely, each point of $\comapp(T_i)\cap\widetilde X^1$ is at distance at most $\epsilon$ from the corresponding point of the forward path emanating from $\tilde x_{ip}$.)  Hence the coarse intersection of $\comapp(T_i)$ and $\comapp(T_j)$ is controlled by the function $R\mapsto B_R$ and the uniform constant $\epsilon$.

Suppose that $\comapp(T_i)$ and $\comapp(T_j)$ are both incoming to $\mathbf N$.  By translating, we can assume that $\mathbf N\subset\mathbf S$.  Because $J\mid L$, we have that $\tilde\phi^L(\tilde x_{ip})$ and $\tilde\phi^L(\tilde x_{jq})$ are again lifts of $x_i,x_j$ to $\mathbf N\subset\mathbf S$ and thus lie on the bi-infinite periodic forward paths $\periodicline_{ip},\periodicline_{jq}$ that diverge according to the map $R\mapsto B_R$.  As before, $\comapp(T_i)$ and $\comapp(T_j)$ are (uniformly) coarsely contained in the $\epsilon$-neighborhoods of $\periodicline_{ip}$ and $\periodicline_{jq}$.

\textbf{Condition~\eqref{item:w_a_periodic}:}  Let $a\in\widetilde V_0$ and let its image $\bar a\in V$ be periodic with period $J_a$.  As before, let $\mathbf S$ be a finite subtree of $\widetilde V_0$ containing $a$ and having the property that every contractible subspace of $V$ lifts to $\mathbf S$ and let $e_0,\ldots,e_r$ be the edges of $V$, with $\bar a\in e_0$.  Let $x_{-1}=\bar a$.  We temporarily subdivide $e_0$, writing $e_0=e_{-1}'e_0'$ with $x_{-1}\in e'_{-1}$.  We now apply Lemma~\ref{lem:r_tree_quotient} to $V$, and then remove the subdivision vertex, yielding periodic points $x_i\in\interior{e_i}, 0\leq i\leq r$ so that: for all $i,j\geq -1$ and all $n\geq 0$, any lifts $\tilde x_{ip},\tilde x_{jq}$ of $\phi^n(x_i),\phi^n(x_j)$ to $\mathbf S$ satisfy $\rho(\tilde x_{ip})\neq\rho(\tilde x_{jq})$.  As before, let $J$ be the least common multiple of the periods of the $x_i$.

Let $L\geq 0$. Applying Lemma~\ref{lem:choosing_busts_1}, for each $i\geq 0$ let $d_i\subset\interior{e_i}$ be a primary bust such that $d_i\subset\neb_{\frac{C}{\varpi^L}}(x_i)$ and such that there is an immersed wall $W\rightarrow X$ with tunnel length $L$ and primary busts $d_i$.  The collection $\mathbb W_a$ of such walls with $J\mid L$ is uniformly bust-quasiconvex since each component of the complement of the primary busts is contractible.  Arguing as in the verification of Condition~\eqref{item:bust_point}, the characteristic property of $\{x_i\}$ ensures that $\mathbb W_a$ has uniformly bounded ladder overlap (the bound is independent of $L$).  Likewise, there is a uniform bound $k(a)$ on $3\delta+2\lambda$ fellow-traveling between two forward ladders, one emanating from an endpoint of $\tilde d_{ip}$ and one from $a=\tilde x_{0q}$, whenever $\tilde d_{ip}$ is a lift of some $d_i$ that is joined to $a$ by a path in a knockout of $\overline W$.  Indeed, in this situation, $\tilde\phi^L(a)$ is a lift of $\bar a$ to the finite nucleus approximation containing the lift $\tilde\phi^L(\tilde x_{ip})$ of $x_i$, whence the forward paths emanating from $\tilde d_{ip}$ and $a$ have uniformly bounded coarse intersection.  The other case, where $a$ and $\tilde d_{ip}$ lie on the same nucleus approximation, is handled as in the analogous case in the verification of Condition~\eqref{item:bust_point}.
\end{proof}

\begin{lem}\label{lem:r_tree_quotient}
Let $x_{-1}\in V$ be a periodic point in an edge $e_{-1}$ and let $e_0,\ldots, e_r$ be a collection of edges in $V$.  Then for $0\leq i\leq r$, there exist periodic points $x_i\in\interior{e_i}$ such that for all $i,j\geq -1,n\geq 0$ and for all distinct lifts $\tilde x_{ip},\tilde x_{jq}$ of $\phi^n(x_i),\phi^n(x_j)$ to $\mathbf S$, we have $\rho(\tilde x_{ip})\neq\rho(\tilde x_{jq})$.
\end{lem}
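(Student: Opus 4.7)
I will induct on $s$. The base case $s=-1$ follows from Proposition~\ref{prop:R_tree_base case}.(5): since $F$ acts transitively on fibers of $\widetilde V_0\to V$, any two distinct lifts of $\phi^n(x_{-1})$ to $\mathbf S$ differ by a nontrivial element of $F$, which acts freely on $\rho(\tilde x_{-1})$ because $x_{-1}$ is periodic.

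For the inductive step, assume $x_{-1},\ldots,x_{s-1}$ have been chosen and let $\pi\colon\mathbf S\to V$ denote the covering projection. Set $\mathcal A=\{\phi^n(x_i):-1\leq i<s,\ n\geq 0\}\subset V$, which is finite because each $x_i$ is periodic, and form the finite sets
\[
\mathcal L=\pi^{-1}(\mathcal A)\cap\mathbf S,\quad \mathcal P=\rho(\mathcal L)\subset\mathcal Y,\quad \mathcal U=\rho^{-1}(\mathcal P)\cap\mathbf S,\quad \mathcal V=\pi(\mathcal U)\subset V.
\]
Finiteness of $\mathcal U$ uses Lemma~\ref{lem:metrizing}, which ensures $\rho$ is injective on each edge of the finite tree $\mathbf S$. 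My plan is to find a periodic $x_s\in\interior{e_s}$ with $\phi^n(x_s)\notin\mathcal V$ for all $n\geq 0$; this will suffice, for if a lift $\tilde x_{sq}\in\mathbf S$ of $\phi^n(x_s)$ satisfied $\rho(\tilde x_{sq})=\rho(\tilde x_{ip})$ for some lift $\tilde x_{ip}$ of $\phi^n(x_i)$ with $i<s$, then $\tilde x_{sq}\in\mathcal U$ would force $\phi^n(x_s)\in\mathcal V$, while the case $i=j=s$ is controlled exactly as in the base case.

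To exhibit such $x_s$, I will use a divisibility trick. Non-periodic elements of $\mathcal V$ are automatically avoided because $\phi^n(x_s)$ is periodic, so only periodic $z\in\mathcal V$, each with some minimal period $p_z$, are of concern. Let $d\geq 1$ be the period of $\mathfrak M$, so that $\mathfrak M^{dk}$ is positive for all sufficiently large $k$ and every periodic point has period divisible by $d$. Choose a prime $M$ with $M>\max\{p_z/d\}$ and $dM$ large enough that $\phi^{dM}(e_s)$ traverses $e_s$; set $N=dM$. Brouwer's fixed-point theorem applied to the relevant branch of $\phi^N|_{e_s}$ produces $x_s\in\interior{e_s}$ with $\phi^N(x_s)=x_s$, and after discarding the finite set of points in $\interior{e_s}$ whose minimal period divides $d$, the minimal period $N_s$ of $x_s$ satisfies $N_s\geq M>\max\{p_z\}$. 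If $\phi^n(x_s)=z\in\mathcal V$ held for some $n\geq 0$, then $\phi^{p_z}(\phi^n(x_s))=\phi^{p_z}(z)=z=\phi^n(x_s)$; but $0<p_z<N_s$ makes positions $n$ and $n+p_z$ distinct modulo $N_s$, contradicting that the orbit of $x_s$ consists of $N_s$ pairwise distinct points.

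The main obstacle is that both the set of $\phi$-periodic points in $\interior{e_s}$ and the ``bad'' set $\bigcup_{n\geq 0}\phi^{-n}(\mathcal V)$ are countable and may generically overlap, so a naive density argument will not suffice. The divisibility trick above resolves this by translating the forbidden equality $\phi^n(x_s)=z$ into the arithmetic condition $p_z\mid N_s$, which can then be precluded by engineering the period of $x_s$ to be coprime (up to the unavoidable factor $d$) to every relevant $p_z$; this is where the existence of arbitrarily large primes $M$ meeting the congruence required by Perron--Frobenius enters.
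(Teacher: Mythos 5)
Your proof is correct, and the reduction step is the same as the paper's: induct on the index, dispose of the case $i=j$ via the freeness of the $F$-action on $\rho$-images of periodic points (Proposition~\ref{prop:R_tree_base case}.(5)), and use injectivity of $\rho$ on edges of the finite tree $\mathbf S$ (Lemma~\ref{lem:metrizing}) to reduce the inductive step to avoiding a finite forbidden set. Where you genuinely diverge is in how the new periodic point is produced. The paper stays upstairs in $\mathbf S$: it counts that $e_r$ meets at least $KQ+1$ distinct $\phi$-orbits of $m$-periodic points for suitable $m$, observes that distinct orbits have disjoint ``lifted orbits,'' and pigeonholes against the at most $KQ$ forbidden points of $\rho^{-1}(\{\rho(\tilde x_{ip})\})\cap\mathbf S$. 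You instead push the forbidden set all the way down to a finite set $\mathcal V\subset V$ and kill it arithmetically: any periodic point whose minimal period exceeds the minimal period of every periodic element of $\mathcal V$ has forward orbit disjoint from $\mathcal V$, and the prime choice $N=dM$ manufactures such a point. Your route buys a cleaner avoidance criterion (one explicit point rather than a pigeonhole among many orbits) and, incidentally, is more careful than the paper in one respect: your set $\mathcal A$ contains the full forward orbits $\phi^n(x_i)$, whereas the paper's constant $Q$ is defined only from lifts of the $x_i$ themselves even though the conclusion quantifies over all $n$. The paper's route buys slightly less bookkeeping about periods and the transition matrix, and it does not need the period $d$ of $\mathfrak M$ at all.

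Three small repairs to your write-up. First, for $d>1$ the power $\mathfrak M^{dk}$ is never a positive matrix (it is block-diagonal with respect to the cyclic classes); what you actually use, namely $(\mathfrak M^{dM})_{e_se_s}>0$ for all large $M$, is correct. Second, the chain ``$N_s\geq M>\max\{p_z\}$'' does not follow from your choice $M>\max\{p_z/d\}$ when $d>1$; the inequality you need and do have is $N_s=dM>\max\{p_z\}$. Third, you should note why, after discarding the finitely many $\phi^d$-fixed points of $\interior{e_s}$, a $\phi^{dM}$-fixed point remains: the number of traversals $(\mathfrak M^{dM})_{e_se_s}$ grows like $\varpi^{dM}$, each traversal contributing a fixed point, while the discarded set has fixed finite cardinality (this is the same counting the paper invokes for its $KQ+1$ orbits). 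None of these affects the validity of the argument.
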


\begin{proof}
For all $i$, any two distinct lifts of $\phi^n(x_i)$ to $\mathbf S$ have distinct images in $\mathcal Y$ by Proposition~\ref{prop:R_tree_base case}.(5).  It therefore suffices to verify the claim of the lemma for points $\tilde x_{ip},\tilde x_{jq}$ with $i\neq j$.

We argue by induction on $r$.  In the base case where $r=-1$, there is nothing to prove.  Supposing that $x_{-1},\ldots,x_{r-1}$ satisfy the conclusion of the lemma, we will choose $x_r$.  Since $\rho$ is an embedding on each edge and $\mathbf S$ is the union of finitely many edges, there exists $K\in\naturals$ such that for all $y\in\rho(\mathbf S)$, we have $|\rho^{-1}(y)\cap\mathbf S|\leq K$.  Let $$Q=|\{\rho(\tilde x_{ip}):-1\leq i\leq r-1,\,1\leq p\leq p_i\}|,$$ where $p_i$ is the number of lifts of $x_i$ to $\mathbf S$.

Choose $m\in\naturals$ such that $e_r$ intersects at least $KQ+1$ $\phi$-orbits of $m$-periodic points.  This choice is possible because, for arbitrarily large $m$, the number of $m$-periodic points in $e_r$ is approximately $C\varpi^m$ for some $C>0$, while the claimed $\phi$-orbits exist as long as there are at least $(KQ+1)m$ periodic points in $e_r$ with period $m$.

For each such $m$-periodic $u$, a \emph{lifted orbit} of $u$ is the set of all lifts to $\mathbf S$ of all points $\phi^k(u)$ with $0\leq k<m$.  Note that if $u,u'$ are $m$-periodic points with distinct $\phi$-orbits, then their lifted orbits are disjoint since their projections to $V$ are distinct $\phi$-orbits of the same cardinality and are hence disjoint.  By the pigeonhole principle, there exists an $m$-periodic point $x_r\in e_r$ with the desired property.  Indeed, the points $\rho(\tilde x_{ip})$ with $i<r$ ruled out at most $KQ$ of the $KQ+1$ lifted orbits.
\end{proof}

\begin{lem}\label{lem:periodic_paths_everywhere}
Let $\phi$ be as in Theorem~\ref{thm:irreducible}.  Then for any finite forward path $\sigma\rightarrow\widetilde X$, there exists a periodic forward path $\chi$ with $\sigma\subset N(\chi)$.  If $\sigma$ is regular, then $N(\sigma)=N(\chi)$.
\end{lem}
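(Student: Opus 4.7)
The plan is to construct $\chi$ as the length-$M$ forward path through a periodic point $x^*$ of $\phi$ lying near the projection of the initial vertex $x$ of $\sigma$. The argument uses three ingredients. First, periodic points of $\phi$ are dense in every edge of $V$: irreducibility of $\mathfrak{M}$ implies that for any subinterval $d\subset e$ there exists $k$ with $\phi^k(d)\supsetneq e$, and Brouwer's fixed point theorem then supplies a $\phi^k$-fixed point inside $d$, as used already in the proof of Theorem~\ref{thm:irreducible}. Second, the iterated forward flow $\tilde\phi^i$ is continuous on each layer $\widetilde V_n$. Third, at a vertex $v\in\widetilde X^0$ the midsegment $m_v$ is precisely the horizontal edge $t_v$, because in the Euclidean trapezoid $R_e$ the segment joining the corner $v$ to the corner $\tilde\phi(v)$ is the horizontal side $t_v$ of the trapezoid; consequently $t_v$ is contained in $R_e$ for every vertical edge $e$ incident to $v$.

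Fix a vertical edge $e_0\subset\widetilde X$ with $x\in\overline{e_0}$, chosen arbitrarily among incident edges when $x\in\widetilde X^0$. Writing $v_i=\tilde\phi^i(x)$, the fact that $\phi$ sends vertices to vertices forces $\sigma$ to decompose as a regular prefix ($0\leq i<k$, with $v_i\notin\widetilde X^0$ and $m_{v_i}$ a diagonal in the $2$-cell $R_{e_i}$ for the unique vertical edge $e_i$ containing $v_i$) followed by a singular tail ($k\leq i\leq M-1$, with $v_i\in\widetilde X^0$ and $m_{v_i}=t_{v_i}$). Using continuity of $\tilde\phi^0,\ldots,\tilde\phi^M$ together with density of periodic points in $e_0$, I would pick a periodic point $x^*\in\interior{e_0}$ so close to $x$ that $\tilde\phi^i(x^*)\in\interior{e_i}$ for $i<k$, and $\tilde\phi^i(x^*)$ lies in the interior of some vertical edge $e^*_i$ with $v_i$ as an endpoint for each $i\geq k$. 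Because $\phi$ preserves vertices, no iterate of a periodic point in the interior of an edge can itself be a vertex, so $\chi:=\sigma_M(x^*)$ is automatically a regular forward path.

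I would then verify $\sigma\subset N(\chi)$ midsegment by midsegment: for $i<k$, both midsegments lie in the common $2$-cell $R_{e_i}\subset N(\chi)$; for $i\geq k$, the horizontal edge $m_{v_i}=t_{v_i}$ lies on the boundary of $R_{e^*_i}\subset N(\chi)$. Periodicity of $\chi$ follows because $x^*$ is $p$-periodic in $V$, so $\tilde\phi^p(x^*)=g\cdot x^*$ for some $g\in G$ shifting horizontal coordinates by $p$ (hence nontrivial); the bi-infinite forward path $\bigcup_{j\in\integers}g^j\sigma_p(x^*)$ contains $\chi$ and is stabilized by $\langle g\rangle$. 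In the regular case of $\sigma$, the singular tail is empty, so the analysis above gives $N(\chi)=\bigcup_iR_{e_i}=N(\sigma)$ directly. The main technical point I expect is the singular tail: one must realize each horizontal midsegment $t_{v_i}$ of $\sigma$ as a boundary edge of a $2$-cell of $\chi$, which rests on the identification $m_v=t_v$ and the incidence of $t_v$ with every $2$-cell $R_e$ at $v$.
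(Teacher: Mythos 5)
Your proof is correct and follows the same strategy as the paper's: approximate the initial point of $\sigma$ by a nearby periodic point (using density of periodic points, which both arguments derive from irreducibility of $\mathfrak M$ plus Brouwer's fixed point theorem) and check that the resulting length-$M$ forward path stays in the same cells as $\sigma$. You are in fact somewhat more careful than the paper, which controls the approximation quantitatively via the expansion constant $\varpi$ but does not spell out the singular case where $\sigma$ meets $\widetilde X^0$; your observations that $m_v=t_v$ at a vertex $v$, that $t_v$ lies on the boundary of every $2$-cell $R_e$ with $e$ incident to $v$, and that no iterate of a periodic interior point can be a vertex are exactly what is needed to close that case.
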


\begin{rem}
The period of $\chi$ is unbounded as the length of $\sigma$ increases.
\end{rem}

\begin{proof}
This follows from the fact that periodic points are dense in $V$ and the fact that distinct forward rays diverge at a rate governed by $\varpi$.  Let $x$ be the initial point of $\sigma$ and let $n=|\sigma|$.  We choose $y$ to be a point at distance at most $\frac{\varpi^n}{2K}$ from $x$, with the image of $y$ in $V$ periodic regular, where $K$ is the distance from $x$ to a nearest vertex.  Then the length-$n$ forward path $\sigma_y$ fellow-travels with $\sigma$ at distance $\frac{1}{2K}$, and hence the first and last vertical edges in the carriers of $\sigma,\chi$ are equal.
\end{proof}

We conclude with the following:

\begin{cor}\label{cor:irreducible_case}
Let $\Phi:F\rightarrow F$ be a monomorphism of the finitely generated free group $F$.  Suppose that $\Phi$ is irreducible and that the ascending HNN extension $G=F*_{\Phi}$ is word-hyperbolic.  Then $G$ acts freely and cocompactly on a CAT(0) cube complex.
\end{cor}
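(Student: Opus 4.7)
The plan is to reduce to Theorem~\ref{thm:irreducible} by producing an irreducible train track representative $\phi\co V\to V$ of $\Phi$ whose mapping torus has fundamental group $G$ and all of whose edges are expanding. Once such a $\phi$ exists, every hypothesis of Theorem~\ref{thm:irreducible} will be verified: $\phi$ is a train track map with irreducible transition matrix $\mathfrak M$, each edge is expanding, $\phi$ is $\pi_1$-injective because $\Phi$ is injective, and the mapping torus $X$ of $\phi$ satisfies $\pi_1 X\cong F\ast_\Phi = G$, which is word-hyperbolic by hypothesis.

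First I would invoke the extension of Bestvina--Handel theory to irreducible free group monomorphisms to obtain a topological representative $\phi\co V\to V$ of $\Phi$ that is an irreducible train track map, i.e.\ each $\phi^n$ is locally injective on edges and the transition matrix $\mathfrak M$ is irreducible in the Perron--Frobenius sense. In the automorphism case this is exactly~\cite{BestvinaHandel}, and the combinatorial train track machinery goes through with essentially no change for an irreducible injective endomorphism: collapse a maximal invariant forest, then perform the folding/valence-reduction moves that preserve irreducibility of $\mathfrak M$.

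Next I would apply Lemma~\ref{lem:edges_expanding} to replace $\phi$ by a representative in which, additionally, every edge is expanding and no edge is mapped to a point. The lemma is stated for hyperbolic $F\rtimes_\Phi\integers$, but its proof carries over to the ascending HNN setting: the subforest $U\subset V$ consisting of edges whose $\phi^k$-length stays bounded is $\phi$-invariant, and each component must be contractible, since a non-contractible component would either violate $\pi_1$-injectivity or produce an immersed torus in $X$, contradicting hyperbolicity of $G = F\ast_\Phi$. Collapsing $U$ preserves both the train track property and irreducibility of $\mathfrak M$, exactly as in the proof of Lemma~\ref{lem:edges_expanding}, so after finitely many collapses we obtain the required $\phi$.

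At this point Theorem~\ref{thm:irreducible} applies directly to $\phi$ and yields a free, cocompact action of $G$ on a CAT(0) cube complex, completing the proof. The main obstacle is really step one: verifying that the existence of an irreducible train track representative, proved in~\cite{BestvinaHandel} for irreducible automorphisms, persists for irreducible free group monomorphisms. Granted that, steps two and three are routine applications of the lemmas already established in this paper.
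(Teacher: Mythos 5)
Your proposal is correct and follows essentially the same route as the paper: obtain an irreducible train track representative of $\Phi$, arrange (via Lemma~\ref{lem:edges_expanding}) that every edge is expanding, and apply Theorem~\ref{thm:irreducible}. The step you flag as the main obstacle --- existence of irreducible train track representatives for irreducible free group monomorphisms, not just automorphisms --- is not something you need to reprove by hand: the paper simply cites the literature for it (\cite{BestvinaHandel,Reynolds,DicksVentura}), so your sketch of how the Bestvina--Handel machinery adapts can be replaced by those references.
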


\begin{proof}
This follows from the fact that such $\Phi$ is represented by a map $\phi:V\rightarrow V$ satisfying the hypotheses of Theorem~\ref{thm:irreducible}.  Indeed, any irreducible endomorphism has an irreducible train track representative~\cite{BestvinaHandel,Reynolds,DicksVentura}.
\end{proof}

\bibliographystyle{alpha}
\bibliography{MrFreeByZ}
\end{document}